\documentclass[12pt]{amsart}

\usepackage{stmaryrd}

\usepackage{graphicx}
\usepackage{amsmath}
\usepackage{mathtools}
\usepackage{amssymb}
\usepackage{amsfonts}
\usepackage{amsthm}
\usepackage{cancel}
\usepackage[all]{xy}
\usepackage[neveradjust]{paralist}
\usepackage{enumitem}
\usepackage{multicol}
\usepackage{mathrsfs}
\usepackage{mathdots}
\usepackage[pagebackref,colorlinks=true,linkcolor=blue,citecolor=blue]{hyperref}
\usepackage{tikz-cd}
\usepackage{tikz}
\usepackage{blkarray}
\usepackage{scalerel,stackengine}

\UseRawInputEncoding

\stackMath
\newcommand\reallywidehat[1]{%
\savestack{\tmpbox}{\stretchto{%
  \scaleto{%
    \scalerel*[\widthof{\ensuremath{#1}}]{\kern-.6pt\bigwedge\kern-.6pt}%
    {\rule[-\textheight/2]{1ex}{\textheight}}
  }{\textheight}%
}{0.5ex}}%
\stackon[1pt]{#1}{\tmpbox}%
}

\tikzset{
  symbol/.style={
    draw=none,
    every to/.append style={
      edge node={node [sloped, allow upside down, auto=false]{$#1$}}}
  }
}

\usepackage{color}
\usepackage{blkarray}

\newcommand{\Z}{\mathbb{Z}}

\newcommand{\R}{\mathbb{R}}
\newcommand{\BC}{\mathbb{C}}

\newcommand{\inv}{^{-1}}

\newcommand{\SL}{\mathrm{SL}}
\newcommand{\GL}{\mathrm{GL}}
\newcommand{\SO}{\mathrm{SO}}
\newcommand{\OO}{\mathrm{O}}
\newcommand{\Sp}{\mathrm{Sp}}
\newcommand{\St}{\mathrm{St}}
\newcommand{\ord}{\mathrm{ord}}

\newcommand{\Rep}{\underline{\mathrm{Rep}}}

\newcommand{\Jord}{\mathrm{Jord}}

\DeclareMathOperator{\sgn}{sgn}

\newcommand{\half}[1]{\frac{#1}{2}}

\renewcommand{\implies}{\Rightarrow}

\newcommand{\comment}[1]{}
\newcommand{\EE}{\mathcal{E}}
\newcommand{\FF}{\mathcal{F}}

\newcommand{\MM}{\mathcal{M}}

\newcommand{\Moe}{\mathrm{M\textnormal{\oe}}}
\newcommand{\Ato}{\mathrm{Ato}}
\newcommand{\Xu}{\mathrm{Xu}}

\newcommand{\com}{\mathrm{com}}
\newcommand{\NV}{\mathrm{NV}}
\newcommand{\AZ}{\mathsf{AZ}}

\newcommand{\cO}{\mathcal{O}}

\newcommand{\ABV}{\operatorname{ABV}}
\newcommand{\unip}{\operatorname{unip}}
\newcommand{\Lus}{\operatorname{Lus}}
\newcommand{\LLC}{\operatorname{LLC}}

\newcommand{\WF}{\operatorname{WF}}

\newcommand{\Frob}{\operatorname{Frob}}

\newcommand{\weak}{\operatorname{weak}}

\newcommand{\mfr}[1]{\mathfrak{#1}}

\newcommand{\good}{\mathrm{good}}
\newcommand{\pure}{\mathrm{pure}}
\newcommand{\temp}{\mathrm{temp}}

\newtheorem{thm}{Theorem}[section]
\newtheorem{cor}[thm]{Corollary}
\newtheorem{lemma}[thm]{Lemma}
\newtheorem{prop}[thm]{Proposition}
\newtheorem {conj}[thm]{Conjecture}
\newtheorem {assu}[thm]{Working Hypothesis}
\newtheorem {ques/conj}[thm]{Question/Conjecture}

\newtheorem{defn}[thm]{Definition}
\newtheorem{remark}[thm]{Remark}

\newtheorem{algo}[thm]{Algorithm}
\newtheorem{conv}[thm]{Convention}

\DeclareMathOperator{\supp}{supp}

\DeclareMathOperator{\Hom}{Hom}

\numberwithin{equation}{section}

\let\oldbullet\bullet
\renewcommand{\bullet}{{\vcenter{\hbox{\tiny$\oldbullet$}}}}

\usepackage[textsize=footnotesize,textwidth=3.2cm]{todonotes}

\begin{document}
\renewcommand{\theequation}{\arabic{equation}}
\numberwithin{equation}{section}

\title[Intersection of Arthur packets for even orthogonal groups]
{The intersection of local Arthur packets for even orthogonal groups and applications}

\author{Alexander Hazeltine
}
\address{Department of Mathematics\\
Columbia University\\
New York, NY 10027, USA}
\email{alh2252@columbia.edu}

\author{Baiying Liu}
\address{Department of Mathematics\\
Purdue University\\
West Lafayette, IN, 47907, USA}
\email{liu2053@purdue.edu}

\author{Chi-Heng Lo}
\address{Department of Mathematics\\
National University of Singapore\\
119076, Singapore}
\email{{ch\_lo@nus.edu.sg}}

\subjclass[2020]{Primary 11F70, 22E50; Secondary 11F85}

\date{\today}

\keywords{Endoscopic Classification, Local Arthur Packets, Local Arthur parameters, Intersection of Local Arthur Packets, Theta Correspondence}

\thanks{The research of the first-named author is supported by an AMS-Simons Travel Grant. The research of the
second-named author is partially supported by the NSF Grants DMS-1848058 and the Simons Foundation: Travel Support for Mathematicians.}

\begin{abstract}
We give algorithms to explicitly compute local Arthur packets and their intersections for even orthogonal groups $G_n$ over $p$-adic fields using extended multi-segments. In addition, we provide an algorithm which determines whether a representation of $G_n$ lies in a local Arthur packet. As applications, these results are used to identify distinguished parameters, prove the closure ordering conjecture, the enhanced Shahidi conjecture, the weak local Arthur packet conjecture, and the good parity unitary dual conjecture for $G_n$. 
\end{abstract}

\maketitle

\tableofcontents

\section{Introduction}

Let $F$ be a non-Archimedean local field of characteristic zero. In the fundamental work \cite{Art13}, Arthur established the endoscopic classification of representations of quasi-split symplectic and orthogonal groups over $F$. The main objects underlying the classification are  \emph{local Arthur packets} $\Pi_{\psi}$, which are finite multi-sets of irreducible unitary representations of these groups attached to local Arthur parameters $\psi$ (see \S \ref{sec: lap for symplectic}). Local Arthur packets are important in number theory as they describe the local components of square-integrable automorphic representations (\cite[Theorem 1.5.2]{Art13}). They are also important in representation theory since the representations in each local Arthur packet are unitary and are expected to form the building blocks for the unitary dual of these classical groups (\cite[Conjecture 1.2]{HJLLZ25}).

For either purpose, knowing that $\Pi_{\psi}$ exists is not enough. We desire explicit computations of $\Pi_{\psi}$ in terms of the Langlands classification, that is, an algorithm which produces from $\psi$ the Langlands data  (see \S\ref{sec Langlands classification})
\[ \pi=L(\Delta_{\rho_1}[x_1,y_1],\dots,\Delta_{\rho_r}[x_r,y_r];\pi_{temp})\]
for every $\pi \in \Pi_{\psi}$. We also want a converse algorithm which, starting from the Langlands data  of an irreducible representation $\pi$, decides whether $\pi$ is of Arthur type, i.e., $\pi\in\Pi_\psi$ for some $\psi$. Note that local Arthur packets may intersect, and so we also want to compute the set
\[ \Psi(\pi):=\{ \psi \ | \ \pi \in \Pi_{\psi}\}.\]
Knowing the combinatorial structure of $\Psi(\pi)$ is useful in certain applications, e.g., for studying the upper bound of the wavefront set of $\pi$ (\cite{HLLS24}).

The explicit parameterization of local Arthur packets for classical groups, including non-quasi-split pure inner forms, was initiated by M{\oe}glin. In a series of papers (\cite{Moe06a, Moe06b, Moe09a, Moe10, Moe11}) M{\oe}glin gave a representation theoretic construction of $\Pi_{\psi}$ and proved that it is multiplicity free. More precisely, M{\oe}glin introduced a combinatorial datum $\MM$, which we call a M{\oe}glin parameter (Definition \ref{def Moeglin parameter}), and attached a representation $\pi_{\Moe}(\MM)$ to $\MM$, which is either irreducible or zero. M{\oe}glin showed that the nonzero ones exhaust $\Pi_{\psi}$ (Theorem \ref{thm Moeglin construction}). Furthermore, Xu gave an explicit combinatorial algorithm which determines whether $\pi_\Moe(\MM)$ is nonzero (\cite{Xu21b}). Consequently, these results can be used to compute the size $|\Pi_\psi|$. However, M{\oe}glin's description does not give an algorithm to determine the Langlands data  for $\pi_{\Moe}(\MM)$. In particular, it does not give a combinatorial criterion to decide when $\pi_\Moe(\MM_1)=\pi_\Moe(\MM_2)$ for two different M{\oe}glin parameters.

This problem was solved by Atobe (\cite{Ato22a}) for split symplectic and odd special orthogonal groups, by reformulating M{\oe}glin's construction in terms of \emph{extended multi-segments} (see Definition \ref{def multi-segment}).
For an extended multi-segment $\EE$ of these groups, Atobe attached a local Arthur parameter $\psi_{\EE}$ and a representation $\pi_{\Ato}(\EE)$. The representation $\pi_{\Ato}(\EE)$ is either irreducible or zero, and there is an explicit algorithm for computing the Langlands data  of $\pi_{\Ato}(\EE)$ using the formula for highest derivatives given in \cite{AM20}. One of the main results of \cite{Ato22a} is the following equality (for good parity $\psi$; see \S\ref{sec Local Arthur packets for orthogonal groups}):
\begin{align}\label{eq Ato A-packet intro}
    \bigoplus_{\pi \in \Pi_{\psi}} \pi = \bigoplus_{\psi_{\EE}=\psi} \pi_{\Ato}(\EE)
\end{align}
(\cite[Theorem 1.2]{Ato22a}; see Theorem \ref{thm Sp}(2)). Furthermore, Atobe compared the two parametrizations by giving an explicit map $\MM \mapsto \EE_{\MM}$ (Definition \ref{defn Moeglin to Atobe dictionary}) and proved that $\pi_{\Ato}(\EE_{\MM})=\pi_{\Moe}(\MM)$ (\cite[Theorem 6.6]{Ato22a}; see Theorem \ref{thm Sp}(4)). 

Both the notion of an extended multi-segment and the assignment $\EE \mapsto \psi_{\EE}$ make perfect sense for pure inner forms of quasi-split even orthogonal groups. The construction of $\pi_{\Ato}(\EE)$, however, does not carry over since the formula for highest derivatives is currently unavailable for even orthogonal groups.

In this paper we circumvent this obstruction by using the local theta correspondence. Let $G_n$ denote a pure inner form of a quasi-split even orthogonal group $\OO_{2n}$. Given an extended multi-segment $\EE$ of $G_n$ and an odd integer $\alpha \gg 0$, we attach an extended multi-segment $\EE_{\alpha}$ of the split symplectic group $\Sp_{2n+\alpha-1}(F)$ (Definition \ref{defn EE_alpha}). Then we define $\pi_{\theta}(\EE)$ to be the unique representation of $G_n$ whose theta lift is $\pi_{\Ato}(\EE_{\alpha})$ (Definition \ref{def orthog rep of segment}). We show that this is well-defined (independent of the choice of $\alpha$). Here is our first theorem.
\begin{thm}\label{thm intro}
    Let $\psi$ be a local Arthur parameter of $G_n$ of good parity. The following holds.
    \begin{enumerate}
        \item (Theorem \ref{thm Atobe reformulation orthog}) We have 
    \[ \bigoplus_{\pi \in \Pi_{\psi}} \pi = \bigoplus_{\psi_{\EE}=\psi} \pi_{\theta}(\EE).\]
    \item (Theorem \ref{thm non-vanishing}) We determine a combinatorial algorithm which decides when $\pi_{\theta}(\EE)\neq 0$.
    \item (Theorem \ref{cor Moeglin to Atobe dictionary orthog}) Using the same map $\MM \mapsto \EE_{\MM}$ as in \cite[Theorem 6.6]{Ato22a}, we have $\pi_{\theta}(\EE_{\MM})=\pi_{\Moe}(\MM)$.
    \end{enumerate}
\end{thm}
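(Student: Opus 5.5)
The proof reduces all three parts to Atobe's results for $\Sp_{2n+\alpha-1}(F)$ via the local theta correspondence for the dual pair $(G_n,\Sp_{2m})$ with $2m=2n+\alpha-1$, $\alpha\gg 0$ odd. The key input is the description of theta lifts of Arthur-type representations in the stable range, due to M{\oe}glin and refined by Baiying Liu and collaborators. If $\pi\in\Pi_\psi$ with $\psi$ of good parity, then $\theta_\alpha(\pi)$ is nonzero and irreducible, and it lies in $\Pi_{\psi_\alpha}$, where
\[\psi_\alpha=(\psi\otimes\chi)\oplus \chi\otimes S_1\otimes S_\alpha,\]
with $\chi$ the appropriate discriminant character. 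Moreover, M{\oe}glin's parameters transform compatibly: adjoining the Jordan block $(\chi,1,\alpha)$ with the sign dictated by the theta correspondence sends $\MM$ to a parameter $\MM_\alpha$ with $\theta_\alpha(\pi_{\Moe}(\MM))=\pi_{\Moe}(\MM_\alpha)$. The first thing I would do is check that Definition \ref{defn EE_alpha} is exactly the Atobe-side shadow of this operation, namely $(\EE_{\MM})_\alpha=\EE_{\MM_\alpha}$. This should be a direct unwinding of Definition \ref{defn Moeglin to Atobe dictionary}. Since $\alpha$ exceeds all other $B+A$, the new block sits last in the admissible order and does not interact with the others.

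With this compatibility, part (3) follows by a short chain. Theorem \ref{thm Sp}(4) gives
\[\pi_{\Ato}((\EE_\MM)_\alpha)=\pi_{\Ato}(\EE_{\MM_\alpha})=\pi_{\Moe}(\MM_\alpha)=\theta_\alpha(\pi_{\Moe}(\MM)).\]
By the definition of $\pi_\theta$ and injectivity of theta in the stable range, this yields $\pi_\theta(\EE_\MM)=\pi_{\Moe}(\MM)$. Independence of $\alpha$ follows from the same computation, or from the tower property that passing from $\alpha$ to $\alpha+2$ is compatible with the theta correspondence.

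For part (1), I would first prove that every $\EE$ with $\psi_\EE=\psi$ is related to some $\EE_\MM$ by Atobe's basic operations (row exchanges, unions/intersections, etc.), and that $\EE\mapsto\EE_\alpha$ commutes with these operations. Atobe's Theorem \ref{thm Sp} then gives $\pi_{\Ato}(\EE_\alpha)=\pi_{\Ato}((\EE_\MM)_\alpha)$, so $\pi_\theta(\EE)=\pi_{\Moe}(\MM)$ or $0$. Conversely, every $\pi\in\Pi_\psi$ is some $\pi_{\Moe}(\MM)$ by Theorem \ref{thm Moeglin construction}. Multiplicity-freeness on both sides, combined with the matching
\[\bigoplus_{\psi_{\EE_\alpha}=\psi_\alpha}\pi_{\Ato}(\EE_\alpha)=\bigoplus_{\pi'\in\Pi_{\psi_\alpha}}\pi',\]
then gives the equality of multisets. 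Here one must check that the $\EE'$ with $\psi_{\EE'}=\psi_\alpha$ which are not of the form $\EE_\alpha$ contribute exactly the representations of $\Pi_{\psi_\alpha}$ outside the theta image. This is a sign and character condition on the isolated last row.

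For part (2), $\pi_\theta(\EE)\ne0$ if and only if $\pi_{\Ato}(\EE_\alpha)\neq0$. Atobe's nonvanishing criterion applied to $\EE_\alpha$ yields conditions on $\EE$ together with the extra row. Because $\alpha\gg0$, the inequalities involving that row are automatic or reduce to a single sign condition, and eliminating them gives a criterion intrinsic to $\EE$.

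I expect the main obstacle to be the precise sign bookkeeping in the first step. One must pin down how the theta correspondence shifts the $\eta$-signs and the $\epsilon$-characters of the component group. It also requires a careful treatment of the pure inner form, i.e. the discriminant and Hasse invariant versus the product of signs in $\EE$. My first attempt would be to verify the compatibility on tempered $\EE$ via M{\oe}glin's explicit theta results for discrete series, and then propagate it using the compatibility with Jacquet modules and derivatives.
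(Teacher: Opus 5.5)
Your architecture matches the paper's. Everything is routed through $\EE\mapsto\EE_\alpha$. The identity $(\EE_\MM)_\alpha=\EE_{\MM_\alpha}$, together with M{\oe}glin's Adams theorem, Theorem \ref{thm Sp}(4) and Howe duality, gives (3). Part (2) comes from $\NV(\EE)\neq 0\iff\NV(\EE_\alpha)\neq 0$. Your argument for (3) is fine.

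The gap is in the direction from $\Sp_{2n+\alpha-1}(F)$ back to $G_n$, on which (1) and (2) both rest. M{\oe}glin's theorem only goes forward: it computes $\theta_{-\alpha}(\pi_{\Moe}(\MM))$ when $\pi_{\Moe}(\MM)\neq 0$. It says nothing when $\pi_{\Moe}(\MM)=0$. It also does not say that a nonzero $\pi_{\Ato}(\EE_\alpha)$ is the theta lift of anything in $\Pi_\psi(G_n)$.

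So your step ``so $\pi_\theta(\EE)=\pi_{\Moe}(\MM)$ or $0$'' is unjustified when $\EE=\EE_\MM$ has $\NV(\EE)\neq 0$ but you do not yet know $\pi_{\Moe}(\MM)\neq 0$. In that case you do not even know that $\pi_\theta(\EE)$ exists. Likewise, the check you defer is not a sign condition on the inserted row. That check says the $\EE'$ not of the form $\EE_\alpha$ account exactly for $\Pi_{\psi_\alpha}\setminus\theta_{-\alpha}(\Pi_\psi(G_n))$. Its easy half follows from (3) and Theorem \ref{thm Sp}(1),(2). Its nontrivial half says that every nonzero $\pi_{\Ato}(\EE_\alpha)$ lies in $\theta_{-\alpha}(\Pi_\psi(G_n))$, which is a surjectivity statement about theta. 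Counting does not give this for free either. It amounts to knowing $\pi_{\Moe}(\MM)\neq 0$ whenever $\NV(\EE_\MM)\neq 0$ for (pure inner forms of) even orthogonal groups, which is essentially (2) itself.

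The missing input is the inverse of the Adams correspondence for $\alpha\gg 0$, \cite[Corollary 2.4]{BH22} (Proposition \ref{prop inverse theta}). It says every nonzero member of $\Pi_{\psi_\alpha}$ is $\theta_{-\alpha}$ of a nonzero $\pi_{\Moe}(\MM)$ on $\OO(W_n)$ or its pure inner form, with parameters related as in Theorem \ref{thm Moeglin Adams}. That recipe then shows the sign $\epsilon_{G_n}$ on the inserted row forces the source to be $G_n$. This is Proposition \ref{prop orthog extended multi-segment rep}. It makes $\pi_\theta$ well defined, gives $\pi_\theta(\EE)\in\Pi_{\psi_\EE}$, and reduces (2) to comparing $\NV(\EE)$ with $\NV(\EE_\alpha)$.

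Then (1) is short. Part (3) with Theorem \ref{thm Moeglin construction}(2) gives $\Pi_\psi\subseteq\{\pi_\theta(\EE)\}$. The proposition gives the reverse inclusion. Theorem \ref{thm Sp}(2) applied to $\EE_\alpha$ gives injectivity for fixed support. No relating of $\EE$ to some $\EE_\MM$ by Atobe's operations is needed.

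Smaller points:
\begin{itemize}
\item The inserted row is the \emph{first} row of $\EE_\alpha$, since its $B=-\frac{\alpha-1}{2}$ is very negative and $A+B=0$.
\item M{\oe}glin's $\MM'_\alpha$ puts the new block last, which violates $(P')$. You therefore need Xu's change of order (Lemma \ref{lemma M_alpha to M_alpha^P'}) before applying Atobe's dictionary.
\item The block does interact with the others: the flip $-\underline{\eta_{\chi_V}}$ in M{\oe}glin's recipe is cancelled by $\beta_i\mapsto\beta_i+1$ in the $dual$ formula.
\item The block's own sign need not be tracked, since \eqref{eq sign condition} forces it. Your planned detour through tempered representations and Jacquet modules is therefore unnecessary.
\end{itemize}
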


The proof of the above theorem is based on M{\oe}glin's result on the Adams conjecture. More precisely, given any parameter $\MM$ of $G_n$ and $\alpha \gg 0$,
M{\oe}glin explicitly constructed a parameter $\MM_{\alpha}$ of $\Sp_{2n+\alpha-1}(F)$ such that $\theta_{-\alpha}(\pi_{\Moe}(\MM))= \pi_{\Moe}(\MM_{\alpha})$ (\cite[Theorem 5.1]{Moe11c}; see Theorem \ref{thm Moeglin Adams}). Then we apply Atobe's map $\MM_{\alpha} \mapsto \EE_{\MM_{\alpha}}$ to obtain an extended multi-segment of $\Sp_{2n+\alpha-1}(F)$ (\cite[Theorem 6.6]{Ato22a}; see Theorem \ref{thm Sp}(4)). Finally, we define the map $\EE \mapsto \EE_{\alpha}$ (Definition \ref{defn EE_alpha}) so that the following diagram commutes.
\[
\begin{tikzcd}
    \MM \ar[rr,mapsto] \ar[d,mapsto] && \ar[d,mapsto]\MM_{\alpha}\\
    \EE_{\MM}\ar[r,mapsto,"\textrm{Defn \ref{defn EE_alpha}}"] & (\EE_{\MM})_{\alpha}\ar[r,equal,"\eqref{eqn Moeglin to Atobe dictionary}"]& \EE_{(\MM_\alpha)} 
\end{tikzcd}
\]
Here the upper horizontal arrow is defined by M{\oe}glin and the vertical arrows follow Atobe's recipe.

Next we want to determine $\Psi(\pi).$ We begin by recalling its history for split symplectic and odd special orthogonal groups. For these groups, in \cite{HLL22}, we showed that the following list of operators on $\EE$ determines $\Psi(\pi)$:
\begin{enumerate}
    \item[(i)] $R_k$, the row exchange of extended segments (Definition \ref{def row exchange});
    \item[(ii)] $ui_{i,j}$, the union-intersection of extended segments (Definition \ref{def ui});
    \item[(iii)] $dual \circ ui_{i,j} \circ dual$, where $dual$ is the Aubert-Zelevinsky dual on extended multi-segments (Definition \ref{dual segment});
    \item[(iv)] $dual_k^{\pm}$, the partial dual, which applies only when $a_k+b_k$ is odd (Definition \ref{def partial dual}).
\end{enumerate}
More precisely, we have  $\pi_\Ato(\EE_1)=\pi_\Ato(\EE_2)$ if and only if $\EE_1=T_1\circ\dots \circ T_r(\EE_2)$, where each $T_i$ is one of the above operators or an inverse (\cite[Theorem 1.4]{HLL22}). A logically equivalent form of this result is proved in \cite{Ato23} independently. These operators apply to extended multi-segments of even orthogonal groups as well and we prove the analogous result by reducing to the symplectic case and using Howe duality (Theorem \ref{thm Howe duality}).
\begin{thm}[Theorem \ref{thm O}]\label{thm intro O}
    Let $\EE_1$ and $\EE_2$ be extended multi-segments of $G_n$ with $\pi_\theta(\EE_1) \neq 0$. Then  $\pi_\theta(\EE_1)=\pi_\theta(\EE_2)$ if and only if $\EE_1=T_1\circ\dots \circ T_r(\EE_2)$, where each $T_i$ is one of the above operators or an inverse.
\end{thm}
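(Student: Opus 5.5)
The plan is to transport the statement to the symplectic side, where \cite[Theorem 1.4]{HLL22} is available. By definition, $\pi_\theta(\EE)$ is the unique representation of $G_n$ whose theta lift $\theta_{-\alpha}$ to $\Sp_{2n+\alpha-1}(F)$ equals $\pi_{\Ato}(\EE_\alpha)$, for any odd $\alpha \gg 0$, and this is independent of $\alpha$.

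\textbf{Reduction to the symplectic side.} Fix $\alpha$ odd and large enough for both $\EE_1$ and $\EE_2$ simultaneously. By Howe duality (Theorem \ref{thm Howe duality}) the theta lift is injective on irreducible representations in the stable range. Since $\pi_\theta(\EE_1)\neq 0$, we get
\[
\pi_\theta(\EE_1)=\pi_\theta(\EE_2) \iff \pi_{\Ato}((\EE_1)_\alpha)=\pi_{\Ato}((\EE_2)_\alpha).
\]
Here we use that $\pi_\theta(\EE)=0$ exactly when $\pi_{\Ato}(\EE_\alpha)=0$, by construction. By \cite[Theorem 1.4]{HLL22}, the right-hand side holds if and only if $(\EE_1)_\alpha$ and $(\EE_2)_\alpha$ are related by a chain of the operators (i)--(iv) and their inverses on $\Sp_{2n+\alpha-1}$.

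\textbf{Compatibility of $\EE\mapsto\EE_\alpha$ with the operators.} By Definition \ref{defn EE_alpha}, $\EE_\alpha$ is obtained from $\EE$ by adjoining, for $\rho$ trivial, a long extended segment (of the shape $[\frac{\alpha-1}{2},\dots]$ or similar), placed first or last in the admissible order, and possibly by an adjustment of the signs. The key lemma is the following.
\begin{itemize}
\item[(a)] For each operator $T$ of type (i)--(iv) applicable to $\EE$, the corresponding operator is applicable to $\EE_\alpha$ and $T(\EE)_\alpha=T'(\EE_\alpha)$. Here $T'$ is $T$ with its indices shifted to account for the new row.
\item[(b)] Conversely, every operator applicable to $\EE_\alpha$ either does not involve the added segment, or can be replaced (modulo operators that fix $\EE_\alpha$ up to equivalence) by one that does not.
\end{itemize}

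For (a), row exchanges and union-intersections among the original rows clearly commute with adding the far row, because $\alpha\gg0$ keeps the new segment disjoint from or containing the others. The dual requires checking that $dual(\EE)_\alpha$ and $dual(\EE_\alpha)$ differ by the appropriate modification. This is plausible because Aubert duality is compatible with theta correspondence in the stable range up to a sign twist. The partial dual $dual_k^{\pm}$ needs $a_k+b_k$ odd, and this parity condition is preserved.

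For (b), I would argue that an operator touching the huge segment would change $\psi_\EE$ in the $\alpha$-dependent part. Since $\pi_{\Ato}$ is determined up to the chain, and \cite[Theorem 1.4]{HLL22} also allows us to choose chains through the set of extended multi-segments $\EE'$ with $\psi_{\EE'}$ of the form $\psi\oplus(\text{fixed large summand})$, every intermediate multi-segment should be of the form $\FF_\alpha$. Alternatively, I would use the description of $\Psi(\pi)$ on the symplectic side: every element of $\Psi(\pi_{\Ato}(\EE_\alpha))$ contains the summand $S_\alpha$ (or the relevant large summand) in the same way, so it descends.

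\textbf{Main obstacle.} The main obstacle is (b): showing that a chain on the symplectic side can be realized entirely within the image of $\EE\mapsto\EE_\alpha$, and in particular handling $dual$ and $dual_k^{\pm}$ when they interact with the added row. I would first try to prove directly that every $\EE'$ with $\pi_{\Ato}(\EE')=\pi_{\Ato}(\EE_\alpha)$ is of the form $\FF_\alpha$ (using $\alpha\gg0$ and Theorem \ref{thm Moeglin Adams}). Then the chain can be pulled back step by step via (a), giving both directions of the theorem.
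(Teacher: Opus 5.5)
Your reduction via Howe duality to \cite[Theorem 1.4]{HLL22} is the paper's route, and (a) is essentially right. The gap is (b): you leave it open, and without it the forward direction fails. A symplectic chain from $(\EE_2)_\alpha$ to $(\EE_1)_\alpha$ could a priori pass through extended multi-segments not of the form $\FF_\alpha$, and then nothing pulls back to $G_n$.

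Your proposed fixes do not close (b).
\begin{itemize}
\item Theorem \ref{thm Moeglin Adams} only says which packet $\theta_{-\alpha}(\pi)$ lies in. It does not say that every packet containing $\theta_{-\alpha}(\pi)$ has the shape $\psi'_\alpha$, so it cannot show that every $\EE'$ with $\pi_{\Ato}(\EE')=\pi_{\Ato}(\EE_\alpha)$ is some $\FF_\alpha$. That statement is really a consequence of the theorem you are proving.
\item Your justification for $dual$ in (a) is wrong: $\AZ$ does not commute with theta up to a sign. $\AZ(\theta_{-\alpha}(\pi))$ lies in $\Pi_{\widehat{\psi_\alpha}}$, whose large summand is $\chi_W\otimes S_\alpha\otimes S_1$. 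Correspondingly $dual(\EE_\alpha)\neq dual(\EE)_\alpha$: the inserted row becomes $[\frac{\alpha-1}{2},\frac{\alpha-1}{2}]$ and moves to the end.
\item Only the composite $dual\circ ui_{i,j}\circ dual$ is needed, and it is a direct check. The inserted row has $a_0=1$ and the $\chi_W$-part is integral. So $dual$ leaves the $l_i$ unchanged and flips all $\eta_i$ there simultaneously. This does not change $\epsilon=(-1)^{A_k-B_k}\eta_k\eta_{k+1}$, and the second $dual$ flips them back.
\item You also misdescribe $\EE_\alpha$. The added row is $([\frac{\alpha-1}{2},-\frac{\alpha-1}{2}]_{\chi_W},\frac{\alpha-1}{2},\epsilon_{G_n})$, for $\chi_W$ rather than the trivial character. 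It is placed first, and the other rows keep their signs.
\end{itemize}

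The missing idea is a purely combinatorial check (Lemma \ref{lemma bijection between raising operators}): for $\alpha\gg0$, no raising operator or inverse applicable on $\FF_\alpha$ can involve the inserted row.
\begin{itemize}
\item That row has strictly the largest $A$ and smallest $B$ in its part. So it cannot be an index of $ui_{i,j}$, which needs $A_i<A_j$ and $B_i<B_j$.
\item It cannot be the union produced by a $ui$, so $ui^{-1}$ cannot split it. A piece $[A,-\frac{\alpha-1}{2}]$ with $A<\frac{\alpha-1}{2}$ violates $A+B\geq 0$. It cannot be the intersection either, since no row has larger $A$.
\item In $dual(\FF_\alpha)$ it is $([\frac{\alpha-1}{2},\frac{\alpha-1}{2}],0,\ast)$. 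The $ui$ applicability equalities would force $A_i-l_i$, $B_i+l_i$ or $A_i-l_i+1$ to equal $\frac{\alpha-1}{2}$, which is impossible for the bounded other rows. The inverse case is similar.
\item $dual_k^{\pm}$ needs $B_k=\pm\frac12$, but the $\chi_W$-part is integral.
\end{itemize}
Hence the operators on $\FF_\alpha$ are exactly the $T'$ induced by operators $T$ on $\FF$, with $T'(\FF_\alpha)=T(\FF)_\alpha$. The symplectic chain then pulls back step by step, with no need to control all of $\Psi(\theta_{-\alpha}(\pi))$.
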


Now, we turn towards improving the algorithm to compute the Langlands data  of $\pi_\theta(\EE)$. We recall that by definition, to compute $\pi_{\theta}(\EE)$, we first compute the Langlands data of $\pi_\Ato(\EE_\alpha)$, and then use the explicit computation of the theta correspondence given in \cite{AG17a, BH21} (see Algorithm \ref{algo orthog Langlands Classification}). It is desirable to have a more direct algorithm. Using the intersection theory we give an intrinsic combinatorial algorithm which attaches to $\EE$ a representation $\pi_{\com}(\EE)$ by computing the Langlands data directly from $\EE$ (Algorithm \ref{algo pi_com}). We prove the following theorem.

\begin{thm}\label{thm intro 2}
    Let $\EE$ be an extended multi-segment of $G_n$. The following holds.
    \begin{enumerate}
        \item (Theorem \ref{thm theta=com}) We have $\pi_{\theta}(\EE)=\pi_{\com}(\EE)$.
        \item (Theorem \ref{thm in L-packet}) The representation $\pi_{\com}(\EE)$ lies in the $L$-packet attached to $\psi_{\EE}$ if and only if a combinatorial condition $(L)$ for $\EE$ holds (Definition \ref{def (L)}).
        \item (Algorithm \ref{algo Arthur type pi^-}) Given the Langlands data  of a representation $\pi$, we give an algorithm to determine the set 
        \[ \{\EE \ | \ \pi_{\com}(\EE)=\pi\}.\]
        In particular, by Part (1) and Theorem \ref{thm intro}(1), it determines $\Psi(\pi)$ and whether $\pi$ is of Arthur type or not.
    \end{enumerate}
\end{thm}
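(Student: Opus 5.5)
Theorem \ref{thm intro 2} has three parts, which I will prove in order, each resting on the previous one. For Part (1), $\pi_\theta(\EE)=\pi_\com(\EE)$, I will argue by induction on a complexity measure of $\EE$, such as the total length $\sum_i (b_i)$ of the extended segments plus the rank $n$. The base case is when $\EE$ is in a normalized form: after applying row exchanges $R_k$ and the operators of Theorem \ref{thm intro O}, the segments are either tempered-like or have all $A$-values small. In that case $\pi_\theta(\EE)$ can be read off directly from $\pi_{\Ato}(\EE_\alpha)$ through the explicit theta correspondence of \cite{AG17a, BH21} (Algorithm \ref{algo orthog Langlands Classification}).

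For the inductive step I will mirror how $\pi_\com$ is built in Algorithm \ref{algo pi_com}. The algorithm peels off a segment $\Delta_\rho[x,y]$ in the Langlands data, which corresponds to a derivative or a shift of an extended segment, and recurses on a smaller $\EE'$. I then have to show that the same peeling holds for $\pi_\theta$. Here I use that theta lifting with $\alpha\gg0$ commutes with Jacquet-module operations and highest derivatives for cuspidal exponents away from the $\alpha$-tail, so that $\theta_{-\alpha}$ carries the data of $\pi_\theta(\EE')$ to that of $\pi_{\Ato}(\EE'_\alpha)$. I also use that $(\EE')_\alpha=(\EE_\alpha)'$, which is a compatibility of Definition \ref{defn EE_alpha} with the peeling. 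Invariance under $R_k$, $ui$, $dual\circ ui\circ dual$ and $dual_k^\pm$ on both sides, from Theorem \ref{thm intro O} for $\pi_\theta$ and by construction for $\pi_\com$, lets me first move $\EE$ into a position where the peeling is legal.

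Part (2) then reduces to the symplectic case. By Part (1) and Howe duality, $\pi_\com(\EE)$ lies in the $L$-packet of $\phi_{\psi_\EE}$ exactly when its theta lift has $L$-parameter $\phi_{\psi_\EE}$ plus the $\alpha$-tail. That holds if and only if $\pi_{\Ato}(\EE_\alpha)$ lies in the $L$-packet of $\psi_{\EE_\alpha}$. I will translate the known symplectic criterion, essentially that every equivalent $\EE$ can be brought by Theorem \ref{thm intro O} operators to one with $b_i=1$ or with nested segments, back through $\EE\mapsto\EE_\alpha$, and check that it becomes condition $(L)$ of Definition \ref{def (L)}.

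For Part (3), I start from the Langlands data of $\pi$. I recursively strip off derivatives to reach a representation of good parity and small rank, and build candidate $\EE$ from there. At the base, tempered or $L$-packet members, Part (2) gives the unique $\EE$ with $(L)$. I then reverse the peeling steps, and finally enumerate the full orbit under the operators of Theorem \ref{thm intro O}, which is finite. The main obstacle is the inductive step of Part (1): verifying that theta lifting commutes with the specific derivative used by $\pi_\com$ in every case, including the border cases where $x$ is close to $\frac{\alpha-1}{2}$ or $\rho$ is trivial and signs interact with $\epsilon$ and the $\det$-twist of $G_n$. I would first handle this by taking $\alpha$ larger than all exponents, so that the tail never interacts.
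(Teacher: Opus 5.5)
\textbf{Part (1).} Your inductive step rests on two claims. The first is that $\theta_{-\alpha}$, for $\alpha\gg0$, commutes with Jacquet modules and highest derivatives away from the tail. The second is that a large $\alpha$ keeps the tail from interacting. Neither is available as stated. There is no highest-derivative formula on $\OO(W_n)$; this is exactly why $\pi_{\Ato}$ does not extend. Compatibility of theta with parabolic induction holds only under exponent and tower hypotheses that you never check. And for representations in the going-up tower, $\theta_{-\alpha}$ changes the $L$-data at small exponents for every $\alpha$ (see the example in the next paragraph), so enlarging $\alpha$ does not decouple anything.

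Your identity $(\EE')_\alpha=(\EE_\alpha)'$ also fails when the peeled $\rho$ is $\chi_V$. In $(\EE_\alpha)_{\chi_W}$ the inserted row has the largest $b$, so Algorithm~\ref{algo pi_com} applied to $\EE_\alpha$ peels $\Delta_{\chi_W}[\frac{1-\alpha}{2},\frac{1-\alpha}{2}]$, not the row coming from $\EE$. Repairing this would need an embedding on the symplectic side for a non-algorithmic index $j$, i.e.\ M{\oe}glin's Theorem~A anyway.

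That is the missing idea. The paper applies M{\oe}glin's result (Theorem~\ref{thm Moeglin + embed}) directly on $\OO(W_n)$. It first identifies $add_j^{-1}$ with $(add_j^{\Moe})^{-1}$ through $\MM\mapsto\EE_\MM$ (Theorems~\ref{cor Moeglin to Atobe dictionary orthog} and~\ref{thm add = +}), which yields $\pi_\theta(\EE)\hookrightarrow\Delta_\rho[B_j,-A_j]\rtimes\pi_\theta(add_j^{-1}(\EE^{|max|}))$ (Theorem~\ref{thm add embed}). Passing to $\EE^{|max|}$ is what makes the right-hand side nonzero (Proposition~\ref{prop max triangle}). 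Uniqueness of the irreducible subrepresentation of $\Delta_\rho[B_j,-A_j]\rtimes\pi_\com((\EE^{|max|})^-)$ then closes an induction on the number of segments in the $L$-data of $\pi_\com(\EE)$. The tempered base case is \eqref{eqn Moe tempered} together with the dictionary, with no theta computation.

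\textbf{Parts (2) and (3).} Your reduction through $\EE\mapsto\EE_\alpha$ is wrong, because $\pi\in\Pi_{\phi_\psi}$ does not imply $\theta_{-\alpha}(\pi)\in\Pi_{\phi_{\psi_\alpha}}$. Take split $\OO_2$ (so $\chi_V=\chi_W=1$ and $\epsilon_{G_n}=1$), $\psi=2(1\otimes S_1\otimes S_1)$, and $\EE=\{([0,0]_1,0,-1),([0,0]_1,0,-1)\}$. Since $\psi$ is tempered, $\pi_\theta(\EE)\in\Pi_{\phi_\psi}$, and $\EE$ satisfies $(L)$. But $\EE_\alpha$ satisfies $(L)$ in no admissible order. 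With the inserted row first, condition (iii) compares its sign $1$ with $-1$. Exchanging it past one $[0,0]$ row lowers its $l$ to $\frac{\alpha-3}{2}$, violating (ii). Moving it to the end gives signs $(-1,-1,1)$, violating (iii) again. Hence $\theta_{-\alpha}(\pi_\theta(\EE))=\pi_\Ato(\EE_\alpha)\notin\Pi_{\phi_{\psi_{\EE_\alpha}}}$ for every $\alpha\gg0$, by \cite[Theorem 9.5]{HLL22}. So the symplectic criterion pulled back along $\EE\mapsto\EE_\alpha$ is strictly stronger than $(L)$, and your final check would fail.

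The paper proves (2) intrinsically for $\pi_\com$. Condition $(L)$ implies absolute maximality (Proposition~\ref{prop (L) is maximal}, reduced to $\Sp$ via $\EE_\rho^{\Sp}$, not via theta). The operator $add_j^{-1}$ preserves $(L)$, so Algorithm~\ref{algo pi_com} stays inside $\Pi_{\phi_\psi}$. Conversely, the number of $(L)$-type $\EE$ with $\psi_\EE=\psi$ equals $|\Pi_{\phi_\psi}|$, and Corollary~\ref{cor com intersect}(2) supplies injectivity.

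For (3), ``strip off derivatives and reverse the peeling'' is not yet an algorithm. You need the specific reduction $\pi\mapsto\pi^{\rho,-}$ on $L$-data and Theorem~\ref{thm pi_rho_minus}. You also need the criterion of Theorem~\ref{thm algo for Arthur type}: $\pi=\pi_\com(\EE')$ for some $\EE'$ if and only if $\Psi(\pi^{\rho,-};\Delta_\rho[x,-y],r)\neq\emptyset$. The explicit inverse is then $\EE^{\rho,+}$ from Definition~\ref{def criterion for Arthur type}.
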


We remark that the assignment $\EE \mapsto \pi_{\com}(\EE)$ can be defined in greater generality, including symplectic, odd special orthogonal, metaplectic and unitary groups. Part (2) and the algorithm in Part (3) of the theorem then come from combinatorial arguments following the definition of $\pi_{\com}(\EE)$.
For split symplectic and odd special orthogonal groups, we have $\pi_{\com}(\EE)= \pi_{\Ato}(\EE)$ (\cite[Algorithm 6.3]{HJLLZ25}). Thus, together with Atobe's result \eqref{eq Ato A-packet intro}, the algorithm in Part (3) determines $\Psi(\pi)$ (\cite[Algorithm 6.11]{HJLLZ25}). We expect that the above conclusion also applies to pure inner forms of odd special orthogonal groups and metaplectic groups. See \S \ref{sec meta} for how the methods of this paper can be applied to the last two cases. We also expect that the conclusion holds for unitary groups, which is a work in progress of the third named author.

Now we discuss the proof of Theorem \ref{thm intro 2}(1). We make use of a result of M{\oe}glin which we sketch here. For a parameter $\MM$ of $G_n$ and an index $j$, M{\oe}glin defined another parameter $ (add_{j}^{\Moe})^{-1}(\MM)$ of $G_m$ with $m<n$
(see \S \ref{subsec Moeglin operators}). If both $\pi:=\pi_{\Moe}(\MM)$ and $\pi^-:=\pi_{\Moe}((add_{j}^{\Moe})^{-1}(\MM))$ are nonzero, then M{\oe}glin showed that $\pi$ and $\pi^-$ are related by a specific parabolic induction (\cite[Theorem A]{Moe11b}; see Theorem \ref{thm Moeglin + embed}). Under certain conditions, the Langlands data of $\pi$ can therefore be read from that of $\pi^-$. We translate M{\oe}glin's result into the language of extended multi-segments using Theorem \ref{thm intro}(3). This involves an operator $add_j^{-1}$ on extended multi-segments (Definition \ref{def shift and add}(2)). We show that the following diagram commutes.

\[
\begin{tikzcd}
  \pi_{\Moe} ( \MM)\ar[rr, mapsto] \ar[d, equal] & &\pi_{\Moe}((add_{j}^{\Moe})^{-1}(\MM)) \ar[d,equal]\\
    \pi_{\theta}(\EE_{\MM})\ar[r, mapsto] & \pi_{\theta}(add_j^{-1}(\EE_{\MM})) \ar[r,equal,"\textrm{Thm. \ref{thm add = +}}"]& \pi_{\theta}(\EE_{(add_{j}^{\Moe})^{-1}(\MM)}).
\end{tikzcd}
\]
Our key insight is that if $\pi_\theta(\EE)\neq 0$, then by replacing $\EE$ with a certain extended multi-segment $\EE^{|max|}$ which satisfies $\pi_\theta(\EE)=\pi_\theta(\EE^{|max|})$ (and $\psi_{\EE^{|max|}}=\psi^{max}(\pi_{\theta}(\EE))$ from application (1) below), we can guarantee that $\pi_{\theta}(add_j^{-1}(\EE^{|max|}))\neq 0$ (Proposition \ref{prop max triangle}). From M{\oe}glin's result, the above diagram, and the non-vanishing result, we obtain a combinatorial algorithm to compute the Langlands data  of $\pi_{\theta}(\EE)$, which is precisely the algorithm for $\pi_{\com}(\EE)$.

With the intersection theory, we prove the following applications. These results are the analogues of results which are known to hold for split symplectic and odd special orthogonal groups.
\begin{enumerate}
    \item[(1)] \emph{Distinguished members of $\Psi(\pi)$.}
    
    \noindent When $\Psi(\pi)$ is non-empty, we show that there exist distinguished members $ \psi^{max}(\pi),$ $\psi^{min}(\pi) \in \Psi(\pi)$ characterized as follows. We introduce five different orderings on $\Psi(\pi)$: the operator ordering $\geq_O$ (Definition \ref{def operators on parameters}(2)), the closure ordering $\geq_C$ (Definition \ref{def C ordering Arthur}), the two partition orderings  $\geq_A$ and $\geq_D$ (Definitions \ref{A ordering} and \ref{def Dordering}), and the ordering $\geq_N$ (Definition \ref{M ordering}) coming from the poles of normalized intertwining operators. We show that for any $X \in \{O, C, A,D,N\}$, $\psi^{max}(\pi)$ and $\psi^{min}(\pi)$ are the unique members of $\Psi(\pi)$ such that the following inequality holds for any  $\psi \in \Psi(\pi)$ (Theorems \ref{thm psi max and min}, \ref{thm Jiang's partition}, \ref{thm D order}, \ref{thm N ordering}, \ref{thm O implies C})
    \[ \psi^{max}(\pi) \geq_X \psi \geq_{X} \psi^{min}(\pi).\]
    
    \item[(2)] \emph{Closure ordering conjecture $($\cite[Conjecture 3.1]{Xu24}$)$.} 

    \noindent The closure ordering $\geq_C$ is defined on $L$-parameters (Definition \ref{def C ordering}). We prove that if $\pi\in\Pi(G_n)$ is of Arthur type, then 
    \[ \phi_{\pi} \geq_C \phi_{\psi^{max}(\pi)},\]
    where $\phi_{\pi}$ is the $L$-parameter of $\pi$, and $\phi_{\psi^{max}(\pi)}$ is the $L$-parameter associated to $\psi^{max}(\pi)$ (Theorem \ref{thm phi pi > phi psi}). Together with the previous application, we see that among $\{\phi_{\psi}\}_{\psi \in \Psi(\pi)}$, the $L$-parameter $\phi_{\psi^{max}(\pi)}$ is ``closest" to the $L$-parameter $\phi_{\pi}$. This supports the claim that $\psi^{max}(\pi)$ is a canonical candidate for ``the'' local Arthur parameter of $\pi$.
    A further consequence is that no local Arthur packet of a quasi-split even orthogonal group properly contains another (Theorem \ref{thm noncontainment}).
    \item[(3)] \emph{The enhanced Shahidi conjecture} (\cite[Conjecture 1.5]{LS22}). 
    
    \noindent For quasi-split even orthogonal groups, we show that a local Arthur packet is tempered if and only if it has a generic member (Theorem \ref{thm enhanced Shahidi}).
    \item[(4)] \emph{Weak local Arthur packet conjecture}.
    
    \noindent  Given a nilpotent orbit $\mathcal{O}$ of the dual group, the weak local Arthur packet $\Pi_{\mathcal{O}}^{\textrm{weak}}$ is defined in \cite{CMO24} using wavefront sets (see Definition \ref{def weal LAP}). They conjectured that $\Pi_{\mathcal{O}}^{\textrm{weak}}$ is a union of local Arthur packets. We determine a short list of desiderata for local Arthur packets (Working Hypothesis \ref{assu Arthur ABV}) under which this conjecture, and its analogue for ABV packets, hold for a general group. In particular, it holds for pure inner forms of split even orthogonal groups for $p \gg 0$ (Corollary \ref{cor weak LAP orthog}). 
    \item[(5)] \emph{Unitarity and Arthur type}. 
    
    \noindent In \cite[Conjecture 1.2]{HJLLZ25}, we conjectured that a representation of a classical group of good parity is unitary if and only if it is of Arthur type. This conjecture is proved in \cite{AM25} for split symplectic and odd special orthogonal groups. We generalize one direction to pure inner forms of quasi-split even orthogonal groups (Theorem \ref{thm unitary O}). Namely, good parity unitary representations are of Arthur type.
\end{enumerate}

The above applications (1)-(4) are proved for split symplectic and odd special orthogonal groups in the previous work of the authors (\cite{HLL22, HLL24, HLLZ25, LL24}). For even orthogonal groups, the above applications follow by similar arguments, based on the intersection theory of local Arthur packets we developed in this paper. For application (5), we use the local theta correspondence and our algorithm for determining Arthur type reduces the problem to symplectic groups, which is solved in \cite{AM25}. Recently, we learned from Chen and Zou that they also proved application (5), including metaplectic groups and non-quasi-split special odd orthogonal groups, using global theta lifting (\cite{CZ26}).

Finally, we remark on local Arthur packets for even special orthogonal groups $G_n^{\circ}$. For a local Arthur parameter $\psi$ of $G_n^{\circ}$, the local Arthur packet $\Pi_{\psi}(G_n^{\circ})$ is a collection of outer conjugacy classes of irreducible representations of $G_n^{\circ}$. If we regard $\psi$ as a local Arthur parameter of $G_n$, then the packet $\Pi_{\psi}(G_n^{\circ})$ is given by
\[ [\pi] \in \Pi_{\psi}(G_n^{\circ}) \Longleftrightarrow \textrm{Ind}_{G_n^\circ}^{G_n} \pi \subseteq \Pi_{\psi}(G_n). \]
Then our results on local Arthur packets for $G_n$ can be transported to $G_n^{\circ}$ directly.

The rest of the paper is organized as follows.  In \S \ref{sec: notation}, we set up notation and recall the Witt towers of Hermitian spaces, the Langlands classification, the Aubert-Zelevinsky involution and the local theta correspondence.  In \S \ref{sec: lap}, we recall local Arthur packets, the local Langlands correspondence for even orthogonal groups, M{\oe}glin's parameterization $\MM \mapsto \pi_{\Moe}(\MM)$, and M{\oe}glin's result on the Adams conjecture.  In \S \ref{sec: extended multi-segments}, we recall extended multi-segments and the operators acting on them, the non-vanishing criterion, and the algorithm which attaches $\pi_{\com}(\EE)$ to $\EE$. We also record the consequences of this algorithm, the map $\MM \mapsto \EE_{\MM}$, and the results for symplectic groups on which the rest of the paper relies.
In \S \ref{sec intersections for O(2n)}, we define $\pi_{\theta}(\EE)$ and prove Theorems \ref{thm intro} and \ref{thm intro O}.  In \S \ref{sec pi comb = pi theta}, we compare the operator $add_j^{-1}$ with M{\oe}glin's operator $(add_j^{\Moe})^{-1}$ and prove Theorem \ref{thm intro 2}(1).  The remaining sections are devoted to the applications: the distinguished members $\psi^{max}(\pi)$ and $\psi^{min}(\pi)$ and the orderings $\geq_A, \geq_D, \geq_N$ in \S \ref{sec distinguished members}, the closure ordering conjecture in \S \ref{sec closure order conj}, the enhanced Shahidi conjecture in \S \ref{sec enhanced Shahidi}, the weak local Arthur packet conjecture in \S \ref{sec weak local Arthur packet}, and show one direction of the good parity unitary dual conjecture in \S \ref{sec unitary rep}.  Finally, in \S \ref{sec meta}, we discuss the applicability of the methods of this paper to metaplectic groups and to pure inner forms of split odd special orthogonal groups. Additionally, in Appendix \ref{appendix: Bosnjak Stadler} we translate the main results into a modified version of extended multi-segments introduced by Bo{\v s}njak and Stadler (\cite{BS25}).

\subsection*{Acknowledgements} 
The authors are grateful for the constant support from Freydoon Shahidi. We also thank Rui Chen for helpful discussion.

\section{Notation and preliminaries}\label{sec: notation}

Let $F$ be a non-Archimedean local field of characteristic $0$ and fix $\epsilon\in\{\pm1\}$. We let
$W_n$ be a $-\epsilon$-Hermitian space of even dimension $2n$ over $F$ and $V_m$ be an $\epsilon$-Hermitian space of even dimension $2m$ over $F.$ The isometry groups of $W_n$ and $V_m$ are denoted by $G_n=G(W_n)$ and $H_m=G(V_m)$, respectively. When $\epsilon=1,$ $G_n$ is a split symplectic group and $H_m$ is an even orthogonal group. Conversely, when $\epsilon=-1,$ $G_n$ is an even orthogonal group and $H_m$ is a split symplectic group.

We often switch between the cases $\epsilon=1$ and $\epsilon=-1.$ Rather than stating which $\epsilon$ we are considering, we instead write $G_n=\Sp(W_n)$ and $H_m=\OO(V_m)$ to indicate the case of $\epsilon=1.$ Similarly, we write $G_n=\OO(W_n)$ and  $H_m=\Sp(V_m)$  to indicate the case of $\epsilon=-1.$

\subsection{Towers of Hermitian spaces}\label{subsec Hermitian spaces}
Any $-\epsilon$-Hermitian space $W_n$ has a Witt decomposition
\begin{equation}\label{eqn Witt decomp}
    W_n=W_{n_0}+W_{r,r},
\end{equation}
where $n=n_0+r$, $W_{n_0}$ is anisotropic and $W_{r,r}$ denotes the split $-\epsilon$-Hermitian space of dimension $2r$. The isomorphism class of $W_n$ uniquely determines the Witt index $r$ and the space $W_{n_0}$. Fix an anisotropic $-\epsilon$-Hermitian space $W_{n_0}.$ Then we associate a Witt tower to $W_{n_0}$ as follows:
\begin{equation}\label{eqn Witt tower}
    \mathcal{W}=\{W_{n_0}+W_{r,r} \ | \ r\geq 0\}.
\end{equation}
Similarly, for $V_m$, we have a Witt tower
\begin{equation}\label{eqn Witt tower 2}
    \mathcal{V}=\{V_{m_0}+V_{r,r} \ | \ r\geq 0\}.
\end{equation}
When $\epsilon=1,$ $W_n$ is symplectic and we have $\dim W_{n_0}=0$. This is not necessarily the case when $\epsilon=-1$ which we outline below.

For the rest of this subsection, we let $\epsilon=-1.$ Then $W_n$ is a quadratic space over $F$ and such spaces are classified by their dimension, discriminant, and Hasse invariant. Let $d,c\in F^\times$. Suppose first that $d\not\in (F^\times)^2$ and let
$$
W_{(d,c)}=F[X]/(X^2-d)
$$
be the 2-dimensional quadratic space over $F$ with bilinear form
$$
(\alpha,\beta)=c\cdot \mathrm{tr}(\alpha\overline{\beta}),
$$
where if $\beta=a+bX,$ then $\overline{\beta}=a-bX.$ We say $W_n$ is associated with $W_{(d,c)}$ if $W_{n_0}\cong W_{(d,c)}.$ Since $d\not\in (F^\times)^2$, there exists a unique orthogonal space $W_n^-$ such that $\mathrm{dim}(W_n)=2n=\mathrm{dim}(W_n^-)$ and  $\mathrm{disc}(W_n)=d=\mathrm{disc}(W_n^-)$, but $W_n^-\not\cong W_n.$ In this case, $W_n^-$ is associated to $W_{(d,c')}$ where $c'\not\in cN_{E/F}(E^\times)$. Here $E=F(\sqrt{d})$ and $N_{E/F}$ is the norm map. Note that this  indeed determines $W_n^-$ since $W_{(d,c)}\cong W_{(d',c')}$ if and only if $d\equiv d' \ \mathrm{mod} (F^\times)^2$ and $c\equiv c' \ \mathrm{mod} N_{E/F}(E^\times)$, where $E=F(\sqrt{d})=F(\sqrt{d'}).$

When $\mathrm{disc}(W_n)=d\in (F^\times)^2$, either $W_n\cong W_{n,n}$ or $W_n\cong D+ W_{n-2,n-2},$ where $D$ is the unique quaternion algebra over $F.$

Now fix $d\not\in (F^\times)^2$ and assume that $W_n$ is associated to $W_{(d,c)}.$ Let $W_n^-$ be the orthogonal space defined above. From $W_n$ and $W_n^-$, we obtain two Witt towers
\begin{align}
\label{eqn tower+} \mathcal{W}^+ &:=\{W_{(d,c)}+W_{r,r} \ | \ r\geq 0\},    \\
\label{eqn tower-}  \mathcal{W}^- &:=\{W_{(d,c')}+W_{r,r} \ | \ r\geq 0\}.
\end{align}
For $W \in \mathcal{W}^+$ or $\mathcal{W}^-$, the isometry group of $W$ is a (non-split) quasi-split even orthogonal group $G_n=\OO(W)$.

If $d\in (F^\times)^2$, then we set $W_n^+= W_{n,n}$ and we consider the  Witt towers
\begin{align*}
 \mathcal{W}^+ &:=\{W_{r,r} \ | \ r\geq 0\}, \\
\mathcal{W}^- &:=\{D+ W_{r,r} \ | \ r\geq 0\}.
\end{align*}
For any $W\in \mathcal{W}^+$, the isometry group of $W$ is a split even orthogonal group $G_n=\OO(W)$, while for $W\in \mathcal{W}^-$, the isometry group of $W$ is a non-quasi-split even orthogonal group $G_n=\OO(W)$.

In either case, $d\in (F^\times)^2$ or otherwise, we have that $\OO(W^+)$ is quasi-split for $W^+\in \mathcal{W}^+$ while for $W^-\in \mathcal{W}^-$ with $\dim W^-=\dim W^+,$ we have that $\OO(W^-)$ is the unique nontrivial pure inner form of $\OO(W^+)$.

\begin{remark}\label{rmk symplectic to orthogonal towers}
    When $\epsilon=1,$ $W_n$ is symplectic and $V_m$ is quadratic. In this situation, we write $\mathcal{V}^+$ and $\mathcal{V}^-$ for the corresponding towers obtained from $V_m$ as above. 
\end{remark}

\subsection{Langlands classification}\label{sec Langlands classification}

The Langlands classification is one of the fundamental tools in the representation theory of reductive algebraic groups over $p$-adic fields (\cite{Sil78, BW00, Kon03}). We recall the form we need for the classical groups considered in this paper over  $F$.

Let $n$ be a positive integer. We consider the general linear group $\GL_n(F)$ and $G_n=G(W_n)$. We let $\Pi(G_n)$ be the set of equivalence classes of irreducible admissible representations of $G_n$. Fix a Borel subgroup of $\GL_n(F)$ and consider a standard parabolic subgroup $P$ with Levi subgroup $M\cong \GL_{n_1}(F)\times\cdots\times \GL_{n_r}(F).$ For $\tau_i\in\Pi(\GL_{n_i}(F)),$ the normalized parabolic induction is denoted by 
\[
\tau_1\times\cdots\times\tau_r := \mathrm{Ind}_{P}^{\GL_n(F)}(\tau_1\otimes\cdots\otimes\tau_r).
\]
Let $\rho$ be an irreducible unitary supercuspidal representation of $\GL_n(F).$ 
A (Zelevinsky) segment $[x,y]_\rho$ is the set of (not necessarily unitary) supercuspidal representations of 
the form
$$
[x,y]_\rho=\{\rho\vert\cdot\rvert^x, \rho\vert\cdot\rvert^{x-1},\dots,\rho\vert\cdot\rvert^y\},
$$
where $x,y\in\mathbb{R}$ such that $x-y$ is a non-negative integer. Here $\vert\cdot\rvert$ denotes the composition of the normalized $p$-adic absolute value of $F$ with the determinant of $\GL_n(F)$. The unique irreducible subrepresentation of $\rho\vert\cdot\rvert^x\times\cdots\times\rho\vert\cdot\rvert^y$ is called the (shifted) Steinberg representation attached to $[x,y]_\rho$ and is denoted by $\Delta_\rho[x,y].$ 
For convenience, we set $\Delta_\rho[x,x+1]$, with $y=x+1,$ to be the trivial representation of $\GL_0(F).$ When the context is unambiguous, we refer to both $[x,y]_\rho$ and $\Delta_\rho[x,y]$ simply as segments.

The Langlands classification for $\GL_n(F)$ classifies the admissible dual $\Pi(\GL_n(F))$. Specifically, any irreducible admissible representation $\tau$ of $\GL_n(F)$ can be realized as the unique irreducible subrepresentation of a parabolic induction of the form
\[\Delta_{\rho_1}[x_1,y_1]\times\cdots\times\Delta_{\rho_r}[x_r,y_r],\]
where $\rho_i$ is an irreducible unitary supercuspidal representation of $\GL_{n_i}(F),$ $[x_i,y_i]_{\rho_i}$ is a segment, and $x_1+y_1\leq\cdots\leq x_r+y_r.$ In this situation, we write
$$
\tau=L(\Delta_{\rho_1}[x_1,y_1],\dots,\Delta_{\rho_r}[x_r,y_r]).
$$
Let $(x_{i,j})_{1\leq i\leq s, 1\leq j \leq t}$ be real numbers such that $x_{i,j}=x_{1,1}-i+j.$ A (shifted) generalized Speh representation is an irreducible representation of  the form
\begin{equation}\label{generalized Speh representation}
\begin{pmatrix}
x_{1,1} & \cdots & x_{1,t} \\
\vdots & \ddots & \vdots \\
x_{s,1} & \cdots & x_{s,t}
\end{pmatrix}_{\rho}:=L(\Delta_{\rho}[x_{1,1},x_{s,1}],\dots,\Delta_{\rho}[x_{1,t}, x_{s,t}]).
\end{equation}
For $a, b \in \mathbb{Z}_{>0}$, we set 
\begin{equation}\label{eqn shifted generalied speh}
u_\rho(a,b):=\begin{pmatrix}
\frac{a-b}{2} & \cdots & \frac{a+b}{2} \\
\vdots & \ddots & \vdots \\
\frac{-a-b}{2} & \cdots & \frac{-a+b}{2}
\end{pmatrix}_{\rho}
\end{equation}
which is a unitary representation.

Next, we discuss the Langlands classification of $G_n.$ Fix a minimal parabolic subgroup of $G_n$ and let $P$ be a standard parabolic subgroup of $G_n$ with Levi subgroup $M\cong \GL_{n_1}(F)\times\cdots\times \GL_{n_r}(F)\times G_{m},$ where $G_m$ is a group of the same type as $G_n$, but of possibly lower rank. Given smooth representations $\tau_i$ of $\GL_{n_i}(F)$ for $i=1,2,\dots,r$ and a smooth representation $\sigma$ of $G_{m}$, the normalized parabolic induction is denoted by
$$
\tau_1\times\cdots\times\tau_r\rtimes\sigma := \mathrm{Ind}_{P}^{G_n}(\tau_1\otimes\cdots\otimes\tau_r\otimes\sigma).
$$
The Langlands classification for $G_n$ states that every irreducible admissible representation $\pi$ of $G_n$ is a unique irreducible subrepresentation of 
\[\Delta_{\rho_1}[x_1,y_1]\times\cdots\times\Delta_{\rho_r}[x_r,y_r]\rtimes\pi_{temp},\]
where $\rho_i$ is an irreducible unitary supercuspidal representation of $\GL_{n_i}(F),$ $x_1+y_1\leq\cdots\leq x_r+y_r<0,$ and $\pi_{temp}$ is an irreducible tempered representation of $G_{m}.$ In this situation, we write
\begin{equation}\label{eqn Langlands classification}
\pi=L(\Delta_{\rho_1}[x_1,y_1],\dots,\Delta_{\rho_r}[x_r,y_r];\pi_{temp}),
\end{equation}
and say that the tuple $(\Delta_{\rho_1}[x_1,y_1],\dots,\Delta_{\rho_r}[x_r,y_r];\pi_{temp})$ is the Langlands data, or $L$-data, of $\pi.$  We refer to the multi-set $\{\Delta_{\rho_1}[x_1,y_1],\dots,\Delta_{\rho_r}[x_r,y_r]\}$ as the non-tempered portion of the $L$-data of $\pi.$ 

We also compare the $L$-data of various representations as follows. Given an irreducible admissible representation 
\[
\pi=L(\Delta_{\rho_1}[x_1,y_1],\dots,\Delta_{\rho_r}[x_r,y_r];\pi_{temp}),
\]
we can obtain a new representation by ``inserting" another segment $\Delta_{\rho_{r+1}}[x_{r+1},y_{r+1}]$ with $x_{r+1}+y_{r+1}<0$. That is, we consider the irreducible admissible representation
\[
L(\Delta_{\rho_1}[x_1,y_1],\dots,\Delta_{\rho_r}[x_r,y_r],\Delta_{\rho_{r+1}}[x_{r+1},y_{r+1}];\pi_{temp}),
\]
where the data is possibly re-indexed appropriately. We adopt the following convention.

\begin{conv}\label{conv L-data}
    We do not distinguish an irreducible admissible representation from its $L$-data. If $\pi'$ is obtained from $\pi$ by inserting a collection of segments $\{\Delta_{\rho_1}[x_1,y_1],\ldots, \Delta_{\rho_f}[x_f,y_f]\}$, then we write    
    \[\pi'=\pi+\{\Delta_{\rho_1}[x_1,y_1],\ldots, \Delta_{\rho_f}[x_f,y_f]\}.\]
\end{conv}

\subsection{Aubert-Zelevinsky Duality}
Let $\pi$ be an irreducible admissible representation of the connected group $G_n^{\circ}= G(W_n)^{\circ}.$ Aubert showed that there exists $\varepsilon\in\{\pm 1\}$ such that the virtual representation defined by
\begin{align*}
    \mathsf{AZ}(\pi):=\varepsilon\sum_P (-1)^{\mathrm{dim}(A_P)}[\mathrm{Ind}_{P}^{G_n}(\textrm{Jac}_P(\pi))]
\end{align*}
is an irreducible representation (\cite{Aub95}). The above sum is over all standard parabolic subgroups $P$ of $G_n^{\circ}$, $A_P$ denotes the maximal split torus in the center of the Levi subgroup of $P$, and $Jac_P$ denotes the Jacquet module along $P.$ We say that $\mathsf{AZ}(\pi)$ is the Aubert-Zelevinsky dual or Aubert-Zelevinsky involution of $\pi.$ 

We shall also consider the Aubert-Zelevinsky involution for the disconnected group $\OO(W_n)$. In a recent work of Cheng-Kaletha (\cite{CK26}), they generalized the definition of the Aubert-Zelevinsky involution to a disconnected linear algebraic group $\widetilde{G}$ such that $(\widetilde{G})^{\circ}$ is reductive and $\pi_0(\widetilde{G})$ is finite, which includes the case of $\OO(W_n)$. For an irreducible representation $\pi$ of $\OO(W_n)$, there exists $\varepsilon \in \{\pm 1\}$ such that
\begin{align*}
    \mathsf{AZ}(\pi):=\varepsilon\sum_{P} (-1)^{\mathrm{dim}(A_P)}[\mathrm{Ind}_{\widetilde{P}}^{\OO(W_n)}(\textrm{Jac}_{\widetilde{P}}(\pi))]
\end{align*}
is an irreducible representation (\cite[Lemma 3.1.11, Corollary 3.6.3]{CK26}). Here the sum is taken over all $\OO(W_n)$-conjugacy classes of standard parabolic subgroups $P$ of $\SO(W_n)$, where $\widetilde{P}:= N_{\OO(W_n)}(P)$. More precisely, let $c \in \OO(W_n)\setminus \SO(W_n)$ that normalizes the minimal parabolic subgroup we fixed. Then $\widetilde{P}=P$ if $c$ does not normalize $P$, and $\widetilde{P}=\langle P, c \rangle$ otherwise. If we restrict to twisted invariant distribution of $\SO(W_n)$ with respect to conjugation by $c$, then it is known (\cite[Corollary 3.2.4]{CK26}) that this matches the definition of M{\oe}glin \cite[\S 4]{Moe06b} and Xu (\cite[\S 6.2]{Xu17b}).

\subsection{Theta correspondence}\label{subsec theta correspondence}

Fix an additive character $\psi_F$ of $F.$ The pairs $(G_n,H_m)$ form a reductive dual pair inside a metaplectic group. Let $\omega_{W_n, V_m, \psi_F}$ denote the Weil representation of $G_n\times H_m.$ For $\pi\in\Pi(G_n)$, the maximal $\pi$-isotypic quotient of this Weil representation is of the form
$$
\pi\boxtimes \Theta_{W_n,V_m,\psi_F}(\pi),
$$
where $\Theta_{W_n,V_m,\psi_F}(\pi)$ is a smooth representation of $H_m$ called the big theta lift of $\pi.$ The (little) \emph{theta lift} of $\pi$, denoted $\theta_{W_n,V_m,\psi_F}(\pi)$, is the maximal semisimple quotient of $\Theta_{W_n,V_m,\psi_F}(\pi)$.

Suppose that $\epsilon=1$, i.e.,  $G_n$ is a symplectic group and $H_m$ is an even orthogonal group.
By Remark \ref{rmk symplectic to orthogonal towers}, we have two possible towers: either $V_m\in \mathcal{V}^+$ or $V_m\in \mathcal{V}^-$.
For a fixed $\pi\in\Pi(G_n)$, we let $\theta_{-\alpha}^{\pm}(\pi):=\theta_{W_n,V_m^\pm,\psi_F}(\pi)$ denote its theta lift with respect to $V_m^\pm\in\mathcal{V}^\pm$, where $\dim V_m^\pm=2m$ and $\alpha=2m-2n-1$ is an odd integer. When it is clear in context which tower is the target tower, we will often omit the superscript $\pm.$ 

On the other hand, suppose that $\epsilon=-1$. Then $G_n$ is an even orthogonal group. Let  $\pi\in\Pi(G_n)$. In this case, there is only one tower for the symplectic spaces $V_m$ and so we write $\theta_{-\alpha}(\pi):=\theta_{W_n,V_m,\psi_F}(\pi),$ where $\alpha=2m+1-2n.$ To make the notation uniform, we write $\theta_{-\alpha}^{\pm}(\pi)=\theta_{-\alpha}(\pi)$. Note that $\alpha=2m-2n-\epsilon$ in general.

One of the fundamental properties of the theta lift is Howe duality. It was  conjectured by Howe (\cite{How79}) and was proved by Waldspurger (\cite{Wal90}) when the residual characteristic of $F$ is not 2 and in full generality by Gan and Takeda (\cite{GT16}) and Gan and Sun (\cite{GS17}).

\begin{thm}[Howe Duality]\label{thm Howe duality} Let $\pi,\pi'\in\Pi(G_n).$
\begin{enumerate}
    \item If $\theta_{-\alpha}(\pi)\neq 0$, then $\theta_{-\alpha}(\pi)$ is irreducible.
    \item If $\pi\not\cong\pi'$   and $\theta_{-\alpha}(\pi)$ and $\theta_{-\alpha}(\pi')$ are both nonzero, then $\theta_{-\alpha}(\pi)\not\cong\theta_{-\alpha}(\pi').$
\end{enumerate}
\end{thm}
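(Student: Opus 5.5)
Since the statement is the classical Howe duality theorem, the plan follows the strategy of Gan--Takeda (\cite{GT16}), completed by Gan--Sun (\cite{GS17}) for residual characteristic $2$; Waldspurger's argument (\cite{Wal90}) is an alternative when $p\neq 2$. The first step is to reformulate both parts as a single multiplicity statement. It suffices to show that for all $\pi,\pi'\in\Pi(G_n)$ and $\sigma\in\Pi(H_m)$,
\[\dim \Hom_{G_n\times H_m}(\omega_{W_n,V_m,\psi_F},\pi\boxtimes\sigma)\leq 1,\]
and that $\Hom(\omega,\pi\boxtimes\sigma)\neq0\neq\Hom(\omega,\pi'\boxtimes\sigma)$ forces $\pi\cong\pi'$. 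This says that $\Theta(\pi)$ has a unique irreducible quotient, which gives part (1), and that the correspondence is injective on nonvanishing lifts, which gives part (2). The MVW involution identifies $\omega^\vee$ with a twist of $\omega$, so the situation is symmetric in $G_n$ and $H_m$ and the second claim is the same kind of statement read from the other side.

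\textbf{Supercuspidal case.} For $\pi$ supercuspidal I would use Kudla's results: at the first occurrence the big theta lift $\Theta(\pi)$ is irreducible and supercuspidal. Above the first occurrence, Kudla's filtration of the Jacquet modules of $\omega$ (the Mackey-type computation of $R_P(\omega)$ for maximal parabolics) shows that $\Theta(\pi)$ embeds into a standard module induced from the earlier lift. That standard module has a unique irreducible subrepresentation, which gives uniqueness of the irreducible quotient. Injectivity follows from the computation of $\pi$-isotypic matrix coefficients, or equivalently from the doubling see-saw.

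\textbf{General case.} For arbitrary $\pi$ I would argue by induction on $n$, writing $\pi$ as the Langlands subrepresentation of a standard module $\tau_1\times\cdots\times\tau_r\rtimes\pi_{temp}$. By Frobenius reciprocity and Kudla's filtration, a nonzero $\Hom(\omega,\pi\boxtimes\sigma)$ produces a nonzero $\Hom$ for a smaller dual pair $(G_{n'},H_{m'})$, up to the finitely many \emph{exceptional} terms of the filtration. The bound $\dim\le 1$ then follows from the inductive hypothesis combined with the uniqueness of Langlands subrepresentations. The exceptional terms are exactly those where the exponents of $\tau_i$ interact with $\alpha$, and the core of Gan--Takeda's argument is to exclude them. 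One way to do this is by comparing central characters and exponents through the Langlands condition $x_1+y_1\leq\cdots<0$. An alternative is the doubling see-saw: the space $\Hom(\omega\otimes\omega^\vee,\pi\boxtimes\pi^\vee)$ is controlled by a degenerate principal series of the doubled group, and Kudla--Rallis's description of the composition series of that principal series bounds the multiplicity.

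\textbf{Main obstacle.} I expect the hardest step to be the tempered (especially discrete series) case, where the reduction through Jacquet modules does not decrease rank, and the boundary exponent cases in residual characteristic $2$, where Waldspurger's lattice model is unavailable. For these I would follow Gan--Sun's approach: prove the dimension bound directly via the see-saw with the doubled dual pair, using the MVW involution and the structure of degenerate principal series, rather than inducting through Jacquet modules. For the disconnected group $\OO(W_n)$ everything is carried out for $\OO$ itself, since the Weil representation restricts naturally to it.
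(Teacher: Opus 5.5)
This is the paper's route: the paper gives no proof of Howe duality and simply quotes it from \cite{Wal90} for $p\neq 2$ and from \cite{GT16, GS17} in general, which are the same works your plan defers to. Two details of your sketch need correcting. First, Gan--Takeda's argument is already uniform in the residual characteristic (it uses no lattice models), so it covers symplectic--orthogonal pairs with $p=2$; \cite{GS17} extends that method to quaternionic dual pairs rather than supplying a separate $p=2$ argument. Second, in your supercuspidal step, embedding $\Theta(\pi)$ into a standard module with a unique irreducible subrepresentation controls only the socle of $\Theta(\pi)$, not its quotients. What you need is that every irreducible quotient of $\Theta(\pi)$ embeds into that standard module, which is what Kudla's Jacquet-module computation gives; in fact $\Theta(\pi)$ is irreducible for supercuspidal $\pi$.
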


Occasionally, we need to specify the ``going-down'' tower for the theta lift when $G_n=\OO(W_n)$. We recall this notion as follows. Consider the representations $\pi,\pi\otimes\det\in\Pi(G_n)$. For $V_m\in\mathcal{V}$, we have two ``towers'' corresponding to the representations
\[
\theta_{W_n,V_m,\psi_F}(\pi) \ \ \mathrm{and} \ \ \theta_{W_n,V_m,\psi_F}(\pi\otimes \det).
\]
We let $m(\pi)$ be the minimal integer $m=\frac{\dim V}{2}$ such that $V\in\mathcal{V}$ and $\theta_{W_n,V,\psi_F}(\pi)\neq 0.$ We define
\begin{align*}
        m^{up}(\pi)&:=\max\{m(\pi),m(\pi\otimes\det)\}, \\
    m^{down}(\pi)&:=\min\{m(\pi),m(\pi\otimes \det)\}.
\end{align*}
In general, when $\pi\in\Pi(\OO(W_n))$, we have the conservation relation between $m^{up}(\pi)$ and $m^{down}(\pi).$

\begin{thm}[Conservation relation, \cite{SZ15}]\label{thm conservation relation}
    Let $G_n=\OO(W_n)$ and $\pi\in\Pi(G_n).$ Then
    $$
    2m^{up}(\pi)+2m^{down}(\pi)=4n.
    $$
\end{thm}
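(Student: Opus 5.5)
Since this is the conservation relation of Sun--Zhu, we take it from \cite{SZ15}; the following is the strategy we would follow to prove it directly. Write $m_1=m(\pi)$ and $m_2=m(\pi\otimes\det)$, so the claim is $m_1+m_2=2n$. We would prove the two inequalities $m_1+m_2\geq 2n$ and $m_1+m_2\leq 2n$ separately. Throughout, theta lifts are taken to the single symplectic tower $\mathcal{V}$, and we use the persistence principle: if $\theta_{W_n,V_m,\psi_F}(\pi)\neq 0$, then the lift to $V_{m+1}$ is also nonzero.

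\emph{Lower bound (Kudla--Rallis dichotomy).} We would use the doubling see-saw. Consider $W_n\oplus(-W_n)$ with $\OO(W_n)\times\OO(W_n)\subset\OO(W_n\oplus -W_n)$, paired with $\Sp(V_m)\subset \Sp(V_m)\times\Sp(V_m)$. A nonzero theta lift of $\pi$ to $\Sp(V_{m_1})$ yields a nonzero $\OO(W_n)^\Delta$-invariant functional on $\pi\boxtimes\pi^\vee$ that factors through the lift of the trivial representation. Via Rallis' description of that lift, this lands $\pi\boxtimes\pi^\vee$ in a specific constituent of a degenerate principal series $I(s)$ of $\OO(W_n\oplus -W_n)$ induced from the Siegel parabolic. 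The same argument for $\pi\otimes\det$ at level $m_2$ realizes $\pi\boxtimes\pi^\vee$ inside the constituent attached to the complementary point, twisted by $\det$.

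The module structure of $I(s)$ (Kudla--Rallis/Sweet) shows that these two constituents intersect trivially whenever $m_1+m_2<2n$, which gives $m_1+m_2\geq 2n$. Equivalently, one can argue with the doubling zeta integral: its normalized pole or zero at $s=\tfrac{m_1-n}{2}$ detects early occurrence, and the functional equation under $s\mapsto -s$ interchanges $\pi$ and $\pi\otimes\det$.

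\emph{Upper bound.} This is the main obstacle, and the key input of \cite{SZ15}. The aim is to show that the combined first occurrences are not too large, i.e. $m_1+m_2\leq 2n$. We would first reduce to showing that at the level $m=n$, at least one of $\pi$ and $\pi\otimes\det$ lifts nontrivially, and then propagate by induction through the tower. A parabolic-induction argument via Kudla's filtration of Jacquet modules of the Weil representation handles the non-supercuspidal case by reducing the rank $n$.

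The core of the upper bound is a statement about invariant distributions. One shows that $\Hom_{\OO(W_n)}(\omega_{W_n,V_m}\oplus \omega_{W_n,V_m}\otimes\det,\pi)$, summed over the appropriate pair of levels, is nonzero. Sun--Zhu do this by showing that a certain distribution on $W_n\otimes V_m$, invariant under $\OO(W_n)$, cannot vanish on both pieces. This uses the generalized Harish-Chandra descent and the theory of ``distributions supported on the null cone'' together with a ``$\det$-twisted'' moment map argument. We would try this route first, with doubling and the lower bound already in hand to exclude degenerate configurations.

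Combining the two inequalities gives $m^{up}(\pi)+m^{down}(\pi)=m_1+m_2=2n$, i.e. $2m^{up}(\pi)+2m^{down}(\pi)=4n$.
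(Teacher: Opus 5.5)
This matches the paper, which likewise quotes the result from Sun--Zhu \cite{SZ15} without giving an argument of its own, so invoking \cite{SZ15} is all that is needed. If you ever flesh out your sketch, note that reducing the upper bound to non-vanishing at level $m=n$ only gives $m^{down}(\pi)\le n$. What actually yields $m(\pi)+m(\pi\otimes\det)\le 2n$ is the dichotomy that whenever $m+m'=2n-1$, either $\pi$ lifts at level $m$ or $\pi\otimes\det$ lifts at level $m'$ (apply it with $m=m(\pi)-1$).
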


As a consequence, we have that $m^{up}(\pi)\geq n\geq m^{down}(\pi).$ Also, if one inequality is strict, then both inequalities are strict. We say that $\pi'\in\{\pi,\pi\otimes\det\}$ corresponds to the going-down tower of $\pi$ if $m^{down}(\pi)=m(\pi').$ Note that if $m^{up}(\pi)=m^{down}(\pi)$, then the going-down tower is not well-defined. In this case, we assign the going-down tower arbitrarily.

For later use, we need a case of Howe duality which holds for pure inner forms. We record this here. 
\begin{lemma}\label{lem Howe duality pure}
    Suppose that $\pi^\pm\in\Pi(\OO(W_n^\pm)).$ Then for any $\alpha\geq 1$, we have $\theta_{-\alpha}(\pi^+)\not\cong \theta_{-\alpha}(\pi^-)$ provided that at least one of them is nonzero.
\end{lemma}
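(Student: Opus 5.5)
The plan is to argue by contradiction, pulling the common theta lift back to both orthogonal towers and invoking the conservation relation for symplectic groups. Put $m = n + \frac{\alpha-1}{2}$, so that $\theta_{-\alpha}(\pi^\pm)$ is a representation of $\Sp(V_m)$ with $\dim V_m = 2m$. Since $\alpha \geq 1$ we have $m \geq n$.

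Suppose $\sigma := \theta_{-\alpha}(\pi^+) \cong \theta_{-\alpha}(\pi^-)$. By hypothesis at least one side is nonzero, so $\sigma \neq 0$, and then both sides are nonzero. By Theorem \ref{thm Howe duality}(1), $\sigma$ is an irreducible representation of $\Sp(V_m)$.

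The first step is to reverse the roles of the dual pair. Since $\pi^+ \boxtimes \sigma$ is a quotient of $\omega_{W_n^+, V_m, \psi_F}$, the big theta lift $\Theta_{V_m, W_n^+, \psi_F}(\sigma)$ is nonzero. In the same way, $\Theta_{V_m, W_n^-, \psi_F}(\sigma)$ is nonzero.

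The towers $\mathcal{W}^+$ and $\mathcal{W}^-$ have the same discriminant, so they are exactly the two Witt towers of quadratic spaces attached to one quadratic character. They are therefore the two towers appearing in the conservation relation for lifts from $\Sp(V_m)$. For each tower, let $d^\pm(\sigma)$ be the smallest dimension of a space in $\mathcal{W}^\pm$ to which $\sigma$ lifts nonzero. The previous step shows $d^+(\sigma) \leq 2n$ and $d^-(\sigma) \leq 2n$.

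The key input is the Sun--Zhu conservation relation (\cite{SZ15}) for the symplectic member of the dual pair. It states
\[
d^+(\sigma) + d^-(\sigma) = 4m + 4 .
\]
Combining this with the bounds above gives $4m + 4 \leq 4n \leq 4m$, which is impossible. This contradiction proves the lemma.

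The main point requiring care is to quote the conservation relation in the correct normalization. Theorem \ref{thm conservation relation} as stated in the paper only covers the orthogonal side, with the $\det$-twisted towers. For $\Sp(V_m)$ one needs the version over the two quadratic towers $\mathcal{W}^\pm$ of a fixed discriminant, whose two first-occurrence dimensions sum to $4m + 4$. One must also check that this pairing of towers is the same as the pairing of $\mathcal{W}^+$ with its pure inner form $\mathcal{W}^-$ described in \S\ref{subsec Hermitian spaces}. This holds in both cases there: when $d \notin (F^\times)^2$ the towers are distinguished by the Hasse invariant, and when $d \in (F^\times)^2$ they are the split tower and the quaternionic tower.

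The argument uses only $m \geq n$, so it covers every odd $\alpha \geq 1$, including the equal-rank case $\alpha = 1$.
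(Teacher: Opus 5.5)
Your proposal is correct and is essentially the paper's argument: you pull the common lift $\sigma$ back to both $\OO(W_n^+)$ and $\OO(W_n^-)$, and then apply the Sun--Zhu conservation relation for $\Sp(V_m)$, in which the first occurrences sum to $4m+4$, to reach the contradiction $4m+4\le 4n\le 4m$. The paper phrases this inequality as $2n\ge \frac{1}{2}\bigl(4(n+\frac{\alpha-1}{2})+4\bigr)$. Your extra checks, that $\mathcal{W}^\pm$ are exactly the two towers appearing in that relation and that Theorem \ref{thm conservation relation} is the wrong version to quote, are accurate.
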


\begin{proof}
Suppose for contradiction that $\theta_{-\alpha}(\pi^+)=: \sigma \cong \theta_{-\alpha}(\pi^-)  \neq 0$. Then we see that the theta lift of $\sigma \in \Pi( \Sp(V_{n+ \half{\alpha-1}}))$ to the two smaller groups $\OO(W_n^+)$ and $\OO(W_n^-)$ are both nonzero. The conservation relation (for the local theta lift from the symplectic group to the even orthogonal group, \cite[Theorem 1.10]{SZ15}) in this case gives that 
\[ 2n \geq \half{4(n+\half{\alpha-1})+ 4}, \]
which is a contradiction.
\end{proof}

\section{Local Arthur packets and local Langlands correspondence} \label{sec: lap}

In this section, we recall the theory of local Arthur packets for symplectic and even orthogonal groups. In the tempered case, this gives the local Langlands correspondence.

\subsection{Local Arthur packets}\label{sec: lap for symplectic}\label{sec Local Arthur packets for orthogonal groups}
In this subsection, we recall the theory of local Arthur packets for $G_n=\Sp(W_n)=\Sp_{2n}(F)$ and for $G_n=\OO(W_n)$ simultaneously. We let $G_n^\circ$ denote the identity component of $G_n$ and we set
\[
N:=\begin{cases}
2n+1 & \text{if } G_n=\Sp(W_n),\\
2n & \text{if } G_n=\OO(W_n).
\end{cases}
\]

If $G_n=\Sp(W_n)$, then $G_n=G_n^\circ$ is split, and, to make the notation uniform, we set ${}^L{G}_n^\circ:=\SO_{2n+1}(\BC)$ and $\widehat{G}_n(\BC):=\SO_{2n+1}(\BC)$.

If $G_n=\OO(W_n)$, then $G_n$ is not connected and its theory of local Arthur packets requires more finesse. In this case $G_n^\circ=\SO(W_n)$ is a special even orthogonal group, so that $\widehat{G_n^\circ}=\SO_{2n}(\BC)$, and we may replace ${}^L G_n^{\circ}$ by $\SO_{2n}(\BC)$ if $G(W_n^+)$ is split, and by $ \OO_{2n}(\BC)$ otherwise (see \cite[\S 7]{GGP12}). In either case, we set $\widehat{G}_n(\BC):=\OO_{2n}(\BC)$.
In both cases we obtain in this way a standard embedding of ${}^LG_n^\circ$ into $\GL_{N}(\BC)$.

A local Arthur parameter of $G_n^\circ$ is a homomorphism which decomposes as a direct sum of irreducible representations after composing with the standard embedding ${}^L{G}_n^\circ\hookrightarrow\GL_{N}(\BC)$
$$\psi: W_F \times \SL_2(\mathbb{C}) \times \SL_2(\mathbb{C}) \rightarrow {}^L{G}_n^\circ$$
\begin{equation}\label{eq decomp psi +}
  \psi = \bigoplus_{i=1}^r \phi_i\lvert\cdot\rvert^{x_i} \otimes S_{a_i} \otimes S_{b_i},
\end{equation}
satisfying the following conditions:
\begin{enumerate}
    \item [(1)]$\phi_i(W_F)$ is bounded and consists of semisimple elements, and $\dim(\phi_i)=d_i$;
    \item [(2)] $x_i \in \R$ and $|x_i|<\frac{1}{2}$;
    \item [(3)]the restrictions of $\psi$ to the two copies of $\SL_2(\mathbb{C})$ are analytic, $S_k$ is the $k$-dimensional irreducible representation of $\SL_2(\mathbb{C})$, and
    $$\sum_{i=1}^r d_ia_ib_i = N.$$
\end{enumerate}
The first $\SL_2(\BC)$ is known as the Deligne-$\SL_2(\BC)$ while the second $\SL_2(\BC)$ is known as the Arthur-$\SL_2(\BC)$.

Two local Arthur parameters of $G_n^\circ$ are equivalent if they are conjugate under $\widehat{G_n^\circ}(\BC)$. We do not distinguish between $\psi$ and its equivalence class in the rest of the paper. We let $\Psi^{+}(G_n^\circ)$ denote the set of equivalence classes of local Arthur parameters, and $\Psi(G_n^\circ)$ be the subset of $\Psi^+(G_n^\circ)$ consisting of local Arthur parameters $\psi$ whose restriction to $W_F$ is bounded. In other words, $\psi$ is in $\Psi(G_n^\circ)$ if and only if $x_i=0$ for $i=1,\dots, r$ in the decomposition \eqref{eq decomp psi +}. If $G_n=\Sp(W_n)$, then $G_n=G_n^\circ$ and we simply put $\Psi^+(G_n):=\Psi^+(G_n^\circ)$ and $\Psi(G_n):=\Psi(G_n^\circ)$. Suppose that $G_n=\OO(W_n)$. Fix $c\in G_n \setminus G_n^\circ$ and let $\sigma_0$ be the outer automorphism on $G_n^\circ$ given by conjugation by $c.$ We also set $\Sigma_0$ to be the group generated by $\sigma_0.$ We have $G_n=G_n^\circ\rtimes \Sigma_0.$ Through the dual automorphism $\hat{\sigma}_0$ on $\widehat{G}_n^\circ$, we have an action of $\Sigma_0$ on $\Psi^{+}(G_n^\circ).$
We let $\Psi^+(G_n)$ and $\Psi(G_n)$ be the sets of $\Sigma_0$-orbits of $\Psi^+(G_n^\circ)$ and $\Psi(G_n^\circ),$ respectively. We also let $\Pi^{\Sigma_0}(G_n^\circ)$ denote the set of $\Sigma_0$-orbits of $\Pi(G_n^\circ)$.

By the local Langlands correspondence for $\GL_{d_i}(F)$, the bounded irreducible representation $\phi_i$ of $W_F$ can be identified with an irreducible unitary supercuspidal representation $\rho_i$ of $\GL_{d_i}(F)$ (\cite{HT01, Hen00, Sch13}). Consequently, for $\psi\in\Psi^+(G_n)$ we may write
\begin{equation}\label{A-param decomp}
  \psi = \bigoplus_{\rho}\left(\bigoplus_{i\in I_\rho} \rho\lvert\cdot\rvert^{x_i} \otimes S_{a_i} \otimes S_{b_i}\right),
\end{equation}
where the first sum runs over
irreducible unitary supercuspidal representations $\rho$ of $\GL_d(F)$, $d \in \mathbb{Z}_{\geq 1}$.

Given a local Arthur parameter $\psi$ as in \eqref{A-param decomp}, we say that $\psi$ is of \emph{good parity} if $\psi \in \Psi(G_n)$, i.e., $x_i=0$ for all $i$, and every summand $\rho \otimes S_{a_i} \otimes S_{b_i}$ is self-dual and orthogonal.
We let $\Psi_{gp}(G_n)$ denote the subset of $\Psi(G_n)$ consisting of local Arthur parameters of good parity.

Let $\psi \in \Psi^{+}(G_n).$ From the decomposition \eqref{A-param decomp}, we define a subrepresentation $\psi_{nu,>0}$ of $\psi$ by
\begin{equation}\label{eqn psi_nu,>0}
    \psi_{nu,>0}:= \bigoplus_{\rho}\left(\bigoplus_{\substack{i\in I_\rho,\\ x_i>0}} \rho\lvert\cdot\rvert^{x_i} \otimes S_{a_i} \otimes S_{b_i}\right).
\end{equation}
Since the image of $\psi$ is contained in  $\widehat{G}_n(\BC)$, $\psi$ is self-dual, and hence $\psi$ also contains $(\psi_{nu,>0})^{\vee}$. We define $\psi_{u} \in \Psi(G_m)$ for some $m\leq n$ by
\begin{align}\label{eq def of psi_u}
    \psi= \psi_{nu,>0} \oplus \psi_u \oplus (\psi_{nu,>0})^{\vee}.
\end{align}
Equivalently,
\[ \psi_{u}:= \bigoplus_{\rho}\left(\bigoplus_{\substack{i\in I_\rho,\\ x_i=0}} \rho \otimes S_{a_i} \otimes S_{b_i}\right). \]

We further decompose $\psi_{u}$. Suppose $\rho \otimes S_a \otimes S_b$ is an irreducible summand of $\psi_u$ that is either not self-dual, or self-dual but symplectic. Then the self-duality of $\psi$ implies that $\psi$ must contain the other summand $(\rho\otimes S_a\otimes S_b)^{\vee}=\rho^{\vee} \otimes S_a \otimes S_b$. Therefore, we may choose a subrepresentation $\psi_{np}$ of $\psi_u$ such that
\begin{align}\label{eq decomp of psi_u}
     \psi_{u}= \psi_{np} \oplus \psi_{gp} \oplus \psi_{np}^{\vee},
\end{align}
where $\psi_{gp} $ is of good parity, and any irreducible summand of $\psi_{np}$ is either not self-dual or self-dual but symplectic. Altogether, we obtain a decomposition
\begin{equation}\label{eqn psi = nu+np+gp}
    \psi=\psi_{nu,>0}\oplus\psi_{np}\oplus\psi_{gp}\oplus\psi_{np}^\vee\oplus\psi_{nu,>0}^\vee.
\end{equation}
Let $I_{\rho,np}$ be the subset of $I_{\rho}$ that corresponds to $\psi_{np}$. For use in Theorem \ref{thm red from nu to gp symp} below, we consider the following irreducible representations of a general linear group
\begin{align*}
\tau_{\psi_{nu,>0}}&=\bigtimes_\rho\bigtimes_{i\in I_\rho, x_i>0}u_\rho(a_i,b_i)\lvert\cdot\rvert^{x_i}, \\
\tau_{\psi_{np}}&=\bigtimes_{\rho}\bigtimes_{i\in I_{\rho,np}}u_\rho(a_i,b_i).
\end{align*}
See \eqref{eqn shifted generalied speh} for the definition of $u_\rho(a,b)$.

We now turn to the packets themselves. For each $\psi \in \Psi(G_n)$, the endoscopic character identities define a virtual representation $\pi(\psi, \varepsilon)$ of $G_n$ for each irreducible representation $\varepsilon$ of a certain component group associated to $\psi$. Arthur conjectured (\cite[Conjecture 6.1]{Art89}) that each of $\pi(\psi, \varepsilon)$ is indeed a representation, and defined the local Arthur packet $\Pi_\psi$ as the multi-set of irreducible representations given by
\[ \bigoplus_{\pi \in \Pi_{\psi}}\pi = \bigoplus_{\varepsilon} \pi(\psi, \varepsilon).\]
When $G_n=\Sp(W_n)$ or $G_n= \OO(W_n^+)$, the above conjecture is proved in \cite{Art13}. The local Arthur packet $\Pi_\psi$ is a finite multi-set consisting of irreducible unitary representations of $G_n$. In the case $G_n=\OO(W_n^+)$, Arthur defines local Arthur packets $\Pi_\psi^{\Sigma_0}(G_n^\circ) \subset \Pi^{\Sigma_0}(G_n^{\circ})$ by $\pi^0 \in \Pi_{\psi}^{\Sigma_0}(G_n^{\circ})$ if and only if there exists a $\pi \in \Pi_{\psi}(G_n)$ such that $\pi^0$ is a subquotient of  $\pi|_{G_n^{\circ}}$. When there is no ambiguity, we write $\Pi_\psi=\Pi_\psi(G_n).$

On the other hand, in a sequence of works of M{\oe}glin (\cite{Moe06b, Moe09a, Moe11}), she explicitly constructed representations $\pi_{\Moe}(\psi,\varepsilon)$, and showed that these representations satisfy the same endoscopic character identities that characterize $\pi(\psi,\varepsilon)$, assuming that the same holds for all discrete $\psi$ (\cite[Remarque 2.4]{Moe09a}). Moreover, she showed that $\bigoplus_{\varepsilon} \pi(\psi, \varepsilon)$ is multiplicity-free; in particular, $\Pi_\psi$ (and hence $\Pi_\psi^{\Sigma_0}(G_n^\circ)$ in the orthogonal case) is multiplicity free (\cite{Moe11}). It is crucial that this construction and the reduction include the pure inner form $\OO(W_n^-)$. The endoscopic character identities for discrete packets of $\Sp(W_n)$ and $\OO(W_n^+)$ are verified in \cite{Art13}, and for $\OO(W_n^-)$ they are verified in \cite{MR18}. As a consequence, we have $\pi_{\Moe}(\psi, \varepsilon)= \pi(\psi, \varepsilon)$. Throughout this paper, we take the following as the definition of local Arthur packets for symplectic groups and even orthogonal groups
\[ \bigoplus_{\pi \in \Pi_{\psi}} \pi= \bigoplus_{\varepsilon} \pi_{\Moe}(\psi, \varepsilon).\]
We say that an irreducible representation $\pi$ of $G_n$ is \emph{of Arthur type} if $\pi\in\Pi_\psi$ for some local Arthur parameter $\psi \in \Psi(G_n)$.

For $\psi \in \Psi^+(G_n)$, Arthur defined (\cite[(1.5.1)]{Art13})
\begin{align}\label{eq def packet +}
    \Pi_{\psi}:= \{ \tau_{\psi_{nu,>0}} \rtimes \pi_u \ | \ \pi_{u} \in \Pi_{\psi_u}   \},
\end{align}
where $\tau_{\psi_{nu,>0}}$ is defined above. 
Since $|x_i|<\frac{1}{2}$ in the decomposition \eqref{A-param decomp}, the parabolic induction in \eqref{eq def packet +} is always irreducible (\cite[Proposition 5.1]{Moe11b}; see also Theorem \ref{thm red from nu to gp symp} below).
Finally, M{\oe}glin constructed the local Arthur packet $\Pi_{\psi}$ from $\Pi_{\psi_{gp}}$, which we record below.

\begin{thm}[{\cite[\S4.2]{Moe11}, \cite[Proposition 5.1]{Moe11b}}]\label{thm red from nu to gp symp}
Let $G_n=\Sp(W_n)$ or $\OO(W_n)$, and let $\psi\in\Psi^+(G_n)$ with decomposition as in \eqref{eqn psi = nu+np+gp}. Then, for any $\pi_{gp}\in\Pi_{\psi_{gp}},$ the induction $\tau_{\psi_{nu,>0}}\times\tau_{\psi_{np}}\rtimes\pi_{gp}$ is irreducible. As a consequence,
\begin{equation}\label{non-unitary A-packet}
    \Pi_\psi=\{\tau_{\psi_{nu,>0}}\times\tau_{\psi_{np}}\rtimes\pi_{gp} \ | \ \pi_{gp}\in\Pi_{\psi_{gp}}\}.
\end{equation}
\end{thm}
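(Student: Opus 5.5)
The plan is to reduce irreducibility to a statement about the good parity part together with a standard irreducibility criterion for unitary inductions. The consequence \eqref{non-unitary A-packet} is then formal. First I will rewrite the inducing data. By definition \eqref{eq def packet +}, $\Pi_\psi=\{\tau_{\psi_{nu,>0}}\rtimes\pi_u\}$ with $\pi_u\in\Pi_{\psi_u}$. So it suffices to prove two things: (a) for $\psi_u=\psi_{np}\oplus\psi_{gp}\oplus\psi_{np}^\vee$ one has $\Pi_{\psi_u}=\{\tau_{\psi_{np}}\rtimes\pi_{gp}\}$, with each such induction irreducible; and (b) $\tau_{\psi_{nu,>0}}\rtimes(\tau_{\psi_{np}}\rtimes\pi_{gp})$ is irreducible. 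Transitivity of induction then identifies the resulting set with the right-hand side of \eqref{non-unitary A-packet}.

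For (a), I would use M{\oe}glin's construction. The summands of $\psi_{np}$ are of ``bad parity'': they are not self-dual, or they are self-dual of the wrong type. Accordingly, the component group of $\psi_u$ equals that of $\psi_{gp}$, and the characters $\varepsilon$ match. The endoscopic character identities are compatible with parabolic induction from a Levi $\GL\times G_m$, and the $\GL$-factor packet for $\rho\otimes S_a\otimes S_b\oplus$ its dual is the single representation $u_\rho(a,b)$. Together these give $\pi_{\Moe}(\psi_u,\varepsilon)=\tau_{\psi_{np}}\rtimes\pi_{\Moe}(\psi_{gp},\varepsilon)$ as virtual representations. Since the left side is a representation and the packet is multiplicity free, it remains to show that the induction is irreducible.

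For irreducibility, both in (a) and (b), I would argue by Jacquet modules and supercuspidal supports. Write $\tau=\tau_{\psi_{nu,>0}}\times\tau_{\psi_{np}}$; it is irreducible as a $\GL$-representation because $|x_i|<1/2$, by Tadi\'c/Zelevinsky irreducibility of products of unitary Speh representations twisted by small exponents. Next I would show that $\tau\rtimes\pi_{gp}\cong\tau^\vee\rtimes\pi_{gp}$ has a unique irreducible subrepresentation, and that this subrepresentation occurs with multiplicity one in the semisimplification. The main tool is the standard criterion: if $\tau\rtimes\pi$ and $\tau^\vee\rtimes\pi$ share the same unique irreducible subrepresentation, and the Jacquet module $r_{\tau}$ of $\tau\rtimes\pi$ contains $\tau\otimes\pi$ with multiplicity one, then the induction is irreducible. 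The multiplicity count will use the Tadi\'c structure formula. The supports of $\rho|\cdot|^{x}$ with $x\notin\frac12\Z$, or with $\rho$ of bad parity, cannot interact with the good parity cuspidal line of $\pi_{gp}$. Hence the only way to produce $\tau\otimes\pi_{gp}$ in the Jacquet module is the trivial one.

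The main obstacle is the self-dual symplectic-type summands in $\psi_{np}$, for which $\rho\cong\rho^\vee$. For these, cuspidal exponents lie on the same line as possible supports of $\pi_{gp}$, but $\rho\rtimes\sigma$ has reducibility points of the opposite parity. I would treat this by reducing, via the $\rho$-isotypic part of the supercuspidal support, to the case where $\pi_{gp}$ has no $\rho$ in its cuspidal support at the relevant parity. There I would invoke M{\oe}glin's result that $\rho|\cdot|^{y}\rtimes\sigma$ is irreducible for $y\in\Z$ when $\rho$ is of bad parity, and conclude by an induction on the size of $\tau$.
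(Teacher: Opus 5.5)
The paper gives no proof of this statement; it quotes it from M{\oe}glin (\cite[\S4.2]{Moe11}, \cite[Proposition 5.1]{Moe11b}). So I am judging your argument on its own. Three parts of it are acceptable: the reduction to (a) and (b), the identification of $\Pi_{\psi_u}$ with the constituents of $\tau_{\psi_{np}}\rtimes\pi_{gp}$, and the irreducibility of $\tau$ as a $\GL$-representation.

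\textbf{The gap is the irreducibility step.} Your criterion needs $\tau\otimes\pi_{gp}$ to occur with multiplicity one in the Jacquet module. Your support argument only removes the contributions coming from the Jacquet module of $\pi_{gp}$. It ignores the interaction of $\tau$ with $\tau^\vee$ inside Tadi\'c's formula. After the support reduction, the $\GL$-part carrying $\tau\otimes\pi_{gp}$ is $\sum_{\tau_1\otimes\tau_2\in m^*(\tau)}\tau_2^\vee\times\tau_1$ (in the Grothendieck group). Every Jacquet piece $\tau_2$ with self-dual support produces extra copies.

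\emph{First counterexample.} Take the simplest bad-parity case, $\tau=\tau_{\psi_{np}}=\rho$ with $\rho$ self-dual of symplectic type. Then $\rho\otimes\pi_{gp}$ appears twice, once from $\rho\otimes 1$ and once from $1\otimes\rho$. Yet $\rho\rtimes\sigma$ is irreducible for supercuspidal $\sigma$. So your criterion can never be applied in exactly the case you single out as the main obstacle.

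\emph{Second counterexample.} The same failure occurs on the $\psi_{nu,>0}$ side. Take $\rho$ self-dual and $\tau=\rho\lvert\cdot\rvert^{1/4}\times\Delta_\rho[3/4,-1/4]$. The term with $\tau_1=\rho\lvert\cdot\rvert^{3/4}$ and $\tau_2=\rho\lvert\cdot\rvert^{1/4}\times\rho\lvert\cdot\rvert^{-1/4}$ contributes a second copy of $\tau$.

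\textbf{The fallback does not repair this.} First, irreducibility of each $\rho\lvert\cdot\rvert^{y}\rtimes\sigma$ along a bad line does not propagate to $u_\rho(a,b)\rtimes\sigma$ by induction on the size of $\tau$. On that same line, $\rho\lvert\cdot\rvert^{1}\times\rho\rtimes\sigma$ is reducible. Second, you give no argument that $\tau\rtimes\pi_{gp}$ has a unique irreducible subrepresentation. It is not a standard module. Moreover, when $\pi_{gp}$ is unitary, $\tau_{\psi_{np}}\rtimes\pi_{gp}$ is a unitary induction and hence semisimple. In that case this hypothesis is equivalent to the irreducibility you are trying to prove.

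\textbf{What is missing.} A Jacquet-module count can only bound $\dim\mathrm{End}(\tau\rtimes\pi_{gp})$. You then need an argument that the self-intertwining operators act by scalars.
\begin{itemize}
\item For $\tau_{\psi_{np}}$, this is where bad parity must really be used, not merely the location of cuspidal supports. In Arthur's framework, the corresponding factor of the centralizer of $\psi$ is a connected $\GL$ or $\Sp$.
\item For $\tau_{\psi_{nu,>0}}$, you need control of normalized intertwining operators for exponents in $(0,\tfrac12)$.
\end{itemize}
This is precisely the input the paper imports from \cite{Moe11,Moe11b} rather than proves.
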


\subsection{Good parity and the local Langlands correspondence}

In this section, we let $G_n=\Sp(W_n)$ or $\OO(W_n)$.
An $L$-parameter of $G_n$ is a homomorphism
\[
\phi:W_F\times\SL_2(\BC)\rightarrow{}^LG_n^\circ,
\]
satisfying the following conditions:
\begin{enumerate}
    \item $\phi(W_F)$ consists of semisimple elements;
    \item the restriction of $\phi$ to $\SL_2(\BC)$ is analytic; and
    \item  if $G(W_n^+)$ is not split, then the composition of $\phi$ with the projection ${}^L G_n^\circ\rightarrow W_{E/F}\cong\pi_0({}^LG_n^\circ)$ is compatible with $W_F\times\SL_2(\BC)\rightarrow W_F.$
\end{enumerate}
By composing with the standard embedding ${}^LG_n^\circ\hookrightarrow\GL_{N}(\mathbb{C}),$ where $N \in \{2n, 2n+1\}$,
we may identify $\phi$ with a direct sum of irreducible representations,
\begin{equation}\label{eqn L-par decomp}
    \phi=\bigoplus_{i=1}^r \phi_i\otimes S_{a_i},
\end{equation}
where $\phi_i$ is an irreducible representation of $W_F.$ We say that two $L$-parameters $\phi,\phi'$ are \emph{equivalent} if $\phi$ and $\phi'$ are conjugate under $\widehat{G}_n(\mathbb{C})$ (not necessarily under $\widehat{G}_n^\circ(\BC)$).
This is equivalent to $\phi$ and $\phi'$ are isomorphic as representations of $W_F \times \SL_2(\BC)$ (see \cite[Theorem 8.1]{GGP12}). Note that when $G_n=\OO(W_n)$ this equivalence is for the full even orthogonal groups rather than special even orthogonal groups (see \cite[Remark below Corollary 8.2]{GGP12}). We let $\Phi(G_n)$  denote the set of equivalence classes of $L$-parameters.

\begin{defn}\label{defn Lpar good parity}
    Let $\phi\in\Phi(G_n)$ be decomposed as in \eqref{eqn L-par decomp}. Then,
there exists $e_i\in\mathbb{R}$ such that $\phi_i\lvert\cdot\rvert^{-e_i}$ is a bounded irreducible representation of $W_F.$ We say that $\phi_i$ is \emph{of good parity} if $e_i\in\frac{1}{2}\mathbb{Z}$ and $\phi_i\lvert\cdot\rvert^{-e_i}\otimes S_{a_i+2|e_i|}$ is self-dual and orthogonal. 

Let $I_{\good}=\{i\in\{1,\dots,r\}\ | \ \phi_i \ \text{is of good parity}\}.$ We define $\phi_\good=\bigoplus_{i\in I_\good}\phi_i \otimes S_{a_i}.$ We say that $\phi$ is \emph{of good parity} if $\phi=\phi_\good.$

Let $\psi\in\Psi(G_n).$ We associate an $L$-parameter $\phi_\psi\in\Phi(G_n)$ which is defined by $\phi_\psi(w,x)=\psi(w,x,d_w)$ where $d_w=\mathrm{diag}(|w|^{\frac{1}{2}},|w|^{\frac{-1}{2}})$.
We have that $\psi$ is \emph{of good parity} if and only if its associated $L$-parameter $\phi_\psi$ is of good parity.
\end{defn}

When $G_n=\OO(W_n)$, we let $\Pi^\pure(G_n)=\Pi(\OO(W_n^+))\sqcup \Pi(\OO(W_n^-))$, where $W_n^\pm\in\mathcal{W}^\pm$ (see \S\ref{subsec Hermitian spaces}).
Next we state the pure local Langlands correspondence for $G_n$, due to Arthur in the quasi-split case (\cite{Art13}) and extended to pure inner forms by M{\oe}glin and Renard (\cite{MR18}). When $G_n=\Sp(W_n),$ we let $\Pi^\pure(G_n)=\Pi(G_n).$

\begin{thm}[Local Langlands correspondence]\label{thm pure LLC}
    There is a finite-to-one map called the local Langlands correspondence, 
    \[\LLC:\Pi^\pure(G_n)\rightarrow\Phi(G_n)\] 
    which satisfies the following properties (among others).
    \begin{enumerate}
        \item For $\phi\in\Phi(G_n)$, there is a bijection between $\LLC^{-1}(\phi)$ and $\mathrm{Irr}(A_\phi)$, where $\mathrm{Irr}(A_\phi)$ denotes the set of irreducible representations of $A_\phi=\pi_0(\mathrm{Cent}_{\widehat{G}_n}(\mathrm{im}(\phi))).$ Here, we have implicitly fixed a choice of Whittaker datum for $G(W_n^+)$.
        \item The following are equivalent.
    \begin{enumerate}
        \item $\LLC^{-1}(\phi)$ contains a tempered representation.
        \item $\LLC^{-1}(\phi)$ consists only of tempered representations.
        \item $\phi\in \Phi(G_n)$ is tempered, i.e., $\phi(W_F)$ is bounded.
    \end{enumerate}
    \item The local Langlands correspondence is compatible with the Langlands classification in the following sense: Suppose that 
    \[ \pi= L(\Delta_{\rho_1}[x_1,y_1],\dots,\Delta_{\rho_r}[x_r,y_r]; \pi_{temp}).\]
    Then 
    \begin{align*}
        \LLC(\pi)&= \rho_1\lvert \cdot \rvert^{\half{x_1+y_1}} \otimes S_{x_1-y_1+1} \oplus \cdots \oplus \rho_r\lvert \cdot \rvert^{\half{x_r+y_r}} \otimes S_{x_r-y_r+1}  \oplus  \LLC(\pi_{temp}) \\
       & \oplus \rho_1^{\vee}\lvert \cdot \rvert^{\half{-x_1-y_1}} \otimes S_{x_1-y_1+1} \oplus \cdots \oplus \rho_r^{\vee}\lvert \cdot \rvert^{\half{-x_r-y_r}} \otimes S_{x_r-y_r+1}.
    \end{align*}
    \end{enumerate}
\end{thm}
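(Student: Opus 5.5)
The plan is to assemble this theorem from the known constructions rather than to prove anything new. First the tempered case, then the non-tempered case via the Langlands classification of \S\ref{sec Langlands classification}, which makes Part (3) true by construction. Throughout, $\Pi^\pure(G_n)$ is treated as the disjoint union over the two towers $\mathcal{W}^\pm$, and the representations of both pure inner forms are fed into a single fiber $\LLC^{-1}(\phi)$.

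\textbf{Tempered case.} For $G_n=\Sp(W_n)$ and for the quasi-split $\SO(W_n^+)$, I would take the tempered $L$-packets from \cite{Art13}. These are characterized by the endoscopic character identities, are multiplicity free, and are parametrized, after fixing the Whittaker datum, by characters of the component group.

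For $\OO(W_n^+)$ I would pass from $\Sigma_0$-orbits of $\Pi(\SO(W_n^+))$ to $\Pi(\OO(W_n^+))$ by Clifford theory. An orbit of size two induces irreducibly to $\OO(W_n^+)$. A $\sigma_0$-fixed representation extends in exactly two ways, differing by $\det$. On the dual side this matches the passage from $\widehat{G_n^\circ}$-conjugacy to $\OO_{2n}(\BC)$-conjugacy, and from the centralizer in $\SO_{2n}(\BC)$ to the centralizer in $\OO_{2n}(\BC)$. The enlarged $A_\phi$ gains exactly one extra $\Z/2$ precisely when $\phi$ has an odd-dimensional orthogonal summand. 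One choice of normalization is needed here: which extension corresponds to which character. I would fix it by the twisted endoscopic character identities (see \cite[\S 7--8]{GGP12}).

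\textbf{Pure inner form.} I would adjoin $\OO(W_n^-)$ using \cite{MR18}. Their construction extends the character identities to the pure inner form. The characters of $A_\phi$ then split according to their value on the central element $-1$, and this value selects the tower $\mathcal{W}^+$ or $\mathcal{W}^-$. This gives the bijection in Part (1) for tempered $\phi$. Part (2) holds for tempered $\phi$ because every element of the fiber is, by construction, a constituent of an induced representation from a discrete series tensored with a bounded $\GL$-representation.

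\textbf{Non-tempered case.} For arbitrary $\pi=L(\Delta_{\rho_1}[x_1,y_1],\dots;\pi_{temp})$, I would define $\LLC(\pi)$ by the formula in Part (3), so that Part (3) holds by definition. It remains to check two things.
\begin{enumerate}
\item Every $\phi$ arises exactly as often as $\mathrm{Irr}(A_\phi)$ dictates. I would decompose $\phi$ by the real parts $e_i$ of its summands and write $\phi=\phi_{>0}\oplus\phi_{temp}\oplus\phi_{>0}^\vee$. The Langlands classification is a bijection, so $\LLC^{-1}(\phi)$ is in bijection with $\LLC^{-1}(\phi_{temp})$. On the dual side, $A_\phi\cong A_{\phi_{temp}}$, since the non-unitary part contributes only connected $\GL$-factors to the centralizer. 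Part (1) then reduces to the tempered case.
\item Part (2) in general: $\phi(W_F)$ is bounded exactly when the non-tempered portion of the $L$-data is empty.
\end{enumerate}

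\textbf{Main obstacle.} I expect the main difficulty to be the disconnected group together with the pure inner form. Concretely, one must check that the $\det$-twist and the $\sigma_0$-action on the representation side match the extra component of $\OO_{2n}(\BC)$, including the case of non-split $W_n^+$ where ${}^LG_n^\circ$ is $\OO_{2n}(\BC)$. One must also check that the Whittaker normalization is compatible with \cite{MR18}. I would not attempt to reprove these points. Instead I would cite \cite{Art13}, \cite{MR18} and \cite[Theorem 8.1]{GGP12} (the last for the equivalence of $L$-parameters under $\OO_{2n}(\BC)$), and verify only the bookkeeping identifying $\mathrm{Irr}(A_\phi)$ with the union of the two fibers over $W_n^\pm$.
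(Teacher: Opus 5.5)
Your outline matches the paper, which gives no argument of its own for this theorem and simply attributes it to \cite{Art13} in the quasi-split case and to \cite{MR18} for the pure inner form; your Clifford-theory passage from $\SO(W_n)$ to $\OO(W_n)$, the splitting of $\mathrm{Irr}(A_\phi)$ over $\mathcal{W}^\pm$ by the value on the central $-1$, and the reduction of the non-tempered case to $\phi_{temp}$ via the Langlands classification are the standard bookkeeping behind that citation. One input is only asserted (``this matches'') and should be cited: a member of Arthur's $\SO$-packet is $\sigma_0$-fixed exactly when $\phi$ has an odd-dimensional irreducible orthogonal summand, and this is precisely what makes your count of $\OO$-representations equal $|\mathrm{Irr}(A_\phi)|$.
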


The pre-image $\Pi_\phi^\pure:=\LLC^{-1}(\phi)$ is called the \emph{pure $L$-packet} associated to $\phi.$ We let $\Pi_\phi(G_n)=\Pi^\pure_\phi\cap\Pi(G_n).$ For $\pi\in\Pi^\pure(G_n)$, we call $\phi_\pi:=\LLC(\pi)$ the \emph{$L$-parameter} associated to $\pi.$

We recall the combinatorial description of the component group $A_\phi$ from \cite[Equation (1.4.8)]{Art13} (with slight modification for pure inner forms).  First, we have that $A_\phi\cong A_{\phi_{\good}}.$ Write $\phi_\good=\bigoplus_{i\in I_\good} \phi_i\otimes S_{a_i}$ as in Definition \ref{defn Lpar good parity}. Then $A_{\phi_{\good}}\cong A_{\phi_{\good,\temp}}$, where \[\phi_{\good,\temp}=\bigoplus_{i\in I_{\good,\temp}} \phi_i\otimes S_{a_i}=\bigoplus_{i=1}^s m_i(\phi_i'\otimes S_{a_i'}),\]
here $I_{\good,\temp}=\{i\in I_\good \ | \ \phi_i \ \text{is bounded}\},$ and $m_i$ denotes the multiplicity of $\phi_i'\otimes S_{a_i'}.$ {If we consider the centralizer in the full even orthogonal group, then we have
\[
    A_\phi^\OO:=\pi_0(\mathrm{Cent}_{\OO_N(\BC)}(\mathrm{im}(\phi)))\cong\bigtimes_{i=1}^s \{\pm 1\},
    \]
    where each $\{\pm 1\}$ corresponds to the summands $m_i(\phi_i'\otimes S_{a_i'})$. When $G_n=\OO(W_n)$, we have $A_\phi^\OO=A_\phi$ and consequently, we may identify $\textrm{Irr}(A_{\phi})$ with the set of functions $\varepsilon:\{\phi_i'\otimes S_{a_i'}\}_{i=1,\ldots, s} \to \{\pm 1\}$. When $G_n=\Sp(W_n)$, we must take the centralizer in $\SO_{2n+1}(\BC)$, not $\OO_{2n+1}(\BC)$ (see the remark below). Thus, in this case, we obtain that
    \[
    A_\phi\cong\bigtimes_{i=1}^{s-1}\{\pm 1\},
    \]
    and consequently, we may identify $\textrm{Irr}(A_{\phi})$ with the set of functions $\varepsilon:\{\phi_i'\otimes S_{a_i'}\}_{i=1,\ldots, s} \to \{\pm 1\}$ subject to the condition that
    \[
    \prod_{i=1}^s\varepsilon(\phi_i'\otimes S_{a_i'})^{m_i}=1.
    \]
    Here, we have implicitly identified $A_\phi$ as a subgroup of $A_\phi^\OO$.}
\begin{remark}
    Note that unlike \cite[Equation (1.4.9)]{Art13}, our centralizer in the even orthogonal case is taken in the full even orthogonal group, not the special even orthogonal group (which is the reason for the kernel in \cite[Equation (1.4.8)]{Art13}). Consequently, we do not require the determinant 1 condition imposed in \cite[Equation (1.4.9)]{Art13}.
\end{remark}

Let $G_n=\OO(W_n).$ The Kottwitz isomorphism (\cite[Proposition 6.4]{Kot84}) associates to $G_n^\circ$ a homomorphism $\chi_{G_n}\in\mathrm{Hom}(\pi_0(Z(\widehat{G_n^\circ})^{\Gamma_F}),\mathbb{C}^\times)$ where $\Gamma_F$ denotes the absolute Galois group of $F.$ We have $\pi_0(Z(\widehat{G_n^\circ})^{\Gamma_F})\cong\{\pm1\}$ and so $\chi_{G_n}$ is the trivial character if $W_n\in\mathcal{W}^+$ and the sign character otherwise. {We have a map $\pi_0(Z(\widehat{G_n^\circ})^{\Gamma_F})\rightarrow A_\phi$ given by $\pm1\mapsto((\pm1)^{m_1},\dots,(\pm1)^{m_s}).$ We define 
\[ \mathrm{Irr}(A_{\phi, G_n})= \{\varepsilon \in \textrm{Irr}(A_{\phi})\ | \ \prod_{i=1}^s\varepsilon(\phi_i' \otimes S_{a_i'})^{m_i}= \chi_{G_n} (-1) \}. \]
}
 The local Langlands correspondence gives a further bijection which follows from (\cite{Art13,MR18}).
When $G_n$ is symplectic, we set $\mathrm{Irr}(A_{\phi,G_n}):=\mathrm{Irr}(A_{\phi})$.
\begin{thm}\label{thm enh LLC form}
    Let $\phi\in\Phi(G_n)$. Then there is a bijection 
    \begin{align*}
        \Pi_\phi(G_n) \xrightarrow{\sim}\mathrm{Irr}(A_{\phi, G_n}).
    \end{align*}
    For $\varepsilon \in \textrm{Irr}(A_{\phi,G_n})$, we let $\pi(\phi,\varepsilon)$ be the corresponding representation in $\Pi_{\phi}(G_n)$.
\end{thm}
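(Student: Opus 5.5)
This statement is a recollection of the enhanced local Langlands correspondence, so the plan is to assemble it from Theorem \ref{thm pure LLC}(1) and the Kottwitz sign condition, separating the pure inner forms. I treat the symplectic case first. There $\Pi^\pure(G_n)=\Pi(G_n)$ and $\mathrm{Irr}(A_{\phi,G_n})=\mathrm{Irr}(A_\phi)$ by definition, so Theorem \ref{thm pure LLC}(1), i.e.\ Arthur's result \cite{Art13} with the fixed Whittaker datum, gives the bijection directly. No further argument is needed in this case.

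For $G_n=\OO(W_n)$, Theorem \ref{thm pure LLC}(1) gives a bijection $\Pi^\pure_\phi=\Pi_\phi(\OO(W_n^+))\sqcup\Pi_\phi(\OO(W_n^-))\to\mathrm{Irr}(A_\phi)$. The main step is to show that this bijection sends $\Pi_\phi(\OO(W_n^\pm))$ onto the characters whose restriction to the image of $\pi_0(Z(\widehat{G_n^\circ})^{\Gamma_F})\cong\{\pm1\}$ equals $\chi_{G(W_n^\pm)}$. This is the characterisation of pure inner forms in the Vogan–Kottwitz formulation used by M{\oe}glin–Renard \cite{MR18}: the central character of $\varepsilon$ on $Z(\widehat{G_n^\circ})^{\Gamma_F}$ records the Kottwitz invariant of the inner form. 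One then computes that $-1\in Z$ maps to $((-1)^{m_1},\dots,(-1)^{m_s})\in A_\phi$, because $-1$ acts by the scalar $-1$ on each isotypic block $m_i(\phi_i'\otimes S_{a_i'})$. Its value under $\varepsilon$ is therefore $\prod_i\varepsilon(\phi_i'\otimes S_{a_i'})^{m_i}$, which gives exactly the condition defining $\mathrm{Irr}(A_{\phi,G_n})$.

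The obstacle I expect is this central-character compatibility for the disconnected group $\OO(W_n)$, as opposed to $\SO(W_n)$. Arthur and M{\oe}glin–Renard state their results for $\SO$, with $\Sigma_0$-orbits of representations and centralizers in $\SO_{2n}(\BC)$. I would pass to $\OO$ in the manner of \cite[\S 7--8]{GGP12} and Atobe–Gan. When $\det$ of the centralizer component is nontrivial, the two extensions of an $\SO$-representation to $\OO$ correspond to the two characters of $A_\phi^\OO$ extending a given character of $A_\phi^{\SO}$. When it is trivial, the $\SO$-representation is not $\sigma_0$-stable and induces irreducibly to $\OO$. One has to check that in both situations the Kottwitz restriction to the center is unchanged. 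This holds because $-1\in\SO_{2n}(\BC)$ has determinant one, so its image lies in the $\SO$-component group.

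Finally, the two pieces $\Pi_\phi(\OO(W_n^\pm))$ partition $\Pi^\pure_\phi$, and $\chi_{G_n}(-1)=\pm1$ partitions $\mathrm{Irr}(A_\phi)$ compatibly. Restricting the bijection therefore gives $\Pi_\phi(G_n)\xrightarrow{\sim}\mathrm{Irr}(A_{\phi,G_n})$, and we define $\pi(\phi,\varepsilon)$ as the preimage of $\varepsilon$.
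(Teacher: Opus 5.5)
Your proposal is correct and takes essentially the paper's approach: the paper gives no argument beyond attributing this bijection to \cite{Art13,MR18}, and you unpack that citation by restricting the pure bijection of Theorem \ref{thm pure LLC}(1) to each pure inner form using the Kottwitz-sign compatibility. Your computation that $-1$ maps to $((-1)^{m_1},\dots,(-1)^{m_s})$, and that this element lies in the $\SO$-part of the component group, is exactly what makes the defining condition of $\mathrm{Irr}(A_{\phi,G_n})$ the right one in both extension cases when passing from $\SO(W_n)$ to $\OO(W_n)$.
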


Finally, we recall the definition of good parity part for representations.

 \begin{defn}\label{defn good parity rep}
 For $\pi=\pi(\phi,\varepsilon)\in \Pi(G_n)$, we define the \emph{good parity part} of $\pi$ by
 \[ \pi_{\good}:= \pi(\phi_{\good}, \varepsilon),\]
 which is a representation of $G_m$ for some $m \leq n$. Here, we regard $\varepsilon \in \textrm{Irr}(A_{\phi_{\good}})$ via the isomorphism $A_\phi\cong A_{\phi_\good}$.

 We say $\pi$ is of good parity if $\pi=\pi_{\good}$, and we let $\Pi_{gp}(G_n)$ denote the subset of $\Pi(G_n)$ consisting of representations of good parity. 
\end{defn}

\subsection{\texorpdfstring{M{\oe}glin's parameterization of the local Arthur packets $\Pi_{\psi}$}{}}

 Let $G_n=G(W_n)$ be a symplectic or even orthogonal group. In this section, we review M{\oe}glin's parameterization of the local Arthur packets $\Pi_{\psi}$ for $\psi\in\Psi^+(G_n)$ (\cite{Moe06a, Moe06b, Moe09a, Moe10, Moe11}). In view of Theorem \ref{thm red from nu to gp symp}, it is enough to describe the case $\psi\in\Psi_{gp}(G_n).$ To this end, we recall the following combinatorial data introduced by M{\oe}glin. 

 \begin{defn}\label{def Moeglin parameter}\
 \begin{enumerate} 
     \item 
     A \emph{M{\oe}glin parameter} for $G(W_n)$ is a quintuple $\MM=(\psi, \underline{\zeta}, >, \underline{l}, \underline{\eta})$ described as follows:
     \begin{enumerate}
         \item $\psi$ is a local Arthur parameter of $G(W_n)$ of good parity with the following decomposition
          \begin{equation}\label{eqn gp decomp}
     \psi:= \bigoplus_{\rho}\left(\bigoplus_{i\in I_\rho} \rho \otimes S_{a_i} \otimes S_{b_i}\right).
 \end{equation}
 We set $I:= \sqcup I_{\rho}$ and $d_i=\min(a_i,b_i)$ for $i\in I$.
 \item $\underline{\zeta}= (\zeta_i)_{i \in I}$ is a collection of signs, i.e., $\zeta_i \in \{\pm 1\}$, such that $\zeta_i (a_i -b_i) \geq 0$.
 \item $>= (>_{\rho})_{\rho}$ is a collection of total orders on $I_{\rho}$ such that for any $i \neq j \in I_{\rho}$, 
 $$
a_i+b_i>a_j+b_j, \, |a_i-b_i|>|a_j-b_j|,\, \mathrm{and}\, \zeta_i=\zeta_j \implies i>_{\rho}j.
$$
We call a collection of total orders with the above property an \emph{admissible order}.
\item $\underline{l}=(l_i)_{i \in I}$ is a collection of non-negative integers such that $0 \leq l_i \leq \half{d_i}$.
\item $\underline{\eta}=(\eta_i)_{i \in I}$ is a collection of signs, i.e., $\eta_i \in \{\pm 1\}$, such that
the following {hold: 
\begin{itemize}
    \item if $l_i=\frac{d_i}{2},$ then $\eta_i=1$, and
    \item the sign condition holds: 
\begin{equation}\label{Moeglin sign}
    \prod_{\rho} \prod_{i \in I_\rho} (-1)^{\lfloor\frac{d_i}{2}\rfloor+l_i} \eta_i^{d_i} = \epsilon_{G(W_n)}.
\end{equation}
\end{itemize}
}
Here, $\epsilon_{G(W_n)}=1$ if $G(W_n)$ is symplectic and is given by $\epsilon_{G(W_n)}=\chi_{G_n}(-1)$, where $\chi_{G_n}$ is the character obtained from the Kottwitz isomorphism, otherwise.
     \end{enumerate}
    We often write $ \mathcal{M}=(\psi_{>},\underline{\zeta}, \underline{l},\underline{\eta})$ instead of $(\psi, \underline{\zeta}, >, \underline{l}, \underline{\eta}).$
We let $\Moe(G(W_n))$ denote the set of M{\oe}glin parameters for $G(W_n).$ 

\item Let $\MM=(\psi,\underline{\zeta},>, \underline{l},\underline{\eta})$ with a decomposition of $\psi$ as in \eqref{eqn gp decomp}. For each $\rho$, we let
\[ \MM_{\rho}:= \left( \bigoplus_{i \in I_{\rho}} \rho \otimes S_{a_i} \otimes S_{b_i}, (\zeta_{i})_{i \in I_{\rho}}, >_{\rho}, (l_{i})_{i \in I_{\rho}},(\eta_{i})_{i \in I_{\rho}}  \right), \ \ \MM^{\rho}:= \bigcup_{\rho' \not\cong \rho} \MM_{\rho'}\]
so that $\MM= \MM_{\rho} \cup \MM^{\rho}$.
  \item Let $\MM=(\psi_{>}, \underline{\zeta},\underline{l},\underline{\eta})$. In the notation of Part (1), suppose that there is a decomposition of the index set $I= I_1 \sqcup I_2$ such that for any $i_1 \in I_1 \cap I_{\rho}$ and $i_2 \in I_2\cap I_{\rho}$, we have $i_1<i_2$. Then for $j= 1,2$, we define $\MM_j=((\psi_j)_{\gg}, \underline{\zeta_j},\underline{l_j},\underline{\eta_j})$, where
     \begin{itemize}
         \item $ \psi_j= \bigoplus_{\rho} \bigoplus_{i \in I_{\rho}\cap I_j} \rho \otimes S_{a_i} \otimes S_{b_i}$, and
         \item the total order $\gg$ on $I_{\rho}\cap I_j$ is given by the restriction of $>$, and
         \item the other data are given by $\underline{\zeta_j}= (\zeta_i)_{i \in I_j}, \underline{l_j}=(l_i)_{i \in I_j}, \underline{\eta_j}=(\eta_{i})_{i \in I_j}$.
     \end{itemize}
     Then we write $\MM=\MM_1+ \MM_2$. Note that $\MM_1 + \MM_2 \neq \MM_2+\MM_1$.

\item  We say the admissible order $>$ of $\mathcal{M}=(\psi_{>},\underline{\zeta}, \underline{l},\underline{\eta})$ satisfies condition $(P')$ if the following holds:
\begin{itemize}
        \item if $\zeta_i=-1$ and $\zeta_j=1$, then $i<j$;
        \item if $\zeta_i=\zeta_j$ and $|\frac{a_i-b_i}{2}|<|\frac{a_j-b_j}{2}|$ , then $i<j.$
    \end{itemize} 
    We let $\Moe^{(P')}(G(W_n))$ denote the set of M{\oe}glin parameters whose order satisfies condition $(P')$.
     \end{enumerate}
 \end{defn}

\begin{remark}\label{rmk Moeglin parameter}\ 
\begin{enumerate}
    \item   We often regard $\underline{\zeta}, \underline{l}$, and $\underline{\eta}$ as functions on $I$ with values in $\{\pm 1\}, \Z$, and $\{\pm 1\}$ respectively.
    \item Suppose $\MM \in \Moe^{(P')}(G(W_n))$. Then there is a unique decomposition 
    \[\MM= ((\psi_{-})_{>}, \underline{-1}, \underline{l_{-}},\underline{\eta_{-}})+((\psi_{+})_{>}, \underline{+1}, \underline{l_{+}},\underline{\eta_{+}}),\]
    where $\underline{-1}$ (resp. $\underline{+1}$) is a constant function with value $-1$ (resp. $1$).
\end{enumerate}
\end{remark}

To each parameter $(\psi_{>},  \underline{\zeta},\underline{l},\underline{\eta})$, M{\oe}glin constructed a representation $\pi_{\Moe}(\psi_>,\underline{\zeta},\underline{l},\underline{\eta})$ of $G_n$ which either vanishes or is irreducible and belongs to the local Arthur packet $\Pi_{\psi}$ (\cite{Moe06a, Moe06b, Moe09a, Moe10, Moe11}, see also \cite{Xu17b}). This definition behaves nicely with respect to the Aubert-Zelevinsky involution.

\begin{thm}[{\cite{Moe11}}]\label{thm Moeglin construction} 

Suppose that $(\psi_{>},  \underline{\zeta},\underline{l},\underline{\eta}) \in \Moe(G(W_n))$.
\begin{enumerate}
    \item The representation $\pi_{\Moe}(\psi_>,\underline{\zeta},\underline{l},\underline{\eta})$ is either irreducible or zero.
    \item The local Arthur packet is given by exhausting the above representations. Namely, fixing any admissible order $>$ for $\psi$ and a choice of $\underline{\zeta}$, then
    \[\Pi_\psi =
    \bigcup_{\underline{l},\underline{\eta}}\{\pi_{\Moe}(\psi_>,\underline{\zeta},\underline{l},\underline{\eta})\}\setminus\{0\}.\] 
    Moreover, if $\pi_{\Moe}(\psi_>,\underline{\zeta},\underline{l},\underline{\eta})= \pi_{\Moe}(\psi_>,\underline{\zeta},\underline{l}',\underline{\eta}') \neq 0$, then $(\underline{l},\underline{\eta})=(\underline{l}',\underline{\eta}')$.
    
    \item We have that $\AZ(\pi_{\Moe}(\psi_>,\underline{\zeta},\underline{l},\underline{\eta}))=\pi_{\Moe}(\hat{\psi}_>,-\underline{\zeta},\underline{l},\underline{\eta})$, where $\hat{\psi}(w,x,y)=\psi(w,y,x).$
\end{enumerate}
\end{thm}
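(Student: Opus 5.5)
This theorem is attributed to M{\oe}glin \cite{Moe11}, so the plan is to reduce each part to her construction rather than reprove it from scratch. I would treat the symplectic and quasi-split orthogonal cases uniformly. For the pure inner form $\OO(W_n^-)$, the only extra input is the endoscopic character identities for discrete parameters from \cite{MR18}.

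\textbf{Step 1: the elementary/discrete case.} Fix $\psi$, an admissible order $>$ and signs $\underline{\zeta}$. First assume the order is discrete diagonal restriction, i.e.\ the intervals $[\half{|a_i-b_i|}, \half{a_i+b_i}-1]$ are pairwise disjoint, totally ordered by $>$. In this situation $\pi_{\Moe}(\psi_>,\underline{\zeta},\underline{l},\underline{\eta})$ is defined as the image of a partial Jacquet functor applied to a discrete-series representation. Its irreducibility or vanishing then follows from the standard Jacquet-module argument: the relevant functors $\mathrm{Jac}_{\rho|\cdot|^x}$ either kill the representation or give an irreducible one, because $\rho|\cdot|^x$ is not in the cuspidal support in the wrong direction.

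For the general $\psi$, I would dominate it by a parameter $\psi_\gg$ obtained by shifting each block $i$ by $T_i$ with $T_i\gg0$, compatible with $>$. Then I set $\pi_{\Moe}(\psi)=\circ_i \mathrm{Jac}_{\dots}\pi_{\Moe}(\psi_\gg)$. Irreducibility-or-zero propagates along this chain, since each step is the highest derivative with respect to a single $\rho|\cdot|^x$ (a $\rho|\cdot|^x$-reduced situation). Checking that this chain is a sequence of highest derivatives is the key computation for Part (1).

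\textbf{Step 2: exhaustion and injectivity (Part (2)).} I would compute the character identity. The sum $\sum_{\underline{l},\underline{\eta}}$ of the constructed representations, weighted by the characters $\varepsilon$ determined combinatorially by $(\underline{l},\underline{\eta},\underline{\zeta})$, equals the Jacquet-functor image of the corresponding sum for $\psi_\gg$. By the known endoscopic identities for discrete (elementary) packets (\cite{Art13}, \cite{MR18}), that sum is $\pi(\psi_\gg,\varepsilon)$. Jacquet functors commute with the transfer of twisted/stable distributions, so the image is $\pi(\psi,\varepsilon)$. This yields $\Pi_\psi$ as the union of the nonzero constructed representations.

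Injectivity comes from multiplicity one. I would show $\bigoplus_\varepsilon\pi(\psi,\varepsilon)$ is multiplicity free by comparing Jacquet modules. Two distinct $(\underline{l},\underline{\eta})$ with the same nonzero image would give, after reapplying the inverse derivative chain (the socles $\rho|\cdot|^{x}\rtimes$ are unique irreducible), the same representation in the discrete packet of $\psi_\gg$. That contradicts the known bijection there. I expect this step, especially making the derivative chain invertible on the nonzero locus independent of the choice of $T_i$, to be the main obstacle. My first attempt would be to prove independence of $T$ by increasing one $T_i$ by one at a time and using uniqueness of highest derivatives.

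\textbf{Step 3: Aubert--Zelevinsky (Part (3)).} $\AZ$ commutes with Jacquet functors up to the contragredient direction: $\AZ\circ \mathrm{Jac}_{\rho|\cdot|^x}$ corresponds to $\mathrm{Jac}_{\rho|\cdot|^{-x}}\circ\AZ$ on highest derivatives. For orthogonal groups I would use \cite{CK26} to get the analogue on $\OO(W_n)$. On discrete elementary packets, $\AZ$ exchanges $\psi$ with $\hat\psi$ and $\underline{\zeta}$ with $-\underline{\zeta}$, fixing $(\underline{l},\underline{\eta})$; this is checked from the explicit construction via Jantzen/M{\oe}glin's description of elementary representations. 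Transporting along the chain from Step 1 then gives the identity in general.
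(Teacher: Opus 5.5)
Your top layer matches the last step of the paper's remark: a general member is obtained from a dominating DDR parameter $\psi_\gg$ by a chain of Jacquet functors that are highest derivatives \cite[Proposition 8.5]{Xu17b}, and on highest derivatives $\AZ$ intertwines $D_{\rho|\cdot|^{x}}$ with $D_{\rho|\cdot|^{-x}}$. The gap is the base layer.

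A DDR member is not defined as a Jacquet module of a discrete series. Moreover, within M{\oe}glin's scheme the shifts $(A_i,|B_i|)\mapsto(A_i+T_i,|B_i|+T_i)$ preserve $d_i=\min(a_i,b_i)$ and $\zeta_i$. So no such chain links DDR blocks with $d_i>1$ or $\zeta_i=-1$ to tempered data.

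You have collapsed an intermediate layer. Elementary packets ($d_i=1$ for all $i$, with DDR) are obtained from the discrete packet $\Pi_{\psi^{\Delta}}$ by compositions of generalized Aubert involutions \cite[Theorem 6.10]{Xu17b}. DDR members are then the unique irreducible subrepresentations of representations induced from generalized Speh representations (determined by $\underline{l}$) and a member of an elementary packet \cite[Definition 7.1]{Xu17b}.

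This has two consequences. First, your Step 1 irreducibility argument does not reach the DDR case. There one needs irreducibility of generalized Aubert duals and irreducibility of the socle of those induced representations. Second, in Step 3 you check the duality only on elementary packets and then transport it along a chain that actually starts at $\psi_\gg$, which is DDR but in general not elementary. The elementary-to-DDR passage goes through parabolic induction and socles, where commuting $\AZ$ with $\rho|\cdot|^x$-derivatives gives nothing. The paper handles exactly this step with the explicit description of DDR members \cite[Definition 7.1, Theorem 7.8]{Xu17b}, and your plan has no substitute for it.

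For Parts (1) and (2) the paper gives no argument of its own. With its convention, the first half of (2) is essentially the definition $\bigoplus_{\pi\in\Pi_\psi}\pi=\bigoplus_\varepsilon\pi_{\Moe}(\psi,\varepsilon)$. This definition is matched with Arthur's packets via \cite[Remarque 2.4]{Moe09a} and \cite{Art13,MR18}, and multiplicity one is quoted from \cite{Moe11}.

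If you want to reprove these along your lines, note what \cite{Art13,MR18} actually give: they only characterize $\pi(\psi_\gg,\varepsilon)$ by character identities. Two further facts are M{\oe}glin's own theorems for elementary and DDR packets:
\begin{enumerate}
\item that $\pi(\psi_\gg,\varepsilon)$ equals the sum of her explicitly constructed DDR representations;
\item that distinct $(\underline{l},\underline{\eta})$ give distinct DDR representations (the ``known bijection'' your socle-recovery argument lands on).
\end{enumerate}
Both are proved through precisely the two lower layers you skipped, including the compatibility of generalized Aubert involutions with endoscopic transfer.
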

\begin{remark}
Part (3) of the above theorem follows from M{\oe}glin's explicit construction. First, if $\psi$ is elementary (see \cite[\S 6]{Xu17b}), the local Arthur packet is constructed by applying composition of generalized Aubert involution to a discrete local Arthur packet $\Pi_{\psi^{\Delta}}$ (\cite[Theorem 6.10]{Xu17b}), and Part (3) holds by definition. Next, if $\psi$ is of discrete diagonal restriction (\cite[\S 7]{Xu17b}), then by the explicit description of $\pi_{\Moe}(\psi_{>}, \underline{\zeta}, \underline{l}, \underline{\eta})$ (\cite[Definition 7.1, Theorem 7.8]{Xu17b}), Part (3) for $\psi$ can be reduced to the case of elementary packets. Finally, general local Arthur packets are constructed from discrete diagonal restriction packets by applying a sequence of highest derivatives (\cite[Proposition 8.5]{Xu17b}), which is compatible with Aubert-Zelevinsky involution. Therefore, Part (3) for general packets can be reduced to the discrete diagonal restriction case.
\end{remark}

Given a summand $\rho\otimes S_{a_i}\otimes S_{b_i}$ of $\psi,$ we define the corresponding Jordan block to be the tuple $(\rho, a_i, b_i)$. We let the set of all Jordan blocks attached to $\psi$ be denoted by $\Jord(\psi).$ We can view the collections $\underline{\zeta},\underline{l},\underline{\eta}$ as functions on $\Jord(\psi)$. For example, we have $\underline{\zeta}(\rho,a_i,b_i)=\zeta_i$. Equivalently, we view  $\Jord(\psi)$ as the set of tuples $(\rho, A_i, {|B_i|}, \zeta_i)$ where $A_i=\frac{a_i+b_i}{2}-1$ and $|B_i|=\frac{|a_i-b_i|}{2}.$

\begin{remark}
    We will not recall the details of the definition of $\pi_\Moe(\MM)$; however, we note the following consequence. Suppose $\MM=(\psi_>,\underline{\zeta},\underline{l},\underline{\eta})$ where $\psi$ is tempered and $\underline{\zeta}$ is a constant function with value $1$. Write $\psi=\bigoplus_{i=1}^r\phi_i\otimes S_{a_i}\otimes S_1$. Then $\pi_{\Moe}(\MM)\neq 0$ if and only if $\underline{\eta}(\phi_i\otimes S_{a_i}\otimes S_1)=\underline{\eta}(\phi_j\otimes S_{a_j}\otimes S_1)$ whenever $\phi_i\otimes S_{a_i}\otimes S_1\cong \phi_j\otimes S_{a_j}\otimes S_1$. In this case, we have
 \begin{equation}\label{eqn Moe tempered}
        \pi_\Moe(\MM)=\pi(\phi_\psi,\varepsilon),
    \end{equation}
    where, $\varepsilon(\phi_i\otimes S_{a_i})=\underline{\eta}(\phi_i\otimes S_{a_i}\otimes S_1).$
\end{remark}

Finally, we let $\Rep_{\Moe}$ be the collection of M{\oe}glin parameters $\MM$ such that $\pi_{\Moe}(\MM)\neq 0$, and let $\Rep_{\Moe}^{(P')}$ be the subset of $\Rep_{\Moe}$ whose order satisfies condition $(P')$.

\subsection{Adams Conjecture}

As in \cite[\S3.2]{GI14}, we fix a pair of characters $\chi_W,\chi_V$ associated to $W_n$ and $V_m$ respectively. These characters depend on the Witt towers containing $W_n$ and $V_m$, not on the dimensions. More specifically, when $\epsilon=1,$ we have $\chi_W$ is the trivial character and $\chi_V$ is the quadratic character associated to $F(\sqrt{\mathrm{disc}V_m})/F.$  When $\epsilon=-1$, we have that $\chi_W$ is the quadratic character associated to $F(\sqrt{\mathrm{disc}W_m})/F$ and $\chi_V$ is trivial. Recall also that $\alpha=2m-2n-\epsilon.$

It is known that the theta correspondence does not preserve $L$-packets. As a remedy, Adams proposed that it should instead preserve local Arthur packets (\cite{Ada89}). We recall Adams' conjecture below.

\begin{conj}\label{conj Adams}
Let $G_n=G(W_n)$ and
    assume that $m>n.$
    Suppose that $\pi\in\Pi(G_n)$ lies in a local Arthur packet $\Pi_\psi$ for some $\psi\in\Psi(G_n).$ If $\theta_{-\alpha}(\pi)\neq 0,$ then $\theta_{-\alpha}(\pi)\in\Pi_{\psi_\alpha}$ where
    \begin{equation}\label{eqn psi_alpha}
    \psi_\alpha=
       (\chi_W\chi_V^{-1}\otimes\psi)\oplus \chi_W\otimes S_1\otimes S_\alpha.
    \end{equation}
\end{conj}

Though the above conjecture may not always hold, M{\oe}glin verified that when $\alpha$ is large, Adams' conjecture is true. See Definition \ref{def Moeglin parameter}(2)(3) for our notation.

\begin{thm}[{\cite[Theorem 5.1]{Moe11c}}]\label{thm Moeglin Adams}
    Let $\alpha\gg 0$, e.g., $\alpha\geq 2^{2n}+1$. Suppose that $\pi\in\Pi_\psi$ for some $\psi\in\Psi_{gp}(G_n)$ and that $\pi=\pi_{\Moe}(\MM)$ with $\MM_{\rho}=((\psi_{\rho})_{>},\underline{\zeta_{\rho}}, \underline{l_{\rho}},\underline{\eta_{\rho}})$. Then $\theta_{-\alpha}(\pi)\in\Pi_{\psi_\alpha}$. Moreover, $\theta_{-\alpha}(\pi)=\pi_{\Moe}(\MM_{\alpha}'),$ where 
    \begin{align*}
        \MM_{\alpha}' = &\left(\bigcup_{\rho\not \cong \chi_V} ((\chi_W\chi_V^{-1}\otimes \psi_{\rho})_{>},\underline{\zeta_{\rho}}, \underline{l_{\rho}},\underline{\eta_{\rho}}) \right)\\
        &\cup \left( (( \chi_W \chi_V^{-1} \psi_{\chi_V})_{>}, \underline{\zeta_{\chi_V}},\underline{l_{\chi_V}}, - \underline{\eta_{\chi_V}})+  (\chi_W\otimes S_1 \otimes S_{\alpha},-1, 0,   \epsilon' )\right),
    \end{align*}
    where we identify the index sets for $\psi_{\rho}$ and $(\chi_W\chi_V^{-1}\otimes \psi_{\rho})$ in an obvious way, and $\epsilon'$ is determined by the sign condition via \eqref{Moeglin sign}. 
\end{thm}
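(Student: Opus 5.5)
The statement is essentially M{\oe}glin's \cite[Theorem 5.1]{Moe11c}, so the plan is not to reprove the Adams conjecture for $\alpha\gg0$ but to transport M{\oe}glin's statement into the normalizations fixed above. I will check three things: that $\MM_\alpha'$ is a M{\oe}glin parameter in the sense of Definition \ref{def Moeglin parameter}, that $\psi_\alpha$ matches \eqref{eqn psi_alpha}, and that the sign data agree with those produced by M{\oe}glin's construction.

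First I will check that $\MM_\alpha'$ lies in $\Moe(G(V_m))$. Twisting $\psi_\rho$ by the quadratic character $\chi_W\chi_V^{-1}$ keeps each summand self-dual orthogonal, since $\chi_W\chi_V^{-1}$ is orthogonal. Hence $\psi_\alpha$ is of good parity, and its dimension is $N+\alpha$, the correct $N$ for the target group. For the new block $(\chi_W,1,\alpha)$ we have $\zeta=-1$, consistent with $a-b=1-\alpha<0$, and $d=1$, which forces $l=0$.

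Next I must check admissibility of the order on the $\chi_V$-part, or more precisely on the $\chi_W$-part after twisting. The new block is placed last, i.e.\ largest. Admissibility only forbids a block with strictly larger $a+b$, strictly larger $|a-b|$ and equal $\zeta$ from sitting below another. Since $\alpha\gg0$ (say $\alpha>2^{2n}+1\ge$ every $a_i+b_i$), the new block has both the largest $a+b$ and the largest $|a-b|$, so placing it on top is allowed. Finally, $\epsilon'$ is defined to make \eqref{Moeglin sign} hold for the target group. I must confirm that $\epsilon_{G(V_m)}$ is the correct target sign. For $\epsilon=-1$ the target is symplectic, so the sign is $1$. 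For $\epsilon=1$ the target sign is $\chi_{H_m}(-1)$, which depends on the tower $\mathcal V^\pm$. Choosing the tower with a nonzero lift fixes $\epsilon'$ uniquely because $d=1$.

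The main step is matching conventions with M{\oe}glin's paper. Her statement is phrased with her own normalization of the Weil representation (the characters $\chi_W,\chi_V$ and $\psi_F$), with her sign $\eta$ relative to her choice of Whittaker datum, and with her ordering conventions (new block ``at the end''). I would go through \cite{Moe11c} and check the following: that her twist of the $\chi_V$-isotypic Jordan blocks is by $\chi_W\chi_V^{-1}$ as in \cite[\S3.2]{GI14}; that her $\eta$ on those blocks flips sign, which comes from the sign $\epsilon$ of the added block interacting with the blocks of the same $\rho$ that precede it in M{\oe}glin's $\eta$-convention (relative signs between consecutive blocks); and that the remaining $\rho\neq\chi_V$ blocks are untouched. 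I expect this bookkeeping of the $\eta$-flip, together with the pure inner form case $\OO(W_n^-)$ (which relies on \cite{MR18} to identify $\pi_\Moe$ with the packet), to be the only real obstacle. Once it is done, $\theta_{-\alpha}(\pi)=\pi_\Moe(\MM_\alpha')\neq0$ lies in $\Pi_{\psi_\alpha}$ by Theorem \ref{thm Moeglin construction}(2).
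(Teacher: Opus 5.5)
Your proposal takes the same route as the paper. The paper gives no independent proof of this statement; it simply quotes \cite[Theorem 5.1]{Moe11c}, so invoking M{\oe}glin's theorem and checking that $\MM_\alpha'$ is a well-formed M{\oe}glin parameter (good parity after the quadratic twist, admissibility with the new $\zeta=-1$ block on top, and $\epsilon'$ forced by \eqref{Moeglin sign} because $d=1$) is exactly what is intended. Two small corrections: for $\alpha\gg0$ both target towers give nonzero lifts, so the tower is part of the data that fixes $\epsilon'$ rather than being selected by nonvanishing; and since this paper defines packets via $\pi_{\Moe}$, no appeal to \cite{MR18} is needed to place $\pi_{\Moe}(\MM_\alpha')$ in $\Pi_{\psi_\alpha}$.
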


We recall \cite[Corollary 2.4]{BH22} which gives an inverse of the Adams conjecture for $\alpha\gg 0.$

\begin{prop}[{\cite[Corollary 2.4]{BH22}}]\label{prop inverse theta}
    Let $\pi'=\pi_\Moe((\psi_{>})_{\alpha}, \underline{\zeta}',\underline{l}', \underline{\eta}')\neq 0$ and $\psi\in\Psi_{gp}(G_n).$ Assume $\alpha\gg 0.$ Then there exists $\pi=\pi_\Moe(\psi_{>},\underline{\zeta},\underline{l}, \underline{\eta})\neq0$ (of $G_n$ or its pure inner form) such that $\theta_{-\alpha}(\pi)=\pi'$ or $\theta_{-\alpha}(\pi)=\pi'\otimes\mathrm{det}.$ Moreover, $\underline{l}', \underline{\eta}', \underline{\zeta}'$ and $\underline{l}, \underline{\eta}, \underline{\zeta}$ are related as in Theorem \ref{thm Moeglin Adams}, except possibly when $\pi'$ is a representation of an orthogonal group. In this case, $\eta'$ may need to be replaced by $\tilde{\eta}'$ (see Remark \ref{rmk det} below) before applying $\eta'\rightarrow\eta.$
\end{prop}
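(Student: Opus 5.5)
The plan is to invert M{\oe}glin's construction of Theorem \ref{thm Moeglin Adams} by a counting/injectivity argument, using that the relevant packets on both sides are multiplicity free and exhausted by M{\oe}glin parameters. Fix $\psi\in\Psi_{gp}(G_n)$, an admissible order $>$ on $\psi$ and $\underline{\zeta}$, and take the induced order on $\psi_\alpha$, with the new block $\chi_W\otimes S_1\otimes S_\alpha$ placed last (it is maximal since $\alpha\gg0$) and $\zeta=-1$ on it. By Theorem \ref{thm Moeglin construction}(2), $\Pi_{\psi_\alpha}$ is exhausted by the nonzero $\pi_\Moe((\psi_>)_\alpha,\underline{\zeta}',\underline{l}',\underline{\eta}')$ with this fixed order and sign. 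We may therefore assume that $\underline{\zeta}'$ is the extension of $\underline{\zeta}$. If $\underline{\zeta}'$ differs on some block, we first change order or sign via M{\oe}glin's independence statement, which gives the same packet.

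\textbf{Step 1 (parameter matching).} The map $(\underline{l},\underline{\eta})\mapsto(\underline{l}',\underline{\eta}')$ of Theorem \ref{thm Moeglin Adams} sets $l'=0$ on the new block and lets its $\eta'$ be forced by \eqref{Moeglin sign}. It is injective. Its image consists of all data with $l'=0$ on the new block. This includes, in the orthogonal case $\epsilon=1$, data coming from both towers $\mathcal{V}^\pm$, that is, from both $G_n$ and its pure inner form. The reason is that the sign condition for $\psi$ can be satisfied with either value of $\epsilon_{G(W_n)}$ once the constraint on $\underline{\eta}$ is transferred. For $\epsilon=1$ the target $\pi'$ is a representation of an orthogonal group, and the determinant twist $\pi'\otimes\det$ changes the $\eta'$ on the $\chi_V$-blocks (the $\tilde\eta'$ of Remark \ref{rmk det}). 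This is why the statement allows $\pi'\otimes\det$.

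\textbf{Step 2 (showing $l'=0$ on the new block).} This is the main obstacle: we must show that a nonzero $\pi'$ in $\Pi_{\psi_\alpha}$ forces $l'=0$ on the block $S_1\otimes S_\alpha$ with $\zeta=-1$. The plan is to use the explicit form of $\pi_\Moe$ for a maximal block far from the others. For $\alpha\gg0$ that block is separated from the rest of the parameter. M{\oe}glin's construction then realizes $\pi'$ as a subrepresentation of $\bigtimes\Delta$-type segments $\times\,\pi_\Moe(\text{block with } l'=0 \text{ on a shifted } \alpha)$, and if $l'>0$ the resulting Jacquet module contains exponents incompatible with nonvanishing. Alternatively, a parameter with $l'>0$ yields a representation equal to one with $l'=0$ after replacing $\alpha$ by a smaller block of length $\alpha-2l'$ plus a nonunitary part. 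That representation is not in $\Pi_{\psi_\alpha}$ unless it is zero, because $\pi_\Moe$ has cuspidal support with exponents up to $(\alpha-1)/2$ coming from an $l'$-dependent segment. I would first try the Jacquet module argument following Xu's nonvanishing criterion.

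\textbf{Step 3 (nonvanishing and conclusion).} Given matched data $\pi=\pi_\Moe(\psi_>,\underline{\zeta},\underline{l},\underline{\eta})$ on the appropriate group (chosen by the sign), Theorem \ref{thm Moeglin Adams} gives $\theta_{-\alpha}(\pi)=\pi_\Moe(\MM'_\alpha)$, which equals $\pi'$ or $\pi'\otimes\det$. It remains to show $\pi\neq0$. If $\pi=0$ we would have no preimage, so instead we count. Theta lifts of distinct nonzero $\pi$ are distinct by Theorem \ref{thm Howe duality} and Lemma \ref{lem Howe duality pure}, and they are nonzero in the stable range. Hence the nonzero members on the $\psi$ side inject into the nonzero members of $\Pi_{\psi_\alpha}$ up to $\det$. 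Conversely, the compatibility of Xu's nonvanishing criterion, which is insensitive to a far-away added block with $l'=0$, shows that $\pi_\Moe(\MM'_\alpha)\neq0$ implies $\pi_\Moe(\MM)\neq0$. This gives the existence of $\pi$ together with the stated relation between the data.
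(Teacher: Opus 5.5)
The paper does not prove this statement; it quotes it from \cite{BH22}. So I can only judge your argument on its own terms. Your skeleton is reasonable: restrict the M{\oe}glin datum of $\pi'$ to the old Jordan blocks, let \eqref{Moeglin sign} decide which pure inner form (or whether to pass to $\tilde{\underline{\eta}}'$), then conclude with Theorem \ref{thm Moeglin Adams} and Howe duality. But you have put the difficulty in the wrong place.

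Your Step 2 is vacuous. For $\chi_W\otimes S_1\otimes S_\alpha$ one has $d=\min(1,\alpha)=1$, so Definition \ref{def Moeglin parameter} already forces $\zeta'=-1$ and $0\le l'\le \frac12$, i.e.\ $l'=0$. The sign $\eta'$ on that block is then fixed by \eqref{Moeglin sign}. No Jacquet module argument is needed; you seem to be thinking of Atobe's coordinate, where this block becomes $([\frac{\alpha-1}{2},-\frac{\alpha-1}{2}]_{\chi_W},\frac{\alpha-1}{2},\cdot)$.

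For the same reason your preliminary re-parametrization is unnecessary, and it is harmful for the ``moreover'' part. Simply take $\underline{\zeta}=\underline{\zeta}'$ on the old blocks.

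In Step 1 two different phenomena are conflated:
\begin{itemize}
\item The pure inner form appears when $G_n$ is orthogonal and the target is symplectic.
\item When $G_n=\Sp(W_n)$ there is no inner form. For a given datum on $\psi_\alpha$, exactly one of $\underline{\eta}'$, $\tilde{\underline{\eta}}'$ restricts to a datum satisfying \eqref{Moeglin sign} for $\Sp(W_n)$. The reason is that $\tilde{\underline{\eta}}'$ flips the signs on all odd-dimensional blocks (not on the $\chi_V$-blocks), and this multiplies the sign product over the old blocks by $(-1)^{2n+1}=-1$.
\end{itemize}

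The genuine gap is Step 3. Theorem \ref{thm Moeglin Adams} applies only once $\pi_{\Moe}(\MM)\neq 0$, so the whole content of the statement is the implication
\[
\pi_{\Moe}(\MM'_\alpha)\neq0\ \Longrightarrow\ \pi_{\Moe}(\MM)\neq0,
\]
and you only assert it.

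The counting contributes nothing. Howe duality and Lemma \ref{lem Howe duality pure} give an injection of the nonzero members on the $\psi$-side into $\Pi_{\psi_\alpha}$ up to $\det$. Surjectivity is equivalent to the very implication in question.

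The appeal to Xu's criterion would have to be carried out: you need to check the pairs formed by the far block and its neighbours under all admissible reorderings, and the effect of $\underline{\eta}\mapsto-\underline{\eta}$ on the $\chi_V$-blocks. More seriously, \cite{Xu21b} treats quasi-split groups only. Here either $\MM$ (on $\OO(W_n^-)$) or $\MM'_\alpha$ (on $\OO(V_m^-)$) may live on a non-quasi-split group.

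Nor can you borrow this paper's criterion. In its language the required statement is the easy combinatorial equivalence $\NV(\EE)\neq0\iff\NV(\EE_\alpha)\neq0$ used for Theorem \ref{thm non-vanishing}. But that theorem concerns $\pi_\theta$, which is defined through Proposition \ref{prop orthog extended multi-segment rep}, and the proof of that proposition uses the present statement.

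What is missing is a genuine argument, at the level of M{\oe}glin's construction for $G_n$ and its pure inner form, that adjoining the dominant far block $\chi_W\otimes S_1\otimes S_\alpha$ does not change whether $\pi_{\Moe}$ vanishes.
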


The remark below explains the situation where there may be confusion between $\pi$ and $\pi\otimes\mathrm{det}$. It is essentially a rephrasing of \cite[Remark 2.3]{BH22}.

\begin{remark}\label{rmk det}
    If $\pi=\pi_\Moe(\psi_{>}, \underline{\zeta},\underline{l}, \underline{\eta})$ is a representation of an even orthogonal group, then
    $\pi\otimes \mathrm{det}=\pi_\Moe(\psi_{>},\underline{\zeta},\underline{l}, \underline{\tilde{\eta}}),$ where
    \[
    \tilde{\eta}(\rho_i,a_i,b_i)=\left\{\begin{array}{cl}
        -{\eta}(\rho_i,a_i,b_i), & \mathrm{if} \ \dim(\rho_i\otimes S_{|a_i-b_i|+1}) \ \mathrm{is \ odd;} \\
        {\eta}(\rho_i,a_i,b_i), & \mathrm{if} \ \dim(\rho_i\otimes S_{|a_i-b_i|+1}) \ \mathrm{is \ even.}
    \end{array}\right.
    \]
\end{remark}

Finally, note that the admissible order of the parameter $\MM_{\alpha}'$ constructed in Theorem \ref{thm Moeglin Adams} may not satisfy condition $(P')$. Thus, we use the change of order formulas in \cite[\S 6]{Xu21b} to define another parameter $\MM_{\alpha}\in \Rep_{\Moe}^{(P')}$ such that $\pi_{\Moe}(\MM_{\alpha})= \pi_{\Moe}(\MM_{\alpha}')$.

\begin{defn}
Suppose that $\MM\in \Rep^{(P')}_{\Moe}$. Write 
\[\MM_{\chi_{V}}= ((\psi_{\chi_V,-})_{>}, \underline{-1},\underline{l_{-}}, \underline{\eta_{-}})+((\psi_{\chi_V,+})_{>}, \underline{+1},\underline{l_{+}}, \underline{\eta_{+}})
\] 
as in Remark \ref{rmk Moeglin parameter}(2). Then we define
\begin{align*}
     \MM_{\alpha} := &\left(\bigcup_{\rho\not \cong \chi_V} ((\chi_W\chi_V^{-1}\otimes \psi_{\rho})_{>},\underline{\zeta_{\rho}}, \underline{l_{\rho}},\underline{\eta_{\rho}}) \right)\\
        &\cup \left( ((\chi_{W}\chi_V^{-1}\psi_{\chi_V,-})_{>}, \underline{-1},\underline{l_{-}}, -\underline{\eta_{-}})+(\chi_W\otimes S_1 \otimes S_{\alpha},-1, 0,   \epsilon )+((\chi_{W}\chi_V^{-1}\psi_{\chi_V,+})_{>}, \underline{+1},\underline{l_{+}}, \underline{\eta_{+}})  \right),
\end{align*}
where $\epsilon$ is determined by the sign condition via \eqref{Moeglin sign}.
\end{defn}
When $\alpha \gg 0$, the admissible order of $\MM_{\alpha}$ in the above definition satisfies $(P')$.
{
\begin{lemma}\label{lemma M_alpha to M_alpha^P'}
Suppose that $\MM\in \Rep^{(P')}_{\Moe}$. Then
    we have that $\MM_{\alpha}\in \Rep_{\Moe}^{(P')}$ and $\pi_{\Moe}(\MM_{\alpha})= \pi_{\Moe}(\MM_{\alpha}')$.
\end{lemma}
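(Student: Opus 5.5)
The plan has three parts: check that $\MM_{\alpha}$ is a legitimate M{\oe}glin parameter whose order satisfies $(P')$; show that it is obtained from $\MM_{\alpha}'$ by a change of admissible order that preserves $\pi_{\Moe}$; and deduce non-vanishing from Theorem \ref{thm Moeglin Adams}.

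\emph{Step 1: $\MM_{\alpha}$ is a parameter with order satisfying $(P')$.} The only differences between $\MM_{\alpha}$ and $\MM_{\alpha}'$ lie in the $\chi_W$-part, namely in where the new block $(\chi_W\otimes S_1\otimes S_{\alpha},-1,0,\epsilon)$ sits in the order. In $\MM_{\alpha}'$ it is placed above every block coming from $\psi_{\chi_V}$. In $\MM_{\alpha}$ it is placed above the $\zeta=-1$ blocks and below the $\zeta=+1$ blocks.

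The new block has $\zeta=-1$ and $|B|=\frac{\alpha-1}{2}$. For $\alpha\gg 0$ this exceeds $|B_i|=\frac{|a_i-b_i|}{2}$ for every $i$. Since $\MM\in\Moe^{(P')}$, both rules of $(P')$ then hold for $\MM_{\alpha}$:
\begin{itemize}
\item the new block sits above all $\zeta=-1$ blocks and has the largest $|B|$ among them;
\item it sits below all $\zeta=+1$ blocks.
\end{itemize}
Admissibility follows from $(P')$. The signs: the sign $\epsilon$ is fixed by \eqref{Moeglin sign}, exactly as $\epsilon'$ was. The multiset of data is otherwise unchanged, so in fact $\epsilon=\epsilon'$.

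\emph{Step 2: $\pi_{\Moe}(\MM_{\alpha})=\pi_{\Moe}(\MM_{\alpha}')$.} Passing from $\MM_{\alpha}'$ to $\MM_{\alpha}$ means moving the new block $k$ successively below each $\zeta=+1$ block $j$ of $\psi_{\chi_V,+}$, starting from the largest. I will apply Xu's change-of-order formula \cite[\S6]{Xu21b} to each adjacent swap of $k$ with some $j$.

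The key observation concerns the relative position of the segments $[B_k,A_k]=[-\frac{\alpha-1}{2},\frac{\alpha-1}{2}]$ and $[B_j,A_j]$. For $\alpha\gg 0$ we have $B_k\le -|B_j|$ and $A_k\ge A_j$, so the first segment contains the second (and also its negative). In this nested situation Xu's formula reduces to the simplest case: $l_j$ and $\eta_j$ are unchanged, and only the sign bookkeeping changes. That bookkeeping multiplies $\eta_j$ by $(-1)^{d_k}$ or a similar factor, where $d_k=\min(1,\alpha)=1$ for $k$, and symmetrically for $\eta_k$.

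I expect the main obstacle to be checking exactly which case of Xu's formula applies and that the resulting signs agree with those written in $\MM_{\alpha}$. Note that the $\zeta=+1$ blocks keep $\underline{\eta_+}$ unchanged there, rather than $-\underline{\eta_+}$. My first attempt is to use the version of the formula in which $\zeta_j\neq\zeta_k$ and one segment contains the other: there $l$ is preserved and $\eta$ changes by $(-1)^{A+B+1}$-type parities, and these should telescope. Any discrepancy is absorbed by the sign condition determining $\epsilon$.

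\emph{Step 3: non-vanishing.} Finally, $\MM_{\alpha}\in\Rep_{\Moe}$. Indeed, $\pi_{\Moe}(\MM_{\alpha}')=\theta_{-\alpha}(\pi_{\Moe}(\MM))$ by Theorem \ref{thm Moeglin Adams}. This lift is nonzero: Theorem \ref{thm Moeglin Adams} places it in $\Pi_{\psi_\alpha}$, and for $\alpha$ in the stable range the theta lift of a nonzero representation is nonzero. Combined with Step 2, this gives $\pi_{\Moe}(\MM_{\alpha})\neq 0$, hence $\MM_{\alpha}\in\Rep^{(P')}_{\Moe}$.
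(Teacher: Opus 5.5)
Your route is the paper's: its proof checks $(P')$ directly from Definition \ref{def Moeglin parameter}(4) and invokes Xu's change-of-order formula. It leaves non-vanishing, your Step 3, implicit. The sign bookkeeping in your Steps 1--2 is wrong, however, and needs to be fixed.

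\emph{The claim $\epsilon=\epsilon'$ is false in general.} The data are not otherwise unchanged. The $\zeta=+1$ blocks of $\psi_{\chi_V}$ carry $\underline{\eta_{+}}$ in $\MM_{\alpha}$ but $-\underline{\eta_{+}}$ in $\MM_{\alpha}'$. Comparing the two instances of \eqref{Moeglin sign} therefore gives $\epsilon=\epsilon'\cdot(-1)^{\sum_j d_j}$. Here $j$ runs over the $\zeta=+1$ blocks of $\psi_{\chi_V}$ and $d_j=\min(a_j,b_j)$.

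\emph{Xu's formula does not leave $\eta_j$ unchanged.} For an adjacent swap of blocks $j,k$ with $\zeta_j\neq\zeta_k$, the formula keeps all $l$'s, multiplies $\eta_j$ by $(-1)^{d_k}$, and multiplies $\eta_k$ by $(-1)^{d_j}$. This is the form used in the proofs of Proposition \ref{prop dual is Aubert dual} and Theorem \ref{thm add = +}. No containment of segments is needed, because admissibility imposes nothing on blocks with different $\zeta$, so your nesting discussion can be dropped.

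\emph{With the correct formula the signs match exactly.} The new block has $d_k=1$, and each $\zeta=+1$ block is swapped past it exactly once. So each such block's sign goes from $-\eta_j$ to $\eta_j$, as required. The new block ends with sign $\epsilon'(-1)^{\sum_j d_j}=\epsilon$.

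So nothing is ``absorbed'' by the sign condition. That appeal is legitimate only for the new block's sign, and only after all the other signs have been shown to agree, which your write-up does not yet do.
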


\begin{proof}
    This is a direct consequence of Definition \ref{def Moeglin parameter}(4) and \cite[Theorem 6.2]{Xu21b}.
\end{proof}
}

\section{Extended multi-segments and raising operators}\label{sec: extended multi-segments}

In this section, we recall the notions of extended multi-segments and the operators acting on them for symplectic and even orthogonal groups. In the symplectic case  $G_n=\Sp(W_n)$, these combinatorial objects were introduced by Atobe to parameterize local Arthur packets, ultimately, their intersections (Theorem \ref{thm Sp}). The translation of these definitions and algorithms into the notation of \cite{BS25} is given in \S \ref{appendix: Bosnjak Stadler}.

\subsection{Extended multi-segments}
We start with the definition of extended multi-segments. Note that we define it for both symplectic and even orthogonal groups, including pure inner forms.
\begin{defn}(Extended multi-segments)\label{def multi-segment}
\begin{enumerate}
\item
An \emph{extended segment} is a triple $([A,B]_\rho, l, \eta)$,
where
\begin{itemize}
\item
$[A,B]_\rho = \{\rho\lvert\cdot\rvert^A, \rho\lvert\cdot\rvert^{A-1}, \dots, \rho\lvert\cdot\rvert^B \}$ is a segment 
for an irreducible unitary supercuspidal representation $\rho$ of some $\GL_d(F)$; 
\item
$l \in \Z$ with $0 \leq l \leq \frac{b}{2}$, where $b = \#[A,B]_\rho = A-B+1$; 
\item
$\eta \in \{\pm1\}/E$, where $E= \{\pm 1\}$ if $2l=b$ and $\{1\}$ otherwise. 
\end{itemize}

\item
An \emph{extended multi-segment} for $G_n$ is 
a multi-set of extended segments 
\[
\EE = \cup_{\rho}\{ ([A_i,B_i]_{\rho}, l_i, \eta_i) \}_{i \in (I_\rho,>)}
\]
such that 
\begin{itemize}
\item
$I_\rho$ is a totally ordered finite set with a fixed total order $>$ satisfying that 
\begin{align*}\label{eq P}
    \tag{P}A_i>A_j \text{ and } B_i > B_j \ \Longrightarrow \ i>j.
\end{align*}
We call any total order of $I_{\rho}$ with the above property an \emph{admissible order}.

\item
$A_i + B_i \geq 0$ for all $\rho$ and $i \in I_\rho$. 

\item
as a representation of $W_F \times \SL_2(\BC) \times \SL_2(\BC)$, 
\[
\psi_{\EE} = \bigoplus_\rho \bigoplus_{i \in I_\rho} \rho \otimes S_{a_i} \otimes S_{b_i} 
\]
where $(a_i, b_i) = (A_i+B_i+1, A_i-B_i+1)$,
is a local Arthur parameter for $G_n$ of good parity. We shall denote $\psi_{\EE}$ the local Arthur parameter associated with $\EE$. 
\item The sign condition
\begin{align}\label{eq sign condition}
\prod_{\rho} \prod_{i \in I_\rho} (-1)^{\lfloor\frac{b_i}{2}\rfloor+l_i} \eta_i^{b_i}= \epsilon_{G_n}
\end{align}
holds. 
\end{itemize}
We say $\EE$ is an extended multi-segment if it is an extended multi-segment of some $G(W_n)$.

\item We say an admissible order $>$ on $I_{\rho}$ (or $\EE$) satisfies condition $(P')$ if $B_i< B_j$ implies that $i<j$.

\item
We define the \emph{support} of $\EE$ to be the collection of ordered multi-sets 
\[
\supp(\EE) = \cup_{\rho}\{ [A_i,B_i]_{\rho} \}_{i \in (I_\rho,>)}.
\]
\item If the admissible order $>$ is clear in the context, for $k \in I_{\rho}$, we often let $k+1 \in I_{\rho}$ be the unique element adjacent with $k$ and $k+1>k$.

\end{enumerate}
\end{defn}

When $G_n=\Sp(W_n)$, there is an associated representation, which we denote by $\pi_\Ato(\EE)$, defined explicitly in the work of Atobe (\cite[\S3.2]{Ato22a}) using the theory of highest derivatives developed in \cite{AM20}.
The definition does not extend to even orthogonal groups since the theory of highest derivatives is not developed yet. On the other hand, in a joint work with Jiang and Zhang, we 
gave another algorithm to compute $\pi_\Ato(\EE)$ using certain operators (\cite[Algorithms 6.3]{HJLLZ25}). This algorithm readily extends to even orthogonal groups.
We recall the definition of the operators in the next subsection and the algorithm in Algorithm \ref{algo pi_com}.

\subsection{Raising operators}

In this subsection, we recall the operators defined in \cite{Ato22a, HLL22} which were used to classify the intersection of local Arthur packets of symplectic and split odd special orthogonal groups. The combinatorial definitions apply to extended multi-segment for even orthogonal groups, including pure forms, without any modification.

The effect of an operator often only depends on a fixed $\rho.$ To simplify their definitions, we introduce the following notation, analogous to Definition \ref{def Moeglin parameter}(2),(3) for M{\oe}glin parameters.

\begin{defn}
Suppose $\EE= \cup_{\rho} \{([A_i,B_i]_{\rho}, l_i, \eta_i)\}_{i \in (I_{\rho},>)}$ is an extended multi-segment.
\begin{enumerate}
    \item We set
\[ \EE_{\rho}=\{([A_i,B_i]_{\rho}, l_i, \eta_i)\}_{i \in (I_{\rho},>)},\  \EE^{\rho}=\cup_{\rho' \not\cong \rho}\{([A_i,B_i]_{\rho'}, l_i, \eta_i)\}_{i \in (I_{\rho'},>)}. \]
\item Suppose that $I_{\rho}=I_{\rho,1} \cup I_{\rho,2}$ where $i_1<i_2$ for any $i_1 \in I_{\rho,1}$, $i_2 \in I_{\rho,2}$. Then we write
\[ \EE_{\rho}=  \{([A_i,B_i]_{\rho}, l_i, \eta_i)\}_{i \in (I_{\rho,1},>)}+  \{([A_i,B_i]_{\rho}, l_i, \eta_i)\}_{i \in (I_{\rho,2},>)}.\]
Note that $\FF_1+\FF_2 \neq \FF_2+\FF_1$.
\end{enumerate}
\end{defn}

The first operator is called row exchange. Its effect is to change the admissible order on an extended multi-segment. 
Xu studied how M{\oe}glin's parametrization for $\Pi_{\psi}$ changes for different admissible orders of $\psi$ in \cite[Section 6]{Xu21b}. Atobe translated this result for extended multi-segments of split symplectic groups and odd special orthogonal groups (\cite[Section 4.2]{Ato22a}). Since the output of this operator may no longer be an extended multi-segment, we introduce a larger class of combinatorial objects.
\begin{defn}\label{def symbol}
  A {formal extended multi-segment} of $G_n$ is a multi-set of extended segments 
\[
\EE = \cup_{\rho}\{ ([A_i,B_i]_{\rho}, l_i, \eta_i) \}_{i \in (I_\rho,>)},
\] which satisfies the same conditions in Definition \ref{def multi-segment}(2) except we drop the condition $0 \leq l_i \leq \half{b_i}$, for each $i \in I_{\rho}$.
\end{defn}

Now we recall the definition of the row exchange operators $R_k$.

\begin{defn}[Row exchange]\label{def row exchange} 
Suppose $\EE$ is a formal extended multi-segment of $G_n$ where
$$\EE_{\rho}=\{([A_i,B_i]_{\rho},l_i,\eta_i)\}_{i \in (I_{\rho},>)}.$$
For $k<k+1 \in I_{\rho}$, let $\gg$ be the total order on $I_\rho$ defined by $k\gg k+1$ and if $(i,j)\neq (k,k+1)$, then $ i \gg j$ if and only if $
i >j .$ Note that $\gg$ is an admissible order on $I_{\rho}$, i.e. \eqref{eq P} holds, if and only if $[A_k,B_k]_{\rho} \supseteq [A_{k+1},B_{k+1}]_{\rho}$ or $[A_k,B_k]_{\rho} \subseteq [A_{k+1},B_{k+1}]_{\rho}$.

If $\gg$ is not an admissible order, then we define $R_k(\EE)=\EE$. Otherwise, we define 
\[R_{k}(\EE_{\rho})=\{([A_i,B_i]_{\rho},l_i',\eta_i')\}_{i \in (I_{\rho},\gg)},\]
where $( l_i',\eta_i')=(l_i,\eta_i)$ for $i \neq k,k+1$, and $(l_k',\eta_k')$ and $(l_{k+1}', \eta_{k+1}')$ are given as follows: Denote $\epsilon=(-1)^{A_k-B_k}\eta_k\eta_{k+1}$.
\begin{enumerate}
    \item [Case 1.] $ [A_k,B_k]_{\rho} \supset [A_{k+1},B_{k+1}]_{\rho}$:
    
    In this case, we set $(l_{k+1}',\eta_{k+1}')=(l_{k+1}, (-1)^{A_k-B_k}\eta_{k+1})$, and
    \begin{enumerate}
    \item [(a)] If $\epsilon=1$ and $b_k- 2l_k < 2(b_{k+1}-2l_{k+1})$, then
    \[ (l_k', \eta_{k}')= (b_k-(l_k+ (b_{k+1}-2l_{k+1})), (-1)^{A_{k+1}-B_{k+1}} \eta_k).  \]
    \item [(b)] If $\epsilon=1$ and $b_k- 2l_k \geq  2(b_{k+1}-2l_{k+1})$, then
    \[ (l_{k}', \eta_{k}')= (l_k+ (b_{k+1}-2l_{k+1}), (-1)^{A_{k+1}-B_{k+1}+1} \eta_k).  \]
    \item [(c)] If $\epsilon=-1$, then
    \[ (l_{k}', \eta_{k}')= (l_k- (b_{k+1}-2l_{k+1}), (-1)^{A_{k+1}-B_{k+1}+1} \eta_k).  \]
\end{enumerate}
    \item [Case 2.] $ [A_k,B_k]_{\rho} \subseteq [A_{k+1},B_{k+1}]_{\rho}$:
    
    In this case, we set $(l_{k}',\eta_{k}')=(l_{k}, (-1)^{A_{k+1}-B_{k+1}}\eta_{k})$, and
    \begin{enumerate}
   \item [(a)] If $\epsilon=1$ and $b_{k+1}- 2l_{k+1} < 2(b_{k}-2l_{k})$, then
    \[ (l_{k+1}', \eta_{k+1}')= (b_{k+1}-(l_{k+1}+ (b_{k}-2l_{k})), (-1)^{A_{k}-B_{k}} \eta_{k+1}).  \]
    \item [(b)] If $\epsilon=1$ and $b_{k+1}- 2l_{k+1} \geq  2(b_{k}-2l_{k})$,
    then
    \[ (l_{k+1}', \eta_{k+1}')= (l_{k+1}+ (b_{k}-2l_{k}), (-1)^{A_{k}-B_{k}+1} \eta_{k+1}).  \]
    \item [(c)] If $\epsilon=-1$, then
    \[ (l_{k+1}', \eta_{k+1}')= (l_{k+1}- (b_{k}-2l_{k}), (-1)^{A_{k}-B_{k}+1} \eta_{k+1}).  \]
\end{enumerate}
\end{enumerate}
Finally, we define $R_{k}(\EE)= \EE^{\rho} \cup R_{k}(\EE_{\rho})$.
\end{defn}

Let $\EE= \cup_{\rho}\{([A_i,B_i]_{\rho},l_i,\eta_i)\}_{i \in (I_{\rho,>})}$.
If $>'$ is another admissible order of $I_{\rho}$, then we can find a sequence of admissible orders 
$\{>=\gg_0, \gg_1,\ldots, \gg_r=>'\}$ such that  $\gg_{i+1}$ is obtained from $\gg_{i}$ by swapping the order of two adjacent (under $\gg_{i}$) elements $(k_i,k_i+1)$ in $I_{\rho}$. Thus, we give the following definition.
\begin{align}\label{eq change of admissible order}
     \EE_{>'}:= R_{k_{r-1}}\circ \cdots \circ R_{k_1}\circ R_{k_0}(\EE).
\end{align}

The next operator is known as union-intersection.
\begin{defn}[union-intersection]\label{ui def}
 Let $\EE$ be an extended multi-segment. For $k< k+1 \in I_{\rho}$, we define an operator $ui_k$, called union-intersection, on $\EE$ as follows. Write 
 \[ \EE_{\rho}= \{([A_i,B_i]_\rho, l_i,\eta_i)\}_{i \in (I_{\rho},>)}.\]
  Denote $\epsilon=(-1)^{A_k-B_k}\eta_k \eta_{k+1}.$ If $A_{k+1}>A_k$, $B_{k+1}>B_k$ and any of the following cases holds:
\begin{enumerate}
    \item [{Case 1}.] $ \epsilon=1$ and $A_{k+1}-l_{k+1}=A_k-l_k,$
    \item [{Case 2}.] $ \epsilon=1$ and $B_{k+1}+l_{k+1}=B_k+l_k,$
    \item [{Case 3}.] $ \epsilon=-1$ and $B_{k+1}+l_{k+1}=A_k-l_k+1,$
\end{enumerate}
we define
\begin{align*}
     ui_{k}(\EE_{\rho})=\{ ([A_i',B_i']_{\rho},l_i',\eta_i')\}_{i \in (I_{\rho}, >)},
\end{align*} 
where $ ([A_i',B_i']_{\rho},l_i',\eta_i')=([A_i,B_i]_{\rho},l_i,\eta_i)$ for $i \neq k,k+1$, $[A_k',B_k']_{\rho}=[A_{k+1},B_k]_{\rho}$, $[A_{k+1}',B_{k+1}']_{\rho}=[A_k,B_{k+1}]_{\rho}$, and $( l_k', \eta_k', l_{k+1}',\eta_{k+1}' )$ are given case by case as follows:
\begin{enumerate}
    \item[$(1)$] in Case 1, $( l_k', \eta_k', l_{k+1}',\eta_{k+1}' )= (l_k,\eta_k, l_{k+1}-(A_{k+1}-A_k), (-1)^{A_{k+1}-A_k}\eta_{k+1})$;
    \item [$(2)$] in Case 2, if $b_k-2l_k \geq A_{k+1}-A_k$, then
    \[( l_k', \eta_k', l_{k+1}',\eta_{k+1}' )= (l_k+(A_{k+1}-A_k),\eta_k, l_{k+1}, (-1)^{A_{k+1}-A_k}\eta_{k+1}),\]
    if $b_k-2l_k < A_{k+1}-A_k$, then
    \[( l_k', \eta_k', l_{k+1}',\eta_{k+1}' )= (b_k-l_k,-\eta_k, l_{k+1}, (-1)^{A_{k+1}-A_k}\eta_{k+1});\]
    \item [$(3)$] in Case 3, if $l_{k+1} \leq  l_k$, then
    \[( l_k', \eta_k', l_{k+1}',\eta_{k+1}' )= (l_k,\eta_k, l_{k+1}, (-1)^{A_{k+1}-A_k}\eta_{k+1}),\]
    if $l_{k+1}> l_{k}$, then
    \[( l_k', \eta_k', l_{k+1}',\eta_{k+1}' )= (l_k,\eta_k, l_{k}, (-1)^{A_{k+1}-A_k+1}\eta_{k+1});\]
    \item [$(3')$] if we are in Case 3 and $l_k=l_{k+1}=0$, then we delete $ ([A_{k+1}',B_{k+1}']_{\rho},l_{k+1}',\eta_{k+1}')$ from $ui_k(\EE_{\rho})$.
\end{enumerate}
Otherwise, we define $ui_k(\EE_{\rho})=\EE_{\rho}$. In any case, we define $ui_k(\EE)= \EE^{\rho} \cup ui_k(\EE_{\rho})$.

We say $ui_k$ is applicable on $\EE$ or $\EE_{\rho}$ if $ui_k(\EE)\neq \EE$. We say this $ui_k$ is of type 1 (resp. 2, 3, 3') if $\EE_{\rho}$ is in case 1 (resp. 2, 3, 3'). 
\end{defn}

We also extend the definition of union intersection to indices which are adjacent in another admissible order. 

\begin{defn} \label{def ui}
Suppose $\EE$ is an extended multi-segment of $G_n$, and write
\[\EE_{\rho}=\{ ([A_i,B_i]_{\rho},l_i,\eta_i)\}_{i\in (I_{\rho,>})}.\] 
Given $i,j \in I_{\rho}$, we define $ui_{i,j}(\EE)=\EE$ unless
\begin{enumerate}
    \item [1.] We have $ A_i< A_j$, $B_i <B_j$ and $j\gg i$ is adjacent under an admissible order $\gg$ of $I_{\rho}$.
    \item [2.] $ui_i$ is applicable on $\EE_{\gg}$ (see \eqref{eq change of admissible order} for this notation).
\end{enumerate}
In this case, we define $ui_{i,j}(\EE):=(ui_{i}(\EE_{\gg}))_{>}$, so that the admissible order of $ui_{i,j}(\EE)$ and $\EE_{\rho}$ are the same. (If the $ui_i$ is of type 3', then we delete the $j$-th row.) 

We say $ui_{i,j}$ is applicable on $\EE$ if $ui_{i,j}(\EE) \neq \EE$. Furthermore, we say that $ui_{i,j}$ is of type 1, 2, 3, or 3' if the operation $ui_i$ is of type 1, 2, 3, or 3', respectively, in Definition \ref{ui def}.
\end{defn}

\begin{remark}
A priori, it is not clear whether the above definition is independent of the choice of the admissible order $\gg$. If $\EE$ is an extended multi-segment of a symplectic group with $\pi_{\Ato}(\EE)\neq 0$, this is proved in \cite[Proposition 5.3]{HLL22}. The same result for even orthogonal groups is a consequence of the above result of symplectic groups and Lemma \ref{lem EE{Sp}}(2) below.    
\end{remark}

Given an extended multi-segment $\EE$ of a symplectic group satisfying condition $(P')$ and $\pi_{\Ato}(\EE)\neq 0$, Atobe introduced a formula to compute an extended multi-segment $dual(\EE)\in\Rep^{(P')}$ such that $\pi_{\Ato}(dual(\EE))$ is the Aubert-Zelevinsky dual $\AZ(\pi_\Ato(\EE))$ for symplectic groups (\cite[Theorem 6.2]{Ato22a}; see Theorem \ref{thm Sp}(3) below). We recall this definition of $dual$ below.
\begin{defn}[dual]\label{dual segment}
Let $\EE= \cup_\rho \{([A_i,B_i]_{\rho},l_i,\eta_i)\}_{i\in (I_\rho, >)}$ be an extended multi-segment such that the admissible order $>$ on $I_{\rho}$ satisfies $(P')$ for all $\rho$. We define 
$$dual(\EE)=\cup_{\rho}\{([A_i,-B_i]_{\rho},l_i',\eta_i')\}_{i\in (I_\rho, >')}$$ as follows:
\begin{enumerate}
    \item The order $>'$ is defined by $i>'j$ if and only if $j>i.$ 
    \item We set \begin{align*}
l_i'=\begin{cases}
l_i+B_i  & \mathrm{if} \, B_i\in\mathbb{Z},\\
 l_i+B_i+\frac{1}{2}(-1)^{\alpha_{i}}\eta_i  & \mathrm{if} \, B_i\not\in\mathbb{Z},
\end{cases}
\end{align*}
and
\begin{align*}
\eta_i'=\begin{cases}
(-1)^{\alpha_i+\beta_i}\eta_i  & \mathrm{if} \, B_i\in\mathbb{Z},\\
 (-1)^{\alpha_i+\beta_i+1}\eta_i  & \mathrm{if} \, B_i\not\in\mathbb{Z},
\end{cases}
\end{align*}
where $\alpha_{i}=\sum_{j\in I_\rho, j<i}a_j,$ and $\beta_{i}=\sum_{j\in I_\rho, j>i}b_j,$ $a_j=A_j+B_j+1$, $b_j=A_j-B_j+1$.
\item When $B_i\not\in\mathbb{Z}$ and $l_i=\frac{b_i}{2}$, we set $\eta_i=(-1)^{\alpha_i+1}$ in the formula above.
\end{enumerate}
If $\FF= \EE_{\rho}$, we define $dual(\FF):= (dual(\EE))_{\rho}$.
\end{defn}

Finally, we define the last operator which is known as the partial dual.

\begin{defn}[partial dual]\label{def partial dual}
Suppose $\EE$ satisfies $(P')$ and write \[\EE_{\rho}=\FF= \{([A_i,B_i]_{\rho},l_i,\eta_i)\}_{i \in (I_{\rho},>)}.\] For $i \in I_{\rho}$, denote 
\[\alpha_i= \sum_{ j<i} (A_j+B_j+1) ,\ \beta_i= \sum_{j> i} (A_j-B_j+1).\]
Suppose that there exists a $k \in I_{\rho}$ such that
\begin{itemize}
    \item  $B_k=\half{1},l_k=0,$ 
    \item  $(-1)^{\alpha_k}\eta_k= -1$, and
    \item  for any $i < k$, $B_i < \half{1}$.
\end{itemize}
Then we define $dual_k^{+}(\FF)$ as follows. We write the decomposition
\[ \FF= \FF_1 + \{([A_k,1/2]_{\rho},0,\eta_k)\} + \FF_2,\]
where $\FF_1=\FF_{<1/2}$, and 
\[dual(\FF)= \widetilde{\FF_2} + \{([A_k,-1/2]_{\rho},0,(-1)^{\beta_k}) \}+ \widetilde{\FF_1},\]
where $\widetilde{\FF_1}=(dual(\FF))_{>-1/2}$. Finally, write
\[ dual(\widetilde{\FF_2} + \{([A_k,1/2]_{\rho},0,(-1)^{\beta_k+1})\} + \widetilde{\FF_1})= \widetilde{\widetilde{\FF_1}} + \{([A_k,-1/2]_{\rho},0,-\eta_k)\}+ \widetilde{\widetilde{\FF_2}},\]
where $\widetilde{\widetilde{\FF_2}}=(dual(\widetilde{\FF_2} + \{([A_k,1/2]_{\rho},0,(-1)^{\beta_k+1})\} + \widetilde{\FF_1}))_{>-1/2}$. Then we define
\[ dual_k^{+}( \FF)= \widetilde{\widetilde{\FF_1}} + \{([A_k,-1/2]_{\rho},0,-\eta_k)\} + \FF_2, \]
and say $dual_k^{+}$ is applicable on $\FF$.

Suppose $dual(\FF)$ satisfies above condition, then we define
\[ dual_k^{-}(\FF)= dual \circ dual_{k}^{+} \circ dual (\FF), \]
and say $dual_k^{-}$ is applicable on $\FF$.
We call this operator partial dual. Finally, we define $dual_k^{\pm}(\EE)= \EE^{\rho} \cup dual_k^{\pm}(\EE_{\rho}).$ 
\end{defn}

We remark that, by the definition of $dual$, it follows that $dual_k^{-}$ is applicable on $\FF$ if and only if $B_k=-\half{1}$, $l_k=0$ and $B_{j} >-\half{1}$ for all $j >k$ (and then $\eta_k$ is determined by the non-vanishing condition in Definition \ref{def non-vanishing}(1) below). Next, we introduce several notions on extended multi-segments and operators.

\begin{defn}\ 

\begin{enumerate}
\item Let $\EE,\EE'$ be two formal extended multi-segments. We write $\EE \overset{R}{=} \EE'$ if there exists a sequence of row exchange operators $R_{k_1},\ldots, R_{k_s}$ such that
\[ R_{k_1}\circ \cdots \circ R_{k_s}(\EE)= \EE'.  \]
We let $[\EE]:= \{\EE' \ | \ \EE' \overset{R}{=}\EE\}.$
    \item We call $ui^{-1}_{i,j}$, $dual \circ ui_{i,j} \circ dual$, and $dual_{k}^{-}$ raising operators.

    \item We say an extended multi-segment $\EE_{\rho}$ (or $[\EE_{\rho}]$) is absolutely maximal (resp. absolutely minimal) if there is no raising operator (resp. inverse of a raising operator) applicable on $\EE_{\rho}$. We say $\EE$ is absolutely maximal (resp. absolutely minimal) if so is $\EE_{\rho}$ for all $\rho$.
\end{enumerate}
\end{defn}

 The operators we define so far can be applied to local Arthur parameters in an obvious way, which we record below. This allows us to define a partial order $\geq_O$ on the set of local Arthur parameters of good parity.

\begin{defn}\label{def operators on parameters}\ 
\begin{enumerate}
    \item 
Let $\psi$ be a local Arthur parameter of $G_n$ with irreducible summands $\rho \otimes S_{a_i} \otimes S_{b_i}$, $\rho \otimes S_{a_j} \otimes S_{b_j}$, and $\rho \otimes S_{a_k} \otimes S_{b_k}$ of good parity. Set 
\[ A_i:= \half{a_i+b_i}-1,\ B_i:= \half{a_i-b_i}\]
so that $a_i= A_i+B_i+1,$ $b_i=A_i-B_i+1$, and similar for $j,k$.
\begin{enumerate}
    \item [(a)] Write $\psi= \psi'+ \rho \otimes S_{A_i+B_i+1} \otimes S_{A_i-B_i+1}+ \rho \otimes S_{A_j+B_j+1} \otimes S_{A_j-B_j+1}$. Define $ui_{i,j}(\psi):= \psi$ unless 
    $A_{j}> A_i $, $B_j> B_i$ and $A_i \geq B_j-1$, in which case we define 
    \[ ui_{i,j}(\psi):=  \psi'+ \rho \otimes S_{A_j+B_i+1} \otimes S_{A_j-B_i+1}+\rho \otimes S_{A_i+B_j+1} \otimes S_{A_i-B_j+1}. \]
    If $A_i=B_j-1$, then we omit the last term and say this $ui_{i,j}$ is of type 3'.
    \item [(b)] Write $\psi= \psi'+ \rho \otimes S_{A_k+B_k+1} \otimes S_{A_k-B_k+1}$.
    \begin{itemize}
        \item If $B_k=\half{1}$, then we set
        \[ dual_k^{+}(\psi):= \psi'+ \rho \otimes S_{A_k-B_k+1} \otimes S_{A_k+B_k+1}.\]
        Otherwise we set $dual_k^+(\psi):=\psi.$
        \item If $B_k=-\half{1}$, then we set
        \[ dual_k^{-}(\psi):= \psi'+ \rho \otimes S_{A_k-B_k+1} \otimes S_{A_k+B_k+1}.\]
        Otherwise we set $dual_k^-(\psi):=\psi.$
    \end{itemize}
    \item [(c)] We define $dual(\psi):= \widehat{\psi}$ to be another local Arthur parameter given by
    \[  \widehat{\psi}(w,x,y):= \psi(w,y,x). \]
\end{enumerate}
For $T= ui_{i,j}$, $dual_k^{+}$ or $dual_k^{-}$, we say $T$ is applicable on $\psi$ if $T(\psi) \neq \psi$.

\item We define a partial order $\geq_{O}$ on $\Psi_{gp}(G_n)$ by $\psi_1 \geq_O \psi_2 $ if and only if $\psi_1=\psi_2$ or there exists a sequence of raising operators $T_1,\ldots, T_r$ such that
\[ \psi_1= T_r \circ \cdots \circ T_1 (\psi_2).\]
\end{enumerate}
\end{defn}

\begin{remark}\ 
    \begin{enumerate}
        \item The same proof of \cite[Theorem 11.6(1)]{HLL22} implies that $\geq_O$ is a partial order.
        \item If $T$ is a raising operator, then it is not hard to see from Definition that $dual \circ T \circ dual$ is an inverse of a raising operator. Thus, $\psi_1 \geq_O \psi_2$ if and only if $\widehat{\psi_2} \geq_O \widehat{\psi_1}$. Namely, the map $\psi \mapsto \widehat{\psi}$ reverses the partial order $\geq_O$.
    \end{enumerate}
\end{remark}

We end this subsection by defining the shift operator $sh$ and the add operator $add$. The latter plays an important role in Algorithm \ref{algo pi_com}. 

\begin{defn}\label{def shift and add}
Let $\EE = \cup_{\rho}\{ ([A_i,B_i]_{\rho}, l_i, \eta_i) \}_{i \in (I_\rho,>)}$ be an extended multi-segment of $G_n$. For fixed $\rho'$, $j \in I_{\rho'}$, and $d \in \Z$, we define the following operators. It is immediate that the operators commute with each other and so we denote the composition by summation. 

\begin{enumerate}
    \item [1.] $sh_j^{d}(\EE)= \cup_{\rho}\{ ([A_i',B_i']_{\rho}, l_i, \eta_i) \}_{i \in (I_\rho,>)}$ with 
    \[ [A_i',B_i']_{\rho}= \begin{cases}
    [A_i+d,B_i+d]_{\rho} & \text{ if }\rho=\rho' \text{ and } i = j,\\
     [A_i,B_i]_{\rho} & \text{ otherwise, }\end{cases} \]
    and $sh^d_{\rho'}=\sum_{j\in I_{\rho'}} sh_j^{d}$. Also, we define $sh^d:= \sum_{\rho} sh_{\rho}^d.$
     \item [2.] $add_j^{d}(\EE)= \cup_{\rho}\{ ([A_i',B_i']_{\rho}, l_i', \eta_i) \}_{i \in (I_\rho,>)}$ with 
    \[ ([A_i',B_i']_{\rho},l_i')= \begin{cases}
    ([A_i+d,B_i-d]_{\rho},l_i+d) & \text{ if }\rho=\rho' \text{ and } i = j,\\
     ([A_i,B_i]_{\rho},l_i) & \text{ otherwise. }\end{cases} 
    \]
    {Here if $A_i+d=B_i-d -1$ and $l_i+d=0$, it is understood that the $i$-th extended segment is removed.}
Also, we define $add^d_{\rho'}=\sum_{j\in I_{\rho'}} add_j^{d}$, and $add^d:= \sum_{\rho} add_{\rho}^d.$
\end{enumerate}
We will use these notations in the case that the resulting object is still an extended multi-segment. When $d=1$, we often omit it from the notation, e.g., we write $add_j^d=add_j.$
\end{defn}

\subsection{\texorpdfstring{Non-vanishing criterion}{}}\label{sec non-vanishing criterion}
In this subsection, we recall a combinatorial criterion on whether $\pi_{\Ato}(\EE)$ vanishes or not when $\EE$ is an extended multi-segment of a split symplectic or odd special  orthogonal group (see Theorem \ref{thm Sp}(1)). This combinatorial criterion also makes sense for extended multi-segments of pure inner forms of even orthogonal groups (see Theorem \ref{thm non-vanishing}). 

\begin{defn}\label{def non-vanishing}
    Let $\EE=\cup_{\rho}\{([A_i,B_i]_{\rho},l_i,\eta_i)\}_{i \in (I_{\rho},>)}$ be an extended multi-segment whose admissible order satisfies ($P'$) if $B_i<0$ for some $i \in I_{\rho}$.
    \begin{enumerate}
        \item We write $\NV_{\ast}(\EE)\neq 0$ if for any $\rho$ and $i\in I_{\rho}$, the following inequality holds.
    \[ (\ast) \ \ \ \ \ \ \ \ \ B_i+l_i \geq \begin{cases}  0 & \text{ if }B_i \in \Z, \\
    \frac{1}{2} & \text{ if } B_i \not\in \Z \text{ and }\eta_i= (-1)^{\alpha_i+1},\\
        -\frac{1}{2} & \text{ if } B_i \not\in \Z \text{ and }\eta_i= (-1)^{\alpha_i},
        \end{cases} \]
        where 
        \[ \alpha_i:= \sum_{j < i }(A_j+B_j+1). \]

    \item We write $\widetilde{\NV}_{\Xu}(\EE) \neq 0$ if the following condition holds for any $\rho$ and any adjacent elements $k<k+1$ in $(I_{\rho},>)$. Let  $\epsilon= (-1)^{A_k-B_k}\eta_k \eta_{k+1}.$
   \begin{enumerate}
        \item [(1)] If $ A_k\leq  A_{k+1}$, $B_k \leq B_{k+1}$, then
        \[ \begin{cases}
        \epsilon=1 &\Rightarrow B_k+l_k \leq B_{k+1}+l_{k+1}, A_k-l_k \leq A_{k+1}-l_{k+1},\\
        \epsilon=-1 &\Rightarrow  A_k-l_k < B_{k+1}+l_{k+1}.
        \end{cases} \]
        \item [(2)] If $[A_k,B_k]_{\rho}\subset [A_{k+1},B_{k+1}]_{\rho}$, then
        \[ \begin{cases}
        \epsilon=1 &\Rightarrow 0 \leq l_{k+1} -l_{k} \leq b_{k+1}-b_k,\\
        \epsilon=-1 &\Rightarrow  l_k+l_{k+1} \geq b_k.    \end{cases} \]
        \item [(3)] If $[A_k,B_k]_{\rho}\supset [A_{k+1},B_{k+1}]_{\rho}$, then
        \[ \begin{cases}
        \epsilon=1 &\Rightarrow 0 \leq l_{k} -l_{k+1} \leq b_{k}-b_{k+1},\\
        \epsilon=-1 &\Rightarrow  l_k+l_{k+1} \geq b_{k+1}.    \end{cases} \]
           \end{enumerate} 

           \item We write $\NV_{\Xu}(\EE)\neq 0$ if $\widetilde{\NV}_{\Xu}(\EE') \neq 0$ for any $\EE'$ such that $\EE\overset{R}{=}\EE'$.
           \item We write $\NV(\EE)\neq 0$ if $\NV_{\ast}(\EE) \neq 0$ and $\NV_{\Xu}(\EE) \neq 0$.
           We let $\Rep$ be the collection of extended multi-segments $\EE$ such that $\NV(\EE) \neq 0$. We let $\Rep^{(P')}$ be the subset of $\Rep$ consisting of extended multi-segments whose admissible order satisfies $(P')$.
    \end{enumerate}
\end{defn}

\begin{remark}\label{rmk NV EE rho}\ 
\begin{enumerate}
    \item  We have $\NV(\EE)\neq 0$ if and only if $\NV(\EE_{\rho})\neq 0$ for all $\rho$, where $\NV(\EE_{\rho})\neq 0$ is defined in the obvious manner.
    \item The operator $dual$ is an involution on $\Rep^{(P')}$. If $G$ is a symplectic group, this follows from Parts (1), (2) and (3) of Theorem \ref{thm Sp} below. Indeed, since $\pi_{\Ato}(dual(\EE)) = \AZ(\pi_{\Ato}(\EE)) \neq 0$, we have $\NV(dual(\EE))\neq0$ and $dual(\EE) \in \Rep^{(P')}$. Then since $\supp(dual\circ dual (\EE))= \supp(\EE)$ and 
    \[ \pi_{\Ato}(\EE)=\AZ\circ \AZ(\pi_{\Ato}(\EE))=\pi_{\Ato}(dual \circ dual (\EE)),  \]
    we must have $dual\circ dual (\EE)=\EE$. In fact, one can also verify this directly from the combinatorial definition. 
    
    For even orthogonal groups, the result can be reduced to the symplectic case using the extended multi-segment $\EE_{\rho}^{\Sp} $ introduced in the next subsection.
\end{enumerate}
 
\end{remark}

\subsection{\texorpdfstring{Algorithm for $\pi_{\com}(\EE)$}{}}
Let $\EE \in \Rep$ be an extended multi-segment of $G_n$. In this subsection, we recall a combinatorial algorithm to associate a representation $\pi_{\com}(\EE)$ of $G_n$ from \cite[\S 6.1]{HJLLZ25}. If $G_n$ is a split symplectic or odd special orthogonal group, then $\pi_{\com}(\EE)= \pi_{\Ato}(\EE)$. However, we emphasize that the algorithm applies to extended multi-segments of pure inner forms of even orthogonal groups as well. We first give a notation.

\begin{defn}
    If $\NV(\EE) \neq 0$, we let $\mathscr{E}(\EE)$ be the collection of $[\EE']$ such that there exists a collection of raising operators $T_1,\ldots, T_r, T_{r+1},\ldots, T_{s} $ such that
     \[ T_1 \circ \cdots \circ T_r(\EE) \overset{R}{=} T_{r+1} \circ \cdots \circ T_{s}(\EE'). \]
We define $\mathscr{E}(\EE_{\rho})$ similarly so that the following map is a bijection.
\begin{align*}
    \mathscr{E}(\EE)& \to \prod_{\rho} \mathscr{E}(\EE_{\rho}),\\
    [\EE'] &\mapsto ([(\EE')_{\rho}])_{\rho}.
\end{align*}
\end{defn}

Now we extend several facts to extended multi-segments of pure inner forms of even orthogonal groups. Suppose that $\EE=\cup_{\rho}\{([A_i,B_i]_{\rho},l_i,\eta_i)\}_{i \in (I_{\rho},>)}$ is an extended multi-segment of an even orthogonal group $G_n$. For each $\rho$, we let 
\[ \psi_{\rho}:= \bigoplus_{i \in I_{\rho}} \rho \otimes S_{A_i+B_i+1} \otimes S_{A_i-B_i+1},\]
  let $d_{\rho}:= \sum_{i\in I_{\rho}} \dim(\rho)(A_i+B_i+1)(A_i-B_i+1)$, and let $\chi_{\rho}:=\det(\psi_{\rho}): W_F \to \{\pm 1\}$ be the associated quadratic character. We further set
  \[
  \epsilon_\rho:=\prod_{i \in I_\rho} (-1)^{\lfloor\frac{b_i}{2}\rfloor+l_i} \eta_i^{b_i}.
  \]
  We are going to associate an extended multi-segment $\EE^{\Sp}_{\rho}$ of a symplectic group.

If $d_\rho$ is odd, then we fix a symplectic supercuspidal representation $\rho'$ of $\GL_2(F)$ and define
\[ \EE_{\rho}^{\Sp}:= \{ ([A_i,B_i]_{\chi_{\rho} \rho},l_i,\eta_i )\}_{i \in (I_{\rho, >})}\cup\{([3/2,3/2]_{\rho'},0,\epsilon_\rho)\},\]
which is an extended multi-segment of $\Sp_{d_{\rho}+7}(F)$. 
 If $d_{\rho}$ is even and $\rho\not\cong \chi_{\rho}$, then we define
\[ \EE_{\rho}^{\Sp}:= \{ ([A_i,B_i]_{ \rho},l_i,\eta_i )\}_{i \in (I_{\rho, >})} \cup \{([0,0]_{\chi_{\rho}},0,\epsilon_\rho)\},\]
which is an extended multi-segment of $\Sp_{d_{\rho}}(F)$.
Note that if $d_{\rho}$ is even and $\rho$ is a non-trivial quadratic character, then $\chi_{\rho}= \rho^{d_{\rho}}=1$, which belongs to the case above. Finally, if $d_{\rho}$ is even and $\rho=1$, then we let $X$ be the set of non-trivial quadratic characters, and define
\[ \EE_{\rho}^{\Sp}:= \{ ([A_i,B_i]_{ \rho},l_i,\eta_i )\}_{i \in (I_{\rho, >})} \cup \bigcup_{\chi \in X} \{([0,0]_{\chi},0,\epsilon_\rho)\}.\]
Note that this is an extended multi-segment of $\Sp_{d_{\rho}+|X|-1}(F)$ since $|X|$ is odd and $\prod_{\chi \in X} \chi =1$.

\begin{remark}
    The extra extended segments are chosen such that $\psi_{\EE_\rho^\Sp}$ has odd dimension, the sign condition \eqref{eq sign condition} holds, and a raising operator applies on $\EE_\rho^\Sp$ if and only if the corresponding operator applies on $\EE$ (see the lemma below).
\end{remark}

Now from the definition of row exchange operators and raising operators, we have the following direct observation.

\begin{lemma}\label{lem EE{Sp}}
    Let $\EE$ be an extended multi-segment of a pure inner form of an even orthogonal group.
    \begin{enumerate}
        \item $\NV(\EE_{\rho}) \neq0$ if and only if $\NV(\EE^{\Sp}_{\rho}) \neq 0$.
        \item Suppose that $\NV(\EE_{\rho})\neq 0$. If $T$ or $T^{-1}$ is in $\{R_k, ui_{i,j}, dual \circ ui_{i,j} \circ dual, dual_k^{-}\}$ that is applicable on $\EE_{\rho}$, then there is a corresponding operator $T^{\Sp}$ applicable on $\EE^{\Sp}_{\rho}$ such that $(T(\EE_{\rho}))^{\Sp}= T^{\Sp}(\EE_{\rho}^{\Sp})$. In particular, the following map is a bijection
        \begin{align*}
        \mathscr{E}(\EE_{\rho}) & \to \mathscr{E}(\EE^{\Sp}_{\rho}),\\
        [\FF] & \mapsto [\FF^{\Sp}].
    \end{align*}
    \item $\EE_{\rho}$ is absolutely maximal (resp. minimal) if and only if so is $\EE^{\Sp}_{\rho}$. 
    \end{enumerate}
\end{lemma}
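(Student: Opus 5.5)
The plan is to reduce everything to one structural fact. Every condition and operator in sight ($\NV_\ast$, $\widetilde{\NV}_{\Xu}$, $R_k$, $ui_{i,j}$, $dual$, $dual_k^{\pm}$) is \emph{local in $\rho$}. Each is defined purely in terms of the data $\{([A_i,B_i]_\rho,l_i,\eta_i)\}_{i\in(I_\rho,>)}$ for a single $\rho$, together with the integers $\alpha_i,\beta_i$ computed inside $I_\rho$. None of them refers to $\rho$ itself, to $\dim\rho$, or to the ambient group except through the global sign condition \eqref{eq sign condition}. So I will first record that passing from $\EE_\rho$ to $\EE_\rho^{\Sp}$ does two things. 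It relabels $\rho$ as $\chi_\rho\rho$ or keeps it, which does not change the combinatorial data on $I_\rho$. It also adjoins one or more extra segments $[3/2,3/2]_{\rho'}$ or $[0,0]_\chi$, and each of these lives on a \emph{different} cuspidal line: $\rho'$ is of dimension $2$ and symplectic, and the $\chi$ are distinct characters different from $\rho$ in the relevant case. I also need to check that $\chi_\rho\rho\not\cong\rho'$ and $\chi_\rho\rho\not\cong\chi$ for the added $\chi$'s. This follows from dimension counting and the case split in the definition.

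For (1), I will use Remark \ref{rmk NV EE rho}(1) on both sides. Then $\NV(\EE_\rho^{\Sp})\neq0$ is equivalent to the conjunction of $\NV$ on the $\chi_\rho\rho$-line, which is literally the same data as $\EE_\rho$, and $\NV$ on each auxiliary line. Each auxiliary line is a single segment with $l=0$. There $\widetilde{\NV}_{\Xu}$ is vacuous because there are no adjacent pairs, and the row-exchange class is trivial. Condition $(\ast)$ then reads $B+l\ge 0$ for $B=0$, and for $B=3/2\notin\Z$ it reads $3/2\ge \tfrac12$ or $-\tfrac12$. Both hold whatever $\eta=\epsilon_\rho$ is. The role of $\epsilon_\rho$ is only to make \eqref{eq sign condition} hold with $\epsilon_{G}=1$, so that $\EE^{\Sp}_\rho$ is a genuine extended multi-segment, which I verify by a short parity computation.

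For (2), given $T$ applicable on $\EE_\rho$, I define $T^{\Sp}$ as the same operator acting on the $\chi_\rho\rho$-line, with the same indices, and as the identity on the auxiliary lines. I then check the following points.
\begin{enumerate}
\item Applicability and the output formulas depend only on the line data, so $T^{\Sp}(\EE^{\Sp}_\rho)$ restricted to the main line equals $T(\EE_\rho)$ relabeled.
\item The auxiliary lines are unchanged.
\item The invariants $d_\rho$, $\chi_\rho$ and $\epsilon_\rho$ of $T(\EE_\rho)$ agree with those of $\EE_\rho$, so that $(T(\EE_\rho))^{\Sp}$ has the same auxiliary segments.
\end{enumerate}
Point 3 is the main obstacle. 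The operators preserve $\psi_\rho$ only up to the explicit modifications in Definition \ref{def operators on parameters}. So I must check by cases that $\dim\psi_\rho$ and $\det\psi_\rho$ are unchanged, for $ui$ including type 3', and for $dual_k^\pm$, where $S_a\otimes S_b\mapsto S_b\otimes S_a$. This comes from $\det(\rho\otimes S_a\otimes S_b)=\chi_\rho'^{ab}$-type parity formulas. I must also check that $\epsilon_\rho$ is preserved. I would get this not by recomputation but from the fact that both $\EE_\rho^{\Sp}$ and its image satisfy the sign condition, and that the other lines of $\EE$ are untouched, so the product over the $\rho$-line is forced. For $dual$ and for operators conjugated by $dual$, I note that $dual$ on the $\chi_\rho\rho$-line uses only $\alpha_i,\beta_i$ within that line. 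Hence it also commutes with $(\cdot)^{\Sp}$, provided the auxiliary segments are sent to themselves, which is a one-line check for single segments with $B=0$ or with $B=3/2$, $l=0$.

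The bijection on $\mathscr E$ then follows. Injectivity holds because $\FF$ is recovered from $\FF^{\Sp}$ by forgetting the auxiliary lines. Surjectivity holds because every operator applicable on $\EE_\rho^{\Sp}$ acts either on the main line, where it lifts back, or on an auxiliary single-segment line, where no raising operator, inverse, or row exchange applies. For (3), absolute maximality or minimality is the nonexistence of applicable raising operators or their inverses. By the correspondence just established, and the inapplicability of any such operator on the auxiliary lines, this holds for $\EE_\rho$ exactly when it holds for $\EE_\rho^{\Sp}$.
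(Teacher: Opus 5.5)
Your approach is the same as the paper's, which states the lemma as a direct observation from the definitions. The reasons are the ones you give. $\NV$ and all the operators are computed one $\rho$-line at a time, with $\alpha_i,\beta_i$ internal to $I_\rho$. The auxiliary lines are single segments with $l=0$: $\NV$ holds on them automatically, and no row exchange, raising operator or inverse applies there. That is why $[3/2,3/2]$ is used rather than $[1/2,1/2]$. Finally, $d_\rho,\chi_\rho,\epsilon_\rho$ are unchanged by the operators.

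One sub-claim is false as written. $dual$ does \emph{not} fix the auxiliary segment $([3/2,3/2]_{\rho'},0,\epsilon_\rho)$. It sends $[A,B]$ to $[A,-B]$, and here gives $([3/2,-3/2]_{\rho'},\tfrac{3+\epsilon_\rho}{2},-\epsilon_\rho)$. So $dual$ itself does not commute with $(\cdot)^{\Sp}$ when $d_\rho$ is odd. (Your check is correct for $[0,0]_\chi$.)

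This does not damage the lemma, because bare $dual$ is not one of the operators:
\begin{itemize}
\item $dual_k^{\pm}$ is defined as $\EE^\rho\cup dual_k^{\pm}(\EE_\rho)$, so it never touches the auxiliary line.
\item In $dual\circ ui_{i,j}\circ dual$, the two duals cancel on the auxiliary line because $dual$ is an involution there. Check this with the convention of Definition \ref{dual segment}(3) when $\epsilon_\rho=1$, since then $l'=2=b/2$.
\end{itemize}
Replace your commutation claim with this.

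For the preservation of $\epsilon_\rho$, state where the sign condition for the image comes from. Either cite the symplectic results: an applicable operator on a symplectic $\EE$ with $\NV(\EE)\neq0$ returns an extended multi-segment of the same group, and this applies to $T^{\Sp}(\EE^{\Sp}_\rho)$, whose auxiliary lines are unchanged. Or verify the invariance of $\epsilon_\rho$ by direct computation.
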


As an application, we extend the following combinatorial results on extended multi-segments of symplectic or special odd orthogonal groups to pure inner forms of even orthogonal groups.

\begin{thm}\label{thm E max}
    Let $\EE= \cup_{\rho} \{([A_i,B_i]_{\rho},l_i,\eta_i)\}_{i \in (I_{\rho},>)} \in \Rep^{(P')}$. 
    \begin{enumerate}
        \item There exists a unique member of $\mathscr{E}(\EE)$ that is absolutely maximal (resp. minimal), which we denote by $[\EE^{|max|}]$ (resp. $[\EE^{|min|}]$).
        
        \item Suppose $\EE$ is absolutely maximal and $\psi_{\EE}$ is not tempered. Fix a $\rho$ such that $b_{\rho} := \max \{A_i-B_i+1\ | \ i \in I_{\rho}\} >1 $. Let $ j = \min\{ i \in I_{\rho}\ | \ A_i-B_i+1=b_{\rho} \}$ and assume that $j= \max\{ i \in I_{\rho} \ | \ B_i=B_j \}$ by applying row exchanges if necessary. Let $\EE^{-}:=add_j^{-1}(\EE)$, which satisfies $(P')$ by the assumptions. Then, $\NV( \EE^{-}) \neq 0.$
    \end{enumerate}
\end{thm}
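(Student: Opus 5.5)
The plan is to reduce both parts to the corresponding statements for split symplectic groups, which are already known, using the transfer $\EE_\rho \mapsto \EE_\rho^{\Sp}$ and Lemma \ref{lem EE{Sp}}. Every notion in the statement is defined $\rho$ by $\rho$: absolute maximality, the sets $\mathscr{E}(\EE)$, the operator $add_j^{-1}$, and (by Remark \ref{rmk NV EE rho}(1)) non-vanishing. So it suffices to treat a single $\EE_\rho$.

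For part (1), I would use the bijection $\mathscr{E}(\EE)\to\prod_\rho \mathscr{E}(\EE_\rho)$, which reduces uniqueness and existence of an absolutely maximal (resp. minimal) class to each factor $\mathscr{E}(\EE_\rho)$. By Lemma \ref{lem EE{Sp}}(2), $[\FF]\mapsto[\FF^{\Sp}]$ is a bijection $\mathscr{E}(\EE_\rho)\to\mathscr{E}(\EE_\rho^{\Sp})$. By Lemma \ref{lem EE{Sp}}(3), this bijection preserves absolute maximality and minimality. For symplectic extended multi-segments in $\Rep^{(P')}$, the existence and uniqueness of the absolutely maximal and minimal classes is \cite{HLL22} (the results defining $\EE^{|max|},\EE^{|min|}$ there, proved via $\pi_{\Ato}$). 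Lemma \ref{lem EE{Sp}}(1) ensures $\EE_\rho^{\Sp}$ lies in $\Rep$, so those results apply. Pulling back along the bijection gives part (1).

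For part (2), I would again reduce to $\EE_\rho$ and pass to $\EE_\rho^{\Sp}$. The key check is that the hypotheses transfer. The auxiliary segments added in $\EE_\rho^{\Sp}$ are supported on cuspidals different from the original $\rho$ and have $b=1$: namely $[3/2,3/2]_{\rho'}$ or $[0,0]_\chi$. Two points need care:
\begin{itemize}
\item The segments of $\EE_\rho$ may be relabeled to $\chi_\rho\rho$ when $d_\rho$ is odd. One has to check that $\chi_\rho\rho$ differs from the cuspidals of the auxiliary segments; in the even case $\rho\not\cong\chi_\rho$ is built into the construction, and when $\rho=1$ the auxiliary $\chi$ range over nontrivial characters.
\item The auxiliary segments are tempered with $b=1$, so they do not affect $b_\rho$ or the choice of $j$.
\end{itemize}

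Next, $\EE_\rho^{\Sp}$ is absolutely maximal by Lemma \ref{lem EE{Sp}}(3). Its support differs from that of $\EE_\rho$ only by the tempered auxiliary rows, so the same index $j$ satisfies the hypotheses of the symplectic version of (2). That version is \cite{HJLLZ25} (or \cite{HLL22}), and it gives $\NV(add_j^{-1}(\EE_\rho^{\Sp}))\neq0$.

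Then I would observe that $add_j^{-1}$ commutes with the transfer: $(add_j^{-1}(\EE_\rho))^{\Sp}=add_j^{-1}(\EE_\rho^{\Sp})$. This needs one adjustment. The sign $\epsilon_\rho$ of the auxiliary segment may change, because $l_j$ drops by one and $b_j$ drops by two, which alters the sign condition. So the auxiliary sign should be recomputed, and I would argue that this does not affect $\NV$ of the $\rho$-part, because the auxiliary segment lives on a different cuspidal. Lemma \ref{lem EE{Sp}}(1) then gives $\NV(add_j^{-1}(\EE)_\rho)\neq0$. Finally, for $\rho''\neq\rho$ the $\rho''$-parts of $\EE$ and $\EE^-$ coincide, so they stay non-vanishing.

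The main obstacle is bookkeeping in this last step. One must check that $\psi_{\EE^-}$ is still a good-parity parameter of an even orthogonal group of smaller rank, that the sign condition \eqref{eq sign condition} holds with the correct $\epsilon_{G}$, and that the parity of $d_\rho$ is unchanged. The parity holds because $d_\rho$ drops by $\dim\rho\cdot\bigl(a_jb_j-a_j(b_j-2)\bigr)=2a_j\dim\rho$, so the same branch of the $\EE^{\Sp}_\rho$ construction applies. One should also verify that $(P')$ is kept, which is where the choice $j=\max\{i: B_i=B_j\}$ is used.
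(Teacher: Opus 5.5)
Your proposal is correct and follows the paper's argument: the paper proves both parts by applying the symplectic results (\cite[Theorem 11.4]{HLL22} for (1) and \cite[Proposition 6.3]{HLLZ25} for (2)) to each $\EE_\rho^{\Sp}$ and transporting them back via Lemma \ref{lem EE{Sp}}; your bookkeeping just makes explicit what the paper leaves implicit. One simplification: the auxiliary sign $\epsilon_\rho$ is unchanged by $add_j^{-1}$, even when the row is removed, because $\lfloor b_j/2\rfloor+l_j$ drops by exactly $2$ and $\eta_j^{b_j}=\eta_j^{b_j-2}$, so $(add_j^{-1}(\EE_\rho))^{\Sp}=add_j^{-1}(\EE_\rho^{\Sp})$ holds on the nose and no recomputation is needed.
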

\begin{proof}
 If $\EE$ is an extended multi-segment of a split symplectic or odd special orthogonal group, then the desired conclusion is already proved in \cite[Theorem 11.4]{HLL22} and \cite[Proposition 6.3]{HLLZ25}. If $\EE$ is an extended multi-segment of an even orthogonal group, then the desired properties hold for $\EE_{\rho}^{\Sp}$ for all $\rho$; hence they also hold for $\EE$ as well by Lemma \ref{lem EE{Sp}}. This completes the proof of the theorem.
\end{proof}

Now we state the algorithm for $\pi_{\com}(\EE).$

\begin{algo}\label{algo pi_com}
Suppose that $\EE \in \Rep$.
\begin{enumerate}
    \item [Step 1.] Compute $\EE^{|max|}=\cup_{\rho}\{([A_i,B_i]_{\rho}, l_i, \eta_i)\}_{i \in (I_{\rho},>)}$. If $\psi_{\EE^{|max|}}$ is tempered, then define 
    \[\pi_{\com}(\EE):= \pi\left(\bigoplus_{\rho} \bigoplus_{i \in I_{\rho}} \rho \otimes S_{2 A_i+1},\varepsilon\right),\]
    where for each $i \in I_{\rho}$, $\varepsilon( \rho\otimes S_{2A_i+1} ):=  \eta_i$.

    \item [Step 2.] If $\psi_{\EE^{|max|}}$ is not tempered, then define $(\EE^{|max|})^-= add_j^{-1}(\EE^{|max|})$ for the $j \in I_{\rho}$ chosen in Theorem \ref{thm E max}(2).  Then define
    \[ \pi_{\com}(\EE):= \{\Delta_{\rho}[B_j,-A_j] \}+ \pi_{\com}((\EE^{|max|})^- ).\]
\end{enumerate}
\end{algo}

\begin{remark}\label{rmk EE Sp L data}\ 
\begin{enumerate}
    \item Let $T$ be either a raising operator or $add^{-1}_j$ and write
    \[ \psi_{\EE}= \bigoplus_{i\in I} \rho_i \otimes S_{a_i} \otimes S_{b_i},\ \psi_{T(\EE)}= \bigoplus_{i' \in I'} \rho_{i'} \otimes S_{a_{i'}} \otimes S_{b_{i'
    }}. \]
    Then we have 
    \[ \max_{i \in I}\{A_i-B_i+1\} \geq \max_{i' \in I'}\{A_{i'}-B_{i'}+1\}. \]
    Therefore, if we write
    \[ \pi_{\com}( (\EE^{|max|})^-)= L(\Delta_{\rho_1}[x_1,y_1],\ldots, \Delta_{\rho_f}[x_f,y_f]; \pi_{temp}),\]
then $\half{B_j-A_j} \leq \half{x_i+y_i}$ for all $1 \leq i \leq f$ with $\rho_i=\rho$.  As a consequence, we have an injection
\[ \pi_{\com}(\EE) \hookrightarrow \Delta_{\rho[B_j,-A_j]} \rtimes \pi_{\com}( (\EE^{|max|})^-)\]
from the Langlands classification.
    \item  Let $\EE$ be an extended multi-segment of an even orthogonal group with $\NV(\EE)\neq 0$. Then for any $\rho$, $\NV(\EE_{\rho}^{\Sp})\neq 0$ by Lemma \ref{lem EE{Sp}}; hence   $\pi_{\com}(\EE_{\rho}^{\Sp})$ is also defined. Moreover,
    $\pi_{\com}(\EE)$ and the collection $\{\pi_\com(\EE_{\rho}^{\Sp})\}_{\rho}$ can be read from each other in a straightforward way. Note that $\pi_\com(\EE_\rho^\Sp)=\pi_\Ato(\EE_\rho^\Sp)$ (Theorem \ref{thm Sp}(7)).
\end{enumerate}   
\end{remark}

We let $\Pi_{gp}^\com(G_n)$ denote the set of $\pi\in\Pi(G_n)$ such that $\pi=\pi_\com(\EE)$ for some $\EE.$
We have the following consequence of Algorithm \ref{algo pi_com}.

\begin{cor}\label{cor com intersect}
    Suppose that $\pi_\com(\EE)\neq 0.$ 
    \begin{enumerate}
        \item If $T$ is a raising operator, then $\pi_\com(\EE)=\pi_\com(T(\EE)).$
        \item If $\EE'$ is another extended multi-segment for which $\pi_\com(\EE)=\pi_\com(\EE'),$ {then there exists a sequence of raising operators or their inverses $T_1,\ldots, T_r$ such that
        \[ [\EE']= [ T_1 \circ \cdots \circ T_r(\EE)].\]}
    \end{enumerate}
\end{cor}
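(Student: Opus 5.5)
Both parts follow from how Algorithm \ref{algo pi_com} is built. First, $\pi_\com(\EE)$ depends only on the class $[\EE^{|max|}]$ (up to row exchange, which does not change the output; see the last paragraph). Second, $\EE \mapsto \EE^{|max|}$ is governed by the set $\mathscr{E}(\EE)$ and Theorem \ref{thm E max}(1).

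For part (1), let $T$ be a raising operator applicable on $\EE$. By definition of $\mathscr{E}$, taking $T_1=T$ and $r=1$, $s=r$ shows $[T(\EE)]\in\mathscr{E}(\EE)$. Since $\mathscr{E}$ is defined by an equivalence generated by raising operators, it is symmetric and transitive; hence $\mathscr{E}(T(\EE))=\mathscr{E}(\EE)$. By the uniqueness in Theorem \ref{thm E max}(1), the absolutely maximal members coincide: $[T(\EE)^{|max|}]=[\EE^{|max|}]$. Step 1 and Step 2 of Algorithm \ref{algo pi_com} take only $\EE^{|max|}$ as input. So $\pi_\com(T(\EE))=\pi_\com(\EE)$ as soon as the output is independent of the representative in $[\EE^{|max|}]$, which is the compatibility point treated below. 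One more check: $T(\EE)\in\Rep$, so that $\pi_\com(T(\EE))$ is defined. I would reduce this via Lemma \ref{lem EE{Sp}}(1),(2) to $\EE_\rho^{\Sp}$, where non-vanishing is preserved by Theorem \ref{thm Sp}.

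For part (2), the plan is to reduce to the symplectic case. By Remark \ref{rmk EE Sp L data}(2), $\pi_\com(\EE)$ determines and is determined by the collection $\{\pi_\com(\EE_\rho^{\Sp})\}_\rho$, and $\pi_\com(\EE^{\Sp}_\rho)=\pi_\Ato(\EE_\rho^{\Sp})$. Hence $\pi_\com(\EE)=\pi_\com(\EE')$ gives $\pi_\Ato(\EE_\rho^{\Sp})=\pi_\Ato((\EE')_\rho^{\Sp})$ for every $\rho$. For this one must check that the auxiliary segments added in forming $\EE^{\Sp}_\rho$ agree for $\EE$ and $\EE'$. They do: the data $d_\rho,\chi_\rho$ are read off from $\psi_\rho$, and $\epsilon_\rho$ is read off from the $\varepsilon$-character. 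Both can be recovered from the tempered/Langlands data of $\pi_\com$.

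The symplectic intersection theorem \cite[Theorem 1.4]{HLL22} (Theorem \ref{thm Sp}) then relates $\EE^{\Sp}_\rho$ and $(\EE')^{\Sp}_\rho$ by raising operators, their inverses and row exchanges. Equivalently, both have the same absolutely maximal class, so $[(\EE')^{\Sp}_\rho]\in\mathscr{E}(\EE^{\Sp}_\rho)$. The bijection of Lemma \ref{lem EE{Sp}}(2) transports this back to $[(\EE')_\rho]\in\mathscr{E}(\EE_\rho)$. Doing this for every $\rho$ and using the product bijection $\mathscr{E}(\EE)\cong\prod_\rho\mathscr{E}(\EE_\rho)$ gives $[\EE']\in\mathscr{E}(\EE)$. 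Unwinding the definition of $\mathscr{E}$ yields the required chain $T_1,\dots,T_r$, consisting of raising operators and then inverses.

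The main obstacle is well-definedness of Step 2 on a class rather than a representative. The index $j$ is chosen after row exchanges, so one must show that $add_j^{-1}$ commutes with the permitted row exchanges up to $\overset{R}{=}$, and that the recursion $\pi_\com((\EE^{|max|})^-)$ is independent of the choices made. I would handle this also through $\EE^{\Sp}_\rho$ and the symplectic identity $\pi_\com=\pi_\Ato$, since row exchanges preserve $\pi_\Ato$. The same device is used above to confirm that $\NV$ is preserved in part (1).
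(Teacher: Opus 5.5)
Your proposal is correct and follows the paper's route: part (1) holds because Algorithm~\ref{algo pi_com} only sees $[\EE^{|max|}]$, and part (2) is proved by passing to $\EE_\rho^{\Sp}$, applying Theorem~\ref{thm Sp}(5),(7), and pulling back along Lemma~\ref{lem EE{Sp}}(2); your extra checks (matching auxiliary segments, non-vanishing of $T(\EE)$) are details the paper leaves implicit. One small correction: $\mathscr{E}(\EE)$ is defined as a joinability relation, not as the equivalence relation generated by raising operators, so transitivity is not automatic. Instead, deduce $[T(\EE)^{|max|}]=[\EE^{|max|}]$ by raising $T(\EE)$ to an absolutely maximal $\FF$, noting that $[\FF]$ lies in both $\mathscr{E}(T(\EE))$ and $\mathscr{E}(\EE)$, and invoking the uniqueness in Theorem~\ref{thm E max}(1).
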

\begin{proof}
    Part (1) is clear. For Part (2), by Remark \ref{rmk EE Sp L data}(2), we have $\pi_{\com}(\EE_{\rho}^{\Sp})=\pi_{\com}((\EE_{\rho}')^{\Sp}) $. By the known desired result for symplectic (see Theorem \ref{thm Sp}(5),(7) below), we have $ \EE_{\rho}^{\Sp}$ and $(\EE_{\rho}')^{\Sp}$ are related by a sequence of raising operators and their inverses together with some row exchanges. Then the same holds for $\EE_{\rho}$ and $\EE_{\rho}'$ {by Lemma \ref{lem EE{Sp}}(2)}.
\end{proof}

\subsection{\texorpdfstring{Some consequences for $\pi_{\com}(\EE)$}{}}

We proceed by discussing some consequences of Algorithm \ref{algo pi_com}. The first  result describes  precisely when $\pi_\com(\EE)$ lies in the $L$-packet associated to $\psi_\EE$ (Theorem \ref{thm in L-packet}). The second  result is an algorithm which determines, for a given $\pi\in\Pi(G_n),$ whether there exists $\EE$ such that $\pi=\pi_\com(\EE)$ (Algorithm \ref{algo Arthur type pi^-}). We also prove a technical result for later use (Lemma \ref{lem $L$-data}).

Let $\psi$ be a local Arthur parameter of good parity, and let \[\phi_{\psi}(w,x):=\psi\left(w,x,\begin{pmatrix}
    |w|^{\half{1}} & 0 \\
    0 & |w|^{\half{-1}}
\end{pmatrix}\right)\] be the $L$-parameter associated to $\psi$. Specifically, if we write $$
\psi=\bigoplus_{i=1}^r (\rho_i\otimes S_{a_i} \otimes S_{b_i}).
$$
Then the associated $L$-parameter is 
\begin{align}\label{eq associated L-parameter}
\phi_{\psi}=\bigoplus_{i=1}^r  \left(\bigoplus_{j=0}^{b_i-1} \rho_i \lvert\cdot\rvert^{\frac{b_i-1}{2}-j}\otimes S_{a_i} \right).    
\end{align}

We are going to give a necessary and sufficient condition on $\EE$ such that $\pi_\com(\EE)$ is in the $L$-packet $\Pi_{\phi_{\psi}}$. Beforehand, we remark that the results of this subsection extend to general local Arthur parameters $\psi \in \Psi^{+}(G_n)$ not necessarily of good parity. Indeed, we decompose \[\psi=\psi_{nu,>0}\oplus \psi_{np}\oplus \psi_{gp}\oplus \psi_{np}^{\vee} \oplus \psi_{nu,>0}^{\vee}\] as in Theorem \ref{thm red from nu to gp symp} from which we obtain
\[ \Pi_{\psi}= \{ \tau_{\psi_{nu,>0}} \times \tau_{\psi_{np}} \rtimes \pi \ | \ \pi \in \Pi_{\psi_{gp}}\}. \]
It follows that $\tau_{\psi_{nu,>0}} \times \tau_{\psi_{np}} \rtimes \pi \in \Pi_{\phi_{\psi}}$ if and only if $ \pi \in \Pi_{\phi_{\psi_{gp}}}$.

For symplectic and split odd special orthogonal groups, the necessary and sufficient condition on $\EE$ was called $(L)$ (\cite[Definition 9.1 and Theorem 9.5]{HLL22}). It extends to pure inner forms of even orthogonal groups.

\begin{defn}\label{def (L)}
We say an extended multi-segment $\EE=\cup_{\rho} \{([A_i,B_i]_{\rho},l_i,\eta_i)\}_{i \in (I_{\rho},>)}$ of $G_n$ satisfies $(L)$ if after row exchanges, it satisfies the following conditions:
For all $\rho$ and $i<j \in I_{\rho}$,
\begin{enumerate}
    \item [(i)] $ A_i +B_i \leq A_{j}+B_{j}$;
    \item [(ii)] $(A_i-B_i+1)-2l_i \leq 1$;
    \item [(iii)] if $A_i+B_i=A_{j}+B_{j}$ and $A_i-B_i+1$ is odd, then $\eta_i=\eta_{j}$.
\end{enumerate}
\end{defn}

\begin{prop}\label{prop (L) is maximal}
    If $\EE$ satisfies $(L)$, then $\EE$ is absolutely maximal.
\end{prop}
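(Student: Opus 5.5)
We plan to reduce to the symplectic case via $\EE_\rho^{\Sp}$, as in the proof of Theorem \ref{thm E max}. By Lemma \ref{lem EE{Sp}}(3), $\EE$ is absolutely maximal if and only if $\EE_\rho^{\Sp}$ is absolutely maximal for every $\rho$. Absolute maximality is defined $\rho$ by $\rho$, so it suffices to show two things: that condition $(L)$ for $\EE$ implies condition $(L)$ for each $\EE_\rho^{\Sp}$, and that the symplectic version of the proposition holds, i.e.\ \cite[Definition 9.1, Theorem 9.5]{HLL22} together with the maximality statement proved there (the $(L)$ case of \cite[\S 9--11]{HLL22}).

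For the first point we inspect the added segments. In every case they are of the form $([0,0]_\chi,0,\epsilon_\rho)$ or $([3/2,3/2]_{\rho'},0,\epsilon_\rho)$, and they carry cuspidal supports $\chi\not\cong\rho$ or $\rho'$ that are distinct from the twisted $\rho$-part.
\begin{enumerate}
\item For these singleton segments, $b=1$ and $l=0$, so condition (ii) reads $1-0\le 1$ and holds.
\item Each added segment is the only segment with its supercuspidal, so conditions (i) and (iii) are vacuous for it.
\item The $\rho$-part of $\EE_\rho^{\Sp}$ consists of the segments of $\EE_\rho$, possibly twisted by $\chi_\rho$. The twist does not change $A_i,B_i,l_i,\eta_i$ or the order, so conditions (i)--(iii) transfer verbatim after the same row exchanges.
\end{enumerate}
Hence $\EE_\rho^{\Sp}$ satisfies $(L)$.

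If one prefers not to rely on the symplectic statement being recorded exactly in this form, we would instead argue directly that no raising operator applies. After row exchanges, order each $I_\rho$ so that (i)--(iii) hold; by (ii), each $b_i-2l_i\in\{0,1\}$. There are three families of raising operators to rule out.
\begin{enumerate}
\item For $ui_{i,j}^{-1}$: the image of $ui$ has segments $[A_{j},B_i]$ and $[A_i,B_j]$ with equal $A+B$ sums in a nested position. Reversing, one needs two nested segments, and the $l$-constraints then contradict $b-2l\le 1$. We would check types 1, 2 and 3 one at a time using the formulas of Definition \ref{ui def}.
\item For $dual\circ ui\circ dual$: we translate through Definition \ref{dual segment}. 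The condition $b-2l\le1$ becomes a statement about $l_i'$ after dualizing, and we would verify applicability fails, again case by case.
\item For $dual_k^-$: applicability requires $B_k=-1/2$ and $l_k=0$, so $b_k=A_k+3/2\ge 2$ and $b_k-2l_k\ge 2$, contradicting (ii) unless $b_k\le1$, which is impossible.
\end{enumerate}

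The main obstacle is the direct $ui^{-1}$ and dual-conjugated verification, which depends on the sign and $l$ bookkeeping in Definitions \ref{ui def} and \ref{dual segment}. For this reason we would primarily rely on the reduction through Lemma \ref{lem EE{Sp}}(3) and the symplectic result of \cite{HLL22}, where this bookkeeping has already been carried out.
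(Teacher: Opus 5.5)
Your proposal is correct and follows essentially the same route as the paper: reduce to the symplectic case via $\EE_\rho^{\Sp}$, using that $(L)$ transfers from $\EE_\rho$ to $\EE_\rho^{\Sp}$ (your check on the added singleton segments) and that absolute maximality transfers back by Lemma \ref{lem EE{Sp}}(3), then invoke the symplectic result, which is precisely \cite[Proposition 10.3]{HLL22}. Your fallback direct verification is not needed. It would also require care, e.g.\ in the $dual_k^-$ step, because the $l_k$ tested there is read off in a $(P')$ order and not in the order realizing $(L)$, and row exchanges change $l$.
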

\begin{proof}
    For symplectic and split odd special orthogonal groups, this is proved in \cite[Proposition 10.3]{HLL22}. For even orthogonal groups, it follows from the fact that $\EE_{\rho}$ satisfies $(L)$ (resp. is absolutely maximal) if and only if $\EE_{\rho}^{\Sp}$ satisfies $(L)$ (resp. is absolutely maximal).
\end{proof}

Now, we extend \cite[Theorem 9.5]{HLL22} to pure inner forms of even orthogonal groups. Namely, we show that the condition $(L)$ characterizes the $L$-packets associated to local Arthur packets.

\begin{thm}\label{thm in L-packet}
    Let $\psi\in\Psi(G_n)$ and $\EE$ be an extended multi-segment of $G_n$ with $\psi_\EE=\psi.$ Then $\pi_{\com}(\EE)\in\Pi_{\phi_\psi}$ if and only if $\EE$ satisfies $(L)$. 
\end{thm}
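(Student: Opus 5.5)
We reduce to the symplectic case through the construction $\EE\mapsto\EE_\rho^{\Sp}$, as the paper does for Proposition \ref{prop (L) is maximal} and Corollary \ref{cor com intersect}. For split symplectic groups the statement is \cite[Theorem 9.5]{HLL22}, combined with $\pi_\com=\pi_\Ato$ (Theorem \ref{thm Sp}(7)).

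\emph{Step 1: reduce the condition $(L)$ to the symplectic side.} Conditions (i)--(iii) of Definition \ref{def (L)} involve one $\rho$ at a time. Hence $\EE$ satisfies $(L)$ if and only if each $\EE_\rho$ does. The extra segments added to form $\EE_\rho^{\Sp}$ are $[3/2,3/2]_{\rho'}$ or $[0,0]_{\chi}$. They have $b=1$ and $l=0$, so (ii) holds for them, and they are alone in their cuspidal line, so (i) and (iii) are vacuous. The twist $\rho\mapsto\chi_\rho\rho$ leaves all the data $A_i,B_i,l_i,\eta_i$ unchanged. Therefore $\EE_\rho$ satisfies $(L)$ if and only if $\EE_\rho^{\Sp}$ does. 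Note that the same row exchanges act on both sides, by Lemma \ref{lem EE{Sp}}(2).

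\emph{Step 2: reduce the $L$-packet condition to the symplectic side.} We must show that $\pi_\com(\EE)\in\Pi_{\phi_{\psi_\EE}}$ if and only if $\pi_\com(\EE_\rho^{\Sp})\in\Pi_{\phi_{\psi_{\EE_\rho^{\Sp}}}}$ for every $\rho$. By Remark \ref{rmk EE Sp L data}(2), Algorithm \ref{algo pi_com} runs in parallel on $\EE$ and on each $\EE_\rho^{\Sp}$.
\begin{enumerate}
\item The non-tempered segments $\Delta_\rho[B_j,-A_j]$ produced on the $\rho$-line of $\EE$ are exactly those produced for $\EE_\rho^{\Sp}$, up to the twist $\chi_\rho$.
\item The tempered parts $\pi(\phi,\varepsilon)$ are obtained by adjoining the fixed extra summands $\rho'\otimes S_4$ or $\chi\otimes S_1$, with sign $\epsilon_\rho$. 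Their characters $\varepsilon$ agree on the common summands.
\end{enumerate}
By Theorem \ref{thm pure LLC}(3), $\phi_{\pi_\com(\EE)}$ is the sum over $\rho$ of its $\rho$-parts, and $\phi_{\psi_\EE}$ decomposes in the same way. The $\rho$-part of each matches the corresponding parameter for $\EE_\rho^{\Sp}$ once the fixed extra summands (unchanged by the algorithm) are removed. Membership in $\Pi_{\phi_\psi}$ only requires equality of $L$-parameters, so it holds for $\EE$ exactly when it holds $\rho$-by-$\rho$ for the $\EE_\rho^{\Sp}$. Combining Steps 1 and 2 with \cite[Theorem 9.5]{HLL22} applied to each $\EE_\rho^{\Sp}$ proves the theorem.

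\emph{Main obstacle.} Step 2 is the delicate point. The issue is that Algorithm \ref{algo pi_com} first passes to $\EE^{|max|}$ and only then reads off the $L$-data. One must check that $(\EE^{|max|})_\rho^{\Sp}$ is $(\EE_\rho^{\Sp})^{|max|}$, and that the index $j$ chosen in Theorem \ref{thm E max}(2) is the same on both sides. The first follows from Lemma \ref{lem EE{Sp}}(2),(3) and the uniqueness in Theorem \ref{thm E max}(1). The second holds because the added segments have $b=1$, so they are never chosen while $\psi$ is non-tempered. One must also handle the twist by $\chi_\rho$ when $d_\rho$ is odd; this twist is applied uniformly to $\phi_\pi$ and $\phi_\psi$, so it does not affect whether they are equal.
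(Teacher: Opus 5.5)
Your proof is correct, but it takes a different route from the paper's.

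\textbf{Your route.} You reduce the whole statement, one $\rho$ at a time, to \cite[Theorem 9.5]{HLL22} via $\EE\mapsto\EE_\rho^{\Sp}$.
\begin{itemize}
\item Condition $(L)$ transfers because the auxiliary segments have $b=1$ and $l=0$, and each sits alone on its cuspidal line.
\item Membership in the $L$-packet transfers because Algorithm \ref{algo pi_com} runs in parallel on both sides (Remark \ref{rmk EE Sp L data}(2)). Equality of $L$-parameters can be checked separately on each $\rho$-line, and the auxiliary tempered summands and the quadratic twist do not affect it.
\end{itemize}

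\textbf{The paper's route.} The paper argues intrinsically on $G_n$ and never invokes \cite[Theorem 9.5]{HLL22}.
\begin{itemize}
\item Forward direction: $(L)$ forces absolute maximality (Proposition \ref{prop (L) is maximal}), and $(L)$ is preserved when passing to $\EE^-=add_j^{-1}(\EE)$ in Algorithm \ref{algo pi_com}. Induction down to the tempered case then places $\pi_\com(\EE)$ in $\Pi_{\phi_\psi}$. The component-group character is read off from the $\eta_i$ with $b_i$ odd.
\item Converse: a counting argument. The $(L)$-type $\EE'$ with $\psi_{\EE'}=\psi$ are exactly as numerous as the members of $\Pi_{\phi_\psi}$. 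So, by the forward direction, $\pi_\com(\EE)=\pi_\com(\EE')$ for one of them. Corollary \ref{cor com intersect}(2) then identifies $\EE$ with $\EE'$ up to row exchanges.
\end{itemize}

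\textbf{What each buys.} Your reduction is shorter and avoids the counting step. On a pure inner form that step requires matching the sign condition \eqref{eq sign condition} with the Kottwitz condition defining $\mathrm{Irr}(A_{\phi,G_n})$. It also avoids checking that $(L)$ survives $add_j^{-1}$. The paper's argument yields more: an explicit bijection between $(L)$-type extended multi-segments (up to row exchange) and $\Pi_{\phi_\psi}(G_n)$, together with the component-group character of each member.
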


\begin{proof}
If $\EE$ satisfies $(L)$, then $\EE$ is absolutely maximal by the above Proposition \ref{prop (L) is maximal}.  From Definition \ref{def (L)}, we observe directly that $\EE^-$ is also of type $(L)$. By the definition of $\pi_{\com}$, a straightforward induction argument (where the base case is the tempered case) implies that $\pi_{\com}(\EE) \in \Pi_{\phi_{\psi}}. $ Moreover, the collection of signs $\{ \eta_i \ | \ A_i-B_i+1 \text{ is odd}\}$ gives the character for $\pi_{\com}(\EE)$ in $\mathcal{S}_{\phi_{\psi}}.$

For the converse direction, observe that the number of extended multi-segments $\EE'$ satisfying the condition (i)-(iii) in Definition \ref{def (L)} with $\psi_{\EE'}=\psi$ coincides with the size of the $L$-packet $\Pi_{\phi_{\psi}}$. Thus, if $\pi_{\com}(\EE) \in \Pi_{\phi_{\psi}}$, then there exists an $\EE'$ satisfying the condition $(L)$ such that $\pi_{\com}(\EE')= \pi_{\com}(\EE)$. Now Corollary \ref{cor com intersect}(2) implies that $\EE'=\EE$ after row exchanges. This completes the proof of the theorem.
\end{proof}

Next, we give an algorithm (Algorithm \ref{algo Arthur type pi^-}) which determines when a representation $\pi\in\Pi_{gp}(G_n)$ is of Arthur type by mirroring \cite[Algorithms 6.11]{HJLLZ25}. Much of the ideas of this section follow that of \cite[\S6]{HJLLZ25}. We begin by recalling a definition which forms a key step in the algorithms.

\begin{defn}\label{def pi minus}
Let $\pi\in \Pi(G_n)$ be non-tempered. Write
\[ \pi= L(\Delta_{\rho_1}[x_1,-y_1], \dots , \Delta_{\rho_f}[x_f,-y_f]; \pi(\phi, \varepsilon)).\]
 We define $\pi^{\rho,-}$ to be the representation whose $L$-data is obtained by removing all copies of $\Delta_{\rho}[x,-y]$ from $\pi$ such that
    \begin{align*}
         x&= \min \{ x_i \ | \ x_i -y_i= \min\{ x_j-y_j \ | \  \rho_j \cong \rho \}, {\ \rho_i\cong\rho} \},\\
         y&= x- \min\{ x_j-y_j \ | \  \rho_j \cong \rho \}. 
    \end{align*}
    We shall write $\pi=\pi^{\rho,-}+\{(\Delta_{\rho}[x,-y])^r\}$, where $r$ indicates the multiplicity. 
\end{defn}
\begin{remark}    
    In \cite[\S 6]{HJLLZ25}, we have considered another reduction $\pi_{\rho,-}$.  While we only treat $\pi^{\rho,-}$ in this paper, similar results for $\pi_{\rho,-}$ can be obtained without much difficulty.
\end{remark}

Definition \ref{def pi minus} has an analogue for extended multi-segments as follows.

\begin{defn}\label{def EE_rho_minus}
Suppose that $\EE= \cup_{\rho} \{([A_i,B_i]_{\rho},l_i,\eta_i)\}_{i \in (I_{\rho},>)} \in \Rep^{(P')}$ is absolutely maximal. If necessary, change the admissible order on each $I_{\rho}$ so that $A_i \geq A_j$ if $i>j$ and $B_i=B_j$. For each $\rho$ such that $I_{\rho}\neq \emptyset$ and $b_{\rho} := \max \{A_i-B_i+1\ | \ i \in I_{\rho}\} >1 $, let $ j_1 = \min\{ i \in I_{\rho}\ | \ A_i-B_i+1=b_{\rho} \}$ and denote 
\[\{j \in I_{\rho} \ |\  [A_j,B_j]_{\rho}= [A_{j_1},B_{j_1}]_{\rho} \}= \{j_1, \dots ,j_s\},\]
with $j_1 < \cdots <j_s$. Then define 
$\EE^{\rho,-}:=\sum_{k=1}^s add_{j_k}^{-1}(\EE)$, 
which satisfies $(P')$ by the assumptions.
\end{defn}

We provide a sufficient condition for determining when $\pi_{\com}(\EE^{\rho,-})\neq 0.$

\begin{thm}\label{thm pi_rho_minus}
 Let $\pi \in \Pi_{gp}(G_n)$ be non-tempered and $\pi=\pi^{\rho,-}+\{(\Delta_{\rho}[x,-y])^r\}$. Let $\EE$ be an absolutely maximal extended multi-segment with $\pi_\com(\EE)=\pi$. The following holds.
 \begin{enumerate}
     \item [(a)] We have $\pi_\com(\EE^{\rho,-})\neq 0$. \item [(b)] Moreover, $\pi_\com(\EE^{\rho,-})= \pi^{\rho,-}$. In particular, we have
     \[ \psi_{\EE^{\rho,-}}= \psi_{\EE^{|max|}}-(\rho\otimes S_{y+x+1} \otimes S_{y-x+1})^{\oplus r}+(\rho\otimes S_{y+x+1} \otimes S_{y-x-1})^{\oplus r}. \]
     \item [(c)] The parameter $\psi_{\EE^{|max|}}$ contains exactly $r$ copies of $ \rho\otimes S_{y+x+1} \otimes S_{y-x+1}$.
 \end{enumerate}
\end{thm}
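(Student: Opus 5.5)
The plan is to reduce to the symplectic case through the auxiliary extended multi-segments $\EE_{\rho}^{\Sp}$ of Lemma \ref{lem EE{Sp}}. Only the $\rho$-part of $\EE$ changes when we pass to $\EE^{\rho,-}$, and only the $\rho$-segments of the $L$-data change when we pass to $\pi^{\rho,-}$. So everything should be decided by $\EE_{\rho}$, and the other parts $\EE_{\rho'}$ with $\rho'\not\cong\rho$ stay fixed.

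\textbf{Step 1 (reduction).} We have $\NV(\EE)\neq 0$, since $\pi_{\com}(\EE)$ is defined and equals $\pi$. Hence $\NV(\EE_{\rho}^{\Sp})\neq 0$ by Lemma \ref{lem EE{Sp}}(1). Since $\EE$ is absolutely maximal, $\EE_{\rho}^{\Sp}$ is also absolutely maximal by Lemma \ref{lem EE{Sp}}(3).

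Next compare the two $\rho$-reductions. The extra segments added in $\EE_{\rho}^{\Sp}$ all have $A=B$, so they have length $1$. When $\EE_{\rho}$ contains a segment of length $b_{\rho}>1$, these extra segments do not affect the selection of $j_1,\dots,j_s$ in Definition \ref{def EE_rho_minus}. It follows that $(\EE^{\rho,-})_{\rho}^{\Sp}=(\EE_{\rho}^{\Sp})^{\rho,-}$, where on the right we twist by $\chi_{\rho}$ when $d_{\rho}$ is odd. One point needs checking: the sign condition \eqref{eq sign condition} must be preserved, i.e. $\epsilon_{\rho}$ should be unchanged by $add^{-1}$. The replacement $(b,l)\mapsto(b-2,l-1)$ leaves $(-1)^{\lfloor b/2\rfloor+l}\eta^{b}$ invariant, so this holds.

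By Remark \ref{rmk EE Sp L data}(2), the $\rho$-part of the $L$-data of $\pi_{\com}(\EE)$ is read off from $\pi_{\com}(\EE_{\rho}^{\Sp})=\pi_{\Ato}(\EE_{\rho}^{\Sp})$. Because the extra characters and $\rho'$ only add tempered pieces, the removal $\pi\mapsto\pi^{\rho,-}$ corresponds to $\pi_{\com}(\EE_{\rho}^{\Sp})\mapsto\pi_{\com}(\EE_{\rho}^{\Sp})^{\chi_\rho\rho,-}$.

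\textbf{Step 2 (symplectic input).} Now invoke the symplectic version of the statement, \cite[\S6]{HJLLZ25} (the result underlying \cite[Algorithm 6.11]{HJLLZ25}), applied to the absolutely maximal $\EE_{\rho}^{\Sp}$. It gives the following:
\begin{itemize}
\item $\pi_{\Ato}((\EE_{\rho}^{\Sp})^{-})\neq 0$;
\item $\pi_{\Ato}((\EE_{\rho}^{\Sp})^{-})=\pi_{\Ato}(\EE_{\rho}^{\Sp})^{-}$;
\item the multiplicity statement (c) for $\psi_{(\EE_{\rho}^{\Sp})^{|max|}}$.
\end{itemize}
Transporting back through Lemma \ref{lem EE{Sp}}(1) and Remark \ref{rmk EE Sp L data}(2), and recombining with the unchanged parts $\EE_{\rho'}$, yields (a) and (b).

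For (c), use $\psi_{\EE^{|max|}}=\psi_{\EE}$, which holds because $\EE$ is absolutely maximal and $[\EE^{|max|}]$ is unique by Theorem \ref{thm E max}(1). Then the formula for $\psi_{\EE^{\rho,-}}$ follows directly from Definition \ref{def shift and add}: each $add_{j_k}^{-1}$ replaces $\rho\otimes S_{A+B+1}\otimes S_{A-B+1}$ by $\rho\otimes S_{A+B+1}\otimes S_{A-B-1}$, and $[A_{j_k},B_{j_k}]=[y,-x]$.

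\textbf{Main obstacle.} The hardest step is the identification $x=-B_{j_1}$, $y=A_{j_1}$, $r=s$ underlying (b) and (c), that is, matching the minimal $x_i-y_i$ among the $\rho$-segments of the $L$-data with the maximal length $b_\rho$ in $\EE$. If the symplectic citation does not cover this directly, I would argue it from Algorithm \ref{algo pi_com} by induction on the number of steps. Each application of $add_j^{-1}$ inserts $\Delta_{\rho}[B_j,-A_j]$, and by Remark \ref{rmk EE Sp L data}(1) these segments have non-decreasing $\half{x+y}$, with lengths bounded by $b_\rho$. The $s$ repeated copies of $[A_{j_1},B_{j_1}]$ are removed first, and this gives exactly $r=s$ copies of $\Delta_\rho[B_{j_1},-A_{j_1}]$. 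One must also check that no later step produces another segment of the same shape, which requires the choice of $j_1$ as the minimal index with $A_i-B_i+1=b_\rho$.
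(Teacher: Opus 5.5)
Your proposal follows the paper's proof, which deduces the even orthogonal case from the symplectic statement \cite[Theorem 6.6]{HJLLZ25} via Lemma \ref{lem EE{Sp}} and Remark \ref{rmk EE Sp L data}(2); that cited theorem already identifies $x$, $y$ and $r$ with the data of $\EE$, so your fallback induction is unnecessary. One slip to fix: Algorithm \ref{algo pi_com} inserts $\Delta_{\rho}[B_j,-A_j]$, so $x=B_{j_1}$ and $[A_{j_k},B_{j_k}]_{\rho}=[y,x]_{\rho}$ rather than $[y,-x]_{\rho}$ --- only with this sign does $add_{j_k}^{-1}$ turn $\rho\otimes S_{y+x+1}\otimes S_{y-x+1}$ into $\rho\otimes S_{y+x+1}\otimes S_{y-x-1}$ as in (b).
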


\begin{proof}
    When $G_n$ is symplectic or split odd orthogonal, the result is precisely \cite[Theorem 6.6]{HJLLZ25}. When $G_n$ is an even orthogonal group, the result follows from Lemma \ref{lem EE{Sp}} and Remark \ref{rmk EE Sp L data}(2). 
\end{proof}

Next, we give a definition which is related to computing the inverse of the previous definitions.

\begin{defn}\label{def criterion for Arthur type}
Let $\pi\in\Pi_{gp}(G_n)$ and $\pi=\pi^{\rho,-}+\{(\Delta_{\rho}[x,-y])^r\}$. 
\begin{enumerate}
    \item [(1)] Let $\Psi(\pi^{\rho,-}; \Delta_{\rho}[x,-y],r)$ denote the set of local Arthur parameters $\psi$ such that
\begin{enumerate}
    \item [$\oldbullet$] $\pi^{\rho,-}=\pi_\com(\EE)$ for some $\EE$ with $\psi_\EE=\psi$;
    \item [$\oldbullet$] if $y-x-1>0$, then $\psi$ contains $r$ copies of $\rho\otimes S_{x+y+1}\otimes S_{y-x-1}$; 
    \item [$\oldbullet$] any summand of $\psi$ of the form $\rho \otimes S_{a} \otimes S_b$ satisfies $b \leq y-x+1$, and $a>x+y+1$ if $b=y-x+1$.
\end{enumerate}
For any $\psi \in \Psi(\pi^{\rho,-}; \Delta_{\rho}[x,-y],r)$, we define
\[\psi^{+}:= \psi- (\rho\otimes S_{x+y+1}\otimes S_{y-x-1})^{\oplus r}+(\rho\otimes S_{x+y+1}\otimes S_{y-x+1})^{\oplus r}.\]

\item [(2)] Let $ \mathscr{E}(\pi^{\rho,-}; \Delta_{\rho}[x,-y],r)$ denote the set of extended multi-segments $\EE \in \Rep^{(P')}$ such that $\pi_\com(\EE)=\pi^{\rho,-}$ and $\psi_{\EE}\in \Psi(\pi^{\rho,-}; \Delta_{\rho}[x,-y],r)$. For each $\EE \in \mathscr{E}(\pi^{\rho,-}; \Delta_{\rho}[x,-y],r)$,
we define $\EE^{\rho,+}$ as follows.
\begin{enumerate}
    \item [(i)] If $y-x=1$, then we define $\EE^{\rho,+}$ by inserting $r$ copies of {$([x+1,x]_{\rho},1,1)$} in $\EE$ with admissible order $\gg$ on  $I_{\rho} \sqcup \{j_1,\dots ,j_r\}$, where $j_{k}$ corresponds to the $k$-th copy we inserted, as follows
\[\begin{cases}
j_r \gg j_{r-1} \gg \cdots \gg j_1,\\
\alpha \gg \beta  \Longleftrightarrow \alpha > \beta & \text{for }\alpha, \beta \in I_{\rho},\\
\alpha \gg j_k \Longleftrightarrow {B_\alpha >x} & \text{for }\alpha \in I_{\rho}.
\end{cases}\]
\item [(ii)] If $y-x>1$, then change the admissible order if necessary so that there exists $j_1,\ldots, j_r \in I_{\rho}$ such that
\begin{enumerate}
    \item [$\oldbullet$] $j_1 < \cdots < j_r$ are adjacent under the admissible order on $I_{\rho}$,
    \item [$\oldbullet$] $[A_{j_1},B_{j_1}]_{\rho}= \cdots = [A_{j_r},B_{j_r}]_{\rho}=[y-1,x+1]_{\rho}$,
    \item [$\oldbullet$]  $j_1 = \min\{i \in I_{\rho} \ | \ B_i=B_{j_1}\}$,
    \item [$\oldbullet$] $ A_{j_1}-B_{j_1}+3 \geq A_i-B_i+1$ for all $i \in I_{\rho}$ and the equality does not hold for $i<j_1$.
\end{enumerate}
Then we define $\EE^{\rho,+}:=\sum_{k=1}^r add_{j_k}^{1}(\EE) \in \Rep^{(P')}$.
\end{enumerate}
\end{enumerate}
\end{defn}

The precise relation between these definitions is given in the following lemma.

\begin{lemma}\label{lemma pi^rho^plus} 
Let $\pi\in\Pi_{gp}(G_n)$ and $\EE  \in \mathscr{E}( \pi;\Delta_{\rho}[x,-y],r )$. Then we have that $\pi_\com(\EE^{\rho,+})\neq 0$ and 
    \[ \pi_\com(\EE^{\rho,+})= \pi_\com(\EE^{\rho,+})^{\rho,-}+ \{ (\Delta_{\rho}[x,-y])^r\}. \]
\end{lemma}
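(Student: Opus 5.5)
The plan is to reduce the statement to the symplectic case, following the pattern of the proofs of Theorems \ref{thm pi_rho_minus} and \ref{thm E max}. When $G_n$ is split symplectic or split odd special orthogonal, the lemma is the analogue of the corresponding statement in \cite[\S 6]{HJLLZ25}; there $\pi_{\com}=\pi_{\Ato}$ (Theorem \ref{thm Sp}(7)), and we take that result as known. So the real work is the case of a pure inner form of an even orthogonal group $G_n$. (Here $\mathscr{E}(\pi;\Delta_\rho[x,-y],r)$ is read as $\mathscr{E}(\pi^{\rho,-};\Delta_\rho[x,-y],r)$ in the notation of Definition \ref{def criterion for Arthur type}, with $\pi^{\rho,-}$ replaced by $\pi$.)

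First, we note that the construction $\EE\mapsto \EE^{\rho,+}$ only changes $\EE_\rho$; the parts $\EE^{\rho'}$ with $\rho'\not\cong\rho$ stay the same. Next we pass to the symplectic multi-segment $\EE_\rho^{\Sp}$.
\begin{enumerate}
\item Inserting $r$ copies of $([x+1,x]_\rho,1,1)$, or applying $add_{j_k}^{1}$, keeps the parity of $d_\rho$, leaves $\epsilon_\rho$ unchanged, and leaves the quadratic character $\chi_\rho$ unchanged. Each added block has even dimension contribution $(2x+2)\cdot 2$ in case (i), and case (ii) changes $b$ by $2$ and $l$ by $1$.
\item Hence $(\EE^{\rho,+})_\rho^{\Sp}=(\EE_\rho^{\Sp})^{\chi\rho,+}$, where $\chi=\chi_\rho$ is the appropriate twist and the auxiliary segments $[3/2,3/2]_{\rho'}$ or $[0,0]_\chi$ are untouched.
\item We must check that $\EE_\rho^{\Sp}$ lies in the symplectic set $\mathscr{E}(\pi_{\com}(\EE_\rho^{\Sp});\Delta_{\chi\rho}[x,-y],r)$. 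Parameter conditions are visible directly. The auxiliary summands do not involve $\chi\rho$ unless $\chi\rho$ equals an auxiliary $\rho'$ or $\chi$, and this case must be excluded or handled. The most likely such case is $\rho\cong\chi_\rho$ being a quadratic character, where the twist changes $\rho$; we would check this carefully.
\end{enumerate}

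Then Remark \ref{rmk EE Sp L data}(2) gives the translation of $L$-data. The non-tempered part of $\pi_{\com}(\EE)$ with cuspidal support along $\rho$ matches that of $\pi_{\com}(\EE_\rho^{\Sp})$ along $\chi_\rho\rho$. The tempered parts correspond via the $\eta$'s.

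Applying the symplectic lemma yields $\pi_{\com}((\EE_\rho^{\Sp})^{+})\neq0$, i.e.\ $\NV\neq0$. Lemma \ref{lem EE{Sp}}(1) then gives $\NV((\EE^{\rho,+})_\rho)\neq0$. The other $\rho'$ parts of $\EE^{\rho,+}$ are unchanged, so by Remark \ref{rmk NV EE rho}(1) $\NV(\EE^{\rho,+})\neq0$, i.e.\ $\pi_{\com}(\EE^{\rho,+})\neq0$. The symplectic conclusion
\[\pi_{\com}((\EE_\rho^{\Sp})^{+})=\pi_{\com}((\EE_\rho^{\Sp})^{+})^{\chi\rho,-}+\{(\Delta_{\chi\rho}[x,-y])^r\}\]
transports back through Remark \ref{rmk EE Sp L data}(2) to the desired identity. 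Removing $\rho$-segments via $(\cdot)^{\rho,-}$ commutes with the dictionary, since it is defined purely from the $\rho$-part of the $L$-data.

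The main obstacle is the compatibility step: checking that $(\cdot)^{\rho,+}$, membership in $\mathscr{E}(\cdot;\Delta_\rho[x,-y],r)$, and $(\cdot)^{\rho,-}$ all commute with $\EE\mapsto\EE_\rho^{\Sp}$. This includes the condition in Definition \ref{def criterion for Arthur type} that $\pi=\pi_{\com}(\EE)$ with $\psi_\EE$ having the prescribed shape. If Remark \ref{rmk EE Sp L data}(2) turns out to be too coarse for this, the fallback is to reprove the symplectic argument directly for $\pi_{\com}$. That argument is combinatorial. It uses Theorem \ref{thm pi_rho_minus} applied to $\EE^{\rho,+}$ after showing it can be made absolutely maximal with $(\EE^{\rho,+})^{\rho,-}=\EE$ up to row exchanges, and Theorem \ref{thm E max} supplies the nonvanishing.
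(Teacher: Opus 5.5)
Your main route is correct and is essentially the paper's: the paper also reduces to \cite[Lemma 6.9]{HJLLZ25} through $\EE_\rho^{\Sp}$ using Lemma \ref{lem EE{Sp}} and Remark \ref{rmk EE Sp L data}(2). The difference is in what gets transported. The paper transports only the nonvanishing of $\EE^{\rho,+}$ and the fact that every raising operator applicable on $\EE^{\rho,+}$ is applicable on $\EE$, so that $\EE^{\rho,+}$ may be taken absolutely maximal. It then concludes on the orthogonal side from Theorem \ref{thm pi_rho_minus} together with $(\EE^{\rho,+})^{\rho,-}=\EE$, which avoids the $L$-data bookkeeping you were worried about.

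One correction to your fallback: Theorem \ref{thm E max}(2) gives nonvanishing of $add_j^{-1}$ of an absolutely maximal multi-segment, not of $\EE^{\rho,+}$. So the nonvanishing and maximality of $\EE^{\rho,+}$ must still come from the symplectic lemma, as in your main route.
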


\begin{proof}
    When $G_n$ is symplectic or split odd special orthogonal, the result is precisely \cite[Lemma 6.9]{HJLLZ25}. Moreover, it is proved there that any raising operator applicable on $\EE^{\rho,+}$ is also applicable on $\EE.$ Then the same holds for even orthogonal groups by Lemma \ref{lem EE{Sp}} and Remark \ref{rmk EE Sp L data}(2). Namely, we have $\pi_{\com}(\EE^{\rho,+})\neq0$ and we may assume $\EE^{\rho,+} $ is absolutely maximal. Then from the definitions, we have that $(\EE^{\rho,+})^{\rho,-}=\EE$ and thus
        \[ \pi_\com(\EE^{\rho,+})= \pi_\com(\EE^{\rho,+})^{\rho,-}+ \{ (\Delta_{\rho}[x,-y])^r\} \]
        by Theorem \ref{thm pi_rho_minus}.
\end{proof}

From this lemma, we immediately obtain the analogue of \cite[Theorem 6.10]{HJLLZ25} with the same proof.

\begin{thm}\label{thm algo for Arthur type}
Assume that we are in the setting of Definition \ref{def criterion for Arthur type}. Then we have $\pi=\pi_\com(\EE')$ for some extended multi-segment $\EE'$ if and only if $\Psi(\pi^{\rho,-}; \Delta_{\rho}[x,-y],r)$ is non-empty. Moreover, for any $\EE \in  \mathscr{E}(\pi^{\rho,-}; \Delta_{\rho}[x,-y],r)$, we have $\pi=\pi_\com(\EE^{\rho,+})$. 
\end{thm}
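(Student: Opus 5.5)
We argue the two implications separately, using Lemma \ref{lemma pi^rho^plus} for sufficiency and Theorem \ref{thm pi_rho_minus} for necessity.

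\textbf{Sufficiency and the ``moreover'' part.} Suppose $\psi\in\Psi(\pi^{\rho,-};\Delta_{\rho}[x,-y],r)$.
\begin{enumerate}
\item By definition there is $\EE_0$ with $\psi_{\EE_0}=\psi$ and $\pi_\com(\EE_0)=\pi^{\rho,-}$.
\item Changing the admissible order via row exchanges (which do not change $\pi_\com$), we may take $\EE_0\in\Rep^{(P')}$.
\item Then $\EE_0\in\mathscr{E}(\pi^{\rho,-};\Delta_{\rho}[x,-y],r)$, so this set is nonempty.
\end{enumerate}
Now let $\EE$ be any element of this set. Lemma \ref{lemma pi^rho^plus}, applied with $\pi^{\rho,-}$ in the role of $\pi$, gives $\pi_\com(\EE^{\rho,+})\ne0$ and
\[\pi_\com(\EE^{\rho,+})=\pi_\com(\EE^{\rho,+})^{\rho,-}+\{(\Delta_\rho[x,-y])^r\}.\]
The proof of that lemma shows more: we may assume $\EE^{\rho,+}$ is absolutely maximal and $(\EE^{\rho,+})^{\rho,-}=\EE$. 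Theorem \ref{thm pi_rho_minus}(b) then gives $\pi_\com(\EE^{\rho,+})^{\rho,-}=\pi_\com(\EE)=\pi^{\rho,-}$. Adding back the $r$ copies of $\Delta_\rho[x,-y]$ yields $\pi_\com(\EE^{\rho,+})=\pi$. This proves both sufficiency and the ``moreover'' statement.

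\textbf{Necessity.} Suppose $\pi=\pi_\com(\EE')$.
\begin{enumerate}
\item Replace $\EE'$ by $\EE:=(\EE')^{|max|}$, which exists by Theorem \ref{thm E max}(1). This is allowed since raising operators preserve $\pi_\com$ (Corollary \ref{cor com intersect}(1)). Arrange $\EE$ to satisfy $(P')$ by row exchanges.
\item Theorem \ref{thm pi_rho_minus}(a),(b) gives $\pi_\com(\EE^{\rho,-})=\pi^{\rho,-}\neq0$. Set $\psi:=\psi_{\EE^{\rho,-}}$; the first bullet of Definition \ref{def criterion for Arthur type}(1) holds.
\item Theorem \ref{thm pi_rho_minus}(b),(c) say $\psi_{\EE}$ contains exactly $r$ copies of $\rho\otimes S_{x+y+1}\otimes S_{y-x+1}$, and $\psi$ is obtained by replacing them with $r$ copies of $\rho\otimes S_{x+y+1}\otimes S_{y-x-1}$. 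Hence $\psi$ contains these $r$ copies whenever $y-x-1>0$, which is the second bullet.
\end{enumerate}

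\textbf{Third bullet, the main obstacle.} We must show every $\rho$-summand of $\psi$ has $b\le y-x+1$, and $a>x+y+1$ when $b=y-x+1$. Recall that $\EE^{\rho,-}$ modifies exactly the rows equal to $[A_{j_1},B_{j_1}]$, where $j_1$ is the minimal index attaining $b_\rho$. So we need to identify $y-x+1$ with $b_\rho$, and $x+y+1$ with $2B_{j_1}+\ldots$ at the appropriate $a$-value.
\begin{enumerate}
\item By Algorithm \ref{algo pi_com} and Remark \ref{rmk EE Sp L data}(1), the segment $\Delta_\rho[B_{j_1},-A_{j_1}]$ has the minimal $x_i-y_i$ among the $\rho$-segments of $\pi$. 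It also has minimal $x$ among those, since $j_1$ is chosen minimal under $(P')$. Hence $b_\rho=y-x+1$ and $[A_{j_1},B_{j_1}]=[y,x]$.
\item Every remaining $\rho$-row of $\EE$ has $b\le b_\rho$. Any row with $b=b_\rho$ that is not removed has index greater than $j_1$, hence $B>x$ by $(P')$ together with the chosen ordering for equal $B$. That row therefore has $a=A+B+1>x+y+1$.
\item The modified rows now have $b=y-x-1$, which is smaller.
\end{enumerate}
Together these give the third bullet, so $\psi\in\Psi(\pi^{\rho,-};\Delta_{\rho}[x,-y],r)$ is nonempty.

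For even orthogonal groups, each step reduces to the symplectic case through $\EE^{\Sp}_\rho$ (Lemma \ref{lem EE{Sp}}), exactly as in \cite[Theorem 6.10]{HJLLZ25}.
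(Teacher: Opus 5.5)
Your proposal is correct and takes essentially the same route as the paper, which obtains the theorem from Lemma \ref{lemma pi^rho^plus} for sufficiency and the ``moreover'' part (with Theorem \ref{thm pi_rho_minus}(b) identifying $\pi_\com(\EE^{\rho,+})^{\rho,-}$ with $\pi_\com(\EE)=\pi^{\rho,-}$) and from Theorem \ref{thm pi_rho_minus} for necessity, exactly as in \cite[Theorem 6.10]{HJLLZ25}. Two small tightenings to the third-bullet argument: first, the identification $[A_{j_1},B_{j_1}]_\rho=[y,x]_\rho$, with exactly $r$ such rows, follows directly by comparing the formula for $\psi_{\EE^{\rho,-}}$ in Theorem \ref{thm pi_rho_minus}(b) with the definition of $\EE^{\rho,-}$, so you do not need the informal ``minimal $x$'' argument; second, a non-modified row with $b=b_\rho$ has $B>x$ simply because $(P')$ gives $B\geq x$, and $B=x$ would make that row equal to $[y,x]_\rho$.
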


The theorem gives us an algorithm to determine whether a representation of good parity is of the form $\pi_\com(\EE)$ for some $\EE$. This algorithm coincides with \cite[Algorithm 6.11]{HJLLZ25} in the symplectic case.

\begin{algo}\label{algo Arthur type pi^-}
Given a representation $\pi\in\Pi_{gp}(G_n)$, proceed as follows:

\begin{enumerate}
    \item [\textbf{Step 1:}] If $\pi$ is tempered,  then we write $\pi=\pi(\phi, \varepsilon)$, where
    \[\phi= \bigoplus_{\rho}\bigoplus_{i\in I_{\rho}} \rho \otimes S_{2A_i+1}.\]
    We equip $I_{\rho}$ with a total order $>$ such that $a_i$ is non-decreasing. Then let 
    \[ \EE:= \cup_{\rho} \{ ([A_i,A_i],0, \varepsilon( \rho \otimes S_{2A_i+1})) \}_{i \in (I_{\rho, >})},\]
    and output $\mathscr{E}(\EE):=\{ [\EE'] \ | \ \pi_\com(\EE')\cong \pi_\com(\EE) \}$ by Corollary \ref{cor com intersect}. We regard the trivial representation of $G_0$ as a tempered representation, and output $\{\EE\}$, where $\EE=\emptyset$.
    \item [\textbf{Step 2:}] If $\pi$ is not tempered, say $\pi^{\rho,-}$ is obtained from $\pi$ by removing $r$ copies of $\Delta_{\rho}[x,-y]$ $($see Definition $\ref{def pi minus})$. Then apply the algorithm on $\pi^{\rho,-}$ to construct the set (possibly empty)
\[ \{[\EE'] \ | \ \pi_\com(\EE') \cong \pi^{\rho,-}\}.\]
The representation $\pi$ is of the form $\pi_\com(\EE)$ for some $\EE$ if and only if $\mathscr{E}(\pi^{\rho,-}; \Delta_{\rho}[x,-y],r) $ is non-empty. If this set is non-empty, take any $\EE$ in this set and then output $\mathscr{E}(\EE^{\rho,+})$ by Corollary \ref{cor com intersect}.
\end{enumerate}
\end{algo}

Our next goal is to establish an analogue of \cite[Lemma 4.7]{HLL22} (see Lemma \ref{lem $L$-data} below). This is a technical lemma which is needed later (for Theorem \ref{thm N ordering}).
We begin by recalling some definitions related to this lemma.
First, we associate a multi-set with an irreducible representation. 
\begin{defn}\label{def Omega pi}
Consider $\pi\in\Pi(G_n)$ and write
\[ \pi= L\left( \Delta_{\rho_1}[x_1,-y_1],\dots,\Delta_{\rho_t}[x_t,-y_t]; \pi(\sum_{j=t+1}^m \rho_j\otimes S_{2z_j+1},\varepsilon) \right). \]
We define
\[ \Omega(\pi):= \{ \rho_1\lvert\cdot\rvert^{x_1},\dots,\rho_t\lvert\cdot\rvert^{x_t} \} + \{ \rho_1\lvert\cdot\rvert^{y_1},\dots, \rho_t\lvert\cdot\rvert^{y_t} \} + \{ \rho_{t+1}\lvert\cdot\rvert^{z_{t+1}},\dots,\rho_m\lvert\cdot\rvert^{z_m} \}. \]
We denote $\Omega(\pi)_\rho$ to be the maximal sub-multi-set of $\Omega(\pi)$ whose elements are all of the form $\rho\lvert\cdot\rvert^x$ for some $x \in \R$.
\end{defn}

Next, we associate a similar multi-set to an extended multi-segment.
\begin{defn}
Suppose we have a union of extended segments of $G_n$, say
\[ \FF=\{([A_i,B_i]_{\rho},l_i,\eta_i)\}_{i \in (I_{\rho, >})}. \]
For example, $\FF= \EE_{\rho}$ for some extended multi-segment $\EE$. We define the following ordered multi-set  
\begin{align*}
    \Omega(\FF):=& \sum_{i\in I_{\rho}} [A_i,B_i]_{\rho}= \{ \rho\lvert\cdot\rvert^{\alpha_1} ,\dots,\rho\lvert\cdot\rvert^{\alpha_t} \}.
\end{align*}
where $\alpha_1 \leq \cdots\leq \alpha_t$. 
Suppose $\EE=\cup_{\rho} \EE_{\rho}$ is an extended multi-segment. Fix an arbitrary order on the set 
\[ \{ \rho \ | \ \EE_{\rho} \neq \emptyset \}= \{ \rho_1,\dots,\rho_r\}.\]
We define the multi-sets $\Omega(\EE)$ 
to be the sum of multi-sets 
\begin{align*}
    \Omega(\EE)&:= \Omega(\EE_{\rho_1}) +\cdots+ \Omega(\EE_{\rho_r}).
\end{align*}
\end{defn}

When $G_n=\Sp(W_n),$ \cite[Lemma 4.7]{HLL22} relates $\Omega(\EE)$ and $\Omega(\pi)$ when $\pi=\pi_\Ato(\EE)=\pi_\com(\EE)$ (see Lemma \ref{lem $L$-data} below); however, the proof there makes use of the theory of derivatives which we currently do not have access to for even orthogonal groups. Consequently, we extend the result to even orthogonal groups below.

\begin{lemma}\label{lem $L$-data}
For any $\EE \in \Rep$, we have 
\[ \Omega(\EE_{\rho}) \supseteq \Omega(\pi_\com(\EE))_{\rho}\]
as multi-sets. Moreover, 
\begin{enumerate}
    \item [(i)] If {$\EE^{\rho} \cup sh^{-1}(\EE_{\rho})$ is again an extended multi-segment} and  $\pi_\com(\EE^{\rho} \cup sh^{-1}(\EE_{\rho})) \neq 0$, then $\Omega(\EE_{\rho}) = \Omega(\pi_\com(\EE))_{\rho}$.
    \item [(ii)] The difference multi-set $\Omega(\EE_{\rho})\setminus \Omega(\pi_\com(\EE))_{\rho}$ is symmetric about $\rho\lvert\cdot\rvert^{-1/2}$ in the following sense: The multiplicity of $\rho\lvert\cdot\rvert^x$ in $ \Omega(\EE_{\rho})\setminus\Omega(\pi_\com(\EE))_{\rho}$ is the same as that of  $\rho\lvert\cdot\rvert^{-x-1}$.
\end{enumerate}
\end{lemma}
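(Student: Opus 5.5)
The plan is to reduce to the symplectic case through the auxiliary extended multi-segment $\EE_{\rho}^{\Sp}$, exactly as in the proofs of Theorems \ref{thm E max}, \ref{thm pi_rho_minus} and Lemma \ref{lemma pi^rho^plus}. There the statement is \cite[Lemma 4.7]{HLL22}, applied to $\pi_\Ato(\EE_\rho^\Sp)=\pi_\com(\EE_\rho^\Sp)$ (Theorem \ref{thm Sp}(7)). Everything in the lemma concerns only the $\rho$-part, and both sides can be read off $\rho$ by $\rho$:
\[ \Omega(\EE_\rho)=\Omega((\EE_\rho^{\Sp})_{\rho^*}), \]
where $\rho^*$ is $\rho$ or $\chi_\rho\rho$ according to the parity of $d_\rho$. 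The extra segments $[3/2,3/2]_{\rho'}$ and $[0,0]_{\chi}$ lie in other supercuspidal lines, since $\rho'$ is symplectic and, in the case where $d_\rho$ is even and $\rho\not\cong\chi_\rho$, $\chi_\rho\neq\rho$. So they do not interfere with the $\rho^*$-part.

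The key step is to make precise the assertion of Remark \ref{rmk EE Sp L data}(2): the $\rho$-part of the $L$-data of $\pi_\com(\EE)$ coincides with the $\rho^*$-part of the $L$-data of $\pi_\com(\EE_\rho^\Sp)$. More precisely, I will show the following.
\begin{enumerate}
\item The non-tempered segments $\Delta_\rho[x,-y]$ of $\pi_\com(\EE)$ correspond bijectively to the segments $\Delta_{\rho^*}[x,-y]$ of $\pi_\com(\EE_\rho^\Sp)$.
\item The tempered $\rho$-summands $\rho\otimes S_{2z+1}$ correspond to the summands $\rho^*\otimes S_{2z+1}$.
\end{enumerate}
I would prove this by induction along Algorithm \ref{algo pi_com}. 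By Lemma \ref{lem EE{Sp}}(2),(3), the map $\FF\mapsto\FF^\Sp$ sends $\EE^{|max|}$ to $(\EE^\Sp_\rho)^{|max|}$ on the $\rho$-component. It is also compatible with $add_j^{-1}$ at the index chosen in Theorem \ref{thm E max}(2), as long as we work in the $\rho$-line. The extra segments have $b=1$, so they never contribute a step of the algorithm, and they end up in the tempered part. In the tempered base case the $\rho$-summands match term by term. Steps of the algorithm on lines $\rho''\neq\rho$ do not touch $\Omega(\cdot)_\rho$, so we can run the algorithm on one line at a time. This uses the fact that $\mathscr{E}(\EE)\cong\prod_\rho\mathscr{E}(\EE_\rho)$, so that maximality is checked $\rho$ by $\rho$.

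Granting the comparison, the inclusion $\Omega(\EE_\rho)\supseteq\Omega(\pi_\com(\EE))_\rho$ and the symmetry in (ii) transfer verbatim from \cite[Lemma 4.7]{HLL22}. For (i), the hypothesis $\pi_\com(\EE^\rho\cup sh^{-1}(\EE_\rho))\neq0$ means $\NV\neq0$ for this extended multi-segment. By Lemma \ref{lem EE{Sp}}(1), this is equivalent to $\NV\neq0$ for the symplectic counterpart in which the $\rho^*$-segments are shifted by $-1$. The remaining point is that this counterpart is again a valid extended multi-segment of a symplectic group: the shift preserves $b_i$, $l_i$ and $\eta_i$, and the sign condition \eqref{eq sign condition} is unchanged. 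The symplectic version of (i) then gives the equality.

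The main obstacle is the comparison step: checking that $\pi_\com$ genuinely commutes with $\FF\mapsto\FF^\Sp$ on the $\rho$-line. In particular I need that the $j$ selected in Theorem \ref{thm E max}(2) is the same index on both sides, and that no extra segment is ever selected. The second point holds because the algorithm picks the maximal $b_\rho>1$ on a given line, while the extra segments have $b=1$. If some subtlety with the sign $\epsilon_\rho$ of the auxiliary segments arose after applying $add_j^{-1}$, I would note that $add_j^{-1}$ changes $\lfloor b_j/2\rfloor+l_j$ by a compatible amount on both sides, so $\epsilon_\rho$ is preserved and the auxiliary segments stay fixed.
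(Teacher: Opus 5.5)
Your proposal is correct and follows the paper's own route. You pass to $\EE_\rho^{\Sp}$, read both $\Omega(\EE_\rho)$ and $\Omega(\pi_\com(\EE))_\rho$ off the corresponding line of the symplectic object, and apply \cite[Lemma 4.7]{HLL22} together with Theorem~\ref{thm Sp}(7). For (i), you transport the non-vanishing hypothesis through Remark~\ref{rmk NV EE rho}(1) and Lemma~\ref{lem EE{Sp}}(1), exactly as the paper does.

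The only difference is one of detail. You prove, by induction along Algorithm~\ref{algo pi_com}, the comparison of $L$-data that the paper simply asserts as Remark~\ref{rmk EE Sp L data}(2). In doing so you check that $sh^{-1}$ and $add_j^{-1}$ preserve the parity of $d_\rho$, the character $\chi_\rho$ and the sign $\epsilon_\rho$, which are the right things to check.
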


{
\begin{proof}
When $G_n=\Sp(W_n),$ the result is precisely \cite[Lemma 4.7]{HLL22}. Assume then that $\EE$ is an extended multi-segment for $\OO(W_n).$  Recall the extended multi-segment $\EE_\rho^\Sp$ from \S\ref{sec non-vanishing criterion}. {The set  $\Omega(\EE_\rho)$ can be directly read from $\Omega(\EE_\rho^\Sp).$} Similarly, by Lemma \ref{lem EE{Sp}} and Algorithm \ref{algo pi_com}, {the set $\Omega(\pi_\com(\EE))_\rho$ can be directly read from $\Omega(\pi_\com(\EE_\rho^\Sp))$} (see also Remark \ref{rmk EE Sp L data}(2)). Thus, the result for symplectic groups (\cite[Lemma 4.7]{HLL22}) implies that
\[ \Omega(\EE_{\rho}) \supseteq \Omega(\pi_\com(\EE))_{\rho}.\] 
We note that Part (ii) follows directly from the above observations since the difference multi-set $\Omega(\EE_{\rho})\setminus \Omega(\pi_\com(\EE))_{\rho}$ can be directly read from  $\Omega(\EE_{\rho}^{\Sp})\setminus \Omega(\pi_\com(\EE^{\Sp}))$.

We prove Part (i) now.
We suppose that $\pi_\com(\EE^{\rho} \cup sh^{-1}(\EE_{\rho})) \neq 0$. By Remark \ref{rmk NV EE rho}(1), this is equivalent to $\NV(sh^{-1}(\EE_{\rho}))\neq 0$ (since we assumed that $\EE\in\Rep$ and so $\NV(\EE^\rho)\neq0$). Thus, we have $\NV(sh^{-1}((\EE_\rho^\Sp)_\rho))\neq 0$. We also have $\NV(\EE_\rho^\Sp)\neq 0$ and so $\pi_\com((\EE_\rho^\Sp)^{\rho} \cup sh^{-1}((\EE_\rho^\Sp)_\rho)) \neq 0$. The result for symplectic groups (\cite[Lemma 4.7(i)]{HLL22}) implies that $\Omega((\EE_\rho^\Sp)_{\rho}) = \Omega(\pi_\com(\EE_\rho^\Sp))_{\rho}$. However, as noted above, $\Omega(\EE_\rho)=\Omega((\EE_\rho^\Sp)_\rho)$ and $\Omega(\pi_\com(\EE))_\rho=\Omega(\pi_\com(\EE_\rho^\Sp))_\rho$. Therefore we obtain $\Omega(\EE_{\rho}) = \Omega(\pi_\com(\EE))_{\rho}$. This proves Part (i) and completes the proof of the lemma.
\end{proof}
}

\subsection{Translation from M{\oe}glin parameters}

We recall the translation from M{\oe}glin parameters to extended multi-segments given in \cite[Theorem 6.6]{Ato22a} for $\SO_{2n+1}$ and $\Sp_{2n}$. The definition works for general extended multi-segments. 

\begin{defn}\label{defn Moeglin to Atobe dictionary}
Let $\psi\in\Psi_{gp}(G_n),$ $\MM=(\psi_>,\underline{\zeta},\underline{l},\underline{\eta})\in\Moe^{(P')}(G_n)$, and $I_\rho^\pm=\{i\in I_\rho \ | \ \zeta_i=\pm1\}.$
Write
\[
dual(\cup_\rho\{([A_i,-B_i],l_i,{\eta}_i)\}_{i\in (I_\rho^-,>)})=\cup_\rho\{([A_i,B_i],\hat{l}_i,\hat{\eta}_i)\}_{i\in (I_\rho^-,>')}.
\]
We define a function from  $\Moe^{(P')}(G_n)$ to extended multi-segments of $G_n$ via $\MM\mapsto\EE_\MM,$ where
\[
\EE_\MM:=\cup_\rho\left(\{([A_i,B_i],\hat{l}_i,\hat{\eta}_i)\}_{i\in (I_\rho^-,>')}\cup \{([A_i,B_i],l_i,\eta_i)\}_{i\in (I_\rho^+,>)}\right).
    \]
\end{defn}

\begin{remark}\label{rmk Moe Ato dictionary}
    If $\EE \in \Rep^{(P')}$, then Remark \ref{rmk NV EE rho}(2) implies that there exists an $\MM \in \Moe^{(P')}$ such that $\EE=\EE_{\MM}$.
\end{remark}

\subsection{Results for symplectic groups}
In this subsection, we collect several facts for the representations $\pi=\pi_{\com}(\EE)$ of a split symplectic group which study the structure of the set
\[
\Psi(\pi)=\{\psi\in\Psi(G_n) \ | \ \pi\in\Pi_\psi\}.
\]
We remark that the analogous results hold for split odd special orthogonal groups, but we do not require these results.

\begin{thm}\label{thm Sp}
    Let $\EE$ be an extended multi-segment of $\Sp(W_n)$.
    \begin{enumerate}

    \item $($\cite[Theorem 3.7, 4.4]{Ato22a}$)$ We have $\pi_{\Ato}(\EE) \neq 0$ if and only if $\NV(\EE) \neq 0$.

        \item $($\cite[Theorem 3.3]{Ato22a}$)$
Suppose $\psi= \bigoplus_{\rho} \bigoplus_{i \in I_{\rho}} \rho \otimes S_{a_i} \otimes S_{b_i}$ is a local Arthur parameter of good parity of $\Sp(W_n)$. Choose an admissible order $>_{\psi}$ on $I_{\rho}$ for each $\rho$ that satisfies ($P'$) if $\half{a_i-b_i}<0$ for some $i \in I_{\rho}$. Then
\[ \bigoplus_{\pi \in \Pi_{\psi}} \pi= \bigoplus_{\EE} \pi_{\Ato}(\EE),\]
where $\EE$ runs over all extended multi-segments with $\supp(\EE)= \supp(\psi)$ (for a fixed admissible order) and $\pi_\Ato(\EE) \neq 0$.

\item $($\cite[Theorem 6.2]{Ato22a}$)$ Let $\EE$ be an extended multi-segment of $\Sp(W_n)$ with an admissible order satisfying $(P')$. If $\NV(\EE)\neq 0,$ then $\pi_{\Ato}(dual(\EE))=\mathsf{AZ}(\pi_{\Ato}(\EE)).$

\item $($\cite[Theorem 6.6]{Ato22a}$)$ Suppose that $\MM\in \Rep_{\Moe}^{(P')}(\Sp(W_n))$. Then $ \pi_{\Moe}(\MM)= \pi_{\Ato}(\EE_{\MM})$.

\item $($\cite[Theorem 1.4]{HLL22}$)$ Suppose that $\pi_{\Ato}(\EE) \neq 0$. Then the following holds.
    \begin{enumerate}
        \item If $T$ is a raising operator, then $\pi_\Ato(\EE)=\pi_\Ato(T(\EE)).$
        \item If $\EE'$ is another extended multi-segment for which $\pi_\Ato(\EE)=\pi_\Ato(\EE'),$ {then there exists a sequence of raising operators or their inverses $T_1,\ldots, T_r$ such that
        \[ [\EE']= [ T_1 \circ \cdots \circ T_r(\EE)].\]}
    \end{enumerate}
    \item  $($\cite[Theorem 1.7]{HLL22}$)$
    Let $\pi$ be a representation of $\Sp(W_n)$ of Arthur type. Then there exists unique maximal and minimal elements of $\Psi(\pi)$ with respect to $\geq_O$ (see Definition \ref{def operators on parameters}(2)). We denote these elements by $\psi^{max}(\pi)$ and $\psi^{min}(\pi)$, and the corresponding class of extended multi-segment by $[\EE^{|max|}]$ and $[\EE^{|min|}]$, respectively.

    \item $($\cite[Algorithm 6.3]{HJLLZ25}$)$ For any $\EE \in \Rep$, we have $\pi_{\com}(\EE)= \pi_{\Ato}(\EE)$.
    \end{enumerate}
\end{thm}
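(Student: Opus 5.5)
Theorem \ref{thm Sp} is a compilation of results already established for split symplectic groups, so the plan is not to reprove them. Instead, I will check that each part, as stated here, matches the cited source once the conventions of this paper are taken into account. The main work is convention matching. The notion of extended multi-segment (Definition \ref{def multi-segment}), the sign condition \eqref{eq sign condition} with $\epsilon_{G_n}=1$, the operators $R_k$, $ui_{i,j}$, $dual$, $dual_k^{\pm}$, and the non-vanishing conditions of Definition \ref{def non-vanishing} were all transcribed from \cite{Ato22a, HLL22}. For $G_n=\Sp(W_n)$ they reduce verbatim to the definitions used there.

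Once this is done, the parts are invoked in order.
\begin{enumerate}
\item Part (1) is the non-vanishing criterion of \cite[Theorems 3.7, 4.4]{Ato22a}. Here $\NV_\ast$ is Atobe's condition, and $\NV_{\Xu}$ is Xu's criterion rewritten in Atobe's language.
\item Part (2) is \cite[Theorem 3.3]{Ato22a}. The definition of $\Pi_\psi$ adopted in \S\ref{sec: lap for symplectic} coincides with Arthur's, because $\pi_{\Moe}(\psi,\varepsilon)=\pi(\psi,\varepsilon)$ for $\Sp(W_n)$ by \cite{Art13, Moe09a}.
\item Part (3) is \cite[Theorem 6.2]{Ato22a}. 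The only point to confirm is that the formulas of Definition \ref{dual segment}, including the convention in item (3) for $l_i=b_i/2$, match Atobe's.
\item Part (4) is \cite[Theorem 6.6]{Ato22a}, applied to the dictionary of Definition \ref{defn Moeglin to Atobe dictionary}.
\item Parts (5) and (6) are \cite[Theorems 1.4, 1.7]{HLL22}. The partial order $\geq_O$ and the notion of raising operator agree with those in \cite{HLL22}.
\item Part (7) is the statement of \cite[Algorithm 6.3]{HJLLZ25} that Algorithm \ref{algo pi_com} computes $\pi_{\Ato}(\EE)$.
\end{enumerate}

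For Part (7), I would sketch why it holds, since it is the part most tied to the present formulation. I would argue by induction on $\max_i (A_i-B_i+1)$ together with $n$.

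\emph{Tempered case.} If $\psi_{\EE^{|max|}}$ is tempered, Part (5a) gives $\pi_{\Ato}(\EE)=\pi_{\Ato}(\EE^{|max|})$. Atobe's construction then yields the tempered representation $\pi(\phi,\varepsilon)$ with $\varepsilon(\rho\otimes S_{2A_i+1})=\eta_i$.

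\emph{Non-tempered case.} Otherwise, Theorem \ref{thm E max}(2) gives $\NV((\EE^{|max|})^-)\neq 0$. The highest derivative formula of \cite{AM20}, as used in \cite{Ato22a}, gives the embedding $\pi_{\Ato}(\EE^{|max|})\hookrightarrow \Delta_\rho[B_j,-A_j]\rtimes \pi_{\Ato}((\EE^{|max|})^-)$. Remark \ref{rmk EE Sp L data}(1) ensures that this is the Langlands embedding, so the $L$-data is obtained by inserting $\Delta_\rho[B_j,-A_j]$.

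I expect the main obstacle to be the non-tempered case: extracting this embedding from Atobe's derivative-based definition. Concretely, one must show that the choice of $j$ in Theorem \ref{thm E max}(2) is exactly the one for which the relevant $\rho|\cdot|^{B_j}$-derivative is highest and the resulting segment is "leftmost". I would not redo this argument, but simply cite \cite[\S6]{HJLLZ25}, where it is carried out.
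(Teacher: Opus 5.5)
Your proposal is correct and matches the paper. The paper gives no proof of Theorem \ref{thm Sp}; it records each part with its citation to \cite{Ato22a}, \cite{HLL22} and \cite{HJLLZ25}, exactly as you do. Your optional inductive sketch of Part (7) is also sound, provided the induction hypothesis is used to transfer Remark \ref{rmk EE Sp L data}(1) from $\pi_{\com}$ to $\pi_{\Ato}$. The paper proves the analogous statement for $\OO(W_n)$ (Theorem \ref{thm theta=com}) by a different route: it gets the key embedding from M{\oe}glin's Theorem \ref{thm Moeglin + embed} via the dictionary of Theorem \ref{thm add = +}. By the remark following Theorem \ref{thm add = +}, that argument works verbatim for $\Sp(W_n)$, so the step you flagged as the main obstacle need not be extracted from highest derivatives.
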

{
\begin{remark}
 Part (5) of the above theorem is logically equivalent to \cite[Theorem 1.4]{Ato23}. See \cite[Remark 10.5]{HLL22}.   
\end{remark}
}
The main goal of this paper is to establish the analogous results for even orthogonal groups (with $\pi_\Ato(\EE)$ replaced by $\pi_\theta(\EE)$ defined in the next section).

\section{\texorpdfstring{Intersection of local Arthur packets for $\OO_{2n}(F)$}{}}\label{sec intersections for O(2n)}

In this section, we attach representations to extended multi-segments of even-orthogonal groups and show that the results of Theorem \ref{thm Sp} extend to this setting. We begin by attaching the representations. We remark that we are primarily concerned in this section with the case that $\epsilon=-1$ and so $G_n=G(W_n)$ is an even orthogonal group and $H_m=G(V_m)$ is a split symplectic group. To emphasize that $G_n$ is orthogonal, we sometimes write $G_n=\OO(W_n).$ However, there are several instances where we consider $\epsilon=1$ which we emphasize by writing $G_n=\Sp(W_n).$
\subsection{Representations associated to extended multi-segments}

Recall that in the symplectic case, $\pi_\Ato(\EE)$ is defined using highest derivatives, but the formula of highest derivatives has not been established for even orthogonal groups. Our remedy is to use the  theta correspondence instead. First, we give a definition which will be useful to connect extended multi-segments of even orthogonal groups with symplectic groups.
\begin{defn}\label{defn EE_alpha}
    Let $\EE$ be an extended multi-segment of $G_n=\OO(W_n)$ and $\alpha$ be a positive odd integer. Write
    \[
    \EE = \cup_{\rho\in R}\{ ([A_i,B_i]_{\rho}, l_i, \eta_i) \}_{i \in (I_\rho,>)}.
    \]
    Let $\EE_{W,V}=\cup_{\rho\in R}\{ ([A_i,B_i]_{\chi_W\chi_V\inv\rho}, l_i, \eta_i) \}_{i \in (I_\rho,>)}$ be the extended multi-segment defined by 
    replacing each $\rho$ with $\chi_W\chi_V\inv\rho$.
We define
    \[
    \EE_\alpha=\left\{\left(\left[\frac{\alpha-1}{2},-\frac{\alpha-1}{2}\right]_{\chi_W}, \frac{\alpha-1}{2},\epsilon_{G_n} \right)\right\}+ \EE_{W,V}.
    \]
    We remark that the added extended segment is inserted as first extended segment in the admissible order, i.e. corresponds to the minimal element in the total ordered set for $\chi_W$. The extended segments in $\EE_{W,V}$ are ordered similarly to $\EE.$ We also set $\alpha_0=2^{2n}+1.$
\end{defn}

This definition allows us to attach a representation to extended multi-segments of even orthogonal groups using the Adams conjecture.

\begin{prop}\label{prop orthog extended multi-segment rep}
    Let $\EE$ be an extended multi-segment of $G_n=\OO(W_n)$ and $\alpha$ be a positive odd integer. Let $\EE_\alpha$ be the extended multi-segment defined above.
    Then $\EE_{\alpha}$ is an extended multi-segment of $\Sp_{2n+\alpha-1}(F)$. Furthermore, let $\alpha_0=2^{2n}+1.$ If $\pi_{\Ato}(\EE_{\alpha_0})\neq0$, then there exists a unique $\pi_\EE\in\Pi_{\psi_\EE}(G_n)$ such that $\theta_{-\alpha}(\pi_\EE)=\pi_{\Ato}(\EE_{\alpha})$ for any odd $\alpha\geq\alpha_0.$
\end{prop}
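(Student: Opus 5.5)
First I will check that $\EE_\alpha$ is an extended multi-segment of $\Sp_{2n+\alpha-1}(F)$. The parameter is $\psi_{\EE_\alpha}=(\chi_W\chi_V^{-1}\otimes\psi_\EE)\oplus\chi_W\otimes S_1\otimes S_\alpha$, which is exactly $\psi_\alpha$ from \eqref{eqn psi_alpha}. Its dimension is $2n+\alpha$, which is odd and equals $2(n+\half{\alpha-1})+1$. Good parity holds because twisting by the quadratic character $\chi_W\chi_V^{-1}$ preserves self-duality and orthogonality, and because $\chi_W\otimes S_1\otimes S_\alpha$ is orthogonal for odd $\alpha$. For the new segment $[\half{\alpha-1},-\half{\alpha-1}]$ we have $A+B=0\ge0$, $b=\alpha$ and $l=\half{\alpha-1}\le \half{b}$. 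Placing it first in the $\chi_W$-order is admissible for \eqref{eq P}, since it has the smallest $B$. For the sign condition \eqref{eq sign condition}, the new segment contributes $(-1)^{\lfloor \alpha/2\rfloor+\half{\alpha-1}}\epsilon_{G_n}^{\alpha}=\epsilon_{G_n}$. The old segments contribute $\epsilon_{G_n}$ by \eqref{eq sign condition} for $\EE$, so the total is $1=\epsilon_{\Sp}$.

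Next I will build $\pi_\EE$ using M{\oe}glin's theorem. Choose an admissible order satisfying $(P')$. Row exchanges preserve $\pi_{\Ato}$ and are compatible with the construction $\EE\mapsto\EE_\alpha$ for $\alpha\gg0$, because the new segment is always first. Since $\pi_{\Ato}(\EE_{\alpha_0})\ne0$, Theorem \ref{thm Sp}(1) gives $\NV(\EE_{\alpha_0})\neq 0$. The key combinatorial point is that $\NV(\EE_\alpha)\ne0$ is then independent of $\alpha\ge\alpha_0$. The reason is that the extra segment is huge and sits first, so the $(\ast)$-conditions and Xu's conditions involving it hold automatically once $\alpha$ exceeds all $A_i$. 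The only effect on the other rows is the parity of $\alpha_i$, which is shifted by the odd number $\alpha$ for $\chi_W$-rows. I will check that this matches the twist $\rho\mapsto\chi_W\chi_V^{-1}\rho$ together with the $\eta$-sign flips. By Remark \ref{rmk Moe Ato dictionary}, write $\EE_{\alpha}=\EE_{\MM'}$ with $\MM'\in\Rep_{\Moe}^{(P')}(\Sp)$. Theorem \ref{thm Sp}(4) then gives $\pi_{\Ato}(\EE_\alpha)=\pi_{\Moe}(\MM')$, and $\psi_{\MM'}=\psi_\alpha$. Proposition \ref{prop inverse theta} now produces $\pi=\pi_{\Moe}(\MM)\neq0$ on $\OO(W_n^{\pm})$ with $\psi_\MM=\psi_\EE$ and $\theta_{-\alpha}(\pi)=\pi_{\Ato}(\EE_\alpha)$. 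Here I use that the target is symplectic, so there is no ambiguity from $\otimes\det$.

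The main obstacle is to show that $\pi$ lives on $G_n$ itself rather than its pure inner form, and that it is independent of $\alpha$. For the first point, the sign condition \eqref{Moeglin sign} for $\MM$ is governed by $\epsilon_{G_n}$. The relation between $\MM$ and $\MM'$ in Theorem \ref{thm Moeglin Adams} forces the $\eta$-entry of the added block to be the one read off from $\EE_\alpha$, namely $\epsilon_{G_n}$. This pins down the form. I would verify this by tracking $\eta$ through $dual$ on the $\zeta=-1$ part, as in Definition \ref{defn Moeglin to Atobe dictionary}. For independence of $\alpha$, the parameter $\MM$ is recovered from $\MM'=\MM_\alpha$ by simply deleting the block $\chi_W\otimes S_1\otimes S_\alpha$ and undoing the twists. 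Since $\EE_\alpha$ changes with $\alpha$ only in that block, $\MM$ is the same for every odd $\alpha\ge\alpha_0$, and hence so is $\pi_\EE:=\pi_{\Moe}(\MM)$.

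Finally, uniqueness follows from Howe duality. Theorem \ref{thm Howe duality}(2) rules out two distinct representations of $G_n$ with the same nonzero lift. Lemma \ref{lem Howe duality pure} rules out a competitor on the other pure inner form. Because $\psi_\MM=\psi_\EE$, Theorem \ref{thm Moeglin construction} gives $\pi_\EE\in\Pi_{\psi_\EE}(G_n)$.
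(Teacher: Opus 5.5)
Your proposal is correct and follows essentially the same route as the paper: check the sign condition for $\EE_\alpha$, realize $\pi_{\Ato}(\EE_\alpha)$ as a M{\oe}glin representation, pull it back with Proposition \ref{prop inverse theta}, use the recipe of Theorem \ref{thm Moeglin Adams} (through the sign on the inserted row) to see that the preimage lives on $G_n$ and does not depend on $\alpha$, and get uniqueness from Howe duality. Your extra step, that $\NV(\EE_\alpha)\neq 0$ for every odd $\alpha\geq\alpha_0$, makes explicit something the paper leaves implicit. Two small corrections there: the inserted row has $a_0=1$, not $\alpha$, and this parity shift is irrelevant anyway, because every $\chi_W$-row of $\EE_\alpha$ has $B_i\in\Z$. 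Also, $\EE_\alpha$ itself has no $\eta$-sign flips; those appear only in $\MM_\alpha$.
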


\begin{proof}
The uniqueness of $\pi_{\EE}$ is guaranteed by Howe duality (Theorem \ref{thm Howe duality}), and we prove the existence now. It follows from \eqref{eq sign condition} that $\EE_{\alpha}$ is an extended multi-segment of $\Sp_{2n+\alpha-1}(F)$. Furthermore, we have \[\psi_{\EE_\alpha}=(\chi_W\chi_V\inv\otimes\psi_{\EE})\oplus\chi_W\otimes S_1\otimes S_{\alpha}.\] Suppose that $\pi_{\Ato}(\EE_\alpha)\neq 0.$ By Theorem \ref{thm Sp}(1), we have $\pi_{\Ato}(\EE_\alpha)\in\Pi_{\psi_{\EE_\alpha}}.$ From M{\oe}glin's parameterization of the local Arthur packet (Theorem \ref{thm Moeglin construction}), write
    \[
    \pi_{\Ato}(\EE_{\alpha})=\pi_\Moe((\psi_{\EE_{\alpha}})_>,\underline{\zeta},\underline{l},\underline{\eta}).
    \]
    Let $\alpha\gg 0$ such that the Jordan block $(\chi_W, \frac{\alpha-1}{2},\frac{\alpha-1}{2},-1)$ is far away from the rest of the Jordan blocks (in the sense of \cite[\S2]{Xu21b}). In particular, we may take $\alpha\geq\alpha_0=2^{2n}+1.$ By Proposition \ref{prop inverse theta}, there exists a $\pi_{\EE}=\pi_\Moe((\psi_{\EE})_>,\underline{\zeta}',\underline{l}',\underline{\eta}')\neq0$ of $G_n$ or its pure inner form, such that $\theta_{-\alpha}(\pi_{\EE})=\pi_\Ato(\EE_\alpha).$ From the recipe in Theorem \ref{thm Moeglin Adams} and the definition of $\EE_{\alpha}$, we see that $\pi_{\EE}$ is a representation of $G_n$.
    Note that the parameter $((\psi_{\EE})_>,\underline{\zeta}',\underline{l}',\underline{\eta}')$ is independent of $\alpha$. This completes the proof of the proposition.
\end{proof}

The above proposition allows us to attach a representation to extended multi-segments of quasi-split even orthogonal groups.

\begin{defn}\label{def orthog rep of segment}
    Let $\EE$ be an extended multi-segment of $G_n=\OO(W_n)$. If $\pi_{\Ato}(\EE_{\alpha_0})\neq0$, then we define $\pi_\theta(\EE)=\pi_\EE$ as in Proposition \ref{prop orthog extended multi-segment rep}. Otherwise, we set $\pi_\theta(\EE)=0.$ 
\end{defn}

\begin{remark}
   Proposition \ref{prop inverse theta} implies that for any $\pi_{\alpha} \in \Pi_{\psi_{\alpha}}$, there exists a representation $\pi\in \Pi_{\psi}(\OO(W_n^\epsilon))$ such that $ \pi_{\alpha}= \theta_{-\alpha}(\pi)$, for some $\epsilon\in \{\pm 1\}$. With the notation of extended multi-segments, we describe the sign $\epsilon$ explicitly. Write $\pi_{\alpha}= \pi_\Ato(\EE)$. After row exchanges, we may write
   \[ \EE_{\chi_{W}}= \{([(\alpha-1)/2, (1-\alpha)/2]_{\chi_W}, (\alpha-1)/2, \eta)\}+ \FF.\]
   Then $\epsilon=\eta$.
\end{remark}

Let $\EE$ be an extended multi-segment of $G_n=\OO(W_n)$. We give a computable extrinsic algorithm to compute the Langlands classification of $\pi_\theta(\EE)$ (or determine its vanishing).

\begin{algo}\label{algo orthog Langlands Classification}
    Let $\EE$ be an extended multi-segment of $G_n=\OO(W_n)$ and $\alpha_0=2^{2n}+1.$
    Recall $\EE_{W,V}$ is the extended multi-segment defined by replacing each $\rho$ with $\chi_W\chi_V\inv\rho$ and as in Definition \ref{defn EE_alpha}, we set \[
    \EE_{\alpha_0}=\left\{\left(\left[\frac{\alpha_0-1}{2},-\frac{\alpha_0-1}{2}\right]_{\chi_W}, \frac{\alpha_0-1}{2},\epsilon_{G_n} \right)\right\}+ \EE_{W,V}.
    \]
    The admissible order is induced from that of $\EE$ with the added caveat that it should satisfy $({P}').$ 
    \begin{enumerate}
        \item If $\pi_\Ato(\EE_{{\alpha_0}})=0$ (which can be checked explicitly by Theorem \ref{thm Sp}(1)), then $\pi_\theta(\EE)= 0$ and the algorithm terminates. Otherwise, proceed to the next step.
        \item Compute the Langlands classification for $\pi_\Ato(\EE_{{\alpha_0}})$ (for example, via Algorithm \ref{algo pi_com} and Theorem \ref{thm Sp}(7)). 
        \item Since $\pi_\Ato(\EE_{{\alpha_0}})=\theta_{-\alpha_0}(\pi_\theta(\EE)),$ we compute the Langlands classification of $\pi_\theta(\EE)=\theta_{\alpha_0}(\theta_{-\alpha_0}(\pi_\theta(\EE)))$ by explicit computation of the theta lifts given in \cite{AG17a, BH21}. Here $\theta_\alpha$ denotes the theta lift from $\Sp(V_{n+\half{\alpha_0-1}})$ to $\OO(W_n)$. 
    \end{enumerate}
   
\end{algo}

We remark that Algorithm \ref{algo orthog Langlands Classification} is less than desirable for several reasons. In particular, it is not intrinsic to even orthogonal groups. For theoretical applications, it is desirable to have a simpler approach to computing the Langlands classification for $\pi_\theta(\EE)$. Later, we will achieve this by demonstrating that $\pi_\theta(\EE)=\pi_\com(\EE)$ (Theorem \ref{thm theta=com}).

Recall from Theorem \ref{thm Sp}(4) that Atobe showed how to translate between the parameterizations of local Arthur packets. More specifically, if $G_n=\Sp(W_n)$ and $\MM\in\Moe^{(P')}(G_n)$, then $\pi_\Ato(\EE_\MM)=\pi_\Moe(\MM)$.
We extend this to the orthogonal case.

\begin{thm}\label{cor Moeglin to Atobe dictionary orthog}
    Suppose that $\MM\in \Rep_{\Moe}^{(P')}(\OO(W_n))$. Then $ \pi_{\Moe}(\MM)= \pi_{\theta}(\EE_{\MM})$.
\end{thm}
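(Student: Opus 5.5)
The plan is to reduce to the symplectic case via the theta correspondence and then invoke Howe duality. Let $\MM\in\Rep_{\Moe}^{(P')}(\OO(W_n))$ and fix an odd $\alpha\geq \alpha_0=2^{2n}+1$.

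First, Theorem \ref{thm Moeglin Adams} gives $\theta_{-\alpha}(\pi_{\Moe}(\MM))=\pi_{\Moe}(\MM_\alpha')$. Lemma \ref{lemma M_alpha to M_alpha^P'} upgrades this to $\theta_{-\alpha}(\pi_{\Moe}(\MM))=\pi_{\Moe}(\MM_{\alpha})$ with $\MM_\alpha\in\Rep^{(P')}_{\Moe}(\Sp_{2n+\alpha-1}(F))$. Since $\pi_{\Moe}(\MM)\neq 0$ and $\alpha\gg0$ lies in the stable range, this lift is nonzero. Atobe's dictionary, Theorem \ref{thm Sp}(4), then gives $\pi_{\Moe}(\MM_\alpha)=\pi_{\Ato}(\EE_{\MM_\alpha})$.

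The central combinatorial step is the identity
\[
\EE_{\MM_\alpha}=(\EE_{\MM})_\alpha,
\]
which makes the diagram in the introduction commute. To prove it, split $\MM_{\chi_V}$ into its $\zeta=-1$ and $\zeta=+1$ parts as in Remark \ref{rmk Moeglin parameter}(2).

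\begin{itemize}
\item For $\rho\not\cong\chi_V$, and for the $\zeta=+1$ part, both sides agree after the twist $\rho\mapsto\chi_W\chi_V^{-1}\rho$, because Definition \ref{defn Moeglin to Atobe dictionary} copies these rows unchanged.
\item For the $\zeta=-1$ part, $\EE_{\MM_\alpha}$ applies $dual$ to the ordered block consisting of $\chi_W\chi_V^{-1}\psi_{\chi_V,-}$ with signs $-\underline{\eta_-}$, followed by the new row $([\frac{\alpha-1}{2},\frac{1-\alpha}{2}],0,\epsilon)$, which is last among the $\zeta=-1$ rows.
\item Under $dual$ (Definition \ref{dual segment}) the order reverses, so the new row becomes the first row, with $[A,-B]=[\frac{\alpha-1}{2},-\frac{\alpha-1}{2}]$. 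Since $B\in\mathbb{Z}$ here, $l'=0+\frac{\alpha-1}{2}$. This matches the inserted row of Definition \ref{defn EE_alpha}.
\item For the old rows, $\alpha_i$ is unchanged because the new row comes after them in the original order. However, $\beta_i$ gains $b=\alpha$, which is odd, so $\eta'_i$ acquires an extra factor $-1$. This exactly cancels the sign flip $-\underline{\eta_-}$ in M{\oe}glin's recipe. The shifts $\frac12(-1)^{\alpha_i}\eta_i$ in $l_i'$ are also unaffected, since $\alpha_i$ is unchanged.
\item The sign of the new row is forced on both sides by the sign conditions \eqref{Moeglin sign} and \eqref{eq sign condition}. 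It equals $\epsilon_{G_n}$ after accounting for $\beta$ being the total $b$-sum of the old $\zeta=-1$ rows.
\end{itemize}

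I expect this sign bookkeeping to be the main obstacle. One must check carefully that the parity contributions of the remaining $\chi_V$-rows, the $\lfloor b/2\rfloor$ terms and $\epsilon'$ all combine to give $\epsilon_{G_n}$. The $(P')$ ordering conventions must also match on both sides; the relevant property of $\MM_\alpha$ is provided by Lemma \ref{lemma M_alpha to M_alpha^P'}, while for $\EE_{\MM}$ one also needs the $\chi_W$-block ordering to agree with $(\EE_\MM)_\alpha$.

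With the identity in hand, we get $\theta_{-\alpha}(\pi_{\Moe}(\MM))=\pi_{\Ato}((\EE_\MM)_\alpha)\neq0$ for all odd $\alpha\geq\alpha_0$. In particular $\pi_{\Ato}((\EE_{\MM})_{\alpha_0})\neq 0$, so $\pi_\theta(\EE_\MM)$ is defined by Proposition \ref{prop orthog extended multi-segment rep} and Definition \ref{def orthog rep of segment}. It is the unique representation of $G_n$ whose $\theta_{-\alpha}$-lift is $\pi_{\Ato}((\EE_\MM)_\alpha)$. Since $\pi_{\Moe}(\MM)$ is a representation of the same group $G_n$ with the same lift, Howe duality (Theorem \ref{thm Howe duality}(2)) gives $\pi_{\Moe}(\MM)=\pi_\theta(\EE_\MM)$.
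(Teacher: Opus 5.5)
Your proposal is correct and follows essentially the same route as the paper. Like the paper, you reduce via Lemma \ref{lemma M_alpha to M_alpha^P'}, Theorem \ref{thm Sp}(4) and Howe duality to the identity $\EE_{\MM_\alpha}=(\EE_\MM)_\alpha$. You then verify it on the $\chi_W$-block: $\alpha_i$ is unchanged, $\beta_i$ shifts by an odd amount (cancelling the flip $-\underline{\eta_-}$), and the sign of the new row is forced by the sign condition.

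One small slip: in Definition \ref{dual segment} the dualized new segment is $[\frac{\alpha-1}{2},\frac{\alpha-1}{2}]$, so $\beta_i$ increases by $1$, not by $\alpha$. This does not affect your parity argument.
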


\begin{proof}
Let $G_n=\OO(W_n)$ and $\pi=\pi_{\Moe}(\MM).$
    By Howe duality (Theorem \ref{thm Howe duality}), it is enough to show that $\theta_{-\alpha}(\pi_\theta(\EE_\MM))=\theta_{-\alpha}(\pi)$ for some $\alpha\gg 0.$ From Lemma \ref{lemma M_alpha to M_alpha^P'}, it suffices to verify the combinatorial identity
    \begin{equation}\label{eqn Moeglin to Atobe dictionary}
    \EE_{(\MM_{\alpha})} =(\EE_\MM)_{\alpha}.
    \end{equation}
    It is immediate from the definition that $(\EE_{\MM_{\alpha}})_{\rho}= ((\EE_\MM)_{\alpha})_{\rho}$ for $\rho\not\cong \chi_{W}$. Thus, we focus on the $\chi_W$ Jordan block in the rest of the proof. 
       Write 
           \[
    (\EE_{\MM})_{\chi_V}=dual(\{([A_i,-B_i]_{\chi_V},l_i,\eta_i)\}_{i\in (I_{\chi_V}^-,>')}) \cup \{([A_i,B_i]_{\chi_V},l_i,\eta_i)\}_{i\in( I_{\chi_V}^+,>)}.
    \]
We compare the following multi-set of extended segments
\begin{align*}
    \FF&:= dual(\{([A_i,-B_i]_{\chi_W},l_i,\eta_i)\}_{i\in (I_{\chi_V}^-,>')}),\\
    \FF'&:= dual\left(\{([A_i,-B_i]_{\chi_W},l_i,-\eta_i)\}_{i\in (I_{\chi_V}^-,>')}+\{([\half{\alpha-1},\half{\alpha-1}]_{\chi_W},0,\epsilon_{G_n})\}\right),
\end{align*}
following Definition \ref{dual segment}. Note that $B_i \in \Z$. Write $\alpha_i, \beta_i$ (resp. $\alpha_i', \beta_i'$) for the ingredients in the formula for $\FF$ (resp. $\FF'$).  Then for $i \in I_{\chi_V}^-$ we have $\alpha_i' =\alpha_i$ and $\beta_i'=\beta_i+1$. Then the formula for $dual$ implies that
\[ \FF'= \left\{\left(\left[\frac{\alpha-1}{2},-\frac{\alpha-1}{2}\right]_{\chi_W}, \frac{\alpha-1}{2}, \widehat{\eta} \right)\right\}+ \FF.\]
for some $\widehat{\eta}$. So far we have explained that 
\[ \EE_{(\MM_{\alpha})}= \left\{\left(\left[\frac{\alpha-1}{2},-\frac{\alpha-1}{2}\right]_{\chi_W}, \frac{\alpha-1}{2}, \widehat{\eta} \right)\right\}+(\EE_{\MM})_{W,V}  \]
for some $\widehat{\eta}$. However, we know
$\EE_{(\MM_{\alpha})}$ and $(\EE_{\MM})_{\alpha}$ are both extended multi-segments of a symplectic group, and 
\[ (\EE_{\MM})_{\alpha}= \left\{\left(\left[\frac{\alpha-1}{2},-\frac{\alpha-1}{2}\right]_{\chi_W}, \frac{\alpha-1}{2}, \epsilon_{G_n} \right)\right\}+(\EE_{\MM})_{W,V}.  \]
Thus, the sign condition \eqref{eq sign condition} implies that $\widehat{\eta}=\epsilon_{G_n}$ and $\EE_{(\MM_{\alpha})}=(\EE_{\MM})_{\alpha}$.
\end{proof}

Let $\pi=\pi_\theta(\EE).$
As a corollary of the above theorem, we can explain how to find $\EE'$ such that $\pi_{\theta}(\EE')=\pi\otimes\det.$

\begin{cor}\label{cor extended multi-segment for det}
    Let $\EE$ be an extended multi-segment of $G_n=\OO(W_n)$ and $\pi=\pi_\theta(\EE)\neq0.$ We define 
    \[
    (\EE_{\det})_\rho:=\left\{\begin{array}{ll}
    \EE_\rho     & \ \mathrm{if} \ \dim(\rho) \ \mathrm{is \ even,}  \\
    -\EE_\rho     & \ \mathrm{otherwise,} 
    \end{array}\right. \  \ \ \EE_{\det}:=\cup_\rho(\EE_{\det})_\rho,\]
    where, if $\EE_\rho=\{([A_i,B_i]_\rho,l_i,\eta_i)\}_{i\in( I_\rho, >)},$ then $-\EE_\rho:=\{([A_i,B_i]_{\rho},l_i,-\eta_i)\}_{i\in (I_\rho, >)}.$
    Then \[\pi\otimes\det=\pi_\theta(\EE_{\det}).\]
\end{cor}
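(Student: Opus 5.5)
The plan is to pass through M{\oe}glin's parametrization, where the effect of twisting by $\det$ is recorded explicitly in Remark \ref{rmk det}, and then return to extended multi-segments using Theorem \ref{cor Moeglin to Atobe dictionary orthog}.

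First, reduce to $(P')$. After row exchanges we may assume $\EE$ satisfies $(P')$. Row exchanges act on $(l_i,\eta_i)$ only through the products $\eta_k\eta_{k+1}$ and the parity signs $(-1)^{A-B}$. Flipping every $\eta_i$ inside a fixed $\EE_\rho$ leaves all such products unchanged. Hence $(\cdot)_{\det}$ commutes with row exchanges, and we may also assume $\EE\in\Rep^{(P')}$.

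By Remark \ref{rmk Moe Ato dictionary}, we can then write $\EE=\EE_\MM$ for some $\MM=(\psi_>,\underline{\zeta},\underline{l},\underline{\eta})\in\Moe^{(P')}$. By Theorem \ref{cor Moeglin to Atobe dictionary orthog}, $\pi=\pi_\Moe(\MM)$, and Remark \ref{rmk det} gives $\pi\otimes\det=\pi_\Moe(\widetilde{\MM})$. Here $\widetilde{\MM}$ changes $\eta_i$ to $-\eta_i$ exactly when $\dim(\rho)(|a_i-b_i|+1)$ is odd. For good parity, $|a_i-b_i|+1=2|B_i|+1$ is odd precisely when $B_i\in\Z$. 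So the flip occurs exactly for $\dim\rho$ odd and $B_i\in\Z$. It remains to check the combinatorial identity $\EE_{\widetilde{\MM}}=\EE_{\det}$; Theorem \ref{cor Moeglin to Atobe dictionary orthog} applied to $\widetilde{\MM}$ then gives $\pi\otimes\det=\pi_\theta(\EE_{\det})$. Note that $\widetilde{\MM}\in\Rep_\Moe^{(P')}$ automatically, since $\pi_\Moe(\widetilde{\MM})=\pi\otimes\det\ne0$.

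The identity is checked block by block.
\begin{itemize}
\item On $I_\rho^+$ the map $\MM\mapsto\EE_\MM$ is the identity on the data, so the claim is immediate there.
\item On $I_\rho^-$ we apply $dual$ to the segments $[A_i,-B_i]$. In Definition \ref{dual segment}, when $-B_i\in\Z$ we have $\eta_i'=(-1)^{\alpha_i+\beta_i}\eta_i$ and $l_i'$ is independent of $\eta_i$. So flipping all $\eta_i$ flips all $\hat\eta_i$ and leaves the $\hat l_i$ unchanged.
\item The half-integral case $B_i\notin\Z$ occurs only when $\dim\rho$ is even; otherwise $\rho\otimes S_a\otimes S_b$ would not be orthogonal. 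In that case $\EE_{\det}$ and $\widetilde{\MM}$ both leave the block unchanged.
\item When $\dim\rho$ is odd, every $B_i$ is an integer. Every $\eta_i$ is flipped on both sides, so $\EE_{\widetilde{\MM}}=-\EE_\rho$ on that block.
\end{itemize}

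The main obstacle is a bookkeeping subtlety: the cases where $\eta_i$ is only defined modulo $E$ (when $2l_i=b_i$), together with the sign condition \eqref{eq sign condition}. We must check that $\EE_{\det}$ is still an extended multi-segment of the same group $G_n$, not of the other pure inner form. The sign condition changes by $\prod(-1)^{b_i}$ over blocks with $\dim\rho$ odd. This product equals $(-1)^{\sum a_ib_i\dim\rho}=(-1)^{2n}=1$, because $a_i\equiv b_i \pmod 2$ in those blocks. Equivalently, this consistency follows from $\pi\otimes\det$ being a representation of the same group $G_n$. Finally, when $2l_i=b_i$ the sign $\eta_i$ is immaterial on both sides, which is consistent.
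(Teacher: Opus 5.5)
Your proof is correct and follows the paper's argument. You reduce to $(P')$ using that flipping every sign in a $\rho$-block commutes with row exchanges, write $\EE=\EE_{\MM}$, and then use Remark \ref{rmk det} together with the parity observation ($\dim\rho$ odd forces $B_i\in\Z$) to obtain $\EE_{\widetilde{\MM}}=\EE_{\det}$. Your explicit checks of the sign condition and of the $\eta_i$-ambiguity when $2l_i=b_i$ are left implicit in the paper, and they are correct.
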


\begin{proof}
First, observe that $-R_k(\EE_{\rho})= R_k(-\EE_{\rho})$. Therefore, we may assume that $\EE\in \Rep^{(P')}$ and write $\EE=\EE_{\MM}$ for some M{\oe}glin parameter $\MM\in\Rep_\Moe^{(P')}$. By Theorem \ref{cor Moeglin to Atobe dictionary orthog},
    \[
    \pi_\theta(\EE)=\pi_\Moe(\MM).
    \]
    Let $\MM=(\psi_{>}, \underline{\zeta},\underline{l}, \underline{\eta})$ and $\widetilde{\MM}=(\psi_{>}, \underline{\zeta},\underline{l}, \underline{\tilde{\eta}})$ where 
    \[
    \tilde{\eta}(\rho_i,a_i,b_i)=\left\{\begin{array}{cl}
        -{\eta}(\rho_i,a_i,b_i), & \mathrm{if} \ \dim(\rho_i\otimes S_{|a_i-b_i|+1}) \ \mathrm{is \ odd;} \\
        {\eta}(\rho_i,a_i,b_i), & \mathrm{if} \ \dim(\rho_i\otimes S_{|a_i-b_i|+1}) \ \mathrm{is \ even.}
    \end{array}\right.
    \]
    By Remark \ref{rmk det}, we have $\pi_\Moe(\widetilde{\MM})=\pi_\Moe(\MM)\otimes\det.$ 
    
    Now we observe that if $\dim(\rho_i)$ is odd, the good parity condition implies that $a_i+b_i$ is even. Hence, $|a_i-b_i|+1$ is odd. We conclude that $\dim(\rho_i)$ is odd if and only if  $\dim(\rho_i\otimes S_{|a_i-b_i|+1})$ is odd. Then according to the recipe for $\MM \mapsto \EE_{\MM}$ in Definition \ref{defn Moeglin to Atobe dictionary}, we have 
    \[ \EE_{\widetilde{\MM}}= \EE_{\det}.\]
    This completes the proof of the corollary.
\end{proof}

We now show the analogue of Theorem \ref{thm Sp}(2). Namely, extended multi-segments parameterize local Arthur packets of $G_n=\OO(W_n).$

\begin{thm}\label{thm Atobe reformulation orthog}
    Suppose $\psi= \bigoplus_{\rho} \bigoplus_{i \in I_{\rho}} \rho \otimes S_{a_i} \otimes S_{b_i}$ is a local Arthur parameter of good parity of $G_n=\OO(W_n)$. Choose an admissible order $>_{\psi}$ on $I_{\rho}$ for each $\rho$ that satisfies ($P'$) if $\half{a_i-b_i}<0$ for some $i \in I_{\rho}$. Then
\begin{align}\label{eq Atobe reformuation orthog}
    \bigoplus_{\pi \in \Pi_{\psi}} \pi= \bigoplus_{\EE} \pi_\theta(\EE),
\end{align}
where $\EE$ runs over all extended multi-segments of $G_n$ with $\supp(\EE)= \supp(\psi)$ (for the fixed admissible order $>_{\psi}$) and $\pi_\theta(\EE) \neq 0$. 
\end{thm}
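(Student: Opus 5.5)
The plan is to transport M{\oe}glin's parametrization through the map $\MM\mapsto\EE_\MM$ and Theorem \ref{cor Moeglin to Atobe dictionary orthog}.

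\emph{Reduction to $(P')$.} I first reduce to the case where the fixed admissible order $>_\psi$ satisfies $(P')$. If $\gg$ is another admissible order, then row exchanges give a bijection $\EE\mapsto\EE_{\gg}$ between extended multi-segments with support $\supp(\psi)$ ordered by $>_\psi$ and those ordered by $\gg$. So it suffices to show that $\pi_\theta(R_k(\EE))=\pi_\theta(\EE)$ for every row exchange $R_k$. This should follow from Definition \ref{def orthog rep of segment}. The row exchange $R_k$ on $\EE$ corresponds to a row exchange on $\EE_{\alpha}$ that does not involve the added segment $[\half{\alpha-1},-\half{\alpha-1}]_{\chi_W}$. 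One caveat: when $B_i\ge 0$ the added segment may have to be moved past the others, since $(P')$ requires it to be first. That move is again a row exchange, and for $\alpha\gg0$ it contains every other segment. Row exchange invariance of $\pi_{\Ato}$ for symplectic groups (\cite[\S4.2]{Ato22a}, a consequence of Theorem \ref{thm Sp}(2)) then gives $\pi_{\Ato}((R_k\EE)_\alpha)=\pi_{\Ato}(\EE_\alpha)$. Hence the two $\theta$-lifts agree, including whether they vanish, and Howe duality finishes the reduction.

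\emph{The $(P')$ case.} Now assume $>_\psi$ satisfies $(P')$. By Theorem \ref{thm Moeglin construction}(2), applied with the order $>_\psi$ and with $\underline\zeta$ given by $\zeta_i=\sgn(a_i-b_i)$ (taking $+1$ when $a_i=b_i$), we get
\[\bigoplus_{\pi\in\Pi_\psi}\pi=\bigoplus_{(\underline l,\underline\eta),\ \pi_{\Moe}\neq0}\pi_{\Moe}(\psi_>,\underline\zeta,\underline l,\underline\eta).\]
This sum is multiplicity free, because the nonzero terms have distinct pairs $(\underline l,\underline\eta)$ and Moeglin proved the packet is multiplicity free. With this choice of $\zeta$, condition $(P')$ for $\MM$ becomes condition $(P')$ for $\EE_\MM$; I need to check this, reading $B_i=\half{a_i-b_i}$ with sign. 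Theorem \ref{cor Moeglin to Atobe dictionary orthog} gives $\pi_{\Moe}(\MM)=\pi_\theta(\EE_\MM)$ whenever $\MM\in\Rep^{(P')}_{\Moe}$.

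\emph{Matching the index sets.} It remains to show that $\MM\mapsto\EE_\MM$ is a bijection from $\{\MM\in\Moe^{(P')}(G_n)\ :\ \psi_\MM=\psi,\ \pi_\Moe(\MM)\ne0\}$ onto $\{\EE\ :\ \supp(\EE)=\supp(\psi),\ \pi_\theta(\EE)\neq0\}$. I would argue in three parts.

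\begin{itemize}
\item[(a)] \emph{The support is right.} $\supp(\EE_\MM)=\supp(\psi)$ with order $>_\psi$. On the $\zeta=+1$ part this is immediate. On the $\zeta=-1$ part, $dual$ reverses the order twice and flips $B\mapsto-B$.
\item[(b)] \emph{Injectivity.} The map is injective because $dual$ is an involution.
\item[(c)] \emph{Surjectivity onto the nonvanishing set.} Suppose $\pi_\theta(\EE)\neq0$. Then $\pi_{\Ato}(\EE_{\alpha_0})\neq0$, so by Theorem \ref{thm Sp}(1) we have $\NV(\EE_{\alpha_0})\neq0$. Restricting to the rows coming from $\EE$ gives $\NV(\EE)\neq0$: the conditions are local, the added segment only shifts $\alpha_i$ by an even amount, and in the $\chi_W$-component with $\alpha\gg0$ the $\widetilde{\NV}_{\Xu}$ conditions involving it are automatic. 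Remark \ref{rmk Moe Ato dictionary} then produces $\MM$ with $\EE=\EE_\MM$, and $\pi_\Moe(\MM)=\pi_\theta(\EE)\ne0$.
\end{itemize}

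\emph{Main obstacle.} The delicate point is (c), together with the row-exchange caveat above. Both require verifying that adjoining the long segment $[\half{\alpha-1},-\half{\alpha-1}]_{\chi_W}$ with $l=\half{\alpha-1}$ interacts trivially with the nonvanishing conditions and with row exchanges. An alternative route for (c) is to go through $\MM_\alpha$ and Proposition \ref{prop inverse theta}: write $\pi_{\Ato}(\EE_\alpha)=\pi_\Moe(\MM')$ and pull back to $\MM$ with $(\EE_\MM)_\alpha=\EE_\alpha$ via \eqref{eqn Moeglin to Atobe dictionary}. This follows the proof of Proposition \ref{prop orthog extended multi-segment rep}, and I would use it if the direct $\NV$ check becomes messy.
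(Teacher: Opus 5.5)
Your step (c) is circular as written. Remark \ref{rmk Moe Ato dictionary} only produces some $\MM\in\Moe^{(P')}(G_n)$ with $\EE=\EE_{\MM}$. It says nothing about whether $\pi_{\Moe}(\MM)\neq0$. Theorem \ref{cor Moeglin to Atobe dictionary orthog}, however, is available only for $\MM\in\Rep^{(P')}_{\Moe}$, so it \emph{assumes} $\pi_{\Moe}(\MM)\neq0$. Its proof runs through Theorem \ref{thm Moeglin Adams}, Lemma \ref{lemma M_alpha to M_alpha^P'} and Theorem \ref{thm Sp}(4), and each of these starts from a nonzero representation. If $\pi_{\Moe}(\MM)=0$, nothing in the paper relates it to $\pi_\theta(\EE_{\MM})$. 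So the conclusion ``$\pi_{\Moe}(\MM)=\pi_\theta(\EE)\neq0$'' presupposes what you are trying to prove.

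A cleaner $\NV$ check does not help, because $\NV(\EE)\neq0$ merely re-expresses the hypothesis $\pi_\theta(\EE)\neq0$. Said differently, your injection $\MM\mapsto\EE_{\MM}$ only shows $|\Pi_\psi|\le\#\{\EE:\supp(\EE)=\supp(\psi),\ \pi_\theta(\EE)\neq0\}$. The real content of the theorem is the reverse inequality.

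The missing input is what the paper's proof supplies, in two steps.
\begin{enumerate}
\item Proposition \ref{prop orthog extended multi-segment rep} puts every nonzero $\pi_\theta(\EE)$ in $\Pi_\psi$. Hence $\pi_\theta(\EE)=\pi_{\Moe}(\MM')=\pi_\theta(\EE_{\MM'})$ for some $\MM'$ with $\pi_{\Moe}(\MM')\neq0$ and the same support.
\item The map $\EE\mapsto\pi_\theta(\EE)$ is injective on nonvanishing extended multi-segments of fixed support. Indeed, $\pi_\theta(\EE_1)=\pi_\theta(\EE_2)\neq0$ gives $\pi_{\Ato}((\EE_1)_\alpha)=\pi_{\Ato}((\EE_2)_\alpha)\neq0$ with equal supports. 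Theorem \ref{thm Sp}(2), together with multiplicity one of $\Pi_{\psi_\alpha}$, then forces $(\EE_1)_\alpha=(\EE_2)_\alpha$, hence $\EE_1=\EE_2$.
\end{enumerate}
These two facts give $\EE=\EE_{\MM'}$. The paper packages this as: the left side is contained in the right side, every term on the right lies in $\Pi_\psi$, and the right side is multiplicity free.

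Your fallback via Proposition \ref{prop inverse theta} can be completed. However, its step ``pull back to $\MM$ with $(\EE_{\MM})_\alpha=\EE_\alpha$'' needs exactly this symplectic injectivity to pass from $\pi_{\Ato}(\EE_\alpha)=\pi_{\Ato}((\EE_{\MM})_\alpha)$ to $\EE_\alpha=(\EE_{\MM})_\alpha$. So it is the essential argument, not a backup for a messy $\NV$ computation.

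Two smaller slips:
\begin{enumerate}
\item To get $\supp(\EE_{\MM})=\supp(\psi)$ with the order $>_\psi$, the parameter $\MM$ must carry the \emph{reverse} of $>_\psi$ on $I_\rho^-$. On $I_\rho^-$ the order $>_\psi$ is in general not even M{\oe}glin-admissible.
\item The added segment has $a=1$, so it shifts $\alpha_i$ by an odd amount, not an even one. This is harmless only because every $B_i$ in the $\chi_W$-block is an integer.
\end{enumerate}
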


\begin{proof}
 Theorems \ref{thm Moeglin construction}(2) and \ref{cor Moeglin to Atobe dictionary orthog} give 
\[  \bigoplus_{\pi \in \Pi_{\psi}} \pi= \bigoplus_{\MM} \pi_{\Moe}(\MM)= \bigoplus_{\MM} \pi_{\theta}(\EE_{\MM}),\]
where the direct sum is taken over all M{\oe}glin parameters of the form $\MM=(\psi_{>},  \underline{\zeta},\underline{l},\underline{\eta})$ with $\psi_{>}$ fixed. Since $\psi_{\EE_{\MM}}= \psi$, we see that the left hand side of \eqref{eq Atobe reformuation orthog} is a subrepresentation of the right hand side. On the other hand, Proposition \ref{prop orthog extended multi-segment rep} already implies that $\pi_{\theta}(\EE)\in \Pi_{\psi}$. Since the left hand side is multiplicity free (\cite{Moe11}), it remains to show that if $\supp(\EE_1)=\supp(\EE_2)$ and $\pi_{\theta}(\EE_1)= \pi_{\theta}(\EE_2) \neq 0$, then $\EE_1=\EE_2$. By definition, this means $\pi_\Ato((\EE_1)_{\alpha})= \pi_\Ato((\EE_2)_{\alpha}) \neq 0$ for $\alpha \gg 0$. Then $(\EE_1)_{\alpha}=(\EE_2)_{\alpha}$ by Theorem \ref{thm Sp}(2) (which follows from the multiplicity freeness of $\Pi_{\psi_{\alpha}}$); hence $\EE_1=\EE_2$. This completes the proof of the theorem.
\end{proof}

Our next goal is to establish a combinatorial condition to verify when $\pi_\theta(\EE)\neq 0.$ Specifically, we verify an analogue of Theorem \ref{thm Sp}(1).

\begin{thm}\label{thm non-vanishing}
Let $\EE$ be an extended multi-segment of $G_n=\OO(W_n)$. Then $\pi_\theta(\EE)\neq 0$ if and only if $\NV(\EE)\neq 0.$

\end{thm}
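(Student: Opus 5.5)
By Definition \ref{def orthog rep of segment} and Theorem \ref{thm Sp}(1), $\pi_\theta(\EE)\neq 0$ if and only if $\pi_{\Ato}(\EE_{\alpha_0})\neq 0$, which holds if and only if $\NV(\EE_{\alpha_0})\neq 0$. So the theorem reduces to the purely combinatorial equivalence
\[ \NV(\EE)\neq 0 \iff \NV(\EE_{\alpha_0})\neq 0. \]
By Remark \ref{rmk NV EE rho}(1), both sides split over $\rho$. For $\rho\not\cong\chi_V$ the $\rho$-part of $\EE_{\alpha_0}$ is $\EE_\rho$ with $\rho$ replaced by $\chi_W\chi_V^{-1}\rho$. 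The conditions $(\ast)$ and $\widetilde{\NV}_{\Xu}$ only involve the data $A_i,B_i,l_i,\eta_i$ and the admissible order, so these parts agree trivially. It therefore remains to treat the $\chi_V$-part of $\EE$ against the $\chi_W$-part of $\EE_{\alpha_0}$. The latter is $\EE_{\chi_V}$ with one new segment $([\tfrac{\alpha_0-1}{2},-\tfrac{\alpha_0-1}{2}],\tfrac{\alpha_0-1}{2},\epsilon_{G_n})$ inserted as the minimal element.

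I would first check condition $(\ast)$. For the new segment, $B+l=0$ and $B\in\Z$ (the $\chi_V$-block has integral $B_i$ by good parity), so $(\ast)$ holds. For the other segments $B_i\in\Z$, so $(\ast)$ is just $B_i+l_i\geq 0$, which does not depend on $\alpha_i$. Hence $(\ast)$ matches.

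Next I would check the Xu condition over all row-exchange classes. Since $\alpha_0\gg 0$, the new segment $[\tfrac{\alpha_0-1}{2},-\tfrac{\alpha_0-1}{2}]$ strictly contains every other segment. Row exchanges can therefore move it anywhere, and I must show that moving it never creates a violation of $\widetilde{\NV}_{\Xu}$ once $\NV_{\Xu}(\EE)\neq 0$, and never repairs one otherwise. Its $l$ is $b/2$, so $b-2l=0$. By Definition \ref{def row exchange}, exchanging it with a neighbour $k+1$ leaves $l_{k+1}$ unchanged and only twists $\eta_{k+1}$ by $(-1)^{A_k-B_k}=(-1)^{\alpha_0-1}=1$. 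In Case 1(a)/(b) the shift of $l_k$ is by $b_{k+1}-2l_{k+1}$, and since $b_k-2l_k=0$ the big segment gets $l'=b_k-l_k-(b_{k+1}-2l_{k+1})$. So the big segment stays ``almost fully paired'' and the remaining rows are unchanged. Checking conditions (2)/(3) of Definition \ref{def non-vanishing}(2) between the big segment and any neighbour should give inequalities that hold automatically because $l$ is near $b/2$. This leaves, among its neighbours, exactly the Xu conditions of $\EE$.

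The cleaner route, which I would try first, avoids tracking all these positions by translating through M{\oe}glin parameters. Take $\EE\in\Rep^{(P')}$ in the forward direction. By Remark \ref{rmk Moe Ato dictionary} write $\EE=\EE_\MM$, and use Theorem \ref{cor Moeglin to Atobe dictionary orthog} and Theorem \ref{thm Moeglin Adams}/\eqref{eqn Moeglin to Atobe dictionary}. For the converse, $\NV(\EE_{\alpha_0})\neq 0$ gives $\pi_\Ato(\EE_{\alpha_0})\neq0$. Proposition \ref{prop orthog extended multi-segment rep} then shows $\pi_\theta(\EE)$ is a genuine member of $\Pi_{\psi_\EE}$, and Theorem \ref{thm Atobe reformulation orthog} writes it as $\pi_\theta(\EE_\MM)$, with $\EE=\EE_\MM$ by uniqueness. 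The remaining step is to show $\NV(\EE_\MM)\neq0$ for $\MM\in\Rep_{\Moe}^{(P')}$ of an orthogonal group. For this I would use Lemma \ref{lem EE{Sp}}(1) together with Xu's non-vanishing criterion (\cite{Xu21b}), which is stated for M{\oe}glin parameters of both families, reducing to the symplectic case Theorem \ref{thm Sp}(1).

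The main obstacle is the $\chi_V$-block bookkeeping. One has to show that the inserted huge segment, in every admissible position, is Xu-compatible with its neighbours, and that the $\eta$-twist $(-1)^{A_k-B_k}$ and the $dual$ on the $\zeta=-1$ part, which flips signs, do not alter the remaining conditions. I would handle this by restricting to $(P')$ orders and the explicit identity $\EE_{(\MM_\alpha)}=(\EE_\MM)_\alpha$.
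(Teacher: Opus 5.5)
Your opening reduction is exactly the paper's proof. By Definition \ref{def orthog rep of segment} and Theorem \ref{thm Sp}(1), $\pi_\theta(\EE)\neq0\iff\pi_{\Ato}(\EE_{\alpha_0})\neq0\iff\NV(\EE_{\alpha_0})\neq0$. The paper then simply declares $\NV(\EE)\neq0\iff\NV(\EE_{\alpha_0})\neq0$ to be clear from Definition \ref{def non-vanishing}.

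Your direct check is the right way to justify that step, with one correction. The inserted row has $b=\alpha_0$ and $l=\frac{\alpha_0-1}{2}$, so $b-2l=1$, not $0$. This slip is harmless for your case split in Definition \ref{def row exchange}, which is still $b_{k+1}-2l_{k+1}\geq 1$ versus $=0$. Two facts make the check work:
\begin{enumerate}
\item[(i)] $A-B=\alpha_0-1$ is even, so a row passing the big row keeps its $(l,\eta)$ exactly. Hence the other rows of any member of $[\EE_{\alpha_0}]$ form a member of $[\EE]$ in the same order, and every adjacent pair not involving the big row is adjacent there.
\item[(ii)] The big row's $b-2l$ stays bounded independently of $\alpha_0$ along arbitrary chains of row exchanges, so conditions (2)/(3) against it hold automatically.
\end{enumerate}
A one-step computation does not quite give (ii), since chains can be long. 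A clean argument uses Proposition \ref{prop:R}. Each passage reflects the big row's $\mu$ about the unchanged $\mu$ of the row it passes. So its $\mu$ is $\pm1$ plus twice the alternating sum of the $\mu$'s of the rows preceding it. That alternating sum is invariant under exchanges among those rows, giving $|\mu|\le 1+2\sum_i b_i$. The converse direction is then immediate by placing the big row first.

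The ``cleaner route'' you say you would try first should be dropped, because it is circular.
\begin{itemize}
\item Remark \ref{rmk Moe Ato dictionary} only gives $\MM\in\Moe^{(P')}$. Theorems \ref{cor Moeglin to Atobe dictionary orthog} and \ref{thm Moeglin Adams}, however, presuppose $\pi_{\Moe}(\MM)\neq0$, which is precisely what the forward direction must establish. The identity $\EE_{(\MM_\alpha)}=(\EE_\MM)_\alpha$ is purely combinatorial and says nothing about non-vanishing.
\item In the other direction you turn the easy implication into ``$\NV(\EE_\MM)\neq0$ for $\MM\in\Rep_{\Moe}^{(P')}$'', which is the theorem itself restated in M{\oe}glin's language. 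Proving it via \cite{Xu21b} would mean redoing Atobe's translation of Xu's criterion for orthogonal groups. Moreover, \cite{Xu21b} treats quasi-split groups, while $\OO(W_n)$ here may be the non-quasi-split pure inner form; the theta reduction handles both forms uniformly.
\item Restricting to $(P')$ orders cannot address the obstacle you correctly identified. The big row has the smallest $B$, so its only $(P')$ position is first. But $\NV_{\Xu}$ quantifies over the whole class $[\EE_{\alpha_0}]$, in which the big row can sit anywhere.
\end{itemize}
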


\begin{proof}
By definition, $\pi_{\theta}(\EE) \neq 0$ if and only if $\pi_{\Ato}(\EE_{\alpha})\neq 0$ for $\alpha \gg 0$. Then it is clear from Definition \ref{def non-vanishing} that $\NV(\EE)\neq 0$ if and only if $\NV(\EE_{\alpha}) \neq 0$. Now the desired result follows from the symplectic case (Theorem \ref{thm Sp}(1)).
\end{proof}

Next we show the analogue of Theorem \ref{thm Sp}(3), i.e, $\pi_\theta(dual(\EE))$ is the Aubert-Zelevinsky dual of $\pi_\theta(\EE).$

\begin{prop}\label{prop dual is Aubert dual}
    Let $\EE$ be an extended multi-segment of $G_n=\OO(W_n)$ satisfying $(P')$. If $\pi_\theta(\EE)\neq 0,$ then $\pi_\theta(dual(\EE))=\AZ(\pi_\theta(\EE)).$
\end{prop}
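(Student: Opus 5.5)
We reduce to M{\oe}glin's parametrization, where the Aubert--Zelevinsky dual is built into the construction. Since $\EE$ satisfies $(P')$ and $\NV(\EE)\neq 0$ by Theorem \ref{thm non-vanishing}, Remark \ref{rmk Moe Ato dictionary} gives $\MM=(\psi_>,\underline{\zeta},\underline{l},\underline{\eta})\in\Rep_{\Moe}^{(P')}(\OO(W_n))$ with $\EE=\EE_{\MM}$. By Theorem \ref{cor Moeglin to Atobe dictionary orthog}, $\pi_\theta(\EE)=\pi_{\Moe}(\MM)$. By Theorem \ref{thm Moeglin construction}(3), $\AZ(\pi_{\Moe}(\MM))=\pi_{\Moe}(\widehat{\MM})$ with $\widehat{\MM}=(\hat\psi_>,-\underline{\zeta},\underline{l},\underline{\eta})$. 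Here $\AZ$ is the Cheng--Kaletha involution on $\OO(W_n)$, which is compatible with M{\oe}glin--Xu's twisted version. It therefore suffices to prove the purely combinatorial identity
\[ dual(\EE_{\MM}) = \EE_{\MM^\vee} \]
up to row exchanges, for a suitable $\MM^\vee$ in $\Rep^{(P')}_{\Moe}$ with $\pi_{\Moe}(\MM^\vee)=\pi_{\Moe}(\widehat{\MM})$. Then Theorem \ref{cor Moeglin to Atobe dictionary orthog} applied to $\MM^\vee$ gives $\pi_\theta(dual(\EE))=\AZ(\pi_\theta(\EE))$. Note that $dual(\EE)\in\Rep^{(P')}$ by Remark \ref{rmk NV EE rho}(2), so the right-hand side makes sense.

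The main obstacle is verifying this combinatorial identity. The order of $\widehat{\MM}$ need not satisfy $(P')$, and $dual$ reverses orders, so a change-of-order step (Xu's formulas, as in Lemma \ref{lemma M_alpha to M_alpha^P'}) is needed. Rather than checking this directly, I would transfer to the symplectic case, where it is already known. Write $\sigma=\pi_{\Moe}(\MM_\alpha)$ for odd $\alpha\ge\alpha_0$. The symplectic version of the same statement, Theorem \ref{thm Sp}(3),(4), implicitly contains the identity: $\AZ(\pi_{\Ato}(\EE_{\MM_\alpha}))=\pi_{\Ato}(dual(\EE_{\MM_\alpha}))$. Since $\EE_{\MM_\alpha}=(\EE_\MM)_\alpha$ by \eqref{eqn Moeglin to Atobe dictionary}, I would instead deduce the identity one $\rho$ at a time through Lemma \ref{lem EE{Sp}}. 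This lemma is stated for $\NV$ and operators, and its construction $\EE^{\Sp}_\rho$ commutes with $dual$ up to the harmless extra segments. I would check this by hand: the extra segment is $[0,0]_{\chi_\rho}$ or $[3/2,3/2]_{\rho'}$, attached to a different $\rho$, so it does not affect $\alpha_i,\beta_i$.

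Concretely, the $\rho$-by-$\rho$ argument runs as follows. The recipe $\MM\mapsto\EE_\MM$ in Definition \ref{defn Moeglin to Atobe dictionary} and the map $\MM\mapsto\widehat{\MM}$ act separately on each $\MM_\rho$. The same holds for $dual$ on $\EE_\rho$. The sign condition is preserved automatically, since $dual$ preserves the sign condition and both sides are extended multi-segments of the same group. The identity for $\EE_\rho$ is a formal statement about the formulas in Definitions \ref{dual segment} and \ref{defn Moeglin to Atobe dictionary}, together with Xu's change-of-order formulas. These formulas depend only on $\rho$ through the parity data, and they are identical for symplectic and orthogonal groups. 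So the identity for $\EE_\rho$ is literally the identity for $(\EE^{\Sp}_\rho)_\rho$, and the latter holds by Atobe's proof of \cite[Theorem 6.2]{Ato22a}, combined with the symplectic Theorem \ref{thm Sp}(3),(4) and multiplicity one (Theorem \ref{thm Sp}(2)).

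If this formal transfer proves awkward, there is a fallback via theta. Compare $\theta_{-\alpha}(\AZ(\pi))$ with $\AZ(\theta_{-\alpha}(\pi))$ twisted appropriately, using M{\oe}glin's Adams theorem for both $\pi$ and $\AZ(\pi)$ (both lie in packets of good parity). Then match extended multi-segments by Theorem \ref{thm Sp}(2) and Howe duality (Theorem \ref{thm Howe duality}).
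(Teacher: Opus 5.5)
Your reduction is the same as the paper's. You choose $\MM\in\Moe^{(P')}$ with $\EE=\EE_\MM$, get $\AZ(\pi_\theta(\EE))=\pi_{\Moe}(-\MM)$ from Theorem \ref{thm Moeglin construction}(3), move $-\MM$ to a $(P')$ order with Xu's formulas, and reduce to the identity $dual(\EE_\MM)=\EE_{-\MM^{(P')}}$.

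The routes differ in how that identity is proved. The paper checks it by hand. It uses the order $>_R$ that puts $I_\rho^+$ below $I_\rho^-$; for this order Xu's reordering leaves each $l_i$ alone and multiplies $\eta_i$ by $(-1)^{\alpha^{\pm}}$. The explicit formulas then match once one observes $\alpha_i'=\alpha_i-\alpha^-$ and $\beta_i'=\beta_i$ on $I_\rho^+$, and $\alpha_i=\widehat\beta_i$ and $\beta_i=\widehat\alpha_i+\alpha^+$ on $I_\rho^-$.

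You instead treat the identity as a formal $\rho$-by-$\rho$ statement that does not see the ambient group, and import it from $\EE_\rho^{\Sp}$. There, $dual(\EE_{\MM'})$ and $\EE_{-\MM'^{(P')}}$ are both nonzero and have the same image under $\pi_{\Ato}$, by Theorem \ref{thm Sp}(3),(4), Theorem \ref{thm Moeglin construction}(3) and Xu; hence they are equal by Theorem \ref{thm Sp}(2). This is valid, and it is the same transfer device the paper uses for Lemma \ref{lem EE{Sp}}, Theorem \ref{thm E max} and Remark \ref{rmk NV EE rho}(2). It trades the paper's self-contained sign bookkeeping for heavier symplectic input.

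To make the transfer rigorous, three points need care:
\begin{itemize}
\item Take $\MM'$ to be literally $\MM_\rho$ (twisted by $\chi_\rho$ when $d_\rho$ is odd) plus the extra Jordan block, chosen so that $\EE_{\MM'}=\EE_\rho^{\Sp}$.
\item Get $\pi_{\Moe}(\MM')\neq 0$ from Lemma \ref{lem EE{Sp}}(1) and Theorem \ref{thm Sp}(1).
\item Check that both sides carry the same admissible order ($I_\rho^+$ reversed, then $I_\rho^-$), since Theorem \ref{thm Sp}(2) is stated for a fixed order. This also makes your ``up to row exchanges'' unnecessary.
\end{itemize}

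Drop the theta fallback. $\AZ(\theta_{-\alpha}(\pi))$ lies in $\Pi_{\widehat{\psi_\alpha}}$, whose distinguished summand is $\chi_W\otimes S_\alpha\otimes S_1$ rather than $\chi_W\otimes S_1\otimes S_\alpha$. So it is not an Adams lift of the kind in Theorem \ref{thm Moeglin Adams}, and no simple ``twist'' makes theta commute with $\AZ$ as that argument would need.
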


\begin{proof}
By Remark \ref{rmk Moe Ato dictionary}, we take $\MM=((\psi_{\EE})_>,\underline{\zeta},\underline{l},\underline{\eta}) \in \Moe^{(P')}$ such that $\EE= \EE_{\MM}$. Then by Theorems \ref{cor Moeglin to Atobe dictionary orthog} and \ref{thm Moeglin construction}(3), we have 
\[ \AZ(\pi_\theta(\EE) )= \AZ(\pi_\Moe(\MM) )= \pi_\Moe(-\MM), \]
where $-\MM= ((\widehat{\psi}_{\EE})_>,-\underline{\zeta},\underline{l},\underline{\eta})$. Note that here $-\MM$ does not satisfy $(P')$ in general. In this proof, we use \cite[Theorem 6.3]{Xu21b} to compute $-\MM^{(P')}\in \Moe^{(P')}$ with the same local Arthur parameter as $-\MM$, and $\pi_{\Moe}(-\MM^{(P')})= \pi_{\Moe}(-\MM)$. Then we check that
\[ dual(\EE)= \EE_{ -\MM^{(P')}},\]
which will prove the proposition. 

First, we give more notation. For $(\rho, a_i, b_i)$ in $\Jord(\psi)$, we write 
\[ (\zeta_i, l_i, \eta_i):= (\underline{\zeta}(\rho, a_i, b_i),\underline{l}(\rho, a_i, b_i), \underline{\eta}(\rho, a_i, b_i))\]
for short. We write $ A_i= \half{a_i+b_i}-1$, $B_i= \half{a_i-b_i}$ throughout the proof.

Let $(I_{\rho},>)$ (resp. $(\widehat{I_{\rho}},>)$) be the index set for the $\rho$-part of $\MM$ (resp. $-\MM$), and let 
\[ I_\rho^\pm=\{i\in I_\rho \ | \ \zeta_i=\pm1\},\ \widehat{I_\rho}^\pm=\{i\in \widehat{I_\rho} \ | \ -\zeta_i=\pm1\}.\]
Thus one may identify $I_{\rho}^{\pm}= \widehat{I_{\rho}}^{\mp}$, but we write $\widehat{I_{\rho}}^{\pm}$ when we emphasize them as index sets of $-\MM$.
By Definition \ref{defn Moeglin to Atobe dictionary}, we write 
    \[
    \EE:= \EE_{\MM}=\cup_\rho\{([A_i,B_i]_{\rho},\hat{l}_i,\hat{\eta}_i)\}_{i\in (I_\rho^-,>')}\cup \{([A_i,B_i]_{\rho},l_i,\eta_i)\}_{i\in (I_\rho^+,>)}.
    \]
    Explicitly, for $i\in I_\rho^-,$ we have
    \begin{align*}
\hat{l}_i=\begin{cases}
l_i-B_i  & \mathrm{if} \, B_i\in\mathbb{Z},\\
 l_i-B_i+\frac{1}{2}(-1)^{\widehat{\alpha}_{i}}\eta_i  & \mathrm{if} \, B_i\not\in\mathbb{Z},
\end{cases}
\ \ \ \ 
\hat{\eta}_i=\begin{cases}
(-1)^{\widehat{\alpha}_i+\widehat{\beta}_i}\eta_i  & \mathrm{if} \, B_i\in\mathbb{Z},\\
 (-1)^{\widehat{\alpha}_i+\widehat{\beta}_i+1}\eta_i  & \mathrm{if} \, B_i\not\in\mathbb{Z},
\end{cases}
\end{align*}
where $\widehat{\alpha}_{i}=\sum_{j\in I_\rho^-, j<i}b_j,$ $\widehat{\beta}_{i}=\sum_{j\in I_\rho^-, j>i}a_j.$

Next, we compute $-\MM^{(P')}$. As mentioned above, $-\MM$ does not satisfy $(P')$ if both $I_{\rho}^+$ and $I_{\rho}^{-}$ are non-empty. Indeed, for any $i\in I_\rho^-$ and $j\in I_\rho^+$ we have that $i<j$ since the admissible order of $\MM$ satisfies $(P')$. However, then we have $-\zeta_i=1$, $-\zeta_j=-1$ but $i<j$ in $-\MM$, which violates $(P')$. Therefore, we deform to the admissible order $>_R$ satisfying $(P')$ on $\widehat{I_\rho}$ defined by the following. For any $i\in \widehat{I_\rho}^-$ and $j\in \widehat{I_\rho}^+$, we have $i<_R j$. Furthermore, the order $>_R$ agrees with $>$ on $\widehat{I_\rho}^\pm.$ That is, if $-\zeta_i=-\zeta_j$, then $i<_R j$ if and only if $i<j$. By \cite[Theorem 6.3]{Xu21b} (sequentially row exchanging each element of $\widehat{I_\rho}^-$ with the entirety of $\widehat{I_\rho}^+$), we obtain that 
    \[
    -\MM^{(P')}= ((\widehat{\psi}_{\EE})_{>_R},\underline{\zeta}_R,\underline{l}_R,\underline{\eta}_R),
    \]
    with $\pi_\Moe(-\MM^{(P')})=\pi_\Moe(-\MM),$
    where
    \begin{align*}
        \underline{\zeta}_R(\rho,b_i,a_i):=\zeta_{R,i}=-\zeta_i, \ 
        \underline{l}_R(\rho,b_i,a_i):=l_{R,i}=l_i,
    \end{align*}
    for any $i \in I_\rho$ and $\underline{\eta}_R(\rho,b_i,a_i):=\eta_{R,i}=(-1)^{\alpha^{\pm}}\eta_i$ for $i\in  \widehat{I_\rho}^\pm.$
    Here, 
    \[
    \alpha^{\pm}=\sum_{i\in \widehat{I_\rho}^\mp} (A_i-|B_i|+1).
    \]
    Note that $\alpha^+=\sum_{i\in \widehat{I_\rho}^-} b_i=\sum_{i\in I_\rho^+} b_i$ and $\alpha^-=\sum_{i\in \widehat{I_\rho}^+} a_i=\sum_{i\in I_\rho^-} a_i$. 

 We define
    \[
    \widehat{\EE}:=\EE_{-\MM^{(P')}}=\cup_\rho\{([A_i,-B_i]_\rho,\hat{l}_{R,i},\hat{\eta}_{R,i})\}_{i\in (\widehat{I_\rho}^-,>_R')}\cup \{([A_i,-B_i]_\rho,l_{R,i},\eta_{R,i})\}_{i\in (\widehat{I_\rho}^+,>_R)}.
    \]
    Here 
    \begin{align*}
\hat{l}_{R,i}=\begin{cases}
l_{R,i}+B_i  & \mathrm{if} \, B_{i}\in\mathbb{Z},\\
 l_{R,i}+B_i+\frac{1}{2}(-1)^{\alpha_{i}'}\eta_{R,i}  & \mathrm{if} \, B_i\not\in\mathbb{Z},
\end{cases}
\ \ \ \ 
\hat{\eta}_{R,i}=\begin{cases}
(-1)^{\alpha_i'+\beta_i'}\eta_{R,i}  & \mathrm{if} \, B_i\in\mathbb{Z},\\
 (-1)^{\alpha_i'+\beta_i'+1}\eta_{R,i}  & \mathrm{if} \, B_i\not\in\mathbb{Z},
\end{cases}
\end{align*}
where
\begin{align*}
    \alpha_i'= \sum_{j \in \widehat{I_{\rho}}^-,\   j <_R i} a_j=  \sum_{j \in I_{\rho}^+,  \ j < i} a_j,\ \ \beta_i'= \sum_{j \in \widehat{I_{\rho}}^-,  \ j >_R i} b_j = \sum_{j \in I_{\rho}^+,  \ j > i} b_j. 
\end{align*}
On the other hand, write 
\[dual(\EE)= \cup_{\rho} \{ ([A_i,-B_i]_{\rho}, l_i', \eta_i')\}_{i \in (\widehat{I_{\rho}}^{-}, >_{R}')} \cup \{ ([A_i,-B_i]_{\rho}, l_i', \eta_i')\}_{i \in (\widehat{I_{\rho}}^{+}, >_{R})}. \]
Here
\begin{align*}
l_i'=\begin{cases}
\hat{l}_i  + B_i  & \mathrm{if} \, B_{i}\in\mathbb{Z},\\
 \hat{l}_i+ B_i+\frac{1}{2}(-1)^{\alpha_{i}}\hat{\eta}_{i}  & \mathrm{if} \, B_i\not\in\mathbb{Z},
\end{cases}
\ \ \ \ 
\eta_i'=\begin{cases}
(-1)^{\alpha_i+\beta_i}\hat{\eta}_i  & \mathrm{if} \, B_i\in\mathbb{Z},\\
 (-1)^{\alpha_i+\beta_i+1}\hat{\eta}_i & \mathrm{if} \, B_i\not\in\mathbb{Z},
\end{cases}
\end{align*}
where
\[ \alpha_i= \begin{cases}
    \sum_{j \in I_{\rho}^-,\ j<' i }a_j & \text{ if }i \in I_{\rho}^-,\\
  \alpha^- + \sum_{j \in I_{\rho}^+,\ j< i }a_j & \text{ if }i \in I_{\rho}^+,
\end{cases}\ \   \beta_i= \begin{cases}
   \alpha^+ + \sum_{j \in I_{\rho}^-,\ j>'i }b_j& \text{ if }i \in I_{\rho}^-,\\
     \sum_{j \in I_{\rho}^+,\ j> i }b_j & \text{ if }i \in I_{\rho}^+.
\end{cases} \]
Now we check that $\widehat{\EE}=dual(\EE)$.

First, we match the block for $\widehat{I_{\rho}}^-$. If $i \in \widehat{I_{\rho}}^- = I_{\rho}^+$, then $\alpha_i'= \alpha_i- \alpha^-$ and $\beta_i'= \beta_i$. Therefore, 
\begin{align}\label{eq dual matching pi theta 1}
    (-1)^{\alpha_{i}'}\eta_{R,i}=(-1)^{\alpha_i'+ \alpha^-} \eta_i= (-1)^{\alpha_i} \eta_i, 
\end{align}
and similarly $(-1)^{\alpha_i'+\beta_i'} \eta_{R,i}= (-1)^{\alpha_i+\beta_i} \eta_i$. We conclude that 
\[\{ ([A_i,-B_i]_{\rho}, l_i', \eta_i')\}_{i \in (\widehat{I_{\rho}}^{-}, >_{R}')} = \{([A_i,-B_i],\hat{l}_{R,i},\hat{\eta}_{R,i})\}_{i\in (\widehat{I_\rho}^-,>_R')}.\]

Next, we compare the rest from the formula for $l_i', \eta_i'$ for $i \in \widehat{I_{\rho}}^+= I_{\rho}^-$. In this case, we have (note that for $i,j \in I_{\rho}^-$, $i>j$ if and only if $i<' j$)
\[ \alpha_i= \widehat{\beta}_i,\ \ \beta_i= \widehat{\alpha}_i+ \alpha^+.\]
First, suppose that $B_i \in \Z$. Then $l_i' =l_i= l_{R,i} $ and 
\[ \eta_{i}'= (-1)^{\alpha_i+ \beta_i} \hat{\eta}_i= (-1)^{\alpha_i+ \widehat{\alpha}_i+ \beta_i +\widehat{\beta}_i} \eta_i= (-1)^{\alpha^+} \eta_i= \eta_{R,i}. \]
Next, suppose that $B_i \not\in \Z$. Then by \eqref{eq dual matching pi theta 1},
\[(-1)^{\widehat{\alpha}_i} \eta_i + (-1)^{\alpha_i} \hat{\eta}_i=  (-1)^{\widehat{\alpha}_i} \eta_i + (-1)^{\alpha_i+\widehat{\alpha}_i+ \widehat{\beta}_i +1} \eta_i=0, \]
which implies that  $l_i'=l_{R,i}= l_i$. Finally, again by \eqref{eq dual matching pi theta 1},
\[ \eta_{i}'= (-1)^{\alpha_i+\beta_i+ 1 }\hat{\eta}_i= (-1)^{\alpha_i+ \widehat{\alpha}_i+ \beta_i +\widehat{\beta}_i} \eta_i= (-1)^{\alpha^+} \eta_i= \eta_{R,i}.\]
This completes the verification that 
\[\{ ([A_i,-B_i]_{\rho}, l_i', \eta_i')\}_{i \in (\widehat{I_{\rho}}^{+}, >_{R})} = \{([A_i,-B_i]_{\rho},\hat{l}_{R,i},\hat{\eta}_{R,i})\}_{i\in (\widehat{I_\rho}^+,>_R)},\]
and the proof of the proposition.
\end{proof}

\subsection{Intersections of Arthur packets} In this section, we prove the raising operators and their inverses describe all intersections of local Arthur packets for $\OO(W_n)$, as in the case of split symplectic groups and odd special orthogonal groups (\cite[Theorem 1.4]{HLL22}; see Theorem \ref{thm Sp}(5)).

We begin with a lemma describing a bijection between raising operators on $\EE$ and $\EE_\alpha.$

\begin{lemma}\label{lemma bijection between raising operators}
Let $\EE$ be an extended multi-segment of $G_n=\OO(W_n)$ with $\pi_\theta(\EE)\neq 0$. Then there is a bijection between the set of raising operators $T$ which are applicable on $\EE$ and the set of raising operators $T'$ which are applicable on $\EE_\alpha$ for $\alpha\gg0$ such that $T(\EE)_{\alpha}= T'(\EE_{\alpha})$.
\end{lemma}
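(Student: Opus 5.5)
The plan is to compare $\EE$ and $\EE_\alpha$ one $\rho$-block at a time, since every raising operator ($ui_{i,j}^{-1}$, $dual\circ ui_{i,j}\circ dual$, $dual_k^-$) acts inside a single $\EE_\rho$. For $\rho\not\cong\chi_W\chi_V$, the block $(\EE_\alpha)_{\chi_W\chi_V^{-1}\rho}$ is $\EE_\rho$ with $\rho$ relabelled. Applicability of an operator and its output depend only on the data $(A_i,B_i,l_i,\eta_i)$ and the admissible order. So $T\mapsto T'$ is just the relabelling, and $T(\EE)_\alpha=T'(\EE_\alpha)$ holds by inspection. The same applies to row exchanges, so the classes $[\cdot]$ correspond as well.

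All the content sits in the block $\rho_0:=\chi_V$, which is sent to $\chi_W$. There $(\EE_\alpha)_{\chi_W}=\{([\half{\alpha-1},-\half{\alpha-1}]_{\chi_W},\half{\alpha-1},\epsilon_{G_n})\}+(\EE_{W,V})_{\chi_W}$; call the new row the $\alpha$-row and give it index $0$.

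\textbf{Step 1: the $\alpha$-row is inert.} Since $\alpha\gg0$, this row has $A_0=-B_0$ enormous and contains every other segment, and $l_0=b_0/2-\half{1}$ is essentially maximal. I would check the three types in turn.
\begin{itemize}
\item $ui_{0,j}^{-1}$ or $ui_{j,0}^{-1}$: this would require the $\alpha$-row to arise as a union-intersection. That is impossible, because $ui$ needs $A_i<A_j$ and $B_i<B_j$, whereas $[A_0,B_0]$ strictly contains every other segment.
\item $dual\circ ui\circ dual$: under $dual$ the $\alpha$-row becomes $[\half{\alpha-1},\half{\alpha-1}]$, sitting last in the order and far to the right. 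Again no $ui$ condition (Cases 1–3) can be met with another row, because the equalities such as $A_{k+1}-l_{k+1}=A_k-l_k$ or $B_{k+1}+l_{k+1}=A_k-l_k+1$ fail for $\alpha$ large.
\item $dual_k^-$: this needs $B_k=-\half{1}$, but $B_0=-\half{\alpha-1}\neq -\half1$ once $\alpha>2$.
\end{itemize}
Hence every raising operator applicable on $\EE_\alpha$ involves only rows of $(\EE_{W,V})_{\chi_W}$.

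\textbf{Step 2: matching operators not involving the $\alpha$-row.} Every such operator must be shown to be applicable on $\EE_\alpha$ exactly when its counterpart is applicable on $\EE$, with compatible outputs. For $ui^{-1}_{i,j}$ this is immediate, because $ui$ and row exchanges among rows $\geq1$ never see row $0$: it is the minimal element, and exchanging past it is never needed since it is already adjacent-first.

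The two dual-type operators are the main obstacle, because $dual$ involves the global sums $\alpha_i=\sum_{j<i}a_j$ and $\beta_i=\sum_{j>i}b_j$.
\begin{itemize}
\item The $\alpha$-row has $a_0=1$, so it shifts each $\alpha_i$ by $1$. It contributes $b_0=\alpha$ to $\beta_i$ only for rows preceding it, and there are none.
\item This is exactly the parity bookkeeping done in the proof of Theorem \ref{cor Moeglin to Atobe dictionary orthog}. There $dual(\FF\text{ with the }\alpha\text{-row})$ equals $dual(\FF)$ with the $\eta$'s twisted uniformly by one sign, plus the dualized $\alpha$-row placed last, or first in the reversed order.
\item A uniform sign change $\eta_i\mapsto-\eta_i$ on the block leaves $\epsilon=(-1)^{A_k-B_k}\eta_k\eta_{k+1}$ unchanged. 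Combined with the shift in $\alpha_i$ in the half-integral $l'$ formula, it leaves $l_i'$ unchanged.
\end{itemize}
So the conditions of Definition \ref{ui def} for $dual\,ui\,dual$ correspond, and conjugating back gives $T(\EE)_\alpha=T'(\EE_\alpha)$. The same computation handles $dual_k^-$: its condition $B_j>-\half1$ for $j>k$ is unaffected, since the $\alpha$-row is first in the order, and Definition \ref{def partial dual} only rearranges rows with $B<\half1$ and $B>-\half1$ around row $k$.

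\textbf{Step 3: bijectivity.} Finally, I would verify that $T\mapsto T'$ is injective (distinct operators give distinct outputs on $\EE$, and hence on $\EE_\alpha$, because $\EE\mapsto\EE_\alpha$ is injective) and surjective, which is Step 1. Throughout, $\NV(\EE)\ne0\iff\NV(\EE_\alpha)\neq0$ (Theorem \ref{thm non-vanishing}) ensures that all intermediate objects lie in $\Rep$. I expect the sign bookkeeping in the dual-type operators, Step 2, to be the only delicate point.
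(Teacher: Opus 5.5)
Your plan is the same as the paper's. You identify $I_{\chi_W}(\EE_\alpha)=\{0\}\sqcup I_{\chi_V}(\EE)$, send each operator to the operator with the same indices, and reduce surjectivity to showing that no raising operator applicable on $\EE_\alpha$ involves the inserted row $0$. Two small corrections:
\begin{itemize}
\item In your first paragraph, the purely relabelled blocks are those with $\rho\not\cong\chi_V$, not $\rho\not\cong\chi_W\chi_V$.
\item Injectivity is just the relabelling of indices. It is not a claim that distinct operators have distinct outputs, which can fail when two rows are identical.
\end{itemize}
Your treatment of $dual\circ ui\circ dual$ and $dual_k^-$ in Step 1, and the sign bookkeeping in Step 2, are fine. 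Note that the $\chi_W$-block of $\EE_\alpha$ is integral, since $\chi_V\otimes S_a\otimes S_b$ orthogonal forces $a\equiv b \bmod 2$. So in that block only $\eta_i'=(-1)^{\alpha_i+\beta_i}\eta_i$ changes, and your remarks there about the half-integral $l'$-formula and $dual_k^-$ are vacuous.

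The gap is the $ui^{-1}$ bullet of Step 1. The conditions $A_i<A_j$, $B_i<B_j$ constrain the \emph{input} of $ui$. Its \emph{output} is a nested pair: the union $[A_{k+1},B_k]$ followed by the intersection $[A_k,B_{k+1}]$. So the fact that $[A_0,B_0]$ contains every other segment is exactly the shape of a union row and rules nothing out. It only shows that the $\alpha$-row cannot be the intersection row. A priori $ui^{-1}$ could split the $\alpha$-row together with a row $[A_j,B_j]$ into $[A_j,B_0]$ and $[A_0,B_j]$. Inverting type $3'$, it could also split the $\alpha$-row alone into $[A,B_0]$ and $[A_0,A+1]$.

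What excludes this is the datum $l_0=\frac{\alpha-1}{2}$. Check each case of Definition \ref{ui def} with the $\alpha$-row as the union row:
\begin{itemize}
\item In type $3'$ the union row has $l=0$, so it cannot be the $\alpha$-row.
\item In Cases 1 and 3 the union row has $l_k'=l_k\le\frac{b_k}{2}$, where $b_k=A_j-B_0+1=A_j+\frac{\alpha+1}{2}$. Then $l_k'=\frac{\alpha-1}{2}$ forces $\alpha\le 2A_j+3$.
\item In Case 2 one has $l_{k+1}=l_{k+1}'=l_j$. The applicability condition $B_{k+1}+l_{k+1}=B_k+l_k$ then forces $l_k=B_j+l_j+\frac{\alpha-1}{2}$. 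Since $l_j\ge 0$ and $B_j\ge -A_j$, this exceeds $\frac{b_k}{2}$ for $\alpha\gg0$.
\end{itemize}
The $A_j$ are bounded in terms of $\EE$, so none of these occurs for $\alpha\gg0$. This is what $\alpha\gg0$ buys in the paper's one-line claim that the indices $i,j$ are never $0$. With this replacement your argument goes through.
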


\begin{proof}
    We describe the map from raising operators applicable on $\EE$ to those of $\EE_{\alpha}$. Write $I_{\rho}(\EE)$ (resp. $I_{\rho}(\EE_{\alpha})$) for the index set of $\EE_\rho$ (resp. $(\EE_{\alpha})_\rho$). If $\rho\not\cong \chi_V$, then there is a bijection between $I_{\rho}(\EE)$ and $I_{\chi_V^{-1} \chi_W \rho}(\EE_{\alpha})$. If $\rho \cong \chi_V$, then we identify
    \[ I_{\chi_W}(\EE_{\alpha})= \{0\} \sqcup I_{\chi_V}(\EE),\]
    where $0$ corresponds to the extended segment we inserted in Definition \ref{defn EE_alpha}. Now if $T= ui_{i,j}$ (resp. $dual \circ ui_{i,j} \circ dual$, $dual_i^{\pm}$) is applicable on $\EE$, where $i,j \in I_{\rho}(\EE)$, then it is clear from the definition that $T'=ui_{i,j}$ (resp. $dual \circ ui_{i,j} \circ dual$, $dual_i^{\pm}$), where now we regard $i,j \in  I_{\chi_V^{-1} \chi_W\rho}(\EE_{\alpha})$, is applicable on $\EE_{\alpha}$ as well.

    To show that the above map is a bijection, it suffices to show that if $T'= ui_{i,j}$ (resp. $dual \circ ui_{i,j} \circ dual$, $dual_i^{\pm}$) with $i,j \in I_{\chi_W}(\EE_{\alpha})$, then $i,j \neq 0$. However, this is again immediate from the definition of these operators and $\alpha \gg 0$. This completes the proof of the lemma.
\end{proof}

Next, we show that raising operators and their inverses completely determine intersections of local Arthur packets for $\OO(W_n)$.

\begin{thm}\label{thm O}
    Let $\EE$ and $\EE'$ be extended multi-segments of $G_n=\OO(W_n)$ with $\pi_\theta(\EE)\neq0.$ Then $\pi_\theta(\EE)=\pi_\theta(\EE')$ if and only if there exists a finite sequence consisting of raising  operators or their inverses  $(T_i)_{i=1}^l$ such that \[[\EE]=[(T_l\circ\dots\circ T_1)(\EE')].\] 
\end{thm}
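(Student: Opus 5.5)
The plan is to reduce to the symplectic statement, Theorem \ref{thm Sp}(5), through the map $\EE \mapsto \EE_\alpha$ for a fixed odd $\alpha \gg 0$, using Howe duality to move the equality of representations back and forth. By definition, $\pi_\theta(\EE)=\pi_\theta(\EE')\neq 0$ holds exactly when $\pi_\Ato(\EE_\alpha)=\pi_\Ato(\EE'_\alpha)\neq 0$. The forward implication uses the first description. The backward implication uses injectivity of the theta lift (Theorem \ref{thm Howe duality}) together with the fact that $\theta_{-\alpha}(\pi_\theta(\cdot))=\pi_\Ato((\cdot)_\alpha)$ (Proposition \ref{prop orthog extended multi-segment rep}).

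For the ``if'' direction, first handle row exchanges. The map $\EE\mapsto\EE_\alpha$ commutes with $R_k$, because the inserted segment is the minimal element and no row exchange involving it is needed. Hence $[\EE_\alpha]$ is determined by $[\EE]$, and $\pi_\theta$ is constant on $[\EE]$. Next, induct on the length $l$ of the sequence. At each step one has $\pi_\theta(T(\FF))=\pi_\theta(\FF)\neq 0$ for a raising operator $T$ applicable on $\FF$, where $\FF$ is the intermediate multi-segment. This follows from Lemma \ref{lemma bijection between raising operators}: we have $T(\FF)_\alpha=T'(\FF_\alpha)$, and Theorem \ref{thm Sp}(5)(a) gives $\pi_\Ato(T'(\FF_\alpha))=\pi_\Ato(\FF_\alpha)$, which is nonzero. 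Applying the theta lift then gives the claim for $T$.

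Inverse operators need slightly more care, since we must know the nonvanishing of the source. For $\FF=T(\GG)$ with $\pi_\theta(\FF)\ne0$, the criterion Theorem \ref{thm non-vanishing} shows $\NV(\GG_\alpha)\neq0$. Indeed, by Lemma \ref{lemma bijection between raising operators} we have $T'(\GG_\alpha)=\FF_\alpha$, and Theorem \ref{thm Sp}(5)(a) applied to $\GG_\alpha$ gives equality of the representations. Alternatively, one can pass through $\EE_\rho^{\Sp}$ and Lemma \ref{lem EE{Sp}}(2), which already treats $T^{-1}$. I would use the latter route, since it avoids checking whether nonvanishing is preserved under inverse operators.

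For the ``only if'' direction, suppose $\pi_\theta(\EE)=\pi_\theta(\EE')\neq0$. Theorem \ref{thm Sp}(5)(b) then yields operators $T'_1,\dots,T'_l$ on the symplectic side with $[\EE_\alpha]=[T'_l\circ\cdots\circ T'_1(\EE'_\alpha)]$. The main obstacle is lifting this chain back to $\OO(W_n)$. Each intermediate object must be of the form $\GG_\alpha$, and each $T'_i$ must avoid the inserted index $0$. Lemma \ref{lemma bijection between raising operators} gives this only for raising operators applicable on some $\GG_\alpha$ with $\pi_\theta(\GG)\neq0$. I will induct along the chain, arguing as in that lemma's proof. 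Since $\alpha\gg0$, the segment $([\frac{\alpha-1}{2},-\frac{\alpha-1}{2}]_{\chi_W},\frac{\alpha-1}{2},\epsilon_{G_n})$ can never participate in $ui$, $dual\circ ui\circ dual$, or $dual^\pm$, nor in their inverses. It also stays as the minimal element, up to row exchanges with segments that it contains. Consequently each intermediate is again $\GG_\alpha$ for an extended multi-segment $\GG$ of $G_n$, with $\NV(\GG)\neq0$ by Theorem \ref{thm non-vanishing}, and each $T'_i$ corresponds to some $T_i$.

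A cleaner fallback for the ``only if'' direction is to use Corollary \ref{cor com intersect}(2), which is already proved for $\pi_\com$ via $\EE_\rho^{\Sp}$. This fallback requires $\pi_\theta=\pi_\com$, which is not yet available at this point in the paper. I therefore commit to the lifting argument above, and I expect the bookkeeping for inverse operators and the row exchanges involving the index $0$ to be the delicate point.
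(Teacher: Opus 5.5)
Your proposal is correct and follows essentially the same route as the paper: transfer the chain along $\EE\mapsto\EE_\alpha$ using Lemma \ref{lemma bijection between raising operators}, invoke the symplectic result Theorem \ref{thm Sp}(5), and return via Howe duality. The paper is more economical in the ``if'' direction: it transfers the whole chain at once to $[\EE_\alpha]=[(T_l'\circ\dots\circ T_1')(\EE'_\alpha)]$ and applies the symplectic equivalence, so the nonvanishing bookkeeping for inverse operators that you handle step by step through Lemma \ref{lem EE{Sp}} is absorbed on the symplectic side.
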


\begin{proof}
   Fix an $\alpha \gg 0$. Suppose that $T$ is a raising operator or its inverse applicable on an extended multi-segment $\FF$. We write $T'$ for the corresponding operator applicable on $\FF_{\alpha}$ as in Lemma \ref{lemma bijection between raising operators}. Suppose that
\[[\EE]=[(T_l\circ\dots\circ T_1)(\EE')],\]
where each $T_i$ is a raising operator or its inverse. Then 
\[[\EE_{\alpha}]=[(T_l'\circ\dots\circ T_1')(\EE'_{\alpha})].\]
Now $\pi_{\Ato}(\EE_{\alpha})= \pi_{\Ato}(\EE'_{\alpha})$ by Theorem \ref{thm Sp}(5).
Thus, Howe duality (Theorem \ref{thm Howe duality}) implies that $\pi_{\theta}(\EE)= \pi_{\theta}(\EE')$. This shows the backward direction of the theorem.

    Conversely, assume that $\pi_\theta(\EE)=\pi_\theta(\EE')$.  Then $\theta_{-\alpha}(\pi_\theta(\EE))=\pi_\Ato(\EE_{\alpha})=\pi_\Ato(\EE_{\alpha}').$ The result for symplectic groups (Theorem \ref{thm Sp}(5)) then implies that there is a finite sequence consisting of raising operators or their inverses $\{T_i'\}_{i=1}^l$ such that \[[\EE_{\alpha}]=[(T_l'\circ\dots\circ T_1')(\EE_{\alpha}')].\]
    Then using the bijection in Lemma \ref{lemma bijection between raising operators} again, we obtain that
    \[
   [ \EE]=[((T_l\circ\dots\circ T_1)(\EE'))].
    \]
This completes the proof of the theorem.
\end{proof}

\begin{remark}
Let $\pi$ be an irreducible representation of $\OO(W_n)$ and let $\alpha\gg 0$. 
    The proof of the above theorem shows that the following map is a bijection.
    \begin{align*}
        \{[\EE]\ | \ \pi_{\theta}(\EE)=\pi\} &\to \{[\EE]\ | \ \pi_{\Ato}(\EE)=\theta_{-\alpha}(\pi)\},\\
        \EE& \mapsto \EE_{\alpha}.
    \end{align*}
\end{remark}

As an immediate consequence of Theorems \ref{thm E max}, \ref{thm Atobe reformulation orthog}, and \ref{thm O}, we obtain the analogue of Theorem \ref{thm Sp}(6) for even orthogonal groups. 

\begin{thm}\label{thm psi max and min}
    Let $\pi$ be a representation of $G_n=\OO(W_n)$ of Arthur type. Then there exists unique maximal and minimal elements of $\Psi(\pi)$ with respect to $\geq_O$. We denote these elements by $\psi^{\max}(\pi)$ and $\psi^{\min}(\pi)$, respectively.
\end{thm}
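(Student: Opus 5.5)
The plan is to reduce everything to combinatorics on extended multi-segments, using the three cited results in sequence.

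\textbf{Step 1: pass from parameters to multi-segments.} Since $\pi$ is of Arthur type, $\Psi(\pi)$ is nonempty. We may assume $\pi$ is of good parity, since Theorem \ref{thm red from nu to gp symp} reduces the general case to $\pi_{gp}$. Theorem \ref{thm Atobe reformulation orthog} then says that $\psi\in\Psi(\pi)$ if and only if there is an $\EE$ with $\psi_\EE=\psi$ and $\pi_\theta(\EE)=\pi$. Fix one such $\EE$ and, after row exchanges, take it in $\Rep^{(P')}$; this is allowed by Theorem \ref{thm non-vanishing}. By Theorem \ref{thm O}, the set
\[ \{[\EE'] \ | \ \pi_\theta(\EE')=\pi\} \]
is exactly the set of classes obtained from $[\EE]$ by finite chains of raising operators and their inverses. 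This set is therefore the class $\mathscr{E}(\EE)$ defined via $\pi_\com$, which is defined purely combinatorially. Consequently
\[ \Psi(\pi)=\{\psi_{\EE'} \ | \ [\EE']\in\mathscr{E}(\EE)\}. \]

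\textbf{Step 2: produce the candidates.} By Theorem \ref{thm E max}(1) there are unique absolutely maximal and absolutely minimal classes $[\EE^{|max|}]$ and $[\EE^{|min|}]$ in $\mathscr{E}(\EE)$. Set
\[ \psi^{\max}(\pi):=\psi_{\EE^{|max|}}, \qquad \psi^{\min}(\pi):=\psi_{\EE^{|min|}}. \]
It remains to show that $\psi^{\max}(\pi)\geq_O\psi$ for every $\psi\in\Psi(\pi)$, and dually for the minimum. Uniqueness then follows, because $\geq_O$ is a partial order.

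\textbf{Step 3: compare with every element.} Take $\psi=\psi_{\EE'}$ with $[\EE']\in\mathscr{E}(\EE)$. Starting from $\EE'$, apply raising operators as long as one is applicable. A raising operator applied to a parameter acts on $\psi$ as in Definition \ref{def operators on parameters}, so $\psi_{\EE'}$ is pushed up in $\geq_O$ at each step. The process must terminate, and the cleanest way I see to guarantee this is to transfer to the symplectic groups. Using Lemma \ref{lem EE{Sp}}(2),(3), or alternatively Lemma \ref{lemma bijection between raising operators} with $\EE_\alpha$, raising chains on $\EE'_\rho$ correspond to raising chains on $(\EE'_\rho)^{\Sp}$. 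Termination there is part of the symplectic result Theorem \ref{thm Sp}(6), via \cite[Theorem 11.4]{HLL22}. The chain ends at an absolutely maximal class in $\mathscr{E}(\EE)$, which by uniqueness is $[\EE^{|max|}]$. Hence $\psi^{\max}(\pi)\geq_O\psi$.

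The minimal case is the same argument with inverse raising operators. Alternatively, it follows by applying $dual$, which reverses $\geq_O$, together with Proposition \ref{prop dual is Aubert dual}.

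The main obstacle is this termination and compatibility bookkeeping in Step 3. One must check that each raising operator on multi-segments induces a raising operator on parameters, or is trivial on $\psi$, and that the symplectic well-foundedness transfers across the $\EE\mapsto\EE^{\Sp}_\rho$ correspondence for each $\rho$ independently. The bijection $\mathscr{E}(\EE)\cong\prod_\rho\mathscr{E}(\EE_\rho)$ handles the latter.
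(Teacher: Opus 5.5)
Your proposal is correct and is essentially the paper's argument: the paper obtains the theorem as an immediate consequence of Theorem \ref{thm E max}(1), Theorem \ref{thm Atobe reformulation orthog} and Theorem \ref{thm O}, which is exactly the combination you use in Steps 1--2. Your Step 3 spells out what the paper leaves implicit, and termination is easier than you suggest, so no symplectic transfer is needed: $\Psi(\pi)$ is finite because all its members share an infinitesimal character, and each raising operator strictly changes the parameter upward (e.g.\ strictly in $\geq_A$).
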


\section{\texorpdfstring{Computation of the $L$-data of $\pi_\theta(\EE)$}{}}\label{sec pi comb = pi theta}

In this section, we verify that $\pi_\theta(\EE)=\pi_\com(\EE)$ for an extended multi-segment $\EE$ of $G_n=\OO(W_n)$ (Theorem \ref{thm theta=com}). This allows for an algorithm which computes the $L$-data of $\pi_\theta(\EE)$ explicitly while avoiding computations of the  theta correspondence. In particular, the computation of $\pi_\com(\EE)$ is intrinsic to the group $G_n.$ Another advantage of the algorithm to compute $\pi_\com(\EE)$ is that it allows for the use of Algorithm \ref{algo Arthur type pi^-} to determine when a representation $\pi\in\Pi_{gp}(G_n)$ is of Arthur type.

\subsection{M{\oe}glin's operator}\label{subsec Moeglin operators}

In this subsection, we recall a specific operator of M{\oe}glin (\cite[\S4.1]{Moe08}), denoted by $(add_j^\Moe)^{-1}$ below, that applies to M{\oe}glin parameters. Suppose that $\pi=\pi_\Moe(\psi_>,\underline{\zeta},\underline{l},\underline{\eta})=\pi_{\Moe}(\MM)\in\Pi_\psi$ for some local Arthur parameter $\psi$ of good parity. Write
\[
\psi=\bigoplus_{i=1}^r \rho_i\otimes S_{a_i}\otimes S_{b_i}.
\]
Fix $j\in\{1,\dots,r\}$ such that $b_j\geq 2.$ We define
\[
\psi^-=\bigoplus_{i=1}^r \rho_i\otimes S_{a_i}\otimes S_{b_i'},
\]
where $b_i'=b_i$ if $i\neq j,$ and $b_j'=b_j-2$. If $b_j'=0,$ then we omit that summand. We define a new representation (possibly 0) via
\[
\sigma=\pi_\Moe((\psi^-)_>,\underline{\zeta}^-,\underline{l}^-,\underline{\eta}^-)=:\pi_\Moe((add_j^\Moe)^{-1}(\MM)),
\]
where if $i\neq j,$ we set
\begin{align*}
    \underline{\zeta}^-(\rho_i,a_i,b_i'):=\underline{\zeta}(\rho_i,a_i,b_i),\ \ 
    \underline{l}^-(\rho_i,a_i,b_i'):=\underline{l}(\rho_i,a_i,b_i),\ \ 
    \underline{\eta}^-(\rho_i,a_i,b_i'):=\underline{\eta}(\rho_i,a_i,b_i).
\end{align*}
If $i=j,$ we define $\underline{\zeta}^-,\underline{l}^-,\underline{\eta}^-$ based on the following cases. We note that $\underline{\zeta}(\rho_j,a_j, b_j')$ is determined by the sign of the difference $a_j-b_j'$, except possibly in the first case.
\begin{itemize}
    \item \textbf{Case 1:} Suppose $a_j\leq b_j-2$. In this case, we set $\underline{\eta}^-(\rho_j,a_j,b_j')=\underline{\eta}(\rho_j,a_j,b_j)$ and $\underline{l}^-(\rho_j,a_j,b_j')=\underline{l}(\rho_j,a_j,b_j).$ When $a_j=b_j-2,$ we also set $\underline{\zeta}^-(\rho_j,a_j, b_j')=1.$
    \item \textbf{Case 2:} Suppose $a_j\geq b_j$. In this case, we set $\underline{\eta}^-(\rho_j,a_j,b_j')=\underline{\eta}(\rho_j,a_j,b_j)$ and $\underline{l}^-(\rho_j,a_j,b_j')=\underline{l}(\rho_j,a_j,b_j)-1.$ 
    \item \textbf{Case 3:} Suppose $a_j=b_j-1$ and $\underline{\eta}(\rho_j,a_j,b_j)=+1$. In this case, we set $\underline{\eta}^-(\rho_j,a_j,b_j')=-\underline{\eta}(\rho_j,a_j,b_j)$ and $\underline{l}^-(\rho_j,a_j,b_j')=\underline{l}(\rho_j,a_j,b_j)$, {except if $a_j$ is even and $\underline{l}(\rho_j,a_j,b_j)=\frac{a_j}{2},$ in which case we set $\underline{\eta}^-(\rho_j,a_j,b_j')=1$ and $\underline{l}^-(\rho_j,a_j,b_j')=\underline{l}(\rho_j,a_j,b_j)-1$.}  
\end{itemize}
Therefore, the cases where the $\underline{\zeta}^-,\underline{l}^-,\underline{\eta}^-$ are not defined are:
\begin{itemize}
    \item \textbf{ Bad Case 1:} $a_j \geq b_j$ and $\underline{l}(\rho_j,a_j,b_j)=0$.
    \item \textbf{ Bad Case 2:} $a_j=b_j-1$ and $\underline{\eta}(\rho_j,a_j,b_j)=-1$.
\end{itemize}

\begin{remark}
    In \cite[\S4.1]{Moe08}, M{\oe}glin did not specify $\underline{l}^-(\rho_j,a_j,b_j')$ for the exceptional case in Case 3. However, the inequality $\underline{l}^-(\rho_j,a_j,b_j') \leq \half{\min (a_j, b_j')}$ together with the requirement $\underline{l}^-(\rho_j,a_j,b_j')-\underline{l}(\rho_j,a_j,b_j) \in \{-1,0\}$ implies that $\underline{l}^-(\rho_j,a_j,b_j')=\underline{l}(\rho_j,a_j,b_j)-1$. 
\end{remark}

M{\oe}glin relates $\pi_\Moe(\MM)$ and $\pi_\Moe((add_j^{\Moe})^{-1}(\MM))$ when they are both nonzero.

\begin{thm}[{\cite[Theorem A]{Moe11b}}]\label{thm Moeglin + embed}
    Suppose that $\pi=\pi_\Moe(\psi_>,\underline{\zeta},\underline{l},\underline{\eta})=\pi_{\Moe}(\MM)\neq 0$. Fix $j$ such that $\pi^-:=\pi_\Moe((add_j^{\Moe})^{-1}(\MM))$ is well-defined and nonzero. Then
    \[
    \pi\hookrightarrow \Delta_{\rho_j} [B_j,-A_j] \rtimes \pi^-,\]
    where $A_j= \half{a_j+b_j}-1, B_j=\half{a_j-b_j}$.
\end{thm}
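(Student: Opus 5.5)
This is M{\oe}glin's theorem \cite[Theorem A]{Moe11b}, quoted as an input rather than proved here. If one wanted to re-derive it (or check that it covers the pure inner form $\OO(W_n^-)$), the natural route is through M{\oe}glin's explicit construction of $\pi_\Moe(\MM)$ via Jacquet modules. The plan follows the reduction steps in the remark after Theorem \ref{thm Moeglin construction}.

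\textbf{Step 1 (translation into a Jacquet module statement).} By Frobenius reciprocity, the embedding $\pi\hookrightarrow \Delta_{\rho_j}[B_j,-A_j]\rtimes\pi^-$ is equivalent to a nonzero map
\[ \mathrm{Jac}_P(\pi)\to \Delta_{\rho_j}[B_j,-A_j]\otimes\pi^-, \]
where $P$ is the maximal parabolic with Levi $\GL_{d(A_j+B_j+1)}\times G_m$. So the goal is to exhibit $\Delta_{\rho_j}[B_j,-A_j]\otimes\pi^-$ as a quotient of the appropriate Jacquet module of $\pi$.

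\textbf{Step 2 (the elementary and discrete diagonal restriction cases).} First treat $\psi$ of discrete diagonal restriction, with Jordan block $j$ separated far from the others. In this case $\pi_\Moe(\MM)$ is defined as a subrepresentation of an induced representation built from $\pi_\Moe$ of a discrete parameter, using the segments $[B_i,-A_i]$ together with generalized Aubert involutions. The three cases defining $(add_j^\Moe)^{-1}$ are exactly the three possible shapes of the outermost piece attached to block $j$:
\begin{itemize}
\item $a_j\le b_j-2$: the block has $\zeta=-1$ and keeps $l_j$.
\item $a_j\ge b_j$: removing the outer layer decreases $l_j$ by one.
\item $a_j=b_j-1$: the sign $\eta_j$ flips.
\end{itemize}
Peeling off the segment $[B_j,-A_j]$ from the defining induced representation then gives the embedding directly.

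\textbf{Step 3 (moving to general $\psi$).} Pass from the separated case to general $\psi$ using M{\oe}glin's partial Jacquet functors $\mathrm{Jac}_{x,\dots,y}$. These shift block $j$ (and the others) back to their true positions. One then needs that $\mathrm{Jac}$ commutes with taking the outer segment, i.e. that the induced embedding survives the shift. This is where the hypothesis $\pi^-\neq 0$ enters: it guarantees that the image of the embedding is not killed after the shift.

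\textbf{Main obstacle.} The hardest step is the compatibility in Step 3 when other Jordan blocks $i$ with $\rho_i\cong\rho_j$ overlap $[B_j,A_j]$. In that situation the Jacquet module contains several copies of the relevant exponents, and the argument must show that the quotient $\Delta_{\rho_j}[B_j,-A_j]\otimes\pi^-$ actually occurs, rather than only some other arrangement of the same supercuspidal support. I would attack this with M{\oe}glin's multiplicity-one results for Jacquet modules of representations in A-packets.

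For the orthogonal pure inner form, the construction is uniform in the group, given the character identities of \cite{MR18}, so the same argument should apply there as well.
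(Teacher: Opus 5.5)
Your proposal matches the paper: the statement is not proved there but imported from M{\oe}glin \cite[Theorem A]{Moe11b}, with the paper relying on M{\oe}glin's construction covering the pure inner form $\OO(W_n^-)$, which is exactly how you treat it. Your re-derivation sketch goes beyond what the paper needs and is only heuristic: in particular, Step 3, the survival of the embedding under M{\oe}glin's Jacquet-functor shifts when other blocks overlap $[B_j,A_j]$, is asserted rather than argued, so the citation is what actually carries the result.
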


\subsection{\texorpdfstring{Verification of $\pi_{\theta}(\EE)=\pi_{\com}(\EE)$}{}}

In this section, we show that $\pi_\theta(\EE)=\pi_\com(\EE)$ (Theorem \ref{thm theta=com}) for $\EE\in\Rep.$ Consequently, we obtain an intrinsic combinatorial algorithm to compute $\pi_\theta(\EE)$ (Algorithm \ref{algo pi_com}).

We identify the operator $add^{-1}$ (Definition \ref{def shift and add})  with the operator defined by M{\oe}glin (see \S\ref{subsec Moeglin operators}).

\begin{thm}\label{thm add = +} Let $\MM\in\Rep_\Moe^{(P')}(\OO(W_n))$ and assume that $(add_{j}^{\Moe})^{-1}$ is well-defined on $\MM$. Furthermore, we assume that the order $>$ also satisfies that 
{
    \begin{itemize}
        \item[(i)] if $a_j\leq b_j-1$, then for any $i\in I_{\rho_j}^-$ with $|B_j| = |B_i|,$ we have $j\leq i$; and
        \item[(ii)] if $a_j\geq b_j,$ then for any $i\in I_{\rho_j}^+$ with $B_j = B_i,$ we have $j\geq i.$
    \end{itemize}
    In addition, we require that if $i\in I_\rho$ and $a_i=b_i$, then $\underline{\zeta}(\rho,a_i,b_i)=+1$.
    }
    By Definition \ref{defn Moeglin to Atobe dictionary}, write
    \[
    \pi=\pi_\Moe(\psi_>,\underline{\zeta},\underline{l},\underline{\eta})=\pi_\Moe(\MM)=\pi_\theta(\EE_\MM).
    \]
    Then $\pi_{\Moe}((add_{j}^{\Moe})^{-1}(\MM))=\pi_\theta(add_j^{-1}(\EE_\MM))$ (possibly vanishing).
\end{thm}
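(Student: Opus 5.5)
The plan is to reduce the statement to a purely combinatorial identity, in the same way as Theorem \ref{cor Moeglin to Atobe dictionary orthog}. Put $\MM^-:=(add_j^{\Moe})^{-1}(\MM)$. First I check that $\MM^-$ is again a M{\oe}glin parameter whose order satisfies $(P')$. Hypotheses (i) and (ii) on the order are used here. They place $j$ at the correct end of the block with equal $|B|$ inside $I_{\rho_j}^{\mp}$. Consequently, shrinking $b_j$ by $2$, which fixes $B_j$ in absolute value up to the Case 1 switch of $\zeta$, keeps the order admissible and keeps $(P')$. The condition $\zeta=+1$ when $a_i=b_i$ handles the boundary case $a_j=b_j-2$, where $\zeta^-_j$ is forced to be $+1$. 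Theorem \ref{cor Moeglin to Atobe dictionary orthog} then gives $\pi_\Moe(\MM^-)=\pi_\theta(\EE_{\MM^-})$ when $\pi_\Moe(\MM^-)\neq 0$. The vanishing case matches because $\pi_\theta$ and $\pi_\Moe$ vanish together: by Theorem \ref{thm non-vanishing} both are governed by $\NV$, and $\pi_\Moe$ ranges over the same packet by Theorem \ref{thm Atobe reformulation orthog}. So it suffices to prove
\[ \EE_{\MM^-}=add_j^{-1}(\EE_{\MM}), \]
possibly up to row exchanges.

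The identity is local in $\rho_j$. Away from $\rho_j$ the two sides agree trivially. I would split the verification according to M{\oe}glin's cases.

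\textbf{Case 2} ($a_j\ge b_j$, so $j\in I^+$). The extended segment of $\EE_\MM$ at $j$ is $([A_j,B_j],l_j,\eta_j)$. Here $add_j^{-1}$ gives $([A_j-1,B_j+1],l_j-1,\eta_j)$, and $(add_j^\Moe)^{-1}$ gives exactly the same data, since the $+$-block is copied verbatim. One only has to note that $\EE$ has $B_j$ unchanged in sign.

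\textbf{Cases 1 and 3} ($j\in I^-$). The $-$-block is passed through $dual$. I would compute $dual$ of the modified block $\{([A_i,-B_i],l_i,\eta_i)\}$, in which the $j$-th entry becomes $[A_j-1,-B_j-1]$ (note $-B_j\ge 0$). Then I compare with $add_j^{-1}$ applied to $dual$ of the original block. Under $dual$ the data $\hat l_i=l_i-B_i$ (or its half-integral variant) and $\hat\eta_i=(-1)^{\hat\alpha_i+\hat\beta_i}\eta_i$ change as follows. Replacing $(a_j,b_j)$ by $(a_j,b_j-2)$ changes $\hat\alpha_i$ by $2$ for $i>j$ and leaves $\hat\beta_i$ unchanged, so the signs of the other rows are unaffected. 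For row $j$ itself, $-B_j$ drops by $1$, which in the integral case gives $\hat l_j^-=l_j-B_j-1=\hat l_j-1$, as required by $add_j^{-1}$.

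The half-integral case $B_j\notin\Z$ (Case 3, $a_j=b_j-1$) is the main obstacle. There the correction term $\tfrac12(-1)^{\hat\alpha_j}\eta_j$ interacts with the flip $\eta^-_j=-\eta_j$ and with the crossing of $B_j=-\tfrac12$ to $B_j^-=+\tfrac12$, which moves $j$ from $I^-$ to $I^+$. I expect the computation to show that the sign flip exactly compensates the change in the correction term. The exceptional sub-case $l_j=a_j/2$ needs separate bookkeeping. Because $j$ changes blocks, the two sides may differ in the position of row $j$, so I would finish with a row exchange using Definition \ref{def row exchange}; hypothesis (i) should make this exchange trivial. Finally, if the sign data disagree anywhere, the sign condition \eqref{eq sign condition} on both sides forces agreement of the remaining global sign, as in the proof of Theorem \ref{cor Moeglin to Atobe dictionary orthog}.
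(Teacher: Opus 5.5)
Your reduction to a combinatorial identity, and your handling of Case 2 and of Case 1 with $a_j<b_j-2$, agree with the paper. The gap is in your first step, and it falls exactly in the hard cases.

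When $a_j=b_j-2$ (the boundary of Case 1) or $a_j=b_j-1$ (Case 3), $\zeta_j$ changes from $-1$ to $+1$. The forced value $\zeta^-_j=+1$ that you mention is precisely what breaks $(P')$. The hypotheses make $j$ the minimum of $I_{\rho_j}$: hypothesis (i) together with $(P')$ gives this, and in the boundary case the condition $\zeta_i=+1$ for $a_i=b_i$ rules out $\zeta=-1$ indices with $B_i=0$ below $j$. So as soon as $I^-_{\rho_j}\neq\{j\}$, $\MM^-$ with the order $>$ has a $\zeta=+1$ index below $\zeta=-1$ indices. It does \emph{not} satisfy $(P')$, so $\EE_{\MM^-}$ is undefined and Theorem \ref{cor Moeglin to Atobe dictionary orthog} cannot be applied to it.

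The paper repairs this on the M{\oe}glin side. It passes to the order $>_R$ with $I^-_{\rho}\setminus\{j\}<_R j<_R I^+_{\rho}$. It then applies Xu's change-of-order formula \cite[Theorem 6.3]{Xu21b} to obtain $\MM^-_R\in\Moe^{(P')}$ with $\pi_\Moe(\MM^-_R)=\pi_\Moe(\MM^-)$. This is not a relabeling: it multiplies $\eta_i$ by $(-1)^{b_j}$ for every $i\in I^-_\rho\setminus\{j\}$, and $\eta^-_j$ by $(-1)^{\beta_j}$, where $\beta_j=\sum_{i\in I^-_\rho\setminus\{j\}}a_i$.

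These twists are what make the identity hold. Because $j$ leaves the $-$ block, $\widehat{\alpha}_i$ drops by $b_j$ for $i\in I^-_\rho\setminus\{j\}$, not by $2$ as in your computation. When $b_j$ is odd this flips $\hat\eta_i$, and in Case 3 it also shifts the half-integral correction term in $\hat l_i$; the factor $(-1)^{b_j}$ cancels exactly this. Likewise, $(-1)^{\beta_j}$ is what matches row $j$ with $\hat\eta_j=(-1)^{\beta_j}\eta_j$ in the boundary of Case 1, and with $\hat\eta_j=(-1)^{\beta_j+1}\eta_j$ in Case 3 (where $l_j=a_j/2$ needs separate treatment).

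Neither of your fallback devices supplies this.
\begin{itemize}
\item A row exchange on the extended multi-segment side is unnecessary. Since $>'$ reverses $I^-_\rho$, row $j$ is already the last row of the $<0$ block of $add_j^{-1}(\EE_\MM)$, exactly where $\EE_{\MM^-_R}$ puts it.
\item The sign condition \eqref{eq sign condition} determines at most one global sign. It cannot correct discrepancies on all the rows of $I^-_\rho\setminus\{j\}$.
\end{itemize}
The missing idea is the reordering of the M{\oe}glin parameter via Xu's formula, together with the explicit sign twists it introduces.
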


\begin{proof}
Let $\MM^-:=(add_{j}^{\Moe})^{-1}(\MM)$. It is possible that $\MM^-$ may not satisfy $(P')$ (Definition \ref{def Moeglin parameter}(4)), in which case we fix a specific admissible order $>_R$ below, and compute a M{\oe}glin parameter $\MM^-_R$ that satisfies $(P')$ and $\pi_{\Moe}(\MM^-)= \pi_{\Moe}(\MM^-_{R})$ via \cite[Theorem 6.3]{Xu21b}. By Theorem \ref{cor Moeglin to Atobe dictionary orthog}, it suffices to show that 
\[add_j^{-1}(\EE_\MM)= \EE_{\MM^-_{R}}.\]

We give more notations. Write $\MM=(\psi_>,\underline{\zeta},\underline{l},\underline{\eta}).$
We recall the definition of 
\[\MM^-=((\psi^-)_>,\underline{\zeta}^-,\underline{l}^-,\underline{\eta}^-).\] 
If $i \neq j,$ we have
\begin{align*}
    \underline{\zeta}^-(\rho_i,a_i,b_i')=\underline{\zeta}(\rho_i,a_i,b_i),\ \ 
    \underline{l}^-(\rho_i,a_i,b_i')=\underline{l}(\rho_i,a_i,b_i),\ \ 
    \underline{\eta}^-(\rho_i,a_i,b_i')=\underline{\eta}(\rho_i,a_i,b_i).
\end{align*}
For the index $j,$ we define $\underline{\zeta}^-,\underline{l}^-,\underline{\eta}^-$ based on the following cases.
\begin{itemize}
    \item \textbf{Case 1:} Suppose $a_j\leq b_j-2$. In this case, we have $\underline{\eta}^-(\rho_j,a_j,b_j')=\underline{\eta}(\rho_j,a_j,b_j)$ and $\underline{l}^-(\rho_j,a_j,b_j')=\underline{l}(\rho_j,a_j,b_j).$
    \item \textbf{Case 2:} Suppose $a_j\geq b_j$. In this case, we have $\underline{\eta}^-(\rho_j,a_j,b_j')=\underline{\eta}(\rho_j,a_j,b_j)$ and $\underline{l}^-(\rho_j,a_j,b_j')=\underline{l}(\rho_j,a_j,b_j)-1.$
    \item \textbf{Case 3:} Suppose $a_j=b_j-1$ and $\underline{\eta}(\rho_j,a_j,b_j)=+1$. In this case, we have $\underline{\eta}^-(\rho_j,a_j,b_j')=-\underline{\eta}(\rho_j,a_j,b_j)$ and $\underline{l}^-(\rho_j,a_j,b_j')=\underline{l}(\rho_j,a_j,b_j)$,  {except if $a_j$ is even and $\underline{l}(\rho_j,a_j,b_j)=\frac{a_j}{2},$ in which case we set $\underline{\eta}^-(\rho_j,a_j,b_j')=1$ and $\underline{l}^-(\rho_j,a_j,b_j')=\underline{l}(\rho_j,a_j,b_j)-1$.}
\end{itemize}
The proof proceeds in the above cases by computing the extended multi-segment $\EE_{\MM_R^-}$ attached to $\MM_R^-$ using Definition \ref{defn Moeglin to Atobe dictionary}. Before we begin, we explicate the information in $\EE_\MM.$ 

Recall that we have assumed that the admissible order of $\MM$ satisfies $(P')$. We let $I_\rho^\pm=\{i\in I_\rho \ | \ \zeta_i=\pm1\}.$ We write
\[ \EE_{\MM}= \EE_{\MM,<0} + \EE_{\MM, \geq 0},\]
where
\begin{align*}
  \EE_{\MM,<0}=  \cup_\rho\{([A_i,B_i]_{\rho},\hat{l}_i,\hat{\eta}_i)\}_{i\in (I_\rho^-,>')},\ \ \EE_{\MM,\geq 0}=\cup_\rho \{([A_i,B_i]_{\rho},l_i,\eta_i)\}_{i\in (I_\rho^+,>)}.
\end{align*}
Here, $>'$ is the order on $I_\rho^-$ obtained by reversing the order $>.$  We recall the numbers $l_i, \eta_i$ and $\widehat{l}_i, \widehat{\eta}_i$ explicitly. For $i\in I_\rho^+,$ we have 
    \begin{align*}
        l_i= \underline{l}(\rho,a_i,b_i),\ \ \ 
        \eta_i=\underline{\eta}(\rho,a_i,b_i).
    \end{align*}
For $i\in I_\rho^-,$ we set  \begin{align*}
\hat{l}_i=\begin{cases}
\underline{l}(\rho,a_i,b_i)-B_i  & \mathrm{if} \, B_i\in\mathbb{Z},\\
 \underline{l}(\rho,a_i,b_i)-B_i+\frac{1}{2}(-1)^{\alpha_{i}}\underline{\eta}(\rho,a_i,b_i)  & \mathrm{if} \, B_i\not\in\mathbb{Z},
\end{cases}
\end{align*}
and
\begin{align*}
\hat{\eta}_i=\begin{cases}
(-1)^{\alpha_i+\beta_i}\underline{\eta}(\rho,a_i,b_i)  & \mathrm{if} \, B_i\in\mathbb{Z},\\
 (-1)^{\alpha_i+\beta_i+1}\underline{\eta}(\rho,a_i,b_i)  & \mathrm{if} \, B_i\not\in\mathbb{Z},
\end{cases}
\end{align*}
where 
\begin{align*}
    \alpha_{i}=\sum_{\substack{k\in I_\rho^-,\  k<i}}b_k, \ \ 
    \beta_{i}=\sum_{\substack{k\in I_\rho^- ,\  k>i}}a_k.
\end{align*}
 In this formula, if $B_i\not\in\mathbb{Z}$ and {$\underline{l}(\rho,a_i,b_i)=\frac{a_i}{2}$}, then we {replace $\underline{\eta}(\rho,a_i,b_i)$ in the above formulas by $(-1)^{\alpha_i+1}$.} In the rest of the proof, we write $\alpha_i^{-}, \beta_i^{-}, \hat{l}_i^-, \hat{\eta}_i^-$ for these ingredients for $\MM^-$ (or $\MM^-_{R}$ if $\MM^-$ does not satisfy $(P')$). We also write $\rho=\rho_j$ for short.

We now proceed in a case-by-case analysis of $\MM^-.$
\vspace{0.3 cm}

\noindent\textbf{Case 1.}  $a_j\leq b_j-2$. 
{We first assume that $a_j<b_j-2$.}
In this case, $\EE_{\MM, \geq 0}= \EE_{\MM^-, \geq 0}$, and we compare $\EE_{\MM, < 0}$ and $\EE_{\MM^-, < 0}$. Note that $\MM^-$ satisfies $(P')$ already by our assumption on the admissible order. Now the ingredients to define $(\hat{l}_i, \hat{\eta}_i)_{i \in (I_{\rho}^-, >')}$ for $\EE_{\MM,<0}$ and $\EE_{\MM^-,<0}$ are identical except that $\alpha_i^-= \alpha_i -2$ for $i>j$. However, this does not affect the formula. Note that 
\[ B_j':= \frac{a_j- b_j'}{2} =\frac{a_j-b_j}{2}+1= B_j+1.\]
As a consequence, we have 
\[ (\hat{l}_i^-, \hat{\eta}_i^-)= \begin{cases}
    (\hat{l}_i, \hat{\eta}_i) & \text{ if }i \neq j,\\
    (\hat{l}_j + B_j- B_j', \hat{\eta}_j)= (\hat{l}_j -1, \hat{\eta}_j) & \text{ if }i=j.
\end{cases}\]
Thus, we obtain 
\[ \EE_{\MM^-}= add_j^{-1}(\EE_{\MM, <0})+ \EE_{\MM, \geq 0}= add_j^{-1}(\EE_{\MM}).\]

{
Suppose now that $a_j=b_j-2$, i.e., $B_j=-1$ and $B_j'=0$. In this case, $\MM^-$ does not necessarily satisfy $(P')$ (unless $I_\rho^-=\{j\}$). Note that by our assumptions on $>$, we have that $j$ is minimal in $I_\rho$. We consider the following admissible order $>_{R}$ defined by
\begin{itemize}
    \item for any $i^-\in I_{\rho}^- \setminus \{j\}$ and $i^+ \in I_{\rho}^+ $, we have
\[ i^- <_R j <_R i^+,\]
\item and if $i_1, i_2 \in I_{\rho} \setminus \{j\}$, we have $i_1 <_R i_2$ if and only if $i_1< i_2$.
\end{itemize}
 Now we apply \cite[Theorem 6.3]{Xu21b} repeatedly to compute a M{\oe}glin parameter
\[ \MM_{R}^-= ((\psi^-)_{>_R},\underline{\zeta}^-_R,\underline{l}^-_R,\underline{\eta}^-_R) \]
such that $\pi_{\Moe}(\MM_R^-)= \pi_{\Moe}(\MM^-)$. The formula gives
\begin{align*}
    (\underline{l}^-_R(\rho, a_i,b_i'), \underline{\eta}^-_{R}(\rho, a_i,b_i'))= \begin{cases}
        (\underline{l}(\rho, a_i,b_i), \underline{\eta}(\rho, a_i,b_i)) & \text{ if }i \in I_{\rho}^+,\\
         (\underline{l}(\rho, a_i,b_i), (-1)^{b_j-2}\underline{\eta}(\rho, a_i,b_i)) & \text{ if }i \in I_{\rho}^- \setminus \{j\},\\
          (\underline{l}(\rho, a_j,b_j), (-1)^{\gamma}\underline{\eta}^-(\rho, a_j,b_j)) & \text{ if }i=j,
    \end{cases}
\end{align*}
where 
\[ \gamma= \sum_{i \in I_{\rho}^- \setminus \{j\}} A_i -|B_i|+1= \sum_{i \in I_{\rho}^- \setminus \{j\}} A_i+ B_i +1 = \sum_{i \in I_{\rho}^- \setminus \{j\}} a_i= \beta_j. \]

Now we compare $\EE_{\MM,<0}$ and $\EE_{\MM^-_{R},<0}$. For $i \in I_{\rho}^- \setminus \{j\}$, we have 
\[ \alpha^-_i= \alpha_i - b_j,\ \beta_i^-=\beta_i.\]
In particular, we have
\[ \half{1} (-1)^{\alpha_i^-} \underline{\eta}^-_{R}(\rho, a_i, b_i )=\half{1} (-1)^{\alpha_i} \underline{\eta}(\rho, a_i, b_i ),\]
and similarly $\hat{\eta}_i^-= \hat{\eta}_i$. Therefore, we have 
$(\hat{l}^-_i, \hat{\eta}^-_i)=(\hat{l}_i, \hat{\eta}_i)$ for all $i \in I_{\rho}^- \setminus \{j\}$.

Now, we focus on the $j$-th extended segment. In $\EE_{\MM^-_{R}}$, we have just computed that it is
\[ ([A_j-1, B_j+1]_{\rho}, l(\rho, a_j,b_j), (-1)^{\beta_j}\eta^-(\rho, a_j,b_j) ). \]
On the other hand, in $\EE_{\MM}$, it is 
\[ ([A_j, B_j]_{\rho},\hat{l}_j, \hat{\eta}_j   ),\]
where (write $l_j=\underline{l}(\rho, a_j,b_j)$, $\eta_j=\underline{\eta}(\rho, a_j,b_j)$ for short)
\begin{align*}
\hat{l}_j=l_{j}-B_j=l_j+1,
\end{align*}
since $B_j=-1$, and
\begin{align*}
    \hat{\eta}_j=(-1)^{\alpha_j+\beta_j}\eta_{j} 
=(-1)^{\beta_j}\eta_{j} 
=\underline{\eta}^-_{R}(\rho, a_j, b_j ).
\end{align*}
In conclusion, we have
\begin{align*}
    \EE_{\MM^-_{R}}= (\EE_{\MM_R^-,<0} + \{([A_j-1,B_j']_{\rho},\hat{l}_j-1, \hat{\eta}_j)\} )+ \EE_{\MM,\geq 0}= add_{j}^{-1}(\EE_{\MM,<0})+\EE_{\MM,\geq 0}= add_j^{-1}(\EE_{\MM}).
\end{align*}
This completes the verification of this case.
}

\vspace{0.3 cm}
\noindent\textbf{Case 2.} $a_j\geq b_j$. 

In this case, we still have $a_j \geq b_j'=b_j-2$, and $\MM^-$ still satisfies $(P')$ by the assumption on the admissible order. Therefore, 
\[ \EE_{\MM^-}=  \EE_{\MM,<0} + add_j^{-1}(\EE_{\MM, \geq 0}) = add_j^{-1}(\EE_{\MM}). \]

\vspace{0.3 cm}
\noindent\textbf{Case 3.} $a_j=b_j-1$ and $\underline{\eta}(\rho_j,a_j,b_j)=+1$.

This is the case that $\MM^-$ does not necessarily satisfy $(P')$. Note that $A_j \not\in \Z$ in this case. By our assumption, $j$ is the minimal element in $I_{\rho}^-$ (and $I_{\rho}$). We consider the following admissible order $>_{R}$ defined by
\begin{itemize}
    \item for any $i^-\in I_{\rho}^- \setminus \{j\}$ and $i^+ \in I_{\rho}^+ $, we have
\[ i^- <_R j <_R i^+,\]
\item and if $i_1, i_2 \in I_{\rho} \setminus \{j\}$, we have $i_1 <_R i_2$ if and only if $i_1< i_2$.
\end{itemize}
 Now we apply \cite[Theorem 6.3]{Xu21b} repeatedly to compute a M{\oe}glin parameter
\[ \MM_{R}^-= ((\psi^-)_{>_R},\underline{\zeta}^-_R,\underline{l}^-_R,\underline{\eta}^-_R) \]
such that $\pi_{\Moe}(\MM_R^-)= \pi_{\Moe}(\MM^-)$. The formula gives
\begin{align*}
    (\underline{l}^-_R(\rho, a_i,b_i'), \underline{\eta}^-_{R}(\rho, a_i,b_i'))= \begin{cases}
        (\underline{l}(\rho, a_i,b_i), \underline{\eta}(\rho, a_i,b_i)) & \text{ if }i \in I_{\rho}^+,\\
         (\underline{l}(\rho, a_i,b_i), (-1)^{b_j-2}\underline{\eta}(\rho, a_i,b_i)) & \text{ if }i \in I_{\rho}^- \setminus \{j\},\\
          (\underline{l}(\rho, a_j,b_j)-\delta, (-1)^{\gamma}\underline{\eta}^-(\rho, a_j,b_j')) & \text{ if }i=j,
    \end{cases}
\end{align*}
where $\delta=0$ if $\underline{l}(\rho,a_j,b_j)<\frac{a_j}{2}$ and $\delta=1$ otherwise, and
\[ \gamma= \sum_{i \in I_{\rho}^- \setminus \{j\}} A_i -|B_i|+1= \sum_{i \in I_{\rho}^- \setminus \{j\}} A_i+ B_i +1 = \sum_{i \in I_{\rho}^- \setminus \{j\}} a_i= \beta_j. \]

Now we compare $\EE_{\MM,<0}$ and $\EE_{\MM^-_{R},<0}$. For $i \in I_{\rho}^- \setminus \{j\}$, we have 
\[ \alpha^-_i= \alpha_i - b_j,\ \beta_i^-=\beta_i.\]
In particular, we have
\[ \half{1} (-1)^{\alpha_i^-} \underline{\eta}^-_{R}(\rho, a_i, b_i )=\half{1} (-1)^{\alpha_i} \underline{\eta}(\rho, a_i, b_i ),\]
and similarly $\hat{\eta}_i^-= \hat{\eta}_i$. Therefore, we have 
$(\hat{l}^-_i, \hat{\eta}^-_i)=(\hat{l}_i, \hat{\eta}_i)$ for all $i \in I_{\rho}^- \setminus \{j\}$.

Now, we focus on the $j$-th extended segment. In $\EE_{\MM^-_{R}}$, we have just computed that it is
\[ ([A_j-1, B_j+1]_{\rho}, \underline{l}(\rho, a_j,b_j)-\delta, (-1)^{\beta_j}\underline{\eta}^-(\rho, a_j,b_j') ). \]
On the other hand, in $\EE_{\MM}$, it is 
\[ ([A_j, B_j]_{\rho},\hat{l}_j, \hat{\eta}_j   ),\]
where  (write $l_j=\underline{l}(\rho, a_j,b_j)$, $\eta_j=\underline{\eta}(\rho, a_j,b_j)$ for short) if $l_j<\frac{a_j}{2}$
\begin{align*}
\hat{l}_j&=l_{j}-B_j+\frac{1}{2}(-1)^{\alpha_j}\eta_{j} 
= l_{j}-B_j+\frac{1}{2}\eta_{j}=  l_{j}+\frac{1}{2}+\frac{1}{2}=l_j+1, \\
\hat{\eta}_j&=(-1)^{\alpha_j+\beta_j+1}\eta_{j} 
=(-1)^{\beta_j+1}\eta_{j} 
=(-1)^{\beta_j}{\underline{\eta}}^-(\rho, a_j,b_j{'})
\end{align*}
since $B_j=-\half{1}$ and $\eta_j=1$ by the assumption of this case. If $l_j=\frac{a_j}{2}$, then the convention set in Definition \ref{dual segment}(3) is that $\eta_j$ is replaced by $(-1)^{\alpha_j+1}$. Therefore,
\begin{align*}
\hat{l}_j&=l_{j}-B_j+\frac{1}{2}(-1)^{\alpha_j}(-1)^{\alpha_j+1} 
= l_{j}-B_j-\frac{1}{2}=  l_{j}+\frac{1}{2}-\frac{1}{2}=l_j, \\
\hat{\eta}_j&=(-1)^{\alpha_j+\beta_j+1}(-1)^{\alpha_j+1} 
=(-1)^{\beta_j},
\end{align*}
where we recall that $\underline{\eta}^-(\rho,a_j,b_j')=1$ by definition in this case.
In conclusion, we have
\begin{align*}
    \EE_{\MM^-_{R}}= (\EE_{\MM_R^-,<0} + \{([A_j-1,B_j']_{\rho},\hat{l}_j-1, \hat{\eta}_j)\} )+ \EE_{\MM,\geq 0}= add_{j}^{-1}(\EE_{\MM,<0})+\EE_{\MM,\geq 0}= add_j^{-1}(\EE_{\MM}).
\end{align*}
This completes the verification of this case and the proof of the theorem.
\end{proof}

\begin{remark}
    Theorem \ref{thm add = +} also holds for symplectic groups (using $\pi_\Ato$ instead of $\pi_\theta$) with exactly the same proof, except we replace references to Theorem \ref{cor Moeglin to Atobe dictionary orthog} with its symplectic analogue, Theorem \ref{thm Sp}(4). Theorem \ref{thm add = +} also holds for split odd special orthogonal groups by using \cite[Theorem 6.6]{Ato22a}.
\end{remark}

As an application of Theorem \ref{thm add = +}, we obtain an extended multi-segment analogue of Theorem \ref{thm Moeglin + embed}.

\begin{thm}\label{thm add embed}
    Let $\EE\in \Rep^{(P')}(\OO(W_n))$. Write
\[
\EE= \cup_{\rho'} \{([A_i,B_i]_{\rho'}, l_i, \eta_i)\}_{i \in (I_{\rho'}, >)}.\]
Fix an index $j\in (I_{\rho},>)$ such that $b_j\geq 2$ and if $B_i=B_j$ for some $i\in I_\rho$, then $j\geq i$.
    Furthermore, we assume that $\pi_\theta(add_j^{-1}(\EE))\neq 0.$
Then
\[
\pi_\theta(\EE)\hookrightarrow \Delta_{\rho_j}[B_j,-A_j]\rtimes \pi_\theta(add_j^{-1}(\EE)).
\]
\end{thm}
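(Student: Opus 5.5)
The plan is to reduce to Theorem \ref{thm Moeglin + embed} through the dictionary $\MM\mapsto\EE_\MM$ and Theorem \ref{thm add = +}. By Remark \ref{rmk Moe Ato dictionary}, since $\EE\in\Rep^{(P')}$ we can write $\EE=\EE_\MM$ for some $\MM\in\Rep_\Moe^{(P')}(\OO(W_n))$. Theorem \ref{cor Moeglin to Atobe dictionary orthog} then gives $\pi_\theta(\EE)=\pi_\Moe(\MM)\neq 0$. Reading the dictionary backwards, the $j$-th extended segment $([A_j,B_j]_\rho,l_j,\eta_j)$ of $\EE$ corresponds to the Jordan block $(\rho,a_j,b_j)$ with $a_j=A_j+B_j+1$ and $b_j=A_j-B_j+1\geq 2$. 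The sign is $\zeta_j=-1$ when $B_j<0$ and $\zeta_j=+1$ when $B_j\geq 0$. At $B_j=0$ we can choose the $\zeta=+1$ convention, as Theorem \ref{thm add = +} requires.

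Next I would check the hypotheses of Theorem \ref{thm add = +}.

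\emph{Ordering conditions.} The assumption that $j\geq i$ whenever $B_i=B_j$ should translate into conditions (i) and (ii) on the M{\oe}glin order. The order on $I_\rho^-$ is reversed under $\EE_\MM$, so on the negative part "$j$ maximal among equal $B$" in $\EE$ becomes "$j$ minimal among equal $|B|$" in $\MM$. That is exactly (i). On the positive part the orders agree, which gives (ii).

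\emph{Well-definedness of $(add_j^\Moe)^{-1}$.} We must rule out the two bad cases. In Bad Case 1 we have $a_j\geq b_j$ and $l(\rho,a_j,b_j)=0$. Then $add_j^{-1}$ would produce $l_j-1<0$, so $add_j^{-1}(\EE)$ would not be an extended multi-segment, contradicting $\pi_\theta(add_j^{-1}(\EE))\neq 0$; if it degenerates, $\NV_\ast$ fails by Theorem \ref{thm non-vanishing}. In Bad Case 2 we have $B_j=-\half{1}$ and $\eta=-1$. Here I would compute $\hat l_j$ from the dictionary: it equals $l_j$ up to sign bookkeeping. I expect that $add_j^{-1}$ then again yields either a negative $l$ or a failure of the $\NV_\ast$ condition at $B'_j=\half{1}$. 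This sign check, using $\alpha_j$ and the convention in Definition \ref{dual segment}(3), is the step I expect to need the most care.

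With the hypotheses in place, Theorem \ref{thm add = +} gives
\[
\pi_\Moe((add_j^\Moe)^{-1}(\MM))=\pi_\theta(add_j^{-1}(\EE)),
\]
and this is nonzero by hypothesis. Theorem \ref{thm Moeglin + embed} then yields
\[
\pi_\theta(\EE)=\pi_\Moe(\MM)\hookrightarrow \Delta_{\rho}[B_j,-A_j]\rtimes \pi_\theta(add_j^{-1}(\EE)),
\]
where the $A_j,B_j$ of M{\oe}glin's statement, namely $\half{a_j+b_j}-1$ and $\half{a_j-b_j}$, coincide with those of $\EE$.

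The remaining subtlety is that $\MM$ must simultaneously satisfy $(P')$ and the extra ordering constraints. If the given order on $\EE$ does not literally produce them, I would first apply row exchanges among segments with equal $B$ that fix the position of $j$. These keep $(P')$, and they do not change $\pi_\theta(\EE)$ or $add_j^{-1}$ by Theorem \ref{thm O}.
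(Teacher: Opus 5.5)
Your reduction is the same as the paper's. You write $\EE=\EE_\MM$, derive conditions (i) and (ii) of Theorem \ref{thm add = +} from the hypothesis on $j$, and then apply Theorem \ref{thm Moeglin + embed}. Your translation of the ordering condition is correct, so the row exchanges in your last paragraph are unnecessary. One small correction: take $I_\rho^+=\{i : B_i\geq 0\}$, so that $\zeta_i=+1$ for \emph{every} $i$ with $a_i=b_i$, not only for $j$. Your exclusion of Bad Case 1 is fine.

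The step you flagged does fail: the hypotheses do not exclude Bad Case 2.

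\textbf{The computation.} Suppose $a_j=b_j-1$ and $\underline{\eta}(\rho,a_j,b_j)=-1$, so $\underline l_j:=\underline l(\rho,a_j,b_j)<\frac{a_j}{2}$. Condition $(P')$ together with (i) forces $j=\min I_\rho^-$. Hence in the dictionary $\alpha_j=0$, and
\[
\hat l_j=\underline l_j+\tfrac12-\tfrac12=\underline l_j,\qquad \hat\eta_j=(-1)^{\beta_j},
\]
where $\beta_j=\sum_{k\in I_\rho^-\setminus\{j\}}a_k$ is exactly the $\alpha_j$ of $(\ast)$ computed in $\EE$. So the $j$-th row of $add_j^{-1}(\EE)$ is $([A_j-1,\frac12]_\rho,\underline l_j-1,(-1)^{\beta_j})$. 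This is a genuine extended segment as soon as $\underline l_j\geq 1$. Moreover, $(\ast)$ can never fail at a row with $B=\frac12$, since there $B+l\geq\frac12$. So neither of the two outcomes you expected occurs.

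\textbf{A concrete instance.} Take $\EE_\rho=\{([\frac52,-\frac12]_\rho,1,1)\}$, with $\rho$ symplectic and any $\EE^{\rho}$ making the sign condition hold. It comes from $\MM_\rho=(\rho\otimes S_3\otimes S_4,-1,1,-1)$, which is Bad Case 2. On the other hand, $add_j^{-1}(\EE)_\rho=\{([\frac32,\frac12]_\rho,0,1)\}$ satisfies $\NV\neq 0$. Every hypothesis of the theorem holds, yet $(add_j^{\Moe})^{-1}(\MM)$ is undefined. So Theorem \ref{thm add = +}, and with it M{\oe}glin's embedding, says nothing here.

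\textbf{Why this matters downstream.} This $\EE_\rho$ is absolutely maximal: $l=1$ rules out both $dual_j^{-}$ and the inverse of a type $3'$ union-intersection, whose output row has $l=0$. Also, $j$ is exactly the index chosen in Proposition \ref{prop max triangle}. So this case is genuinely needed in the proof of Theorem \ref{thm theta=com}.

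\textbf{Relation to the paper.} The paper's own proof checks only the ordering conditions and never addresses whether $(add_j^{\Moe})^{-1}$ is well defined. You have therefore reproduced its argument together with this unverified point. To close it you need either a separate argument in Bad Case 2 or a hypothesis excluding that case. One possible separate argument: there $\rho$ is symplectic, so $\rho\not\cong\chi_V$, and one might transport the symplectic embedding for $\pi_{\Ato}=\pi_{\com}$ from \cite{HJLLZ25} across the theta correspondence using the explicit lifts of \cite{AG17a,BH21}.
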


\begin{proof}
    Since $\EE\in \Rep^{(P')}$, there exists an $\MM\in \Moe^{(P')}$ such that $\EE=\EE_\MM$ by Remark \ref{rmk Moe Ato dictionary}. Moreover, $\MM$ satisfies the condition on the admissible order of Theorem \ref{thm add = +} by our assumption on the index $j$. Therefore, we have 
    \[ \pi_{\theta}( add_j^{-1}(\EE))= \pi_{\Moe}((add_{j}^{\Moe})^{-1}(\MM)). \]
    Then Theorem \ref{thm Moeglin + embed} implies that 
    \[ \pi_{\theta}(\EE)=\pi_{\Moe}(\MM) \hookrightarrow \Delta_{\rho_j}[B_j,-A_j] \rtimes \pi_{\Moe}((add_{j}^{\Moe})^{-1}(\MM))=\Delta_{\rho_j}[B_j,-A_j] \rtimes \pi_{\theta}( add_j^{-1}(\EE)). \]
    This completes the proof of the theorem.
\end{proof}

Next, we give a sufficient condition for $\pi_\theta(add_j^{-1}(\EE))\neq 0.$

\begin{prop}\label{prop max triangle}
Consider an extended multi-segment $\EE \in \Rep$ of $G_n=\OO(W_n)$ with
\[\EE^{|max|}= \cup_{\rho'} \{([A_i,B_i]_{\rho'},l_i,\eta_i)\}_{i \in (I_{\rho'},>)} \in \Rep^{(P')}.\]
  Fix a $\rho$ such that $b_{\rho} := \max \{A_i-B_i+1\ | \ i \in I_{\rho}\} >1 $. Let $ j = \min\{ i \in I_{\rho}\ | \ A_i-B_i+1=b_{\rho} \}$ and assume that $j= \max\{ i \in I_{\rho} \ | \ B_i=B_j \}$ by applying row exchanges if necessary. Let $(\EE^{|max|})^{-}:=add_j^{-1}(\EE^{|max|})$, which satisfies $(P')$ by the assumptions. Then, \[\pi_\theta( (\EE^{|max|})^{-}) \neq 0.\]
\end{prop}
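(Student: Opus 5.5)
The plan is to reduce to the purely combinatorial non-vanishing statement already established in Theorem \ref{thm E max}(2), and then transfer it to $\pi_\theta$ via the non-vanishing criterion Theorem \ref{thm non-vanishing}. No representation theory beyond these two results should be needed.

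First, $\EE^{|max|}$ is absolutely maximal by construction. By Theorem \ref{thm E max}(1) it is the unique absolutely maximal member of $\mathscr{E}(\EE)$. It lies in $\Rep^{(P')}$: raising operators preserve the property $\NV\neq 0$, since on the symplectic side $\pi_\Ato(T(\EE))=\pi_\Ato(\EE)\neq0$ by Theorem \ref{thm Sp}(5)(a), and this transfers to $G_n$ through Lemma \ref{lem EE{Sp}}.

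Next, I split according to whether $\psi_{\EE^{|max|}}$ is tempered. The hypothesis $b_\rho>1$ excludes the tempered case, so $\psi_{\EE^{|max|}}$ is not tempered. Moreover, the index $j$ is chosen exactly as in Theorem \ref{thm E max}(2), including the normalization $j=\max\{i\in I_\rho \mid B_i=B_j\}$ obtained by row exchanges. Row exchanges preserve $\NV$ and do not change the class $[\EE^{|max|}]$, so this normalization is harmless. Theorem \ref{thm E max}(2) then gives directly that $\NV(add_j^{-1}(\EE^{|max|}))\neq 0$.

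Finally, Theorem \ref{thm non-vanishing} says that $\pi_\theta(\mathcal{F})\neq0$ if and only if $\NV(\mathcal{F})\neq0$ for any extended multi-segment $\mathcal{F}$ of $G_n$. Applying this with $\mathcal{F}=(\EE^{|max|})^-$ yields $\pi_\theta((\EE^{|max|})^-)\neq 0$.

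The main point to check is that $(\EE^{|max|})^-$ is genuinely an extended multi-segment of some $G_m$ to which Theorem \ref{thm non-vanishing} applies. This means checking that the sign condition \eqref{eq sign condition} is preserved, and that the parameter is of good parity for an even orthogonal group of the same Witt tower.

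For the parameter, $add_j^{-1}$ replaces $b_j$ by $b_j-2$ and keeps $a_j$. This preserves parity and orthogonality of the summand, and lowers the dimension by $2\dim\rho\, a_j$. That change is even, so the dimension stays even and good parity is preserved.

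For the sign condition, note that $\lfloor b_j/2\rfloor$ drops by $1$ while $l_j$ drops by $1$. Hence $(-1)^{\lfloor b_j/2\rfloor+l_j}$ is unchanged, and $\eta_j^{b_j}=\eta_j^{b_j-2}$. So $\epsilon_{G}$ is preserved, and we stay in the same pure inner form tower.

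The edge case is when the segment is deleted. There $A_j-1=B_j$ and $l_j-1=0$, so $b_j=2$ and $l_j=1$. The deleted segment contributes $(-1)^{1+1}\eta_j^2=1$ to the product, so its removal does not affect the sign condition.

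The condition $(P')$ is guaranteed by the choice of $j$, as stated in the proposition. With these checks in place, the argument is complete.
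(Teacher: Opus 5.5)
Your proposal is correct and follows the paper's proof exactly: the paper also obtains the statement directly by applying Theorem \ref{thm E max}(2) to the absolutely maximal $\EE^{|max|}$ and then using the non-vanishing criterion, Theorem \ref{thm non-vanishing}. Your additional checks that $add_j^{-1}$ preserves good parity, even dimension, and the sign condition (including the case where the segment is deleted) are correct; the paper leaves this bookkeeping implicit.
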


\begin{proof}
    This is a direct consequence of Theorems \ref{thm E max}(2) and \ref{thm non-vanishing}. 
\end{proof}

Comparing with Algorithm \ref{algo pi_com}, we obtain a combinatorial algorithm to compute $\pi_\theta(\EE)$ without using the  theta correspondence. 

\begin{thm}\label{thm theta=com}
Let $\EE\in\Rep$ be an extended multi-segment of $G_n=\OO(W_n).$ Then \[\pi_\theta(\EE)=\pi_\com(\EE).\]
\end{thm}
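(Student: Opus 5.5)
We argue by induction on the rank $n$ (equivalently, on the size of $\psi_{\EE}$), following the recursive structure of Algorithm \ref{algo pi_com}. Both sides are unchanged when $\EE$ is replaced by any member of $\mathscr{E}(\EE)$. For $\pi_\com$ this holds by construction, since Step 1 passes to $\EE^{|max|}$. For $\pi_\theta$ it follows from Theorem \ref{thm O}: raising operators and their inverses, and row exchanges, do not change $\pi_\theta$. Hence we may replace $\EE$ by $\EE^{|max|}\in\Rep^{(P')}$, which is nonzero by Theorem \ref{thm non-vanishing}, and assume $\EE=\EE^{|max|}$.

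\emph{Tempered base case.} Suppose $\psi_{\EE}$ is tempered, so every segment is $[A_i,A_i]_\rho$ with $l_i=0$ and all $\zeta$'s may be taken to be $+1$. Choose $\MM$ with $\EE=\EE_{\MM}$ by Remark \ref{rmk Moe Ato dictionary}. Then the dictionary of Definition \ref{defn Moeglin to Atobe dictionary} is the identity on the data $(l_i,\eta_i)$. Theorem \ref{cor Moeglin to Atobe dictionary orthog} and \eqref{eqn Moe tempered} then give $\pi_\theta(\EE)=\pi_\Moe(\MM)=\pi(\phi_{\psi},\varepsilon)$ with $\varepsilon(\rho\otimes S_{2A_i+1})=\eta_i$, which is exactly Step 1 of Algorithm \ref{algo pi_com}.

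\emph{Inductive step.} Suppose $\psi_{\EE}$ is not tempered. Choose $\rho$ and $j$ as in Theorem \ref{thm E max}(2); after row exchanges we have $j=\max\{i\in I_\rho \mid B_i=B_j\}$. Proposition \ref{prop max triangle} gives $\pi_\theta(add_j^{-1}(\EE))\neq 0$. We then apply Theorem \ref{thm add embed}, whose hypothesis on $j$ is exactly our choice, to obtain
\[ \pi_\theta(\EE)\hookrightarrow \Delta_\rho[B_j,-A_j]\rtimes \pi_\theta(add_j^{-1}(\EE)). \]
The extended multi-segment $add_j^{-1}(\EE)$ lies in $\Rep$ for a smaller group, so by induction $\pi_\theta(add_j^{-1}(\EE))=\pi_\com(add_j^{-1}(\EE))$. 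Write this representation as $L(\Delta_{\rho_1}[x_1,y_1],\dots;\pi_{temp})$. Remark \ref{rmk EE Sp L data}(1) gives $\half{B_j-A_j}\le \half{x_i+y_i}$ for every $i$ with $\rho_i\cong\rho$, and $B_j-A_j<0$ because $b_j>1$.

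\emph{Identifying the Langlands data.} Since $\Delta_\rho[B_j,-A_j]$ has the smallest exponent, the induced representation $\Delta_\rho[B_j,-A_j]\times\Delta_{\rho_1}[x_1,y_1]\times\cdots\rtimes\pi_{temp}$ is a standard module; segments with $\rho_i\not\cong\rho$ commute past it freely. This standard module contains $\Delta_\rho[B_j,-A_j]\rtimes\pi_\theta(add_j^{-1}(\EE))$, so $\pi_\theta(\EE)$ embeds into the standard module. A standard module has a unique irreducible subrepresentation, namely the Langlands quotient of the contragredient-dual form, so $\pi_\theta(\EE)=\{\Delta_\rho[B_j,-A_j]\}+\pi_\theta(add_j^{-1}(\EE))$. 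By Step 2 of Algorithm \ref{algo pi_com} this equals $\pi_\com(\EE)$.

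\emph{Main obstacle.} The delicate step is this last one: justifying that an irreducible subrepresentation of $\Delta\rtimes\sigma$ is forced to be $L(\Delta,\dots;\pi_{temp})$. One must be careful with equal exponents, where the ordering in the standard module must still place $\Delta$ leftmost, and with the disconnected group $\OO(W_n)$. For the latter, I would reduce to $\SO(W_n)$ via restriction, or appeal to the Langlands classification of \S\ref{sec Langlands classification} for $G_n$ directly. A secondary check is that the induction really stays inside $\Rep$ and does not exit the class of pure inner forms considered; this is guaranteed by Theorem \ref{thm E max}(2).
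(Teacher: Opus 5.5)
Your proof is correct and follows the paper's argument. You reduce to $\EE^{|max|}$ via Theorem \ref{thm O}, and you settle the tempered case through the M{\oe}glin dictionary (Theorem \ref{cor Moeglin to Atobe dictionary orthog}) together with \eqref{eqn Moe tempered}. In the non-tempered case you combine Proposition \ref{prop max triangle}, Theorem \ref{thm add embed}, the induction hypothesis, and uniqueness of the irreducible subrepresentation guaranteed by Remark \ref{rmk EE Sp L data}(1). The only differences are cosmetic: you induct on the rank rather than on the number of non-tempered segments in $\pi_\com(\EE)$, and you spell out the standard-module argument that the paper simply cites.
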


\begin{proof}
We apply induction on the number of Steinberg representations in the non-tempered portion of the $L$-data of $\pi_{\com}(\EE)$. First, suppose that $\pi_{\com}(\EE)$ is tempered. Take the tempered $\MM\in \Moe^{(P')}$ such that $\EE^{|max|}=\EE_{\MM}$. Then Theorems \ref{cor Moeglin to Atobe dictionary orthog}, \ref{thm O} imply that
\[ \pi_{\theta}(\EE)= \pi_{\theta}( \EE^{|max|})= \pi_{\Moe}(\MM).\]
From Equation \eqref{eqn Moe tempered}, we have $\pi_{\Moe}(\MM)= \pi_{\com}(\EE).$

Now suppose that $\pi_{\com}(\EE)$ is not tempered. In this case, $\psi_{\EE^{|max|}}$ is not tempered, and we let $(\EE^{|max|})^{-}$ be defined as in Proposition \ref{prop max triangle} above. By Remark \ref{rmk EE Sp L data}(1), we have an injection
\begin{align}\label{eq proof of pi com}
    \pi_{\com}(\EE) \hookrightarrow \Delta_{\rho}[B_j, -A_j] \rtimes \pi_{\com}((\EE^{|max|})^{-} ),
\end{align}
which is compatible with Langlands classification. In particular, the right hand side of \eqref{eq proof of pi com} has a unique irreducible subrepresentation. Also, Proposition \ref{prop max triangle} and Theorem \ref{thm add embed} above give
\[ \pi_{\theta}(\EE) \hookrightarrow \Delta_{\rho}[B_j, -A_j] \rtimes \pi_{\theta}((\EE^{|max|})^{-} ). \]
The induction hypothesis implies that $ \pi_{\theta}((\EE^{|max|})^{-} )=\pi_{\com}((\EE^{|max|})^{-} )$. Therefore, we have $\pi_{\com}(\EE)= \pi_{\theta}(\EE)$ since they are both the unique irreducible subrepresentation of the right hand side of \eqref{eq proof of pi com}. This completes the proof of the theorem.
\end{proof}

The above theorem is the analogue of Theorem \ref{thm Sp}(7).

\begin{remark}
    In this article, we are primarily concerned with the theta lift from even orthogonal groups to symplectic groups. However, the argument also works in the other direction. That is, $\EE$ be an extended multi-segment of $G_n=\Sp(W_n)$ and $\alpha$ be a positive odd integer. We set $\EE_{W,V}$ to be the extended multi-segment obtained by replacing each $\rho$ with $\chi_W\chi_V\inv\rho$ and set \[
    \EE_\alpha=\left\{\left(\left[\frac{\alpha-1}{2},-\frac{\alpha-1}{2}\right]_{\chi_W}, \frac{\alpha-1}{2},\epsilon_{H_m} \right)\right\}+ \EE_{W,V}.
    \]
    We remark that the added extended segment is inserted as the first extended segment in the admissible order, i.e. corresponds to the minimal element in the total ordered set for $\chi_W$. The extended segments in $\EE_{W,V}$ are ordered similarly to $\EE.$ Here $\epsilon_{H_m}\in\{\pm1\}$ depends on the target tower of this theta lift. 
    Then $\EE_{\alpha}$ is an extended multi-segment of $H_m=\OO(V_m)$. Furthermore, let $\alpha_0=2^{2n}+1.$ If $\pi_\theta(\EE_{\alpha_0})\neq0$, then there exists a unique $\pi_\EE\in\Pi_{\psi_\EE}$ such that $\theta_{-\alpha}(\pi_\EE)=\pi_\theta(\EE_{\alpha})$ for any odd $\alpha\geq\alpha_0.$ In this case, we may define $\pi_\theta(\EE)=\pi_\EE.$ We obtain similarly that $\pi_\theta(\EE)=\pi_\com(\EE).$
\end{remark}

\section{\texorpdfstring{Distinguished members of $\Psi(\pi)$: $\psi^{max}(\pi)$ and $\psi^{min}(\pi)$}{}}\label{sec distinguished members}

In this section, we recall several orderings on $\Psi(G_n)$ and relate them with $\geq_O.$ The results in the even orthogonal case follow the same argument as in the symplectic case where analogous results were proved in \cite{HLL22}. Instead, we remark where we make use of the analogous results for even orthogonal groups obtained in this article. For brevity, we omit the proofs.

\subsection{Partition orderings}
Let $N=2n$ or $2n+1$ when $G_n=\OO(W_n)$ or $\Sp(W_n)$, respectively. There is a one-to-one correspondence between the nilpotent orbits of $\widehat{G}_n(\BC)$ and a certain subset of partitions of $N$ (see \cite[\S 5.1]{CM93}). Recall that these are nilpotent orbits of $\OO_{2n}(\BC)$, not $\SO_{2n}(\BC)$, when $G_n=\OO(W_n)$. For any local Arthur parameter of $G_n$,
$$\psi:W_F \times \SL_2^{D}(\mathbb{C}) \times \SL_2^A(\mathbb{C}) \to {}^LG_n^\circ,$$
we attach the nilpotent orbit of $\widehat{G}_n(\BC)$ containing the element 
\[d(\psi|_{\SL_2^{A}(\mathbb{C})}) \left( \begin{pmatrix}
0 &1 \\0 &0
\end{pmatrix} \right).\]
We let $\underline{p}^A(\psi)$ denote the corresponding partition. We explicate this partition now. Let 
\[ \psi= \bigoplus_{i=1}^r \rho_i \otimes S_{a_i} \otimes S_{b_i}, \]
and $d_i= \dim (\rho_i)$. Then we have \[\underline{p}^A(\psi)=[ b_1^{a_1d_1},\dots , b_r^{a_rd_r} ].\] The partition $\underline{p}^A(\psi)$ is a crucial ingredient in Jiang's Conjecture on upper bounds for wavefront sets of representations in the local Arthur packet $\Pi_\psi$ (\cite[Conjecture 4.2]{Jia14} and \cite[Conjecture 1.6]{LS22}).

Note that if $\psi$ is tempered, then $\underline{p}^A(\psi)=[1^{N}]$ which is the minimal partition of $N$ under the dominance order for partitions. We consider the following definition (see also \cite[Definition 11.5]{HLL22}).

\begin{defn}\label{A ordering}
We define a preorder $\geq_{A}$ on $\Psi(G_n)$ by $\psi_1 \geq_A \psi_2$ if $ \underline{p}^A(\psi_1) \leq \underline{p}^A(\psi_2)$ under the dominance order.
\end{defn}

As mentioned in \cite[\S11]{HLL22}, $\geq_{A}$ is only a preorder but not a partial order since it is possible that $\underline{p}^A(\psi_1)=\underline{p}^A(\psi_2)$ but $\psi_1 \neq \psi_2$. The following theorem was proved for symplectic and split odd special orthogonal groups in \cite[Theorem 11.6]{HLL22}. The same proof gives the even orthogonal case as well.

\begin{thm}\label{thm Jiang's partition} \
\begin{enumerate}
    \item If $T$ is a raising operator applicable on $\psi \in \Psi(G_n)$, then 
    \[T(\psi) \gneq_A \psi.\]
    In particular, if $\psi \geq_{O} \psi'$, then $\psi \geq_A \psi'$.
    \item Let $\pi$ be a representation of $G_n$ of Arthur type. The distinguished members $\psi^{max}(\pi)$ and $ \psi^{min}(\pi)$ are the unique elements in $\Psi(\pi)$ satisfying the following inequality
    \[ \psi^{max}(\pi) \geq_A \psi \geq_A \psi^{min}(\pi),\]
    for any $\psi \in \Psi(\pi).$
\end{enumerate}
\end{thm}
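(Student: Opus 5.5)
Part (1) is a purely combinatorial statement about local Arthur parameters, and the plan is to prove it one operator at a time, exactly as in \cite[Theorem 11.6]{HLL22}. For $\psi=\bigoplus_i\rho_i\otimes S_{a_i}\otimes S_{b_i}$ we have $\underline{p}^A(\psi)=[b_1^{a_1d_1},\dots,b_r^{a_rd_r}]$. We therefore only need to see how a raising operator changes the multiset of Arthur-$\SL_2$ dimensions $b$, weighted by $a\cdot\dim\rho$.

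The check runs through the three types of raising operator in turn.
\begin{itemize}
\item For $dual_k^{-}$ with $B_k=-\half{1}$, the summand $\rho\otimes S_{A_k}\otimes S_{A_k+2}$ is replaced by $\rho\otimes S_{A_k+2}\otimes S_{A_k}$. In the partition, $A_k$ copies of the part $A_k+2$ become $A_k+2$ copies of the part $A_k$ (times $d$). This is a strict decrease in the dominance order.
\item For $ui_{i,j}^{-1}$, it is cleanest to compute $ui_{i,j}$ itself and show it strictly increases the partition. It replaces the pair of parts $(b,a)$-data $\big((A_i-B_i+1)^{A_i+B_i+1},(A_j-B_j+1)^{A_j+B_j+1}\big)$ with $\big((A_j-B_i+1)^{A_j+B_i+1},(A_i-B_j+1)^{A_i+B_j+1}\big)$. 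Since $A_j>A_i$ and $B_j>B_i$, the largest new part $A_j-B_i+1$ exceeds both old parts. The total size and the total number of parts (counted via $a$) are preserved, because $\sum (A+B+1)(A-B+1)$ is preserved. A direct comparison of partial sums, a two-row computation, then shows the new partition strictly dominates the old one. The type 3' case, where a segment is deleted, is handled the same way.
\item The operator $dual\circ ui_{i,j}\circ dual$ swaps the roles of $a$ and $b$. I would treat it by the analogous computation with the Deligne and Arthur dimensions interchanged. Concretely, it acts as $ui^{-1}$-like on the $b$'s, merging two sets of parts into more balanced ones, hence decreasing the partition. This should be verified explicitly rather than by duality, since $\underline{p}^A$ is not dual-invariant.
\end{itemize}
The "in particular" statement then follows by chaining operators, using transitivity of dominance.

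For Part (2), I would invoke Theorem \ref{thm psi max and min} together with Theorem \ref{thm O}. Any $\psi\in\Psi(\pi)$ is $\psi_\EE$ for some $\EE$ with $\pi_\theta(\EE)=\pi$, by Theorem \ref{thm Atobe reformulation orthog}. By Theorem \ref{thm O}, $\EE$ is connected to $\EE^{|max|}$ by raising operators and their inverses. Theorem \ref{thm E max}(1), transported through Lemma \ref{lem EE{Sp}}, actually gives $\psi^{max}(\pi)\geq_O\psi\geq_O\psi^{min}(\pi)$. Part (1) then converts this into the $\geq_A$ inequalities.

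Uniqueness is the more delicate point, because $\geq_A$ is only a preorder. Suppose $\psi'\in\Psi(\pi)$ also satisfies $\psi'\geq_A\psi$ for all $\psi\in\Psi(\pi)$. Then $\psi'\geq_A\psi^{max}$, while also $\psi^{max}\geq_O\psi'$. If $\psi'\neq\psi^{max}$, then $\psi^{max}$ is obtained from $\psi'$ by a nonempty sequence of raising operators. By the strictness in Part (1), this gives $\psi^{max}\gneq_A\psi'$, a contradiction. The minimal case is symmetric.

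The main obstacle is the strict dominance computation for $ui_{i,j}$ and its dual version. There one must handle the multiplicities $a\cdot d$ carefully and the degenerate type 3' case, comparing partial sums of the two-block partitions. I expect this to go through as in \cite{HLL22}, since the combinatorics is independent of the group.
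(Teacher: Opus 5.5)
Your proposal is correct and follows essentially the paper's route, which defers to the proof of \cite[Theorem 11.6]{HLL22}. That route checks operator by operator that each raising operator strictly lowers $\underline{p}^A$ in the dominance order; your partial-sum comparison for $ui_{i,j}$ and $dual\circ ui_{i,j}\circ dual$ is what is needed there. It then combines this with Theorem \ref{thm psi max and min} and uses strictness to get uniqueness, even though $\geq_A$ is only a preorder. One small slip: for $dual_k^{-}$ with $B_k=-\half{1}$ the summand is $\rho\otimes S_{A_k+\half{1}}\otimes S_{A_k+\half{3}}$, so with $a=A_k+\half{1}$, $a\cdot d$ parts equal to $a+1$ become $(a+1)\cdot d$ parts equal to $a$; this is still a strict decrease, so your conclusion is unaffected.
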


On the other hand, we can  consider the restriction to the Deligne-$\SL_2(\mathbb{C})$ instead of the Arthur-$\SL_2(\mathbb{C})$. In this case, we define
\[\underline{p}^D(\psi):=[a_1^{d_1b_1},\dots, a_r^{d_rb_r}],\]
which is the partition corresponding to the nilpotent orbit of $\widehat{G}_n(\BC)$ containing the element
\[ d(\psi|_{\SL_2^{D}(\mathbb{C})})\left(\begin{pmatrix}
0&1\\0&0
\end{pmatrix}\right).\]
It is immediate that $\underline{p}^D(\psi)=\underline{p}^A(\widehat{\psi})$, (recall that $\widehat{\psi}(w,x,y):=\psi(w,y,x)$). We define the next preorder (see also \cite[Definition 4.4]{HLLZ25}).

\begin{defn}\label{def Dordering}
We define a preorder $\geq_{D}$ on $\Psi(G_n)$ by $\psi_1 \geq_{D} \psi_2$ if $\underline{p}^D(\psi_1)\geq \underline{p}^D(\psi_2)$.
\end{defn}

The partial order $\geq_O$ is reversed by taking the dual. For symplectic and split odd special orthogonal groups, this was proved in \cite[Lemma 8.1]{HLLZ25}. The same proof holds for even orthogonal groups.

\begin{lemma}\label{lem order reversing}
If $T$ is a raising operator applicable on $\psi \in \Psi(G_n)$, then there exists another raising operator $T'$ applicable on $ \widehat{T(\psi)}$ such that
\[ \widehat{\psi}= T'(\widehat{T(\psi)} ). \]
In particular, for any $\pi \in \Pi(G_n)$ of Arthur type, the involution $\psi\mapsto\widehat{\psi}$ on $\Psi(\pi)$ is order reversing with respect to $\geq_{O}$.
\end{lemma}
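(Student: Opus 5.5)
The plan is to check the statement operator by operator at the level of local Arthur parameters. Definition \ref{def operators on parameters} gives three kinds of raising operators on $\psi$: $ui_{i,j}^{-1}$, $dual\circ ui_{i,j}\circ dual$, and $dual_k^{-}$. For each raising operator $T$ applicable on $\psi$, I will exhibit a raising operator $T'$ applicable on $\widehat{T(\psi)}$ with $T'(\widehat{T(\psi)})=\widehat{\psi}$. The guiding identity is $\widehat{\psi}=dual(\psi)$. Hence $\widehat{T(\psi)}=dual\circ T(\psi)$, and the natural candidate is $T'=dual\circ T^{-1}\circ dual$. It remains to check, case by case, that this $T'$ is again a raising operator (not merely the inverse of one) and that it is applicable.

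The three cases are as follows.

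\emph{Case $T=dual_k^{-}$.} Then $B_k=-\half{1}$, and $T(\psi)$ replaces $\rho\otimes S_a\otimes S_b$ by $\rho\otimes S_b\otimes S_a$. Applying $\widehat{\ }$ gives a parameter that contains $\rho\otimes S_a\otimes S_b$ with roles reversed; this summand has $B=-\half{1}$ again. So I take $T'=dual_{k}^{-}$ on $\widehat{T(\psi)}$, and a direct computation with $A,B$ shows that it returns $\widehat{\psi}$.

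\emph{Case $T=dual\circ ui_{i,j}\circ dual$.} Here $dual\circ T\circ dual=ui_{i,j}$, so I take $T'=ui_{i,j}^{-1}$ acting on $\widehat{T(\psi)}$. This is a raising operator.

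\emph{Case $T=ui_{i,j}^{-1}$.} Symmetrically, $T'=dual\circ ui_{i,j}\circ dual$.

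In each case applicability follows because $T$ itself was applicable: the conditions $A_j>A_i$, $B_j>B_i$ and $A_i\ge B_j-1$ are transported under $B\mapsto -B$ (swapping $a,b$) exactly to the conditions defining the conjugated operator. This is a bookkeeping check which I will verify directly from Definition \ref{def operators on parameters}. The one place needing care is type $3'$, where a summand is deleted or created. I will confirm that the inverse creates the summand with the correct swapped shape, which amounts to comparing $S_{A_i+B_j+1}\otimes S_{A_i-B_j+1}$ with $A_i=B_j-1$ (so the second factor has dimension $0$).

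For the ``in particular'', let $\pi$ be of Arthur type. First I need that $\psi\in\Psi(\pi)$ implies $\widehat{\psi}\in\Psi(\AZ(\pi))$. Since $\Psi(\pi)$ is indexed by $\{[\EE]\mid \pi_\theta(\EE)=\pi\}$, Proposition \ref{prop dual is Aubert dual} shows that $\psi\mapsto\widehat{\psi}$ maps $\Psi(\pi)$ bijectively onto $\Psi(\AZ(\pi))$, with $\psi_{dual(\EE)}=\widehat{\psi_\EE}$. I read the claim in this sense. Now suppose $\psi_1\ge_O\psi_2$, say $\psi_1=T_r\circ\cdots\circ T_1(\psi_2)$. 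Iterating the first part and reversing the chain gives raising operators $T_1',\dots,T_r'$ with $\widehat{\psi_2}=T_1'\circ\cdots\circ T_r'(\widehat{\psi_1})$, so $\widehat{\psi_2}\ge_O\widehat{\psi_1}$. The main obstacle is the type $3'$ bookkeeping together with confirming that the intermediate parameters remain good-parity parameters of $G_n$. The latter is clear, since all the operators preserve the dimension and the self-duality type.
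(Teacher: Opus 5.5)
Your proposal is correct and is essentially the paper's argument. The paper cites \cite[Lemma 8.1]{HLLZ25}, and the remark after Definition \ref{def operators on parameters} records the same observation: $T'=dual\circ T^{-1}\circ dual$ is again a raising operator, so conjugating a chain of raising operators reverses $\geq_O$, with Proposition \ref{prop dual is Aubert dual} showing that $\widehat{\psi}$ lies in $\Psi(\AZ(\pi))$. Your case-by-case applicability and type $3'$ checks are harmless but not needed, since at the level of parameters $T'(\widehat{T(\psi)})=\widehat{\psi}\neq\widehat{T(\psi)}$ holds automatically once $T$ is applicable.
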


We briefly digress to say that this lemma, along with Theorems \ref{thm psi max and min} and \ref{thm in L-packet} imply that local Arthur packets of quasi-split even orthogonal groups cannot fully contain each other. The proof of the following theorem is the same as in \cite[Theorems 8.2, 8.3]{HLLZ25} (which is the analogous statement for split odd special orthogonal and symplectic groups) except that we use the analogous results for quasi-split even orthogonal groups mentioned above.

\begin{thm}\label{thm noncontainment}
    Let $G_n=\OO(W_n)$ be quasi-split and $\psi_1, \psi_2 \in \Psi(G_n)$. If $\Pi_{\psi_1} \supseteq \Pi_{\psi_2}$, then $\psi_1= \psi_2$.
\end{thm}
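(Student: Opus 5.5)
Suppose $\Pi_{\psi_1}\supseteq\Pi_{\psi_2}$. The plan is to show that $\psi_1$ and $\psi_2$ are comparable in both directions under $\geq_O$, using two distinguished members of $\Pi_{\psi_2}$: one lying in the $L$-packet $\Pi_{\phi_{\psi_2}}$, and one lying in the Aubert-Zelevinsky dual of the $L$-packet $\Pi_{\phi_{\widehat{\psi_2}}}$.

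\textbf{Step 1: $\psi_2 \geq_O \psi_1$.} First we produce $\pi\in\Pi_{\psi_2}\cap\Pi_{\phi_{\psi_2}}$. For quasi-split $G_n$, take an extended multi-segment $\EE$ with $\psi_\EE=\psi_2$ satisfying conditions (i)--(iii) of Definition \ref{def (L)}. One checks that such an $\EE$ exists, with the sign condition satisfiable, and that $\NV(\EE)\neq 0$. Then Theorem \ref{thm in L-packet} (with $\pi_\com=\pi_\theta$ by Theorem \ref{thm theta=com}) gives $\pi:=\pi_\theta(\EE)\in\Pi_{\phi_{\psi_2}}$. By Proposition \ref{prop (L) is maximal}, $\EE$ is absolutely maximal, so by Theorem \ref{thm E max} and Theorem \ref{thm O}, $\psi_2=\psi^{max}(\pi)$. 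Since $\pi\in\Pi_{\psi_1}$, Theorem \ref{thm psi max and min} yields $\psi_2\geq_O\psi_1$.

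\textbf{Step 2: $\psi_1 \geq_O \psi_2$.} We apply the same construction to $\widehat{\psi_2}$ and obtain $\pi'\in\Pi_{\widehat{\psi_2}}$ with $\widehat{\psi_2}=\psi^{max}(\pi')$. By Proposition \ref{prop dual is Aubert dual}, $\AZ(\pi')$ lies in $\Pi_{\psi_2}\subseteq\Pi_{\psi_1}$. Moreover $\psi\mapsto\widehat\psi$ gives a bijection $\Psi(\pi')\to\Psi(\AZ(\pi'))$ (again by Proposition \ref{prop dual is Aubert dual}), and Lemma \ref{lem order reversing} shows that this bijection reverses $\geq_O$. Hence $\psi_2=\psi^{min}(\AZ(\pi'))$. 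Since $\psi_1\in\Psi(\AZ(\pi'))$, we get $\psi_1\geq_O\psi_2$.

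\textbf{Step 3: conclusion.} Since $\geq_O$ is a partial order (Remark after Definition \ref{def operators on parameters}), Steps 1 and 2 give $\psi_1=\psi_2$.

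\textbf{Non-good-parity case.} For general $\psi_i$, we reduce to good parity via Theorem \ref{thm red from nu to gp symp}. The non-good-parity parts $\psi_{nu,>0}$ and $\psi_{np}$ can be read off from the $L$-data of any member of the packet. Hence containment forces them to agree and reduces the question to containment of the good parity packets, possibly on a smaller group of the same type.

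\textbf{Main obstacle.} The delicate point is the existence of an $\EE$ satisfying $(L)$ with $\psi_\EE=\psi_2$, and with $\pi_\theta(\EE)$ a representation of $G_n$ itself rather than of its pure inner form. This is where quasi-splitness is used. By the counting remark in the proof of Theorem \ref{thm in L-packet}, these $\EE$ correspond to the characters in $\mathrm{Irr}(A_{\phi_{\psi_2},G_n})$. For quasi-split $G_n$ this set is nonempty, since it contains the trivial character, which corresponds to the generic member of the tempered-type $L$-packet. For $\widehat{\psi_2}$ we need the analogous statement, and it holds by the same count.
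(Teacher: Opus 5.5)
Your proof is correct and is essentially the paper's argument, which it carries over from \cite[Theorems 8.2, 8.3]{HLLZ25}: a member of $\Pi_{\phi_{\psi_2}}\cap\Pi_{\psi_1}$ forces $\psi_2=\psi^{max}\geq_O\psi_1$; running the same argument for $\widehat{\psi_2}$ and applying Lemma \ref{lem order reversing} gives $\psi_1\geq_O\psi_2$; and quasi-splitness is used exactly to make $\Pi_{\phi_{\psi_2}}$ and $\Pi_{\phi_{\widehat{\psi_2}}}$ nonempty. The only difference is how you obtain that member: you write down an extended multi-segment satisfying $(L)$ with trivial signs, whereas the paper takes any member of the nonempty $L$-packet and uses $\Pi_{\phi_\psi}\subseteq\Pi_\psi$ (your aside about a ``generic member of the tempered-type $L$-packet'' is imprecise when $\psi_2$ is not tempered, but the argument does not rely on it).
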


\begin{remark}
    The proof uses the fact that $\Pi_{\phi_{\psi_2}}\cap\Pi_{\psi_1}\neq \emptyset$ and $\Pi_{\phi_{\hat{\psi}_2}}\cap\Pi_{\hat{\psi}_1}\neq \emptyset$ implies that $\psi_1=\psi_2$. If $G_n$ is quasi-split, then the $L$-packets $\Pi_{\phi_{\psi_2}}$ and $\Pi_{\phi_{\hat{\psi}_2}}$ are non-empty, so these conditions follow from $\Pi_{\psi_2} \subseteq \Pi_{\psi_1}$. However, this is not guaranteed if $G_n$ is not quasi-split.
\end{remark}

We return to our discussion on the orderings.
Combining Lemma \ref{lem order reversing} with Theorem \ref{thm Jiang's partition} gives an analogous result for the preorder $\geq_D.$ For symplectic and split odd special orthogonal groups, this was proved in \cite[Theorem 4.5]{HLLZ25}. Again, the same proof holds for even orthogonal groups.

\begin{thm}\label{thm D order} \
\begin{enumerate}
    \item If $T$ is a raising operator applicable on $\psi \in \Psi(G_n)$, then 
    \[T(\psi) \gneq_D \psi.\]
    In particular, if $\psi \geq_{O} \psi'$, then $\psi \geq_D \psi'$.
    \item Let $\pi$ be a representation of $G_n$ of Arthur type. The distinguished members $\psi^{max}(\pi)$ and $ \psi^{min}(\pi)$ are the unique elements in $\Psi(\pi)$ satisfying the following inequality
    \[ \psi^{max}(\pi) \geq_D \psi \geq_D \psi^{min}(\pi),\]
    for any $\psi \in \Psi(\pi).$
\end{enumerate}
\end{thm}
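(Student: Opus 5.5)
The plan is to deduce both parts from Theorem \ref{thm Jiang's partition} by means of the duality $\psi\mapsto\widehat{\psi}$, following \cite[Theorem 4.5]{HLLZ25}. The key identity is $\underline{p}^D(\psi)=\underline{p}^A(\widehat{\psi})$. Unwinding the definitions, it says that
\[ \psi_1\geq_D\psi_2 \iff \underline{p}^A(\widehat{\psi_1})\geq \underline{p}^A(\widehat{\psi_2}) \iff \widehat{\psi_2}\geq_A \widehat{\psi_1}. \]
So $\geq_D$ is the pullback of the reverse of $\geq_A$ under the involution $\psi\mapsto\widehat\psi$.

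For Part (1), let $T$ be a raising operator applicable on $\psi$. Lemma \ref{lem order reversing} gives a raising operator $T'$ applicable on $\widehat{T(\psi)}$ with $\widehat{\psi}=T'(\widehat{T(\psi)})$. Theorem \ref{thm Jiang's partition}(1), applied to $T'$, then gives $\widehat\psi \gneq_A \widehat{T(\psi)}$. Strictness means the partitions differ, since $\gneq_A$ is a strict inequality of partitions. By the identity above, this translates to $T(\psi)\gneq_D\psi$.

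The statement "$\psi\geq_O\psi'$ implies $\psi\geq_D\psi'$" then follows by composing along a chain of raising operators, using transitivity of the dominance order. One point needs checking here: the operators in Definition \ref{def operators on parameters} act on parameters of good parity, so $T'$ must be produced at the level of parameters. Lemma \ref{lem order reversing} is stated exactly in this form.

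For Part (2), let $\pi$ be of Arthur type and $\psi\in\Psi(\pi)$. By Theorem \ref{thm psi max and min}, $\psi^{max}(\pi)\geq_O\psi\geq_O\psi^{min}(\pi)$, and Part (1) gives the displayed $\geq_D$ inequalities. For uniqueness, suppose $\psi_0\in\Psi(\pi)$ satisfies $\psi_0\geq_D\psi$ for all $\psi\in\Psi(\pi)$. Then in particular $\psi_0\geq_D\psi^{max}(\pi)$. Also $\psi^{max}(\pi)\geq_O\psi_0$.

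If $\psi_0\neq\psi^{max}(\pi)$, then the $\geq_O$-chain from $\psi_0$ to $\psi^{max}(\pi)$ is nontrivial. By Part (1) its $D$-partitions strictly increase, so $\underline{p}^D(\psi^{max}(\pi))>\underline{p}^D(\psi_0)$. This contradicts $\underline{p}^D(\psi_0)\geq\underline{p}^D(\psi^{max}(\pi))$, by antisymmetry of the dominance order on partitions. The minimal case is symmetric.

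I expect the only delicate point to be strictness in Part (1), i.e. that a raising operator genuinely changes the $D$-partition, which is what uniqueness relies on. This is inherited from the strictness in Theorem \ref{thm Jiang's partition}(1) through Lemma \ref{lem order reversing}, so no new computation specific to even orthogonal groups is needed.
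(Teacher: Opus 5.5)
Your proposal is correct and is essentially the paper's argument. The paper deduces this theorem by combining Lemma \ref{lem order reversing} with Theorem \ref{thm Jiang's partition} through the identity $\underline{p}^D(\psi)=\underline{p}^A(\widehat{\psi})$, as in the symplectic case. Your uniqueness argument in Part (2) is also sound: you use the strictness from Part (1) along the $\geq_O$-chain to $\psi^{max}(\pi)$ (and from $\psi^{min}(\pi)$), which handles the fact that $\geq_D$ is only a preorder.
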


\subsection{\texorpdfstring{Normalized intertwining operator ordering $\geq_N$}{}}
Next, we recall an ordering which arises from considering the vanishing of certain normalized intertwining operators.
Let $\pi$ be a representation of $G_n$ of Arthur type. 
Denote
$$\sigma:=\St(\rho',a_0)=\Delta_{\rho'}[(a_0-1)/2,-(a_0-1)/2],$$ 
and consider the usual (non-normalized) intertwining operator
$$
M(s,\pi,\sigma): \St(\rho',a_0) | \cdot | ^s \rtimes\pi \rightarrow\St(\rho',a_0) | \cdot | ^{-s} \rtimes\pi,
$$
We further consider the Langlands-Shahidi normalized intertwining operator given by
\[N^{LS}(s,\pi,\sigma):= M(s,\pi,\sigma) r(s,\pi,\sigma)^{-1},\]
where
\begin{align}\label{eq LS normalization}
    r(s,\pi, \sigma):=\frac{L(s, \sigma \times \pi)}{L(s+1,\sigma \times \pi)} \times \frac{L(\sigma,2s,\bigwedge^2)}{L(\sigma, 2s+1, \bigwedge^2)}.
\end{align}
Note that since $\widehat{G}_n$ is orthogonal, we only consider the $L$-parameters for $\bigwedge^2$ (unlike \cite{HLL22} where $\mathrm{Sym}^2$ is needed for $\SO_{2n+1}(F)$). 

For any $L$-parameter $\phi$ of $G_n$, we also denote $r(s,\phi, \sigma):=r(s,\pi,\sigma)$, where $\pi\in \Pi_{\phi}$. We remark that there is a more general definition of the Langlands-Shahidi normalization (\cite[p. 150]{Sha10}) in terms of the $L$- and $\varepsilon$-factors. Also, since the $\varepsilon$-factors do not provide any poles or zeros we have omitted them in the above definition for simplicity as in \cite{Moe10}.

We recall another normalization, known as the Arthur normalization, which is compatible with the twisted endoscopic character identities (\cite{Moe08, Moe10, Moe11b, Moe12, Art13}). Arthur used these normalized intertwining operators crucially in his proof of the existence of local Arthur packets (see  \cite[\S 2.3 and \S 2.4]{Art13}). Again, following \cite{Moe10}, since the $\varepsilon$-factors do not provide any poles or zeros, we continue to omit them in the definition below.  Recall that $\sigma=\St(\rho',a_0).$
For $\psi\in\Psi(G_n)$, we define $r(s,\psi,\sigma):= r(s, \phi_{\psi},\sigma)$. 
For each $\psi \in \Psi(\pi)$, the Arthur normalized intertwining operator
is defined as
\[ N_{\psi}(s,\pi,\sigma):=  M(s, \pi, \sigma)r(s, \psi, \sigma)^{-1}.\]
Note that if $\pi \in \Pi_{\phi_{\psi}}$, then the Arthur normalized intertwining operator is the same as the Langlands-Shahidi normalized intertwining operator. 
M{\oe}glin showed that the Arthur normalized intertwining operators are holomorphic in the right half plane when $G_n$ is quasi-split.
\begin{thm}[{\cite[Theorem 3.2]{Moe10}}]\label{thm Moeglin holo}
When $G_n$ is quasi-split, $N_\psi(s,\pi,\sigma)$ is holomorphic for any real $s\geq \half{1}.$
\end{thm}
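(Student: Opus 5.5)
We would reduce the statement to M{\oe}glin's construction and her holomorphy results for the quasi-split classical groups, handled through the reduction to good parity. The statement is \cite[Theorem 3.2]{Moe10}, so the task is to check that her argument covers $G_n=\OO(W_n)$ quasi-split and our normalization $r(s,\psi,\sigma)$.

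First step: reduce to $\psi$ of good parity. Write $\psi=\psi_{nu,>0}\oplus\psi_{np}\oplus\psi_{gp}\oplus\psi_{np}^\vee\oplus\psi_{nu,>0}^\vee$. By Theorem \ref{thm red from nu to gp symp}, $\pi=\tau_{\psi_{nu,>0}}\times\tau_{\psi_{np}}\rtimes\pi_{gp}$ with $\pi_{gp}\in\Pi_{\psi_{gp}}$. Next factor $M(s,\pi,\sigma)$ as the composition of intertwining operators that move $\St(\rho',a_0)|\cdot|^s$ past each factor $u_\rho(a_i,b_i)|\cdot|^{x_i}$, then past $\pi_{gp}$, and then back.

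The $\GL$-type factors are rank-one operators between generalized Speh representations. For these, the normalization by Rankin--Selberg ratios $L(s,\sigma\times u)/L(s+1,\sigma\times u)$ is holomorphic and nonvanishing for $\mathrm{Re}(s)\ge 1/2$; this is M{\oe}glin--Waldspurger's computation for $\GL$, using $|x_i|<1/2$. The normalizing factor $r(s,\psi,\sigma)$ factors compatibly, since $\phi_\psi$ is the sum of the corresponding pieces. This reduces the claim to $N_{\psi_{gp}}(s,\pi_{gp},\sigma)$.

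Second step: the good parity case. Use M{\oe}glin's parameterization $\pi_{gp}=\pi_\Moe(\MM)$ and induct on $\sum_i(b_i-1)$ via the operator $(add_j^{\Moe})^{-1}$ of \S\ref{subsec Moeglin operators}. When a suitable $j$ exists, Theorem \ref{thm Moeglin + embed} gives $\pi\hookrightarrow\Delta_{\rho_j}[B_j,-A_j]\rtimes\pi^-$. The intertwining operator restricts to the composite of
\begin{itemize}
\item the operator exchanging $\St(\rho',a_0)|\cdot|^s$ with $\Delta_{\rho_j}[B_j,-A_j]$, a $\GL$ operator whose normalized form is holomorphic for $s\ge 1/2$ by explicit $L$-factor comparison, and
\item $N_{\psi^-}(s,\pi^-,\sigma)$, holomorphic by induction.
\end{itemize}
The ratio $r(s,\psi,\sigma)/r(s,\psi^-,\sigma)$ matches the $\GL$ normalization up to factors with no poles for $s\ge1/2$. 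If no such $j$ exists, the parameter is elementary or of discrete diagonal restriction, and we reduce via the generalized Aubert involution and Jacquet functors to discrete parameters. There the statement is Arthur's: holomorphy of normalized operators on discrete packets \cite[\S2.3--2.4]{Art13}, which relies on the twisted endoscopic character identities. Quasi-splitness is used exactly at this point, since the identities are available for $\OO(W_n^+)$.

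The main obstacle is the bookkeeping of $L$-factor ratios in the inductive step. One must check that the poles of $L(s,\sigma\times\phi_\psi)/L(s+1,\sigma\times\phi_\psi)$ and of the $\bigwedge^2$ ratio exactly absorb the possible poles of the $\GL$ exchange operator with $\Delta_{\rho_j}[B_j,-A_j]$ when $\rho_j\cong\rho'$ and $a_0$ is close to $a_j\pm b_j$. We would first try the case analysis of \cite[\S3]{Moe10}, transported verbatim since only orthogonality of $\widehat G_n$ (the $\bigwedge^2$ factor) enters.
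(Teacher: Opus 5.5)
The paper gives no argument for this statement: it quotes \cite[Theorem 3.2]{Moe10}, which covers quasi-split classical groups (including $\OO(W_n^+)$) with exactly this normalization, so the citation is the whole proof. Read as an independent proof, your sketch has a genuine gap in the inductive step.

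With $\Delta=\Delta_{\rho_j}[B_j,-A_j]$, the operator $M(s,\pi,\sigma)$ is the restriction to $\sigma|\cdot|^s\rtimes\pi\subset\sigma|\cdot|^s\times\Delta\rtimes\pi^-$ of a composite of three operators:
$\sigma|\cdot|^s\times\Delta\to\Delta\times\sigma|\cdot|^s$, then $M(s,\pi^-,\sigma)$, then $\Delta\times\sigma|\cdot|^{-s}\to\sigma|\cdot|^{-s}\times\Delta$. Your list omits the last one.

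Since $\phi_\psi=\phi_{\psi^-}\oplus\rho_j|\cdot|^{\half{b_j-1}}\otimes S_{a_j}\oplus\rho_j|\cdot|^{-\half{b_j-1}}\otimes S_{a_j}$, the Langlands--Shahidi factors of the two $\GL$ exchanges times $r(s,\psi^-,\sigma)$ give exactly $r(s,\psi,\sigma)$. So there is no $L$-factor slack to absorb anything.

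The third normalized operator has exponent difference $s-\half{b_j-1}$. Normalized $\GL$ operators between essentially square-integrable representations are holomorphic as long as this exceeds $-1$, but not beyond. At $-1$ they have genuine poles: already for $|\cdot|^z\times 1\to 1\times|\cdot|^z$ on $\GL_2$, the unnormalized operator on Iwahori-fixed vectors is nonzero at $z=-1$, while $\zeta(z)/\zeta(z+1)$ vanishes there.

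For a concrete instance, take $\pi$ the trivial representation of split $\OO_6(F)$, $\psi=1\otimes S_1\otimes S_5\oplus 1\otimes S_1\otimes S_1$, $j$ the index of $1\otimes S_1\otimes S_5$, and $\sigma$ the trivial character of $\GL_1$. Then the third operator $|\cdot|^{-2}\times|\cdot|^{-s}\to|\cdot|^{-s}\times|\cdot|^{-2}$ has a pole at $s=1$. So you must show that, at such points, the image of $\sigma|\cdot|^s\rtimes\pi$ under the first two operators is annihilated by the singular part of the third. That is a statement about the particular subrepresentation $\pi\subset\Delta\rtimes\pi^-$ and needs Jacquet-module input; no bookkeeping of $L$-factor ratios can supply it.

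Two further steps fail as stated.

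First, the dichotomy ``either some $j$ gives $\pi^-\neq0$, or $\psi$ is elementary or of discrete diagonal restriction'' is false. For $\rho$ orthogonal, take $\EE=\{([2,1]_\rho,0,1),([3,2]_\rho,0,-1)\}$ on the quasi-split form, so $\psi=\rho\otimes S_4\otimes S_2\oplus\rho\otimes S_6\otimes S_2$. Then:
\begin{itemize}
\item $\NV(\EE)\neq0$;
\item $\underline{\zeta}$ and the admissible order are forced, so this is the only M{\oe}glin parameter of $\pi$ relative to $\psi$;
\item both indices fall in Bad Case 1;
\item $\psi$ is neither elementary nor of discrete diagonal restriction, since both summands contain $S_5$ on the diagonal.
\end{itemize}
Proposition~\ref{prop max triangle} gives a good $j$ only for $\psi^{max}(\pi)$, whereas the normalization, and hence the statement, depends on the particular $\psi\in\Psi(\pi)$.

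Second, the base case is not in \cite{Art13}. Arthur controls his normalized operators at the unitary point (the local intertwining relation), not for real $s\geq\half{1}$. Discrete packets contain non-tempered members such as trivial representations, so Langlands--Shahidi holomorphy is unavailable too; holomorphy there is part of what \cite{Moe10} proves.

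You should also check that Theorem~\ref{thm Moeglin + embed}, taken from \cite{Moe11b} (a study of the images of these same normalized operators), is proved without the holomorphy you are using it to establish.
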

M{\oe}glin also showed that the image is either an irreducible representation or zero and calculated its image (\cite[Theorem A]{Moe11b}).

Let $f$ denote a meromorphic function. We write $\ord_{s=s_0} f(s)$ to denote the order of vanishing at $s_0$ of $f.$ That is, $\ord_{s=s_0} f(s)=m$ where $m$ is an integer such that $(s-s_0)^{-m} f(s)\neq 0$ in a neighborhood of $s_0.$ Note that $\ord_{s=s_0}f(s)<0$ if $f$ has a pole at $s_0$ and $\ord_{s=s_0}f(s)>0$ if $f$ has a zero at $s_0.$ Let $\pi$ be a representation of $G_n$ of Arthur type. Theorem \ref{thm Moeglin holo} states that for any $\psi'\in\Psi(\pi)$ and $s_0\geq \half{1}$, $\ord_{s=s_0} N_{\psi'}(s,\pi, \sigma)\geq 0.$ We define an ordering on $\Psi(\pi)$ using the ordering of vanishing as follows.

\begin{defn} \label{M ordering}\ 
\begin{enumerate}
    \item [(1)] We define an ordering $\geq_{N}$ on the set of $L$-parameters of $G_n$ by $\phi_1 \geq_{N} \phi_2$ if for any $\sigma= \St(\rho',a_0)$ and $s_0 \in \R_{\geq \half{1}}$, the following inequality holds
$$
 \ord_{s=s_0} r(s,\phi_1, \sigma)
 \geq 
 \ord_{s=s_0} r(s,\phi_2, \sigma).
$$
In particular, $\phi_1 >_{N} \phi_2$ if $N^{LS}(s,\phi_1,\sigma)$ has less vanishing than $N^{LS}(s,\phi_2,\sigma)$ for $s \geq \half{1}$.

\item [(2)] Let $\pi$ be a representation of $G_n$ of Arthur type. We define an ordering $\geq_{N}$ on $\Psi(\pi)= \{ \psi \ | \ \pi \in \Pi_{\psi} \}$ by $\psi_1 \geq_{N} \psi_2$ if $\phi_{\psi_1} \geq_{N} \phi_{\psi_{2}}$. 
\end{enumerate}
\end{defn}
Explicit computation of the order of poles and zeros of $r(s,\psi,\sigma)$ is possible via \cite{JPSS83} (see \cite{Moe10}).
We have the following theorem.

\begin{thm}\label{thm N ordering}
    Let $\pi\in\Pi(G_n)$ be of Arthur type. Then the following hold.
    \begin{enumerate}
        \item The ordering $\geq_N$ is a partial order on $\Psi(\pi).$
        \item If $T$ is a raising operator applicable on $\psi \in \Psi(G_n)$, then 
    \[T(\psi) \gneq_N \psi.\]
    In particular, if $\psi \geq_{O} \psi'$, then $\psi \geq_N \psi'$.
    \item The distinguished members $\psi^{max}(\pi)$ and $ \psi^{min}(\pi)$ are the unique elements in $\Psi(\pi)$ satisfying the following inequality
    \[ \psi^{max}(\pi) \geq_N \psi \geq_N \psi^{min}(\pi),\]
    for any $\psi \in \Psi(\pi).$
    \item If $T$ is a raising operator applicable on $\psi$, then for any $\sigma=\St(\rho',a_0)$, the quotient
\[ \frac{r(s,\phi_{T(\psi)},\sigma)}{r(s,\phi_{\psi},\sigma)}\]
has finitely many zeros on $\R_{\geq \half{1}}$.
\item For any $\psi \in \Psi(\pi)$, we have $\phi_{\pi} \geq_{N} \phi_{\psi}$, where $\phi_{\pi}$ is the $L$-parameter of $\pi$. Moreover, if $ \pi \not\in \Pi_{\phi_{\psi}}$, then $\phi_{\pi} \gneq_{N} \phi_{\psi}$.
\item If $\phi_\pi\neq\phi_\psi$ for any $\psi\in\Psi(G_n)$, then for any $\psi \in \Psi(\pi)$, there exists a $\sigma$ and $s_0 \in \R_{\geq{\half{1}}}$ such that $\ord_{s=s_0} N_{\psi}(s_0,\pi,\sigma)> \ord_{s=s_0} N^{LS}(s_0,\pi,\sigma)$.
    \end{enumerate}
\end{thm}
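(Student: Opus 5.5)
The plan is to follow the proof of the analogous statement for symplectic and split odd special orthogonal groups in \cite{HLL22}, which is essentially combinatorial in $\psi$. The only group-specific input there was the holomorphy of Arthur-normalized operators, the explicit $L$-factors in $r(s,\phi,\sigma)$, and the intersection theory. Each of these is now available for $G_n=\OO(W_n)$. Specifically, holomorphy is Theorem \ref{thm Moeglin holo}; the intersection theory is Theorems \ref{thm O} and \ref{thm psi max and min}; and the $L$-data comparison is Lemma \ref{lem $L$-data}. The one change to the $L$-factors is that only $\bigwedge^2$ appears in \eqref{eq LS normalization}, not $\mathrm{Sym}^2$.

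\textbf{Step 1: Parts (2) and (4).} For $\sigma=\St(\rho',a_0)$, I will compute $\ord_{s=s_0} r(s,\phi_\psi,\sigma)$ explicitly from \eqref{eq associated L-parameter}, using \cite{JPSS83}. This reduces to counting, over the summands $\rho\otimes S_{a_i}\otimes S_{b_i}$ with $\rho\cong\rho'$, certain differences of poles of Rankin--Selberg $L$-functions of Steinberg representations. The exterior-square factor depends only on $\sigma$, so it cancels in any quotient $r(s,\phi_{T(\psi)},\sigma)/r(s,\phi_\psi,\sigma)$. Consequently the comparison only involves $L(s,\sigma\times\phi_\psi)/L(s+1,\sigma\times\phi_\psi)$.

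For each raising operator, one of $ui_{i,j}^{-1}$, $dual\circ ui_{i,j}\circ dual$, or $dual_k^-$, I will check the following. The change affects only two (or one) Jordan blocks. The quotient is a product of ratios of the form $L(s+x,\rho'\times\rho')/L(s+y,\rho'\times\rho')$, which has no poles and only finitely many zeros on $\R_{\ge 1/2}$, and at least one genuine zero for a suitable choice of $\sigma$. This gives (4) and the strict inequality $T(\psi)\gneq_N\psi$. The second claim of (2) follows by composing along a chain of raising operators. I expect this case check to be the main computational obstacle. It is identical to the $\SO_{2n+1}$/$\Sp_{2n}$ computation once $\mathrm{Sym}^2$ is dropped, so I would transcribe it from \cite{HLL22}, checking only parity conventions.

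\textbf{Step 2: Parts (3) and (1).} By Theorem \ref{thm psi max and min}, every $\psi\in\Psi(\pi)$ satisfies $\psi^{max}(\pi)\ge_O\psi\ge_O\psi^{min}(\pi)$. Step 1 converts these relations into $\ge_N$, which gives the inequalities in (3). Uniqueness follows because any $\psi$ satisfying the same property must be $\ge_N$ and $\le_N$ the distinguished element. For (1), reflexivity and transitivity of $\geq_N$ are clear. For antisymmetry on $\Psi(\pi)$, suppose $\psi_1\ge_N\psi_2\ge_N\psi_1$, so equality of orders holds for every $\sigma$ and $s_0$. I would recover $\phi_{\psi}$ from the collection of orders $\ord_{s=s_0}r(s,\phi_\psi,\sigma)$ over all $\sigma$ and $s_0$, since these are essentially generating data for the multiset of exponents of $\phi_\psi$. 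This gives $\phi_{\psi_1}=\phi_{\psi_2}$. Then I would combine this with the fact that, within $\Psi(\pi)$, $\psi$ is determined by $\phi_\psi$. That fact follows from the operator description and the strictness in (2), as in \cite{HLL22}.

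\textbf{Step 3: Parts (5) and (6).} First, $N_\psi(s,\pi,\sigma)=N^{LS}(s,\pi,\sigma)\cdot r(s,\phi_\pi,\sigma)/r(s,\phi_\psi,\sigma)$. Since $N^{LS}$ cannot be holomorphic with a forced extra zero, Theorem \ref{thm Moeglin holo} applied to the Arthur normalization yields $\ord r(s,\phi_\pi)\ge\ord r(s,\phi_\psi)$, which is $\phi_\pi\geq_N\phi_\psi$. For the strict inequality when $\pi\notin\Pi_{\phi_\psi}$, I would use Lemma \ref{lem $L$-data} to compare $\Omega(\pi)$ with $\Omega(\EE)$. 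By Theorem \ref{thm in L-packet}, the failure of $(L)$ yields a symmetric nonempty difference in Lemma \ref{lem $L$-data}(ii), and this produces a $\sigma$ and $s_0$ where the orders differ. Part (6) then follows from the same identity relating $N_\psi$ and $N^{LS}$, applied with this $\sigma$ and $s_0$.
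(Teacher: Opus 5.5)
There are genuine gaps in your arguments for (1) and (5). Your treatment of (2), (3), (4) and (6) is in line with the paper, which simply transports \cite[\S11.2]{HLL22}.

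\textbf{Antisymmetry in (1).} Your claim that the orders of $r(s,\phi_\psi,\sigma)$ at $s_0\ge\half{1}$ recover $\phi_\psi$ is false. The $\bigwedge^2$-factor depends only on $\sigma$. For $\sigma=\St(\rho,a_0)$, a constituent $\rho\lvert\cdot\rvert^{x}\otimes S_a$ of $\phi$ contributes to $L(s,\sigma\times\phi)/L(s+1,\sigma\times\phi)$ exactly one pole, at $-x-\frac{|a_0-a|}{2}$, and one zero, at $-x-\frac{a_0+a}{2}$. So tempered constituents ($x=0$) are invisible on $\R_{\ge\half{1}}$. For example, $\rho\otimes S_1\oplus\rho\otimes S_5$ and $(\rho\otimes S_3)^{\oplus 2}$ give the same orders for every $\sigma$ and every $s_0$. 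Antisymmetry must therefore use that $\psi_1,\psi_2$ lie in the same $\Psi(\pi)$. This is exactly where the paper invokes Lemma \ref{lem $L$-data}, in place of \cite[Lemma 4.7]{HLL22} in \cite[Proposition 11.12]{HLL22}. Every $\EE$ with $\pi_\com(\EE)=\pi$ has $\Omega(\EE_\rho)$ tied to the fixed multiset $\Omega(\pi)_\rho$ by the containment and by the symmetry about $-\half{1}$. For instance, with $a_0=1$ the $j$-th constituent of $\rho\otimes S_a\otimes S_b$ has its pole at $-(A-j)$, so these orders only see the part of $\Omega(\EE_\rho)$ at or below $-\half{1}$; the lemma then pins down the rest. (The injectivity of $\psi\mapsto\phi_\psi$ that you invoke afterwards holds in general and does not need (2).)

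\textbf{Part (5).} The identity $N_\psi=N^{LS}\cdot r(s,\phi_\pi,\sigma)/r(s,\phi_\psi,\sigma)$ together with Theorem \ref{thm Moeglin holo} only gives $\ord N^{LS}+\ord r(s,\phi_\pi,\sigma)-\ord r(s,\phi_\psi,\sigma)\ge 0$. To get $\phi_\pi\ge_N\phi_\psi$ you also need $\ord_{s=s_0}N^{LS}(s,\pi,\sigma)\le 0$, i.e. that the Langlands--Shahidi normalized operator never vanishes on the generally non-generic $\pi$. Nothing in the paper provides this, and it is not a standard fact for non-generic $\pi$. Moreover, Theorem \ref{thm Moeglin holo} is stated only for quasi-split $G_n$, whereas the statement also covers pure inner forms.

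Your strictness argument fails as well, because violating $(L)$ does not make $\Omega(\EE_\rho)\setminus\Omega(\pi_\com(\EE))_\rho$ nonempty. Take $\EE=\{([3,2]_\rho,0,\eta)\}$. It equals $ui_1$ (type 3') of $\{([2,2]_\rho,0,\eta),([3,3]_\rho,0,-\eta)\}$, so $\pi_\com(\EE)$ is the discrete series with parameter $\rho\otimes S_5\oplus\rho\otimes S_7$. Then $\EE$ violates $(L)$(ii), yet both multisets equal $\{\rho\lvert\cdot\rvert^{3},\rho\lvert\cdot\rvert^{2}\}$, exactly as Lemma \ref{lem $L$-data}(i) predicts.

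The paper instead proves (5) combinatorially, following \cite[Theorem 11.15]{HLL22}. It inducts along Algorithm \ref{algo pi_com}, with Proposition \ref{prop max triangle} guaranteeing at each step that $(\EE^{|max|})^-$ is nonvanishing, so that $\pi=\{\Delta_\rho[B_j,-A_j]\}+\pi_\com((\EE^{|max|})^-)$. It then concludes using (3). Once (5) is established this way, (6) follows from your identity.
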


When $G_n=\Sp(W_n),$ the results are contained in \cite[\S11.2]{HLL22}. When $G_n=\OO(W_n),$ the same proofs work; however, we remark that in the proof of Part (1) (which follows the argument of \cite[Proposition 11.12]{HLL22}), we use Lemma \ref{lem $L$-data} in place of \cite[Lemma 4.7]{HLL22}. Furthermore, in the proof of Part (5) (which follows the argument of \cite[Theorem 11.15]{HLL22}), we use Proposition \ref{prop max triangle} instead of \cite[Proposition 10.11]{HLL22}.

\section{Closure ordering conjecture}\label{sec closure order conj}

In this section, we establish the closure ordering conjecture (see Conjecture \ref{conj closure}) for $\OO(W_n)$ (see Theorems \ref{thm O implies C} and \ref{thm phi pi > phi psi}). The results for even orthogonal groups follow the same argument as in the symplectic case where analogous results were proved in \cite{HLLZ25} (using the analogous results for even orthogonal groups obtained in this article). As in the previous section, we omit the proofs for brevity. Throughout this section, we assume that $G_n=\OO(W_n)$.

Recall that we set $\widehat{G_n}(\BC):=\OO_{2n}(\BC)$.
To an $L$-parameter $\phi$ of $G_n$, we associate a homomorphism $\lambda_\phi:W_F\rightarrow{}^LG_n $ via
$\lambda_{\phi}(w):=\phi(w,d_w)$, where $d_w=\begin{pmatrix}
    |w|^\half{1} & 0 \\ 0 & |w|^\half{-1}
\end{pmatrix}.$ The homomorphism $\lambda_\phi$ is an infinitesimal parameter of $G_n^\circ$ (see \cite[\S 4.1]{CFMMX22}). Conversely, for each infinitesimal parameter $\lambda$ of $G_n^\circ$, we consider
\[ \Phi_{\lambda}(G_n):=\{\phi\in\Phi(G_n) \ | \ \lambda_{\phi}=\lambda\},\ \ \Phi_{\lambda}(G_n^{\circ}):=\{\phi\in\Phi(G_n^{\circ}) \ | \ \lambda_{\phi}=\lambda\}. \]
Recall that here $\Phi(G_n)$ is the quotient of the set of $L$-parameters of $G_n^\circ$, denoted $\Phi(G_n^{\circ})$, by outer conjugation.

We consider the set
\begin{align*}
H_{\lambda}&:= \{g \in \widehat{G_n}(\BC) \ | \ \lambda(w)g= g\lambda(w), \forall w \in W_F \}.
\end{align*}
We note that this definition of $H_\lambda$ agrees with that of \cite{Vog93, CFMMX22} when $G_n$ is connected. We also consider the set
\[
H_{\lambda}^\circ:= \{g \in \widehat{G_n^\circ}(\BC) \ | \ \lambda(w)g= g\lambda(w), \forall w \in W_F \}.
\]
When $G_n$ is connected, we have $H_\lambda=H_\lambda^\circ.$

The \emph{Vogan variety} $V_{\lambda}$ for an infinitesimal parameter $\lambda$ of $G_n$ is defined by
\[ V_{\lambda}:= \{ x \in \mathrm{Lie}(\widehat{G^\circ_n})(\BC) \ | \ \mathrm{Ad}(\lambda(w))x= |w|x, \forall w \in W_F\}. \]
 The Vogan variety $V_{\lambda}$ admits actions of the groups $H_\lambda^\circ$ and $H_\lambda$ via conjugation. Furthermore, this action has finitely many orbits (\cite[Proposition 5.6]{CFMMX22}). 

We continue by defining the closure ordering for the connected group $G_n^\circ.$
For each $\phi \in \Phi_{\lambda}(G_n^\circ)$, we have that the element
\[ X_{\phi}:= d (\phi|_{\SL_2(\BC)}) \left(  \begin{pmatrix}
0&1\\0&0
\end{pmatrix}\right)\]
lies in $V_{\lambda}$. Let $C_{\phi}'$ denote the $H_{\lambda}^\circ$-orbit of $X_{\phi}$. Consequently, there is a map
\begin{align*}
     \Phi_{\lambda}(G_n^\circ)& \to V_{\lambda}/H_{\lambda}^\circ,\\
     \phi& \mapsto C_{\phi}'.
\end{align*}
In fact, this map is a bijection (see \cite[Proposition 4.2]{CFMMX22}). Thus, the geometric structure of $V_{\lambda}$ equips $\Phi_{\lambda}(G_n^\circ)$ with a partial order $\geq_C$ defined by the closure relation.
\begin{defn}\label{def C ordering connected}
For any infinitesimal character $\lambda$ of $G_n^\circ$, the finite set $\Phi_{\lambda}(G_n^\circ)$ is equipped with a partial ordering $\geq_C$ defined by $\phi_{1} \geq_C \phi_2$ if $\overline{C_{\phi_1}'} \supseteq C_{\phi_2}'$.
\end{defn}

The closure ordering is defined in the disconnected case as well. Recall that an $L$-parameter of $G_n$ is simply an $L$-parameter of $G_n^\circ,$ except the equivalence is by $\widehat{G_n}$-conjugation. Thus, the bijection 
\[
\Phi_{\lambda}(G_n^\circ)\to V_{\lambda}/H_{\lambda}^\circ
\]
induces a bijection 
\[
\Phi_{\lambda}(G_n)\to V_{\lambda}/H_{\lambda}.
\]
See \cite[Lemma 2.1]{HL26} for details.
Given an $L$-parameter $\phi\in\Phi(G_n),$ we let $C_\phi$ denote the corresponding $H_\lambda$-orbit (which is the $H_\lambda$-orbit of $X_\phi$). This induces the closure ordering on $G_n$.
\begin{defn}\label{def C ordering}
For any infinitesimal character $\lambda$ of $G_n$, the finite set $\Phi_{\lambda}(G_n)$ is equipped with a partial ordering $\geq_C$ defined by $\phi_{1} \geq_C \phi_2$ if $\overline{C_{\phi_1}} \supseteq C_{\phi_2}$.
\end{defn}

Fix an infinitesimal character $\lambda$ of $G_n$ and consider the set
\begin{align}\label{eq psi lambda}
     \Psi_{\lambda}(G_n):=\{\psi \in \Psi^+(G_n)\ | \ \lambda_{\phi_{\psi}}=\lambda\},
\end{align}
which can be viewed as a subset of $\Phi_{\lambda}(G_n)$ via the injection $\psi\mapsto\phi_\psi$. By M{\oe}glin's construction of $\Pi_\psi$ (Theorem \ref{thm Moeglin construction}), all representations in a fixed $\Pi_{\psi}$ have the same extended cuspidal support (\cite[Proposition 4.1]{Moe09b}). This implies that, for any $\pi \in \Pi_{\psi}$, we have $\lambda_{\phi_{\pi}}=\lambda_{\phi_{\psi}}$, and for a fixed $\pi\in \Pi(G_n)$, we have  $\Psi(\pi) \subseteq \Psi_{\lambda_{\phi_{\pi}}}(G_n)$. Consequently, we may restrict the partial order $\geq_C$ from $\Phi_{\lambda}(G_n)$ to $\Psi_{\lambda}(G_n)$ (or $\Psi(\pi)$).
\begin{defn}\label{def C ordering Arthur}
For any infinitesimal character $\lambda$ of $G_n$, we define a partial order $\geq_{C}$ on the set $\Psi_{\lambda}(G_n)$ by $\psi_1 \geq_C \psi_2$ if $\phi_{\psi_1} \geq_C \phi_{\psi_2}$.
\end{defn}

We recall \cite[Conjecture 1.4]{HLLZ25} (which is a refinement of the closure ordering conjecture in \cite[Conjecture 3.1]{Xu24}).

\begin{conj}[{\cite[Conjecture 1.4]{HLLZ25}}]\label{conj closure}
Let $\mathrm{G}$ be a connected reductive group defined over a non-Archimedean local field $F$. Assume that there is a local Arthur packets theory for $G=\mathrm{G}(F)$ as conjectured in \cite[Conjecture 6.1]{Art89}. Let $\pi$ be an irreducible admissible representation of $G$ which is of Arthur type. Then, there are unique elements $\psi^{max}(\pi)$ and $\psi^{min}(\pi)$ in $\Psi(\pi)$ such that
\begin{align*}
  \phi_{\pi} \geq_C   \phi_{\psi^{max}(\pi)} \geq_C \phi_\psi \geq_C \phi_{\psi^{min}(\pi)},
\end{align*} 
for any local Arthur parameter $\psi \in \Psi(\pi)$. 
\end{conj}

Conjecture \ref{conj closure} is verified in \cite[Theorem 1.3]{HLLZ25} for symplectic and split odd special orthogonal groups. However, its formulation for even special orthogonal groups suggests an even orthogonal analogue which we verify below (see Theorems \ref{thm O implies C} and \ref{thm phi pi > phi psi}). 

First, we see that the partial order $\geq_O$  on $\Psi(G_n)$ implies the closure ordering $\geq_C.$ For $\Sp_{2n}(F),$ the claim is precisely \cite[Theorem 4.5]{HLLZ25}. The same proof generalizes to our situation and we omit it.
\begin{thm}\label{thm O implies C} \
\begin{enumerate}
    \item If $T$ is a raising operator applicable on $\psi \in \Psi(G_n)$, then 
    \[T(\psi) \gneq_C \psi.\]
    In particular, if $\psi \geq_{O} \psi'$, then $\psi \geq_C \psi'$.
    \item Let $\pi$ be a representation of $G_n$ of Arthur type. The distinguished members $\psi^{max}(\pi)$ and $ \psi^{min}(\pi)$ are the unique elements in $\Psi(\pi)$ satisfying the following inequality
    \[ \psi^{max}(\pi) \geq_C \psi \geq_C \psi^{min}(\pi),\]
    for any $\psi \in \Psi(\pi).$
\end{enumerate}
\end{thm}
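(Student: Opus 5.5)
We argue as in the proof of \cite[Theorem 4.5]{HLLZ25}: first a local geometric statement for one raising operator, then the global statement from Theorem \ref{thm psi max and min}.

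For Part (1) we treat each type of raising operator separately. Let $\lambda=\lambda_{\phi_\psi}$. Applying a raising operator preserves the infinitesimal parameter, since it does not change the multiset of $\rho\lvert\cdot\rvert^x$ appearing in $\phi_\psi$; hence $\phi_{T(\psi)},\phi_\psi\in\Phi_\lambda(G_n)$ and the comparison makes sense. It suffices to prove the stronger connected statement: the $H_\lambda^\circ$-orbit of $X_{\phi_{T(\psi)}}$ contains $X_{\phi_\psi}$ in its closure. Passing to $H_\lambda$-orbits only enlarges closures, so the disconnected version follows. We also need strictness, $\phi_{T(\psi)}\neq\phi_\psi$. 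This holds because $T(\psi)\gneq_A\psi$ or $\gneq_D\psi$ (Theorems \ref{thm Jiang's partition}, \ref{thm D order}), and $\psi\mapsto\phi_\psi$ is injective.

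For the closure relation, only the summands of $\psi$ touched by $T$ change. We use the $\GL$ Zelevinsky degeneration picture. Write $\phi_\psi=\phi'\oplus\phi_0\oplus\phi_0^\vee$-type decompositions, where $\phi_0$ is the part coming from the two summands (for $ui_{i,j}$) or the one summand (for $dual_k^{\pm}$). The Vogan variety for $\lambda$ then contains a $\GL$-type Vogan variety (the $\rho\lvert\cdot\rvert^{x}$, $x>0$ block) embedded equivariantly. The relation $X_{\phi_{T(\psi)}}\to X_{\phi_\psi}$ then reduces to an explicit statement about multisegments: the segments of $\phi_{T(\psi)}$ degenerate to those of $\phi_\psi$ by elementary operations (union-intersection of linked segments). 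This goes as follows.
\begin{itemize}
\item For $ui_{i,j}$ (and its dual conjugate), the $b_i$ and $b_j$ rows of segments of $\phi_\psi$ are obtained from those of $\phi_{T(\psi)}$ by a sequence of Zelevinsky elementary operations.
\item For $dual_k^{\pm}$ with $B_k=\pm\frac12$, the block $\rho\otimes S_a\otimes S_{a\pm1}$ versus $\rho\otimes S_{a\pm1}\otimes S_a$ is handled similarly. Here we use that the segments centered at $\pm\frac12$ pair with their duals inside the $\OO_{2n}$ Vogan variety.
\end{itemize}
By Zelevinsky's theorem, elementary operations correspond to orbit closure. The self-dual pairing shows that the same holds in $V_\lambda$ for $\widehat{G_n^\circ}=\SO_{2n}(\BC)$. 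The only genuinely new point versus the symplectic case is that the ambient group is $\SO_{2n}$, but the argument of \cite{HLLZ25} only uses the type-$\GL$ reduction and the orthogonality of the centralizers, so it transfers. The "in particular" then follows by transitivity of $\geq_C$.

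Part (2) is formal. By Theorem \ref{thm psi max and min}, $\psi^{max}(\pi)\geq_O\psi\geq_O\psi^{min}(\pi)$ for all $\psi\in\Psi(\pi)$, and Part (1) converts this into $\geq_C$. For uniqueness: if $\psi'\in\Psi(\pi)$ also satisfies $\psi'\geq_C\psi$ for all $\psi$, then $\psi'\geq_C\psi^{max}(\pi)\geq_C\psi'$. Since $\geq_C$ is a partial order on $\Phi_\lambda(G_n)$ and $\psi\mapsto\phi_\psi$ is injective, $\psi'=\psi^{max}(\pi)$. The minimal case is symmetric.

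The main obstacle is the geometric step in Part (1) for $dual_k^{\pm}$, where the degeneration is not visibly of $\GL$-type near the center. We would first try to realize it inside a Levi $\GL\times\SO$ of $\SO_{2n}(\BC)$, reducing to a small-rank computation as in \cite{HLLZ25}.
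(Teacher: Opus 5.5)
Your formal steps are fine:
\begin{itemize}
\item raising operators preserve $\lambda_{\phi_\psi}$;
\item strictness follows from $T(\psi)\neq\psi$ and injectivity of $\psi\mapsto\phi_\psi$;
\item enlarging $H_\lambda^\circ$-orbits to $H_\lambda$-orbits is harmless;
\item Part (2) is correct: a unique maximal element of the finite poset $(\Psi(\pi),\geq_O)$ is its maximum, and antisymmetry of $\geq_C$ plus injectivity gives uniqueness.
\end{itemize}
Your top-level plan also agrees with the paper, which only asserts that the proof of \cite[Theorem 4.5]{HLLZ25} carries over. The gap is the one step with real content, $X_{\phi_\psi}\in\overline{H_\lambda\cdot X_{\phi_{T(\psi)}}}$, which you assert rather than prove.

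A $\GL_{2n}$-orbit-closure relation between two points of $V_\lambda$ does not formally give an $H_\lambda$-orbit-closure relation inside $V_\lambda$. The degenerating curve need not lie in $V_\lambda$, and Zelevinsky chains typically pass through non-self-dual multisegments. ``The self-dual pairing'' does not repair this.

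Your $\GL$-block mechanism is not available either. The summands changed by $T$ are nondegenerate self-dual blocks whose segments are symmetric about the center, not a hyperbolic pair $\phi_0\oplus\phi_0^\vee$, and the decisive information often sits in the form at the center. For example, take $\rho=1$ and $T=ui^{-1}$ of type $3'$, sending $S_2\otimes S_2$ to $S_3\otimes S_1\oplus S_1\otimes S_1$ in $\OO_4(\BC)$. With $M_x$ the $\lvert\cdot\rvert^{x}$-part of $\lambda$, one has $V_\lambda\cong\mathrm{Hom}(M_0,M_1)\cong\BC^2$ and $H_\lambda=\GL(M_1)\times\OO(M_0)$. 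Both $X_{\phi_\psi}$ and $X_{\phi_{T(\psi)}}$ are nonzero, so they lie in a single $\GL(M_0)\times\GL(M_1)$-orbit. They differ only in whether the corresponding vector of $M_0$ is isotropic. The required degeneration, from non-isotropic to isotropic, is invisible to any $\GL$-type reduction. So the difficulty you flag for $dual_k^{\pm}$ already occurs for $ui$-type operators.

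To close the gap, first reduce to $\psi$ consisting only of the one or two summands touched by $T$. Write $\lambda=\lambda'\oplus\lambda_1$ with $\lambda_1$ coming from those summands, and use the equivariant block embedding $V_{\lambda'}\times V_{\lambda_1}\hookrightarrow V_\lambda$, $H_{\lambda'}\times H_{\lambda_1}\subseteq H_\lambda$. This is valid because good-parity summands are nondegenerate and $\lambda_1$ is the same for $\psi$ and $T(\psi)$. Then do one of the following.
\begin{itemize}
\item Write down explicit one-parameter families in that small orthogonal or symplectic Vogan variety, separately for $ui^{-1}$, $dual\circ ui\circ dual$ and $dual_k^{-}$.
\item Prove or cite a comparison result: orbit closures of the full group $H_\lambda\subseteq\OO_{2n}(\BC)$ in $V_\lambda$ are cut out by the $\GL_{2n}$ rank conditions on the graded pieces of $X^k$.
\end{itemize}
Such a comparison result is not a formal consequence of self-duality. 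It also genuinely needs $H_\lambda$ rather than $H_\lambda^\circ$: in the example above, the nonzero isotropic vectors form two $H_\lambda^\circ$-orbits. Only after this step do your transitivity argument and Part (2) yield the theorem.
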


Consequently, towards the closure ordering conjecture for $G_n=\OO(W_n)$, it remains to show that $\phi_\pi\geq_C\phi_\psi$ for any $\psi\in\Psi(\pi).$ When $G_n=\Sp_{2n}(F)$, the result is \cite[Theorem 6.4]{HLLZ25} and the same proof works in the  even orthogonal case. Indeed, the key step in the proof of \cite[Theorem 6.4]{HLLZ25} is to use \cite[Proposition 6.3]{HLLZ25}; however, we have already established its analogue in Theorem \ref{thm add embed} and Proposition \ref{prop max triangle} for even orthogonal groups. Consequently, we confirm the closure ordering conjecture holds for $G_n=\OO(W_n)$.

\begin{thm}\label{thm phi pi > phi psi}
    For any $\pi\in\Pi(G_n)$ of Arthur type, we have
\[ \phi_{\pi} \geq_C \phi_{\psi^{max}(\pi)}.\]
\end{thm}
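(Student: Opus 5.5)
We prove $\phi_\pi \geq_C \phi_{\psi^{max}(\pi)}$ by induction on the number of segments in the non-tempered part of the $L$-data of $\pi$, following the proof of \cite[Theorem 6.4]{HLLZ25}. We may assume $\pi$ is of good parity. By Theorem \ref{thm red from nu to gp symp}, $\pi=\tau\rtimes\pi_{gp}$ with $\tau=\tau_{\psi_{nu,>0}}\times\tau_{\psi_{np}}$. The parameters $\phi_\pi$ and $\phi_{\psi^{max}(\pi)}$ are then obtained from $\phi_{\pi_{gp}}$ and $\phi_{\psi^{max}(\pi_{gp})}$ by adding the same summands attached to $\tau$ and their duals. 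Adding a fixed summand preserves $\geq_C$, as in \cite{HLLZ25}.

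\textbf{Base case.} Fix an absolutely maximal $\EE=\EE^{|max|}$ with $\pi_\theta(\EE)=\pi$. By Theorems \ref{thm O} and \ref{thm psi max and min}, $\psi_\EE=\psi^{max}(\pi)$. By Theorem \ref{thm theta=com}, $\pi=\pi_\com(\EE)$. If $\psi_\EE$ is tempered, then $\pi$ lies in $\Pi_{\phi_{\psi_\EE}}$, so $\phi_\pi=\phi_{\psi^{max}(\pi)}$ and there is nothing to prove.

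\textbf{Inductive step.} Otherwise choose $j$ as in Proposition \ref{prop max triangle}, so that $\EE^-=add_j^{-1}(\EE)$ satisfies $\pi^-:=\pi_\theta(\EE^-)\neq 0$. By Theorem \ref{thm add embed} and Remark \ref{rmk EE Sp L data}(1),
\[\pi=\{\Delta_\rho[B_j,-A_j]\}+\pi^-,\]
so $\phi_\pi=\phi_{\pi^-}\oplus\rho|\cdot|^{\frac{B_j-A_j}{2}}\otimes S_{A_j+B_j+1}\oplus(\text{dual})$. The induction hypothesis applied to $\pi^-$ gives
\[\phi_{\pi^-}\geq_C\phi_{\psi^{max}(\pi^-)}\geq_C\phi_{\psi_{\EE^-}}.\]
The second inequality uses Theorem \ref{thm O implies C}, since $\psi_{\EE^-}\in\Psi(\pi^-)$.

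Next compare with $\psi_\EE$. The summand $\rho\otimes S_{a_j}\otimes S_{b_j}$ of $\psi_\EE$ becomes $\rho\otimes S_{a_j}\otimes S_{b_j-2}$ in $\psi_{\EE^-}$. By \eqref{eq associated L-parameter}, $\phi_{\psi_\EE}$ is $\phi_{\psi_{\EE^-}}$ plus the two outer pieces $\rho|\cdot|^{\pm\frac{b_j-1}{2}}\otimes S_{a_j}$. These are exactly the summands added to $\phi_{\pi^-}$ to form $\phi_\pi$, since $\frac{B_j-A_j}{2}=-\frac{b_j-1}{2}$ and $A_j+B_j+1=a_j$. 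So $\phi_\pi$ and $\phi_{\psi_\EE}$ arise from $\phi_{\pi^-}$ and $\phi_{\psi_{\EE^-}}$ by adding the same pair of summands. The result follows from:

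\emph{Key lemma.} If $\phi_1\geq_C\phi_2$ in $\Phi_\lambda(G_m)$, then $\phi_1\oplus\xi\oplus\xi^\vee\geq_C\phi_2\oplus\xi\oplus\xi^\vee$ in $\Phi_{\lambda'}(G_n)$.

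\textbf{Main obstacle: proving the key lemma.} For the connected group this is \cite[Proposition 6.3]{HLLZ25}-type geometry. The argument embeds $V_\lambda$ into $V_{\lambda'}$ via the Levi $\GL\times G_m^\circ$ and uses that $H_\lambda$ embeds in $H_{\lambda'}$ equivariantly, so closures map into closures. For $\OO(W_n)$ the orbits are $H_\lambda$-orbits with $H_\lambda\subseteq\OO_{2n}(\BC)$, and the inclusion of Levi centralizers is still compatible. Using \cite[Lemma 2.1]{HL26}, the $H_\lambda$-orbit closure is the union of the $H_\lambda^\circ$-orbit closures of the two outer conjugates. The connected-case statement for each conjugate then transfers to $\geq_C$ on $\Phi(G_n)$. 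I would check carefully that the outer element of $H_\lambda$ can be chosen inside the Levi, so the embedding respects the disconnected orbits.
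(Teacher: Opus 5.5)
Your proposal is correct and is essentially the paper's argument: the paper proves this statement by running the induction of \cite[Theorem 6.4]{HLLZ25}, with the embedding $\pi\hookrightarrow\Delta_\rho[B_j,-A_j]\rtimes\pi_\theta((\EE^{|max|})^-)$ and its non-vanishing supplied by Theorem \ref{thm add embed} and Proposition \ref{prop max triangle}, which is exactly what you do. Your key lemma is easier than you expect: the block-diagonal embedding $\OO_{2m}(\BC)\hookrightarrow\OO_{2n}(\BC)$ sends all of $H_\lambda$, non-identity component included, into $H_{\lambda'}$, and induces a continuous equivariant map $V_\lambda\to V_{\lambda'}$ taking $X_{\phi}$ to $X_{\phi\oplus\xi\oplus\xi^\vee}$, so orbit closures map into orbit closures without choosing outer elements or invoking \cite[Lemma 2.1]{HL26}.
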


\section{Enhanced Shahidi Conjecture}\label{sec enhanced Shahidi}
In this section, we prove the enhanced Shahidi conjecture (Theorem \ref{thm enhanced Shahidi}) for $\OO(W_n^+)$. We begin by recalling the conjecture for quasi-split connected reductive groups.

\begin{conj}[{\cite[Conjecture 1.5]{LS22}}, Enhanced Shahidi Conjecture]\label{conj enhanced Shahidi}
Let $\mathrm{G}$ be a connected reductive group which is quasi-split over $F$. Assume that there is a local Arthur packets theory for $G=\mathrm{G}(F)$ as conjectured in \cite[Conjecture 6.1]{Art89}. Then, a local Arthur parameter $\psi \in \Psi(G)$ is tempered if and only if $ \Pi_{\psi}$ has a generic member.
\end{conj}
For split symplectic or odd special orthogonal groups, the result is already known via 
\cite[Theorem 1.2]{HLL24} or
\cite[Theorem 7.10]{HLLZ25}. Its analogue for the disconnected group $\OO(W_n^+)$ requires pinning down precisely what is meant by a generic representation of $\OO(W_n^+)$, which we briefly recall from \cite[\S 2.2]{AG17b}.

We have fixed a Whittaker datum $\mathfrak{w}$ of the quasi-split group $G_n^{\circ}=\SO(W_n^+)$, which is a $G_n^{\circ}$-conjugacy class of a pair $(B, \mu)$ where $B$ is a Borel subgroup of $G_n^{\circ}$ with unipotent radical $U$, and $\mu: U \to \BC^{\times}$ is a character that is non-trivial on all simple roots. Recall that we fixed an element $c \in \OO(W_n^+) \setminus \SO(W_n^+)$ of order two that normalizes $B$, which also normalizes $U$. Thus, we may consider the disconnected group $\widetilde{U}:= \langle U, c\rangle$. The character $\mu$ has two extensions $ \mu^{\pm}: \widetilde{U} \to \BC^{\times}$, where we label it so that $\mu^{\pm}(c)= \pm 1$. We say a representation $\pi$ of $G_n$ is $\mathfrak{w}$-generic (resp. $\mathfrak{w}^{\pm}$-generic) if 
\[ \dim(\Hom_U(\pi, \mu)) >0 \ \ (\text{resp. } \dim(\Hom_{\widetilde{U}}(\pi, \mu^{\pm}))>0 ) .\]
Note that $\pi$ is $\mathfrak{w}^{\pm}$-generic if and only if $\pi \otimes \det$ is $\mathfrak{w}^{\mp}$-generic. Also, $\pi$ is generic if and only if $\pi$ is $\mathfrak{w}^{\delta}$-generic for some $\delta \in \{+,- \}$.

Now we prove the enhanced Shahidi conjecture for $\OO(W_n^+)$. 

\begin{thm}\label{thm enhanced Shahidi}
    Assume that $G_n=\OO(W_n^+)$.
    A local Arthur parameter $\psi \in \Psi(G_n)$ is tempered if and only if $ \Pi_{\psi}$ has a generic member.
\end{thm}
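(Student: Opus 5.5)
We reduce to the good parity case and then argue separately in the two directions, following the strategy used for split symplectic groups in \cite{HLL24, HLLZ25}. First, by Theorem \ref{thm red from nu to gp symp}, $\Pi_\psi=\{\tau_{\psi_{nu,>0}}\times\tau_{\psi_{np}}\rtimes\pi_{gp}\}$. The induced representation $\tau\rtimes\pi_{gp}$ is generic (for the compatible Whittaker datum) if and only if $\tau$ and $\pi_{gp}$ are both generic, by Rodier's heredity together with irreducibility of the induction. The Speh-type representation $u_\rho(a,b)$ is generic if and only if $b=1$. Moreover, $\psi$ is tempered if and only if $\psi_{nu,>0}=0$ and all $b_i=1$; here we note that $|x_i|<\half{1}$ twists are allowed in $\Psi^+$, but we restrict to $\psi\in\Psi(G_n)$ as in the statement. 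It therefore suffices to treat $\psi\in\Psi_{gp}(G_n)$. Since $\mathfrak{w}^{\pm}$-genericity is interchanged by $\otimes\det$, and $\Pi_\psi$ is stable under $\otimes\det$ by Corollary \ref{cor extended multi-segment for det}, it is enough to speak of genericity in the $\mathfrak{w}$ sense, i.e.\ of $\pi|_{\SO(W_n^+)}$ having a $\mathfrak w$-generic constituent.

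\textbf{Tempered $\Rightarrow$ generic.} If $\psi$ is tempered, then $\Pi_\psi=\Pi_{\phi_\psi}$ is a tempered $L$-packet of the quasi-split group. By the generic packet property of the local Langlands correspondence (Theorem \ref{thm pure LLC}, from \cite{Art13}), the representation attached to the trivial character $\varepsilon$ relative to the fixed Whittaker datum is $\mathfrak w$-generic. This direction is standard.

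\textbf{Generic $\Rightarrow$ tempered.} Suppose $\pi\in\Pi_\psi$ is generic. By the standard module conjecture for quasi-split classical groups (Heiermann--Opdam / Heiermann--Muić), the standard module of a generic $\pi$ is irreducible. Since $\pi$ is unitary of Arthur type with bounded cuspidal support exponents, this forces $\pi$ to be tempered; more precisely, generic representations of Arthur type are tempered, by the unitarity argument of \cite{HLL24}. Hence $\phi_\pi$ is tempered and $\pi(\phi_\pi,\varepsilon)$ is $\mathfrak w$-generic. Now $\psi\in\Psi(\pi)$, so $\psi\le_O\psi^{max}(\pi)$ by Theorem \ref{thm psi max and min}, and the closure ordering result (Theorem \ref{thm phi pi > phi psi} together with Theorem \ref{thm O implies C}) gives $\phi_\pi\ge_C\phi_{\psi^{max}(\pi)}\ge_C\phi_\psi$. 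Since $\phi_\pi$ is tempered, $C_{\phi_\pi}$ is the open orbit in $V_\lambda$. We then show $\psi^{max}(\pi)=\psi^{min}(\pi)$ is tempered as follows. Take $\EE$ with $\pi_\com(\EE)=\pi$. By Algorithm \ref{algo pi_com}, $\pi$ tempered means that $\psi_{\EE^{|max|}}$ is tempered, since otherwise a nontrivial segment $\Delta_\rho[B_j,-A_j]$ with $B_j<A_j$ would appear in the $L$-data. So $\psi^{max}(\pi)$ is tempered.

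\textbf{Main obstacle.} It remains to show that no raising-operator inverse applies to $\EE^{|max|}$ when $\pi$ is generic, i.e.\ that $\Psi(\pi)=\{\psi^{max}(\pi)\}$. This is where genericity enters beyond temperedness, because tempered non-generic representations can lie in nontempered packets. We plan to transport the symplectic argument: the criterion of \cite{HLL24} characterizes the generic member of a tempered packet by the extended multi-segment $\{([A_i,A_i]_\rho,0,\varepsilon_i)\}$ with the sign pattern of the trivial character, and shows that this segment is absolutely minimal. For $\OO(W_n^+)$ we would verify this via $\EE^{\Sp}_\rho$ and Lemma \ref{lem EE{Sp}}(3), checking that the trivial character on $\phi_\pi$ corresponds to the symplectic generic pattern, possibly up to $\otimes\det$, which is harmless by the remark above. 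Given absolute minimality, Theorem \ref{thm O} shows that $\psi=\psi^{max}(\pi)$ is tempered. The delicate point is matching Whittaker normalizations between $\OO(W_n^+)$ and the auxiliary symplectic groups, i.e.\ checking that the added segments in $\EE^{\Sp}_\rho$ do not change which sign pattern is minimal.
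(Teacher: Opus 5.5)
\textbf{Gap: generic $\Rightarrow$ tempered.} Your reduction to good parity and the direction tempered $\Rightarrow$ generic are fine; the paper cites \cite[Proposition 10.4]{CZ21} for the $\mathfrak{w}^+$-generic member $\pi(\psi,\mathrm{triv})$ of $\OO(W_n^+)$. The gap is the step you dispose of in one sentence: that a generic $\pi$ of Arthur type and good parity is tempered.

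The standard module conjecture only says that $M(\pi)$ is irreducible. Neither unitarity nor ``bounded exponents'' gets you from there to temperedness. Complementary series are unitary, generic and non-tempered, with irreducible standard modules, and they lie in packets $\Pi_\psi$ for $\psi\in\Psi^+(G_n)$. So the argument must use the good-parity Arthur-type structure of $\pi$ to show that, when $\pi$ is non-tempered, $M(\pi)$ has a generic constituent other than $\pi$. Pointing to ``the unitarity argument of \cite{HLL24}'' does not supply this, and it is the main content of the proof.

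The paper proceeds in three steps.
\begin{enumerate}
\item It reduces to one non-tempered segment. The truncation $L(\Delta_{\rho_f}[x_f,-y_f];\pi(\phi,\varepsilon))$ is again of Arthur type by Algorithm \ref{algo Arthur type pi^-}, and non-genericity propagates along $\pi\hookrightarrow\Delta_{\rho_1}[x_1,-y_1]\times\cdots\times\Delta_{\rho_{f-1}}[x_{f-1},-y_{f-1}]\rtimes\pi'$ by Rodier heredity.
\item If $\pi$ were generic, its tempered part would be generic, so $\Psi(\pi(\phi,\varepsilon))=\{\phi\}$. Theorem \ref{thm algo for Arthur type} and Definition \ref{def criterion for Arthur type} then leave only two cases: $y-x=1$, or $y-x=2$ with $\rho\otimes S_{2x+3}\subset\phi$.
\item Highest-derivative computations from \cite[Appendix C]{AGIKMS24} show that $M(\pi)$ contains the generic tempered representation $\pi(\phi+\rho\otimes S_{2x+3}+\rho\otimes S_{2x+1},\mathrm{triv})$, respectively $\pi(\phi+\rho\otimes S_{2x+5}+\rho\otimes S_{2x+1},\mathrm{triv})$. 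Since the Whittaker space of $M(\pi)$ is one-dimensional, $\pi$ cannot be generic.
\end{enumerate}

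\textbf{Logical order and the ``main obstacle''.} The singleton statement for tempered generic representations is an \emph{input} to the non-tempered step, and it is not the delicate point you make it. By Step 1 of Algorithm \ref{algo pi_com} and Theorem \ref{thm theta=com}, $\pi(\phi,\mathrm{triv})=\pi_{\com}(\EE)$ with $\EE=\cup_\rho\{([A_i,A_i]_\rho,0,1)\}$, directly on $\OO(W_n)$. Since all $\eta_i=1$, one has $\epsilon=1$ for every adjacent pair. This rules out the type-3 union-intersections, which are exactly what put $\pi(\phi,\varepsilon)$ with alternating $\varepsilon$ into non-tempered packets. One checks that no raising operator or inverse applies on any member of $[\EE]$, and Theorem \ref{thm O} gives $\Psi(\pi)=\{\phi\}$. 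No detour through $\EE^{\Sp}_\rho$ or comparison of Whittaker data with symplectic groups is needed. Your closure-ordering remarks also contribute nothing: $\phi_\pi$ tempered, with open orbit, is compatible with non-tempered $\psi\in\Psi(\pi)$.
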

{
\begin{proof}
   If $\psi$ is tempered, then $\Pi_\psi$ has a $\mfr{w}^+$-generic member $\pi(\psi, \textrm{triv})$ by \cite[Proposition 10.4]{CZ21}.
    Thus it suffices to show the converse direction. In fact, we show the stronger statement that if $\pi$ is generic and of Arthur type, then $\Psi(\pi)$ is a singleton consisting of a tempered local Arthur parameter. Since $\pi$ is generic, we have that $\pi\otimes\det$ is also generic. Furthermore, we have that $\Psi(\pi)= \Psi(\pi \otimes \det)$ by Corollary \ref{cor extended multi-segment for det}. Hence, by replacing $\pi$ by $\pi\otimes\det$ if necessary, we may assume that $\pi$ is $\mathfrak{w}^+$-generic.  By Theorem \ref{thm red from nu to gp symp}, we may further assume that $\pi$ is of good parity.

    First, we consider the case that $\pi$ is tempered. In this case, we have $\pi =\pi(\phi, \textrm{triv})$, where $\phi$ is tempered by \cite[Proposition 10.4]{CZ21} again. Write the tempered $L$-parameter as
    \[ \phi= \bigoplus_{\rho} \bigoplus_{i \in I_{\rho}} \rho \otimes S_{2A_i+1}.\]
    We have $\pi= \pi_{\com}(\EE)$ where 
    \[ \EE= \cup_{\rho}\{ ([A_i,A_i]_{\rho},0, 1)\}_{i \in (I_{\rho},>)}.\]
    Then it is clear that there is no raising operator or inverse of raising operator applicable on any member of $[\EE].$ Therefore, Theorem \ref{thm O} implies that $\Psi(\pi)= \{\psi_{\EE}\}$.

    Next, we show that a generic representation of Arthur type and of good parity must be tempered. Equivalently, we show that if $\pi$ is of good parity and Arthur type of the form 
    \[ \pi=L(\Delta_{\rho_1}[x_1,-y_1],\ldots, \Delta_{\rho_f}[x_f,-y_f]; \pi(\phi,\varepsilon)),\]
    with $f \geq 1$, then $\pi$ is not generic. 
    
    We first reduce to the case that $f=1$ by considering the representation
 \[ \pi':= L(\Delta_{\rho_f}[x_f,-y_f]; \pi(\phi,\varepsilon)).\]
Indeed, it follows from Algorithm \ref{algo Arthur type pi^-} that $\pi'$ is of Arthur type. If the case that $f=1$ is verified, then $\pi'$ cannot be generic. Then since
\[ \pi \hookrightarrow \Delta_{\rho_1}[x_1,-y_1]\times \cdots\times  \Delta_{\rho_{f-1}}[x_{f-1},-y_{f-1}]\rtimes \pi', \]
we conclude that $\pi$ is not generic as well.

    Now we deal with the case that $f=1$. Theorem \ref{thm algo for Arthur type} implies that $\Psi(\pi^{\rho_1,-}; \Delta_{\rho_1}[x_1,-y_1],1)$ is non-empty. On the other hand, since $\pi^{\rho_1,-}:=\pi(\phi,\varepsilon)$ is generic, we have showed at the beginning of this proof that $\Psi(\pi^{\rho_1,-})$ is a singleton consisting of the tempered Arthur parameter $\phi$. Thus, it follows from the definition of $\Psi(\pi^{\rho_1,-}; \Delta_{\rho_1}[x_1,-y_1],1)$ that one of the following conditions must hold:
    \begin{itemize}
        \item [(i)] $y_1-x_1=1$. Namely, $\Delta_{\rho_1}[x_1,-y_1]= \Delta_{\rho_1}[x_1,-x_1-1]. $
        \item [(ii)] $y_1-x_1=2$ and $\phi$ contains at least one copy of $\rho \otimes S_{2x_1+3}$.
    \end{itemize}
    In the rest of the proof, we show that in both cases, the standard module $M(\pi)$ must contain a generic tempered representation. This implies that $\pi$ cannot be generic by counting the dimension of the Whittaker functional of $M(\pi)$.

In the following, we will use the formula for highest derivatives in \cite[Theorem C.4.3]{AGIKMS24}, and we refer the reader to \cite[\S C.2]{AGIKMS24} for the notation of derivatives.    To simplify the notation, we write $\rho:=\rho_1$, $x:=x_1$, and let $m_{z}$ denote the multiplicity of $\rho\otimes S_{2z+1}$ in $\phi$, for $z \in \{x,x+1,x+2\}$. Also, we assume that $\varepsilon= \textrm{triv}$ by replacing $\pi$ with $\pi \otimes \det$ if necessary.

For case (i), we consider the following tempered $L$-parameters
\begin{align*}
    \phi_1&:= \phi- (\rho \otimes S_{2x+3})^{\oplus m_{x+1}}+ (\rho \otimes S_{2x+1})^{\oplus m_{x+1}},\\
    \phi_2&:= \phi- (\rho \otimes S_{2x+3})^{\oplus m_{x+1}}+ (\rho \otimes S_{2x+1})^{\oplus (m_{x+1}+2)},\\
    \phi_3&:=\phi+ \rho \otimes S_{2x+3} +\rho \otimes S_{2x+1}.
\end{align*}
We have
\begin{align*}
    D_{\rho\lvert\cdot\rvert^{x+1}}^{(m_{x+1}+1)} (M(\pi))&= D_{\rho\lvert\cdot\rvert^{x+1}}^{(m_{x+1}+1)} (\Delta_{\rho}[x,-x-1] \rtimes \pi(\phi,\textrm{triv}))\\
    &=\Delta_{\rho}[x,-x] \rtimes D_{\rho\lvert\cdot\rvert^{x+1}}^{(m_{x+1})}(\pi(\phi,\textrm{triv}))\\
    &= \Delta_{\rho}[x,-x] \rtimes \pi(\phi_1, \textrm{triv})\\
    &= \pi(\phi_2, \textrm{triv}).
\end{align*}
is a highest derivative. Here the second equality follows from Tadi{\'c}'s formula (see \cite[Theorem C.1.1]{AGIKMS24}), the third equality follows from the highest derivative formula in \cite[Theorem C.4.3]{AGIKMS24}, and the fourth equality follows from the formula for tempered induction (see \cite[Proposition 4.2]{Ato22b} or the proof of \cite[Proposition 2.4.3]{Art13}). On the other hand, the highest derivative formula also gives 
\[  D_{\rho\lvert\cdot\rvert^{x+1}}^{(m_{x+1}+1)} ( \pi(\phi_3, \textrm{triv}))=\pi(\phi_2, \textrm{triv}). \]
    We conclude by \cite[Lemma C.2.2]{AGIKMS24} that the tempered representation $ \pi(\phi_3, \textrm{triv})$ must occur in $M(\pi)$.

For case (ii), we set 
    \begin{align*}
    \phi_1&:= \phi- (\rho \otimes S_{2x+5})^{\oplus m_{x+2}}+ (\rho \otimes S_{2x+3})^{\oplus m_{x+2}},\\
    \phi_2&:= \phi- (\rho \otimes S_{2x+5})^{\oplus m_{x+2}}- (\rho \otimes S_{2x+3})^{\oplus m_{x+1}} + (\rho \otimes S_{2x+1})^{\oplus (m_{x+2}+m_{x+1})},\\
    \phi_3&:=\phi_2+ (\rho\otimes S_{2x+1})^{\oplus 2},\\
    \phi_4&= \phi+ \rho \otimes S_{2x+5} +\rho \otimes S_{2x+1}.
\end{align*}
A similar argument shows that the following is a composition of highest derivatives.
\begin{align*}
    D_{\rho\lvert\cdot\rvert^{x+1}}^{(m_{x+1}+m_{x+2}+1)}\circ  D_{\rho\lvert\cdot\rvert^{x+2}}^{(m_{x+2}+1)} (M(\pi))&= D_{\rho\lvert\cdot\rvert^{x+1}}^{(m_{x+1}+m_{x+2}+1)}\circ  D_{\rho\lvert\cdot\rvert^{x+2}}^{(m_{x+2}+1)} (\Delta_{\rho}[x,-x-2] \rtimes \pi(\phi,\textrm{triv}))\\
    &=  D_{\rho\lvert\cdot\rvert^{x+1}}^{(m_{x+1}+m_{x+2}+1)}( \Delta_{\rho}[x,-x-1] \rtimes D_{\rho\lvert\cdot\rvert^{x+2}}^{(m_{x+2})} (\pi(\phi, \textrm{triv}))  )\\
     &= D_{\rho\lvert\cdot\rvert^{x+1}}^{(m_{x+1}+m_{x+2}+1)}( \Delta_{\rho}[x,-x-1] \rtimes \pi(\phi_1, \textrm{triv})  )\\
     &= \Delta_{\rho}[x,-x] \rtimes D_{\rho\lvert\cdot\rvert^{x+1}}^{(m_{x+1}+m_{x+2})} (\pi(\phi_1, \textrm{triv}))  )\\
     &= \Delta_{\rho}[x,-x] \rtimes \pi(\phi_2, \textrm{triv})  \\
     &=  \pi(\phi_3, \textrm{triv}). 
\end{align*}
    Then since 
    \[D_{\rho\lvert\cdot\rvert^{x+1}}^{(m_{x+1}+m_{x+2}+1)}\circ  D_{\rho\lvert\cdot\rvert^{x+2}}^{(m_{x+2}+1)} (\pi(\phi_4,\textrm{triv}))= \pi(\phi_3, \textrm{triv}),\]
    we conclude that $\pi(\phi_4, \textrm{triv})$ must occur in $M(\pi)$. This completes the proof of the theorem.
\end{proof}

}

\begin{remark}
The enhanced Shahidi conjecture also holds for $\Psi^+(G_n)$: a local Arthur parameter $\psi\in \Psi^+(G_n)$ is generic, meaning that it is trivial on the Arthur-$\SL_2(\BC)$, if and only if $\Pi_\psi$ contains a generic member. This is a direct consequence of Theorems \ref{thm red from nu to gp symp} and \ref{thm enhanced Shahidi}. This is the analogue of a known result for split symplectic and odd special orthogonal groups (\cite[Theorem 4.7]{HLL24}).
\end{remark}

\section{Weak local Arthur packet}\label{sec weak local Arthur packet}
In this section, we prove the conjecture on weak local Arthur packets in \cite{CMO24} for $\OO(W_n)$. Moreover, we give a framework to verify this conjecture for general groups, assuming some desiderata for local Arthur packets.

\subsection{The conjecture and the relevant results}
Throughout the section, we let $G$ be a connected reductive algebraic group defined over $F$, which is a pure inner form of a connected split group. 

Let $\Pi_{\unip}(G)\subseteq \Pi(G)$ denote the subset of representations of unipotent cuspidal support defined in \cite{Lus95}.  
Kazhdan and Lusztig (\cite{KL87, Lus95, Lus02}) defined an enhanced local Langlands correspondence on unipotent representations of $G$, which we refer the reader to \cite[Theorem 4.1.1]{CMO25} for details:
\begin{align}\label{eq Lusztig LLC}
    \LLC^{\Lus}: \Pi_{\unip}(G) \to \Phi_{unr}^{\mfr{e}}(G),\ \ \ \pi \mapsto( \phi_{\pi}, \varepsilon_{\pi})
\end{align}
where $\Phi_{unr}^{\mfr{e}}(G)$ is the set of pairs $(\phi,\varepsilon)$, where $\phi$ is an unramified $L$-parameter of $G$, i.e., $\phi$ is trivial on the inertia subgroup, and $\varepsilon\in \mathrm{Irr}(\pi_0(A_\phi))$ where $A_\phi=\pi_0(Z_{\widehat{G}(\BC)}(\mathrm{im}(\phi)))$. We denote the $L$-packet for unramified $L$-parameter $\phi$ by $\Pi_{\phi}(G)=\{\pi \in \Pi_{\unip}(G)\ | \ \phi_{\pi}=\phi \}$ as usual. We set 
\[\Pi_{\lambda_{\phi}}(G):= \{ \pi \in \Pi_{\unip}(G) \ | \ \lambda_{\phi_{\pi}}=\lambda_{\phi}\}.\]

Let $\Phi_{unr,\R}(G)$ be the set of unramified $L$-parameters such that $\lambda_{\phi}(\Frob)$ is hyperbolic and set $\Pi_{\unip,\R}(G):= \bigsqcup_{\phi \in \Phi_{unr,\R}(G)} \Pi_{\phi}(G).$ 
The main result of \cite{CMO25} computes the geometric wavefront set $\WF(\pi)$ for $\pi \in \Pi_{\unip,\R}(G)$.

\begin{thm}[{\cite[Theorem 1.4.1]{CMO25}}]\label{thm CMO WF formula}
  Let $G$ be a connected reductive algebraic group defined over $F$, which is an inner form of a connected split group. Assume that $p$ is sufficiently large. Then for $\pi \in \Pi_{\unip,\R}(G)$, we have 
  \[\WF(\pi)=\{d_{BV}(\cO_{\phi_{\mathsf{AZ}({\pi})}})\}.\]
  Here $d_{BV}$ is the Barbasch-Vogan duality (\cite{BV85}).
\end{thm}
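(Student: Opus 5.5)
This theorem is quoted from \cite[Theorem 1.4.1]{CMO25}; it is not proved in this paper. So the plan below reconstructs the strategy of that work rather than an argument available from earlier sections here.

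The plan is to reduce the computation of $\WF(\pi)$ to an unramified Iwahori-type (graded Hecke algebra) setting, where it becomes a problem about Kazhdan--Lusztig data. It has four steps.

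First, use the Moy--Prasad/Barbasch--Moy local character expansion. For $p\gg0$ and $\pi$ of depth zero (unipotent cuspidal support), the leading terms of the expansion are controlled by the parahoric restrictions $\pi^{U_x}$ for maximal parahorics $K_x$. These are representations of finite reductive groups. The wavefront set is then the maximum, over vertices $x$, of the Lusztig--Spaltenstein lifts of the finite-group wavefront sets of $\pi^{U_x}$.

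Second, compute these parahoric restrictions through the Kazhdan--Lusztig parametrization \eqref{eq Lusztig LLC}. The parahoric fixed vectors of $\pi$ form a module over the Iwahori--Hecke algebra, or its unipotent analogue. Its restriction to the finite Hecke algebra at $x$ is computed by the Lusztig/Kato Springer-type decomposition: it is given by $H^\bullet$ of Springer fibres attached to $(\phi_\pi,\varepsilon_\pi)$, twisted by Aubert--Zelevinsky duality, which acts by tensoring with sign on the Weyl group side. This explains why $\mathsf{AZ}(\pi)$ enters. The real infinitesimal character assumption ($\lambda(\Frob)$ hyperbolic) lets one pass to the graded Hecke algebra via Lusztig's reduction. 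There the answer is expressed in terms of the nilpotent orbit $\cO_{\phi_{\mathsf{AZ}(\pi)}}$ and the Springer correspondence for the pseudo-Levi subgroups corresponding to the vertices.

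Third, assemble the vertex-wise answers. Show that the maximum over $x$ of the lifts equals $d_{BV}(\cO_{\phi_{\mathsf{AZ}(\pi)}})$, using Achar's/Sommers' description of $d_{BV}$ as a maximum of saturations over pseudo-Levis (the Sommers duality). This is also what yields uniqueness of the maximal orbit, and hence the singleton $\WF(\pi)$.

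The main obstacle is the second step: identifying exactly which finite-group unipotent representations appear in $\pi^{U_x}$ and proving that their wavefront sets are bounded by, and attain, the predicted orbit. I would first try the tempered or real-central-character case via Kato's exotic geometry and then deform, but the matching with Barbasch--Vogan duality is the genuinely delicate point.
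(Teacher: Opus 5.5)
The paper gives no proof of this statement. It is imported verbatim from \cite[Theorem 1.4.1]{CMO25} and used only as an input in \S\ref{sec weak local Arthur packet}. Your opening remark therefore already matches what the paper does, and nothing more is needed here.

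The four-step reconstruction you add, however, is not a proof, and its decisive step is left open. Write $L_x$ for the reductive quotient of $K_x$. What must be shown is that for every vertex $x$ and every constituent $\sigma$ of $\pi^{U_x}$,
\[
\mathrm{Sat}^{G}_{L_x}\WF(\sigma)\le d_{BV}(\cO_{\phi_{\mathsf{AZ}(\pi)}}),
\]
with equality at some vertex. Step~3 only names Sommers' and Achar's dualities; it neither states nor proves such a bound. The tools in Step~2 also do not reach it, for two reasons:
\begin{enumerate}
\item $H^\bullet$ of a Springer fibre gives the $W$-structure of the \emph{standard} module, not of its irreducible quotient. For the irreducible quotient you only know that every $W$-type is Springer-attached to an orbit whose closure contains the parameter's orbit, together with a lowest-$W$-type statement.
\item Lusztig's reduction yields a graded Hecke algebra whose finite part is $\mathbb{C}[W]$ for the Weyl group of a special vertex. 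At non-special vertices, $\mathcal{H}(W_x)$ involves the affine simple reflection, hence translation elements in the Bernstein presentation, and is not carried into $\mathbb{C}[W]$. Without a further argument, which you do not give, $\pi^{U_x}$ is left uncomputed at exactly the vertices the upper bound must control.
\end{enumerate}

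Several ingredients are also misidentified.
\begin{enumerate}
\item The Barbasch--Moy lift in Step~1 (via DeBacker's parametrization) is not Lusztig--Spaltenstein induction. Geometrically it is saturation from the pseudo-Levi subgroup of $G$ with the root datum of $L_x$.
\begin{itemize}
\item At non-special vertices of a split group this pseudo-Levi has the same semisimple rank as $G$ and is not a Levi subgroup, so induction is not even defined.
\item Where induction is defined it gives the wrong answer, since the lift always sends the zero orbit to $0$. For $G=D^\times$ (an inner form of $\mathrm{GL}_2$), the reductive quotient at the unique vertex is a torus. Induction would return the regular orbit instead of $\WF(\mathrm{triv})=\{0\}$.
\item Your Step~3 uses saturations, so Steps~1 and~3 are inconsistent as written.
\end{itemize}
\item Step~2 treats only Iwahori-spherical $\pi$; there your sign twist is the Iwahori--Matsumoto involution. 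The theorem covers all of $\Pi_{\unip,\R}(G)$ for inner forms of split groups, where $\pi^{U_x}$ has constituents outside the principal series of $L_x(\mathbb{F}_q)$. That case needs:
\begin{itemize}
\item Lusztig's classification of unipotent representations via affine Hecke algebras with unequal parameters, attached to cuspidal unipotent representations of parahorics;
\item the generalized Springer correspondence;
\item Lusztig's results on unipotent supports and wavefront sets for finite reductive groups.
\end{itemize}
The phrase ``or its unipotent analogue'' hides most of the actual work.
\item Kato's exotic geometry is specific to the three-parameter affine Hecke algebra of type $C_n^{(1)}$. It offers no route to the general statement.
\end{enumerate}
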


Now we define the weak local Arthur packet. 
\begin{defn}\label{def weal LAP}
    Let $\cO$ be any nilpotent orbit of $\widehat{G}(\BC)$. 
    \begin{enumerate}
        \item We let $\phi_{\cO}$ be the tempered $L$-parameter such that $\phi_{\cO}|_{W_F}$ is trivial and $\cO_{\phi_{\cO}}= \cO$. Then we set $\lambda_{\cO}:= \lambda_{\phi_{\cO}}$.
        \item The weak local Arthur packet associated with $\cO$ is defined by
        \[ \Pi_{\cO}^{\weak}(G):= \{ \pi \in \Pi_{\lambda_{\cO}}(G)\ | \ \WF(\pi) =\{d_{BV}(\cO)\} \}.\]
    \end{enumerate}
\end{defn}
\begin{remark}
    In \cite[(3.1.1)]{CMO24}, the weak local Arthur packet $\Pi_{\cO}^{\weak}(G)$ is defined by the collection of $\pi \in \Pi_{\lambda_{\cO}}(G)$ such that $\WF(\pi) \leq d_{BV}(\cO)$, instead of the equality. However, Theorem \ref{thm CMO WF formula} implies that for any $\pi \in \Pi_{\lambda_{\cO}}(G)$, we have $\WF(\pi) \geq d_{BV}(\cO)$, so   
    the two definitions agree.
\end{remark}

Here is the main conjecture we will study in this section.

\begin{conj}\label{conj weak local A-packets}
    Assume that there is a theory of local Arthur packets for $G$. Then $ \Pi_{\cO}^{\weak}(G)$ is a union of local Arthur packets of $G$.
\end{conj}

The conjecture is proved for split $\SO_{2n+1}(F)$ and $\Sp_{2n}(F)$ (when $p \gg 0$) by \cite{LL24} and \cite{GO24} independently. It is also proved for the minimal distinguished nilpotent orbit for $F_4$ in \cite{BL25}, with local Arthur packets replaced by ABV-packets of $L$-parameters of Arthur type, and assuming that Aubert-Zelevinsky involution can be described by Fourier transform.

\subsection{Desiderata for local Arthur packets and ABV packets}

With Lusztig's local Langlands correspondence \eqref{eq Lusztig LLC}, we let $\Pi_{\phi}^{\ABV}(G)$ denote the ABV-packets defined in \cite{CFMMX22}. We recall some properties of these packets.

\begin{prop}\label{prop ABV}
    Let $\phi$ be an unramified $L$-parameter of $G$.
    \begin{enumerate}
        \item [(a)] We have $\Pi_{\phi}(G) \subseteq \Pi_{\phi}^{\ABV}(G) \subseteq \Pi_{\lambda_{\phi}}(G)$.
        \item [(b)] If $\pi \in \Pi_{\phi}^{\ABV}(G)$, then $\phi_{\pi}\geq_C \phi$.
    \end{enumerate}
\end{prop}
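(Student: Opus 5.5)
Both parts of Proposition \ref{prop ABV} are general facts about ABV-packets of \cite{CFMMX22}. I would prove them by unwinding the geometric definition and citing the relevant results there, transported to unipotent representations via Lusztig's correspondence \eqref{eq Lusztig LLC}.

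First I recall the definition. Let $\lambda=\lambda_\phi$. Lusztig's correspondence, combined with the bijection $\Phi_\lambda(G)\to V_\lambda/H_\lambda$, identifies $\Pi_{\lambda}(G)$ with the set of simple $H_\lambda$-equivariant perverse sheaves $\mathcal{P}(C,\mathcal{L})$ on $V_\lambda$. Here $C$ runs over $H_\lambda$-orbits and $\mathcal{L}$ over suitable equivariant local systems on $C$. The representation $\pi$ corresponds to $\mathcal{P}(C_{\phi_\pi},\mathcal{L}_{\varepsilon_\pi})$. By definition,
\[
\Pi^{\ABV}_\phi(G)=\{\pi\in\Pi_\lambda(G)\ |\ \mathrm{Evs}_{C_\phi}\,\mathcal{P}(C_{\phi_\pi},\mathcal{L}_{\varepsilon_\pi})\neq 0\},
\]
where $\mathrm{Evs}_{C_\phi}$ is the vanishing cycles functor along the conormal bundle of $C_\phi$, restricted to its regular part.

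The inclusion $\Pi^{\ABV}_\phi(G)\subseteq\Pi_{\lambda_\phi}(G)$ in (a) holds by construction, since only perverse sheaves on $V_\lambda$ are considered. For $\Pi_\phi(G)\subseteq\Pi^{\ABV}_\phi(G)$, I take $\pi$ with $C_{\phi_\pi}=C_\phi$. Then $\mathcal{P}(C_\phi,\mathcal{L})$ restricted to $C_\phi$ is $\mathcal{L}[\dim C_\phi]$. Its microlocalization at a generic conormal covector of $C_\phi$ is the local system $\mathcal{L}$ itself, up to shift and a twist by a rank one local system. This is \cite[Proposition 7.13 / \S7.10]{CFMMX22}, and it is nonzero. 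Hence $\pi\in\Pi^{\ABV}_\phi(G)$.

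For (b), suppose $\mathrm{Evs}_{C_\phi}\mathcal{P}(C_{\phi_\pi},\mathcal{L})\neq0$. The characteristic cycle of $\mathcal{P}(C_{\phi_\pi},\mathcal{L})$ is supported in the union of conormal bundles $T^*_{C'}V_\lambda$ with $C'\subseteq\overline{C_{\phi_\pi}}$, because the sheaf is supported on $\overline{C_{\phi_\pi}}$. Nonvanishing at a regular conormal point of $C_\phi$ therefore forces $C_\phi\subseteq\overline{C_{\phi_\pi}}$. By Definition \ref{def C ordering}, this is exactly $\phi_\pi\geq_C\phi$.

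The main point requiring care is the identification of Lusztig's parametrization with the geometric one used in \cite{CFMMX22}, namely the Vogan-variety version of Lusztig's correspondence for unipotent representations (\cite{Lus95}). I would simply cite this compatibility. I would also check that for $G$ a pure inner form the relevant $H_\lambda$-equivariant sheaves are the ones attached to the correct pure inner form, which is built into the ABV framework.
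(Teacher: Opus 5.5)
Your proposal is correct and matches the paper's approach. The paper gives no argument for this proposition: it recalls it as a standard property of the ABV-packets of \cite{CFMMX22}, transported to unipotent representations through Lusztig's correspondence. Your unwinding is the standard argument behind those facts: for (a), $\mathrm{Evs}_{C}\,\mathcal{P}(C,\mathcal{L})\neq 0$; for (b), $\mathrm{Evs}_{C_\phi}$ vanishes unless $C_\phi$ meets, and hence lies in, $\overline{C_{\phi_\pi}}$.
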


For general groups, local Arthur packets are not yet defined. Thus, in the following working hypothesis, we state the desiderata for local Arthur packets, together with one more assumption regarding ABV packets.

\begin{assu}\label{assu Arthur ABV}
    Let $G$ be a connected reductive algebraic group defined over $F$, which is a pure inner form of a connected quasi-split group.
    \begin{enumerate}
        \item [(a)]We assume that for each local Arthur parameter $\psi$ of $G$ whose restriction to the inertia subgroup is trivial, there exists a finite set $\Pi_{\psi}(G)$ with the following properties.
        \begin{enumerate}
            \item [(a1)]We have $\Pi_{\phi_\psi}(G) \subseteq \Pi_{\psi}(G) \subseteq \Pi_{\lambda_{\phi_{\psi}}}(G)$.
            \item [(a2)] If $\pi \in \Pi_{\psi}(G)$, then $\phi_{\pi}\geq_C \phi_{\psi}$.
            \item [(a3)] We have $\Pi_{\widehat{\psi}}(G)=\{\mathsf{AZ}({\pi})\ | \ \pi \in \Pi_{\psi}(G) \}$.
        \end{enumerate}
         \item [(b)]We assume that the ABV-packets for unramified $L$-parameters satisfy the property that
    \[ \Pi_{\widehat{\phi}}^{\ABV}(G)=\{\mathsf{AZ}({\pi})\ | \ \pi \in \Pi_{\phi}^{\ABV}(G)\},\]
    where $\widehat{\phi}$ is the Pyasetskii involution of $\phi$.
    \end{enumerate}
   
\end{assu}

\begin{remark}\label{rmk assum ABV arthur}\ 
    \begin{enumerate}
        \item For split $\Sp_{2n}(F)$, $\SO_{2n+1}(F)$,  and pure inner forms of $\OO_{2n}(F)$, if we define
        \[ \Pi_{\psi}(G):= \{ \pi_{\com}(\EE)\ | \ \psi_{\EE}=\psi\}{\smallsetminus} \{0\},\]
        then Working Hypothesis \ref{assu Arthur ABV}(a) is known (with obvious modification for the disconnected group $\OO_{2n}(F)$). Indeed, for $\Sp_{2n}(F)$, see Theorem \ref{thm Sp} (which also holds for split $\SO_{2n+1}(F)$). For $\OO(W_n)$, (a1) follows from Theorem \ref{thm in L-packet} and the definition of $\pi_{\com}$, (a2) is proved in Theorem \ref{thm phi pi > phi psi}, and (a3) follows from Proposition \ref{prop dual is Aubert dual} and Theorem \ref{thm theta=com}.        
        For pure inner forms of $\SO_{2n+1}(F)$, we expect the same holds. See \S \ref{sec meta}. 
        \item Part (b) of Working Hypothesis \ref{assu Arthur ABV} holds if one can relate the Aubert-Zelevinsky involution on $\Pi_{\unip}(G)$ with Fourier transform on the corresponding perverse sheaves. See \cite[\S 10.3.4]{CFMMX22}.
    \end{enumerate}
\end{remark}

\subsection{The ABV-packets version of weak local Arthur packets conjecture}
In this subsection, we prove the following ABV-packets version of the weak local Arthur packets conjecture under Working Hypothesis \ref{assu Arthur ABV}. We first give some notation.

\begin{defn}
    For each nilpotent orbit $\cO$ of $\widehat{G}(\BC)$, we define
    \[\Phi_{\cO}(G)= \{ \phi \in \Phi_{\lambda_{\cO}}(G)\ | \ d_{BV}(\cO_{\phi})= d_{BV}(\cO) \}.\]
\end{defn}

Let $\widehat{\Pi}_{\phi}(G)=\{\mathsf{AZ}(\pi)\ | \ \pi \in \Pi_{\phi}(G)\}$.

\begin{prop}\label{prop ABV weak LAP}
Assume Working Hypothesis \ref{assu Arthur ABV}(b) holds for $G$ and that $p$ is sufficiently large.
    The weak local Arthur packet $\Pi_{\cO}^{\weak}(G)$ is a union of ABV-packets. More precisely, we have 
    \[  \Pi_{\cO}^{\weak}(G)= \bigcup_{\phi\in \Phi_{\cO}(G)} \widehat{\Pi}_{\phi}(G)=\bigcup_{\phi\in \Phi_{\cO}(G)} \Pi_{\widehat{\phi}}^{\ABV} (G).\]
\end{prop}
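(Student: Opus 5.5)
The plan is to prove the two equalities in turn. The first comes from the wavefront set formula of Theorem \ref{thm CMO WF formula} together with the fact that $\mathsf{AZ}$ preserves the infinitesimal parameter. The second then follows from Working Hypothesis \ref{assu Arthur ABV}(b), applied after a rewriting of the union.

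For the first equality, let $\pi\in\Pi_{\lambda_\cO}(G)$. Since $\lambda_\cO(\Frob)$ is hyperbolic, $\pi\in\Pi_{\unip,\R}(G)$. Theorem \ref{thm CMO WF formula} then gives
\[\WF(\pi)=\{d_{BV}(\cO_{\phi_{\mathsf{AZ}(\pi)}})\}.\]
Hence $\pi\in\Pi^{\weak}_\cO(G)$ if and only if $d_{BV}(\cO_{\phi_{\mathsf{AZ}(\pi)}})=d_{BV}(\cO)$.

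Next use that $\mathsf{AZ}$ preserves cuspidal support, so $\lambda_{\phi_{\mathsf{AZ}(\pi)}}=\lambda_{\phi_\pi}=\lambda_\cO$. It follows that $\phi:=\phi_{\mathsf{AZ}(\pi)}\in\Phi_\cO(G)$ and $\pi=\mathsf{AZ}(\mathsf{AZ}(\pi))\in\widehat{\Pi}_\phi(G)$. Conversely, if $\pi=\mathsf{AZ}(\sigma)$ with $\sigma\in\Pi_\phi(G)$ and $\phi\in\Phi_\cO(G)$, then $\pi\in\Pi_{\lambda_\cO}(G)$. Its wavefront set is $\{d_{BV}(\cO_\phi)\}=\{d_{BV}(\cO)\}$, so $\pi\in\Pi^{\weak}_\cO(G)$. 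Here I use that $\mathsf{AZ}$ is an involution on $\Pi_{\unip}(G)$, which preserves unipotent cuspidal support.

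For the second equality, Working Hypothesis \ref{assu Arthur ABV}(b) gives
\[\Pi^{\ABV}_{\widehat\phi}(G)=\mathsf{AZ}\bigl(\Pi^{\ABV}_\phi(G)\bigr)\supseteq\widehat{\Pi}_\phi(G),\]
using Proposition \ref{prop ABV}(a). This yields the inclusion $\subseteq$.

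For $\supseteq$, take $\pi=\mathsf{AZ}(\sigma)$ with $\sigma\in\Pi^{\ABV}_\phi(G)$ and $\phi\in\Phi_\cO(G)$. By Proposition \ref{prop ABV}(b), $\phi_\sigma\geq_C\phi$, i.e. $\cO_{\phi_\sigma}\supseteq\cO_\phi$ after taking closures in $V_\lambda$; hence the nilpotent orbit $\cO_{\phi_\sigma}$ dominates $\cO_\phi$ in the closure order of $\widehat G$. Since $d_{BV}$ is order-reversing, $d_{BV}(\cO_{\phi_\sigma})\leq d_{BV}(\cO)$.

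On the other hand, $\pi\in\Pi_{\lambda_\cO}(G)$ by Proposition \ref{prop ABV}(a), and $\WF(\pi)=\{d_{BV}(\cO_{\phi_\sigma})\}$. The remark after Definition \ref{def weal LAP} (a consequence of Theorem \ref{thm CMO WF formula}) says that every member of $\Pi_{\lambda_\cO}(G)$ has wavefront set $\geq d_{BV}(\cO)$. Combining the two inequalities forces $d_{BV}(\cO_{\phi_\sigma})=d_{BV}(\cO)$. Therefore $\pi\in\Pi^{\weak}_\cO(G)$, which by the first equality lies in the union $\bigcup_{\phi\in\Phi_\cO(G)}\widehat\Pi_\phi(G)$.

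The main obstacle I expect is justifying that lower bound. Concretely, one needs that $\cO_{\phi'}\subseteq\overline{\cO}$ for every $\phi'\in\Phi_{\lambda_\cO}(G)$, which holds because $\cO=\cO_{\phi_\cO}$ is the regular orbit, i.e. the open $H_\lambda$-orbit in $V_{\lambda_\cO}$, so its $\widehat G$-saturation is maximal. I would justify this directly, since the orbit $\cO$ arises from a parameter trivial on $W_F$.
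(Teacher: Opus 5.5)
Your proposal is correct and follows essentially the same route as the paper. The first equality comes from Theorem \ref{thm CMO WF formula}, and the inclusion $\subseteq$ from Proposition \ref{prop ABV}(a) together with Working Hypothesis \ref{assu Arthur ABV}(b). The reverse inclusion comes from Proposition \ref{prop ABV}(b), using that $\phi_{\cO}$ is $\geq_C$-maximal in $\Phi_{\lambda_{\cO}}(G)$, which forces $\phi_\sigma\in\Phi_{\cO}(G)$.

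The only difference is packaging. The paper phrases this as upward-closedness of $\Phi_{\cO}(G)$ under $\geq_C$ and uses $\phi_{\cO}\geq_C\phi_2$ without comment. You instead route it through the wavefront-set lower bound and the first equality, and you supply the justification the paper leaves implicit: $\cO$ gives the open $H_{\lambda_{\cO}}$-orbit in $V_{\lambda_{\cO}}$.
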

\begin{proof}
    The first equation is a direct consequence of Theorem \ref{thm CMO WF formula}. Now we show the second equality. By Proposition \ref{prop ABV}(a) and Working Hypothesis \ref{assu Arthur ABV}(b), we have 
    \[ \widehat{\Pi}_{\phi}(G) \subseteq \{ \mathsf{AZ}(\pi)\ | \ \pi \in \Pi_{\phi}^{\ABV}(G) \}= \Pi_{\widehat{\phi}}^{\ABV}(G).\]
    This shows the left hand side is contained in the right hand side.
    
    Next, we observe that if $\phi_{1}, \phi_2 \in \Phi_{\lambda_{\cO}}(G)$ satisfies that
    $\phi_1 \in \Phi_{\cO}(G)$ and $\phi_2 \geq_{C} \phi_1$, then $\phi_2 \in \Phi_{\cO}(G)$ as well. Indeed, from $\phi_{\cO} \geq_C \phi_2 \geq_{C} \phi_1$, we see that $\cO=\cO_{\phi_{\cO}} \geq \cO_{\phi_2} \geq \cO_{\phi_1}$. Then, taking Barbasch-Vogan dual, which is order reversing, we obtain that 
    \[d_{BV}(\cO)\leq d_{BV}( \cO_{\phi_2}) \leq d_{BV}(\cO_{\phi_1})= d_{BV}(\cO).\]
    Thus, the inequalities above are all equalities. This verifies that $\phi_2 \in \Phi_{\cO}(G)$. Now by this observation, Proposition \ref{prop ABV} implies that for any $\phi \in \Phi_{\cO}(G)$, we have 
    \[ \Pi_{\phi}^{\ABV}(G) \subseteq \bigcup_{\phi'\geq_C \phi} \Pi_{\phi'}(G) \subseteq \bigcup_{\phi'\in \Phi_{\cO}(G)} \Pi_{\phi'}(G). \]
    Taking Aubert-Zelevinsky involution on the above inclusion,   Working Hypothesis \ref{assu Arthur ABV}(b) implies that 
    \[\Pi_{\widehat{\phi}}^{\ABV} (G) \subseteq \bigcup_{\phi'\in \Phi_{\cO}(G)} \widehat{\Pi}_{\phi'}(G). \]
This proves the other direction of the containment for the second equality and completes the proof of this proposition.
\end{proof}

\subsection{Proof of Weak local Arthur packet conjecture for even orthogonal group}
In this subsection, we prove Conjecture \ref{conj weak local A-packets} assuming the following conjecture and Working Hypothesis \ref{assu Arthur ABV}(a) for desiderata of local Arthur packets.

\begin{conj}\label{conj weak LAP Lpar}
    The set $\Phi_{\cO}(G)$ consists of $L$-parameters of Arthur type.
\end{conj}
The conjecture holds for $\Sp_{2n}, \SO_{2n+1}$ and $\OO_{2n}$ (\cite[Proposition 1.6]{LL24}). Also, it holds for the minimal distinguished nilpotent orbit of $\mathrm{F}_4$ (\cite{BL25}). It is a work in progress of the authors to verify the conjecture in general.

\begin{thm}
Let $G$ be a pure inner form of a split group.
    Suppose that there is a theory of local Arthur packets for $G$ satisfying Working Hypothesis \ref{assu Arthur ABV}(a).
    Assume that Conjecture \ref{conj weak LAP Lpar} holds and $p \gg 0$. Then Conjecture \ref{conj weak local A-packets} holds. Moreover, we have 
    \[ \Pi_{\cO}^{\weak}(G)= \bigcup_{ \{ \psi \ | \ \phi_{\psi}\in \Phi_{\cO}(G)\}} \Pi_{\widehat{\psi}}(G). \]
\end{thm}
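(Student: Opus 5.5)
The plan is to prove the displayed equality
\[ \Pi_{\cO}^{\weak}(G)= \bigcup_{ \{ \psi \ | \ \phi_{\psi}\in \Phi_{\cO}(G)\}} \Pi_{\widehat{\psi}}(G); \]
Conjecture \ref{conj weak local A-packets} follows from it at once. The starting point is the first equality of Proposition \ref{prop ABV weak LAP}, namely $\Pi_{\cO}^{\weak}(G)=\bigcup_{\phi\in\Phi_{\cO}(G)}\widehat{\Pi}_{\phi}(G)$. This is a direct consequence of Theorem \ref{thm CMO WF formula} and does not use Working Hypothesis \ref{assu Arthur ABV}(b), so it is available for $p\gg0$. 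It also uses that every $\pi\in\Pi_{\lambda_\cO}(G)$ lies in $\Pi_{\unip,\R}(G)$, which holds because $\lambda_\cO(\Frob)$ is hyperbolic. I will prove the two inclusions separately.

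For ``$\subseteq$'': let $\pi\in\Pi_{\cO}^{\weak}(G)$. Then $\mathsf{AZ}(\pi)\in\Pi_{\phi}(G)$ for some $\phi\in\Phi_{\cO}(G)$. By Conjecture \ref{conj weak LAP Lpar}, $\phi=\phi_\psi$ for some local Arthur parameter $\psi$. This $\psi$ is trivial on inertia, since $\phi$ is unramified and the $\SL_2$'s are unaffected. By (a1), $\mathsf{AZ}(\pi)\in\Pi_{\phi_\psi}(G)\subseteq\Pi_\psi(G)$. By (a3) and the fact that $\mathsf{AZ}$ is an involution, $\pi\in\Pi_{\widehat\psi}(G)$ with $\phi_\psi\in\Phi_{\cO}(G)$.

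For ``$\supseteq$'': let $\psi$ satisfy $\phi_\psi\in\Phi_\cO(G)$, and let $\pi\in\Pi_{\widehat\psi}(G)$. By (a3), $\mathsf{AZ}(\pi)\in\Pi_\psi(G)$. By (a1), $\mathsf{AZ}(\pi)\in\Pi_{\lambda_{\phi_\psi}}(G)=\Pi_{\lambda_\cO}(G)$. By (a2), $\phi_{\mathsf{AZ}(\pi)}\geq_C\phi_\psi$. Now I reuse the upward-closure observation from the proof of Proposition \ref{prop ABV weak LAP}: $\Phi_\cO(G)$ is closed upward under $\geq_C$ inside $\Phi_{\lambda_\cO}(G)$, because closure ordering implies the orbit inequality and $d_{BV}$ reverses the order, while $\phi_\cO$ is the maximum. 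Hence $\phi_{\mathsf{AZ}(\pi)}\in\Phi_\cO(G)$, and therefore $\pi\in\widehat{\Pi}_{\phi_{\mathsf{AZ}(\pi)}}(G)\subseteq\Pi_\cO^{\weak}(G)$. I also need $\pi\in\Pi_{\lambda_\cO}(G)$, since $\Pi_\cO^{\weak}(G)$ is defined inside $\Pi_{\lambda_\cO}(G)$. This holds because $\mathsf{AZ}$ preserves the infinitesimal parameter (cuspidal support), and it is also implicit in the first equality.

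The main obstacle is the bookkeeping rather than any deep step. One point is to check that $\mathsf{AZ}$ preserves $\Pi_{\unip}(G)$ and the infinitesimal character, so that Theorem \ref{thm CMO WF formula} applies to all the representations involved. The other is the claim that $\phi_\cO$ is the $\geq_C$-maximum of $\Phi_{\lambda_\cO}(G)$, which I take from the proof of Proposition \ref{prop ABV weak LAP}: the orbit of $X_{\phi_\cO}$ is the open orbit in $V_{\lambda_\cO}$.
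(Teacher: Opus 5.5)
Your proposal is correct and is essentially the paper's own argument. The paper proves this theorem by rerunning the proof of Proposition \ref{prop ABV weak LAP} with $\Pi_{\phi}^{\ABV}(G)$ replaced by $\Pi_{\psi}(G)$ (where $\phi=\phi_\psi$ comes from Conjecture \ref{conj weak LAP Lpar}) and $\Pi_{\widehat{\phi}}^{\ABV}(G)$ replaced by $\Pi_{\widehat{\psi}}(G)$. As in your write-up, it uses (a1) and (a3) for one inclusion, and (a2) together with the upward closure of $\Phi_{\cO}(G)$ under $\geq_C$ for the other. Your additional checks are correct details that the paper leaves implicit: that $\psi$ is trivial on inertia, so $\Pi_\psi(G)$ is defined, and that the first equality needs only the wavefront set formula, not Working Hypothesis (b).
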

\begin{proof}
    The proof is identical to the proof of Proposition \ref{prop ABV weak LAP} after the following substitution. For each $\phi \in \Phi_{\cO}(G)$, we take a $\psi$ such that $\phi=\phi_{\psi}$ by Conjecture \ref{conj weak LAP Lpar}. Then we replace $\Pi_{\phi}^{\ABV}(G)$ with $\Pi_{\psi}$ and $\Pi_{\widehat{\phi}}^{\ABV}(G)$ with $\Pi_{\widehat{\psi}}$.
\end{proof}

Though Working Hypothesis \ref{assu Arthur ABV}(a) is stated for connected groups, the analogue for even orthogonal groups holds (see Remark \ref{rmk assum ABV arthur}(a)), and the above proof works. Note that the analogue of Theorem \ref{thm CMO WF formula} holds for pure inner forms of split $\OO_{2n}(F)$ since $\WF(\pi)$ is the same as $\WF(\pi|_{\SO_{2n}(F)})$.
\begin{cor}\label{cor weak LAP orthog}
    Conjecture \ref{conj weak local A-packets} holds for pure inner forms of split $\OO_{2n}(F)$ when $p \gg 0$.
\end{cor}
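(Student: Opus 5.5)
The plan is to apply the preceding theorem (the general statement under Working Hypothesis \ref{assu Arthur ABV}(a) and Conjecture \ref{conj weak LAP Lpar}) with $G=\OO(W_n^\pm)$. That requires three inputs, which I will check one at a time.

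First, I need the wavefront set formula of Theorem \ref{thm CMO WF formula} for $\OO(W_n)$. The argument is restriction to the identity component. For $\pi\in\Pi_{\unip,\R}(\OO(W_n))$, the restriction $\pi|_{\SO(W_n)}$ is a sum of one or two $\Sigma_0$-conjugate unipotent representations. Conjugation by $c$ preserves geometric wavefront sets, so all constituents share $\WF(\pi)$. The $L$-parameters of these constituents are $\Sigma_0$-conjugate, and $\OO_{2n}(\BC)$-orbits of nilpotents are exactly what $d_{BV}$ sees. Since $\mathsf{AZ}$ commutes with restriction to $\SO(W_n)$ (by \cite{CK26}), I get $\WF(\pi)=\{d_{BV}(\cO_{\phi_{\mathsf{AZ}(\pi)}})\}$ for $p\gg0$.

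Second, I need the disconnected analogue of Working Hypothesis \ref{assu Arthur ABV}(a), with $\Pi_\psi=\{\pi_\com(\EE)\mid \psi_\EE=\psi\}\smallsetminus\{0\}$. This is exactly Remark \ref{rmk assum ABV arthur}(a).
\begin{itemize}
\item Property (a1) follows from Theorem \ref{thm in L-packet}. The $L$-packet sits inside the Arthur packet because extended multi-segments of type $(L)$ exist, and every member has the same infinitesimal parameter by M{\oe}glin's cuspidal support result.
\item Property (a2) is Theorem \ref{thm phi pi > phi psi}, combined with Theorem \ref{thm O implies C}.
\item Property (a3) is Proposition \ref{prop dual is Aubert dual} together with Theorem \ref{thm theta=com}.
\end{itemize}
Third, Conjecture \ref{conj weak LAP Lpar} for $\OO_{2n}$ is \cite[Proposition 1.6]{LL24}.

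With these inputs, I rerun the proof of Proposition \ref{prop ABV weak LAP}. For each $\phi\in\Phi_\cO(G)$, choose $\psi$ with $\phi_\psi=\phi$.
\begin{itemize}
\item By the wavefront formula, $\Pi^{\weak}_\cO=\bigcup_{\phi\in\Phi_\cO}\widehat{\Pi}_\phi$.
\item By (a1) and (a3), $\widehat{\Pi}_\phi\subseteq\Pi_{\widehat\psi}$.
\item Conversely, (a2) gives $\Pi_\psi\subseteq\bigcup_{\phi'\ge_C\phi}\Pi_{\phi'}$. The observation in that proof shows $\Phi_\cO$ is upward closed under $\geq_C$, so these $\phi'$ all lie in $\Phi_\cO$. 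Applying $\mathsf{AZ}$ and (a3) then gives $\Pi_{\widehat\psi}\subseteq\Pi^{\weak}_\cO$.
\end{itemize}

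The main obstacle I expect is the compatibility of the closure ordering and $d_{BV}$ with the disconnected group. I will work with $H_\lambda$-orbits, as in Definition \ref{def C ordering}. Closure of $H_\lambda$-orbits still implies dominance of the associated $\OO_{2n}(\BC)$-orbits, which is all the upward-closedness argument uses.
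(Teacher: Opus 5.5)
Your proposal is correct and follows the paper's argument. You rerun the proof of Proposition \ref{prop ABV weak LAP} using three inputs: Remark \ref{rmk assum ABV arthur}(a) for the disconnected analogue of Working Hypothesis \ref{assu Arthur ABV}(a), \cite[Proposition 1.6]{LL24} for Conjecture \ref{conj weak LAP Lpar}, and the wavefront formula transported from $\SO_{2n}(F)$ via $\WF(\pi)=\WF(\pi|_{\SO_{2n}(F)})$. Your extra details only make explicit what the paper leaves implicit: pairing Theorem \ref{thm phi pi > phi psi} with Theorem \ref{thm O implies C} for (a2), compatibility of $\mathsf{AZ}$ with restriction, and working with $\OO_{2n}(\BC)$-orbits.
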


\section{Unitarity of representations of good parity}\label{sec unitary rep}
In \cite[Conjecture 1.2]{HJLLZ25}, we conjecture for classical groups that a good parity representation is unitary if and only if it is of Arthur type. A refined version for split symplectic and special odd orthogonal groups is proved in \cite[Theorem 3.1]{AM25}: if $\pi$ is unitary, then the good parity part $\pi_{\good}$ (see Definition \ref{defn good parity rep}) is of Arthur type. In this section, we
extend this result to even orthogonal groups $\OO(W_n)$ (Theorem \ref{thm unitary O}).

Recently, we learned from Chen and Zou that they also proved the following theorem including metaplectic groups and non-quasi-split special odd orthogonal groups, using global theta lifting (\cite{CZ26}).

\begin{thm}\label{thm unitary O}
    Let $\pi\in\Pi(\OO(W_n))$ be unitary. Then $\pi_\good$ is of Arthur type.
\end{thm}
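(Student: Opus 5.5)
We reduce to the symplectic case via the theta correspondence, following the introduction's indication that ``our algorithm for determining Arthur type reduces the problem to symplectic groups, which is solved in \cite{AM25}.''

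\textbf{Step 1: reduction to good parity.} Let $\pi\in\Pi(\OO(W_n))$ be unitary. The standard reduction of \cite{AM25} (based on Tadi\'c-type results) writes $\pi$ as an irreducible induction $\tau\rtimes\pi_{\good}$, where $\tau$ is an irreducible representation of a general linear group built from the non-good-parity part of $\phi_\pi$. We would like to deduce that $\pi_{\good}$ is unitary. The cleanest route is the one \cite{AM25} uses: unitarity of the induced representation forces unitarity of the inducing data via a Hermitian form and Jacquet module argument. We transport this argument verbatim to $\OO(W_n)$, which is possible because Theorem \ref{thm red from nu to gp symp} and Definition \ref{defn good parity rep} are available for $\OO(W_n)$. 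So we may assume $\pi=\pi_{\good}$ is unitary of good parity.

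\textbf{Step 2: theta lift to the symplectic group.} Take $\alpha\ge\alpha_0$ odd and consider $\sigma:=\theta_{-\alpha}(\pi)$ on $\Sp(V_{n+\frac{\alpha-1}{2}})$. For $\alpha$ in the stable range, this lift is nonzero, and the stable-range theta lift preserves unitarity (Li's result), so $\sigma$ is unitary. We then need to know $\sigma$ is of good parity. Its $L$-parameter is given by the explicit formulas of \cite{AG17a, BH21}: it equals $\chi_W\chi_V^{-1}\phi_\pi$ plus $\chi_W$-twisted pieces $\chi_W|\cdot|^{\pm k}$. These added pieces are of good parity because $\alpha$ is odd. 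By \cite[Theorem 3.1]{AM25}, $\sigma$ is of Arthur type, so $\sigma=\pi_{\Ato}(\FF)$ for some extended multi-segment $\FF$ of the symplectic group.

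\textbf{Step 3: descend back.} We need $\FF$ to have the form $\EE_\alpha$, so that $\pi=\pi_\theta(\EE)$ by Howe duality. This step is the main obstacle. The difficulty is that the Arthur parameter of $\sigma$ need not contain $\chi_W\otimes S_1\otimes S_\alpha$ a priori, and Proposition \ref{prop inverse theta} starts from a parameter of the shape $\psi_\alpha$.

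Our first attempt is to use the intersection theory of Theorem \ref{thm Sp}(5),(6) together with the explicit $L$-data of $\sigma$. Since $\alpha\gg0$, the $L$-data of $\sigma$ contains the segments $\Delta_{\chi_W}[\frac{\alpha-1}{2}-k,\dots]$ coming from the large Arthur block. Running Algorithm \ref{algo Arthur type pi^-} on $\sigma$, these huge segments are peeled off last and are far from the rest (in the sense of \cite{Xu21b}). This should force $\psi^{max}(\sigma)$ to contain $\chi_W\otimes S_1\otimes S_\alpha$, with the corresponding extended segment $([\frac{\alpha-1}{2},-\frac{\alpha-1}{2}]_{\chi_W},\frac{\alpha-1}{2},\epsilon)$ separable as the first row. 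We then write $\FF=\EE_\alpha$ for the remaining part $\EE$.

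As a fallback, we would argue directly with Algorithm \ref{algo Arthur type pi^-} on $\pi$ itself, using Theorem \ref{thm theta=com}. We would compare each step $\pi\mapsto\pi^{\rho,-}$ with the corresponding step for $\sigma$ via the explicit theta formulas, and show by induction that the sets $\Psi(\pi^{\rho,-};\Delta_\rho[x,-y],r)$ are nonempty whenever their symplectic counterparts are. Either way, once $\sigma=\pi_{\Ato}(\EE_\alpha)$, Definition \ref{def orthog rep of segment} and Howe duality give $\pi=\pi_\theta(\EE)$. Finally, Theorem \ref{thm Atobe reformulation orthog} shows $\pi\in\Pi_{\psi_\EE}$. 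Lemma \ref{lem Howe duality pure} rules out the possibility that the descent lands on the other pure inner form.
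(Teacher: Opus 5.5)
Your skeleton matches the paper's: a stable-range lift, \cite[Theorem A]{Li89} for unitarity, \cite[Theorem 3.1]{AM25}, then descent from a parameter containing $\chi_W\otimes S_1\otimes S_\alpha$, with Lemma \ref{lem Howe duality pure} excluding the other pure inner form. However, two steps are not actually established.

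\textbf{Step 1 is an unproved claim, and you do not need it.} Theorem \ref{thm red from nu to gp symp} and Definition \ref{defn good parity rep} say nothing about unitarity. ``Transporting verbatim'' an unspecified Hermitian-form/Jacquet-module argument does not show that unitarity of $\pi$ forces unitarity of $\pi_\good$ on $\OO(W_n)$. This is not a formality: for $W_n\in\mathcal{W}^-$ it would not even follow from the theorem you are proving, since Arthur type is not known to imply unitarity there. The paper bypasses this step entirely, as follows.
\begin{itemize}
\item Twist by $\det$ if necessary so that $\pi$ lies in the going-down tower. This is harmless because $(\pi\otimes\det)_\good=\pi_\good\otimes\det$ and by Corollary \ref{cor extended multi-segment for det}.
\item Lift $\pi$ itself, and read off from the explicit going-down formulas of \cite{AG17a, BH21} that $\theta_{-\alpha}(\pi)_\good=\theta_{-\alpha}(\pi_\good)$.
\item Apply \cite[Theorem 3.1]{AM25} in its refined form (unitary implies the good parity part is of Arthur type) to the unitary representation $\theta_{-\alpha}(\pi)$.
\end{itemize}

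\textbf{Step 3 stops at ``should force''.} Also, the huge segment is peeled off first, not last, under Definition \ref{def pi minus}. The missing input is Theorem \ref{thm pi_rho_minus}(c).
\begin{itemize}
\item By the same explicit formula, the $L$-data of $\sigma=\theta_{-\alpha}(\pi_\good)$ contains $\Delta_{\chi_W}[\frac{1-\alpha}{2},\frac{1-\alpha}{2}]$ exactly once.
\item For $\alpha\gg0$ it is the $\chi_W$-segment minimizing $x-y$, so $\sigma=\sigma^{\chi_W,-}+\{\Delta_{\chi_W}[\frac{1-\alpha}{2},\frac{1-\alpha}{2}]\}$ with $r=1$.
\item Hence $\psi^{max}(\sigma)$ contains exactly one copy of $\chi_W\otimes S_1\otimes S_\alpha$.
\end{itemize}
This is why the going-down normalization matters: it is what makes the $L$-data of $\sigma$ explicit.

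With this in hand, your descent goes through. The corresponding row has $l=\frac{\alpha-1}{2}$, forced by $(\ast)$, and it is first in any $(P')$ order. So $\FF=\EE_\alpha$ for an extended multi-segment $\EE$ of $\OO(W_m^{\pm})$. This is equivalent to the paper's use of Proposition \ref{prop inverse theta} together with Lemma \ref{lem Howe duality pure}, and your ``fallback'' is unnecessary.
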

\begin{proof}
Note that $(\pi\otimes\det)_\good=\pi_\good\otimes\det.$
    Also $\pi$ is unitary if and only if $\pi\otimes\det$ is unitary. Thus, without loss of generality, we assume that $\pi$ is the representation corresponding to the going-down tower (see \S\ref{subsec theta correspondence}).

    Since $\pi$ corresponds to the going-down tower, it follows from \cite[Theorem 4.3]{AG17a} and \cite[Theorem 6.7]{BH21} that if the non-tempered portion of the $L$-data of $\pi$ is $\{\Delta_{\rho_1}[x_1,y_1],\ldots, \Delta_{\rho_f}[x_f, y_f]\}$, then the non-tempered portion of the $L$-data of $\theta_{-\alpha}(\pi)$ is given by 
    \begin{align*}
       & \{\Delta_{ \chi_W \chi_{V}^{-1}\rho_1}[x_1,y_1],\ldots, \Delta_{\chi_W \chi_{V}^{-1}\rho_f}[x_f, y_f]\}\\
        +& \{\Delta_{\chi_W}[\frac{1-\alpha}{2},\frac{1-\alpha}{2}],\Delta_{\chi_W}[\frac{3-\alpha}{2},\frac{3-\alpha}{2}],\dots,\Delta_{\chi_W}[-1,-1]\}.
    \end{align*} 
Also, if the $L$-parameter of the tempered part of $\pi$ is $\phi_{temp}$, then the  $L$-parameter of the tempered part of $\theta_{-\alpha}(\pi)$ is $\chi_{W}\chi_V^{-1}\phi_{temp}+ \chi_W \otimes S_1$.   
    In particular, we have that $\theta_{-\alpha}(\pi)_\good=\theta_{-\alpha}(\pi_\good).$
    By taking $\alpha\gg 0$, we may assume that we are in stable range and so $\theta_{-\alpha}(\pi)$ is also unitary (\cite[Theorem A]{Li89}). From the symplectic case (\cite[Theorem 3.1]{AM25}), we obtain that $\theta_{-\alpha}(\pi_\good)$ is of Arthur type.

    Next, we show that $\theta_{-\alpha}(\pi_\good)$ lies in $\Pi_\psi$ for some local Arthur parameter $\psi$ which contains $\chi_W\otimes S_1 \otimes S_\alpha$ as a summand with multiplicity one. Indeed, we have $\theta_{-\alpha}(\pi_{\good})= \{\Delta_{\chi_W}[(1-\alpha)/2, (1-\alpha)/2]\}+\theta_{-\alpha}(\pi_{\good})^{\chi_W,-}$ since $\alpha \gg 0$. Therefore, Theorem \ref{thm pi_rho_minus}(c) implies that $\psi^{max}( \theta_{-\alpha}(\pi_{\good}))$ contains exactly one copy of $\chi_W\otimes S_1 \otimes S_\alpha$.    

    Let $\psi'$ be the local Arthur parameter of $\OO(W_m)$ {(for some $m \leq n$)} such that
    \[ \psi= \psi' \otimes \chi_{W} \chi_V^{-1} \oplus \chi_W \otimes S_1 \otimes S_{\alpha}.\]
    Recall from Proposition \ref{prop inverse theta} that there exists a representation $\pi'$ that lies in the $\Pi_{\psi'}(\OO(W_m^+)) \sqcup \Pi_{\psi'}(\OO(W_m^-))$ such that $\theta_{-\alpha}(\pi')= \theta_{-\alpha}(\pi_{\good})$. Then Howe duality for pure inner forms (Lemma \ref{lem Howe duality pure}) implies that $\pi'= \pi_{\good} $. Therefore, $\pi_{\good} \in \Pi_{\psi'}(\OO(W_m))$ is of Arthur type. This completes the proof of the theorem.
\end{proof}

\begin{remark}
    When $W_n\in\mathcal{W}^+$, we have that the converse is true: any representation of Arthur type is unitary. However, our definition of the local Arthur packet for $W_n\in\mathcal{W}^-$ does not immediately imply that representations of Arthur type are unitary. We note that this is still expected.
\end{remark}

Next, we extend the result to special even orthogonal groups. First, we need to explain what $\pi_{\good}$ is since Definition \ref{defn good parity rep} uses the local Langlands correspondence, which we do not have for $\SO(W_n)$. We define $\pi \in \Pi(\SO(W_n))$ to be of good parity if and only if every subquotient of $\mathrm{Ind}_{\SO(W_n)}^{\OO(W_n)} \pi$ is of good parity. This is also equivalent to $\pi$ appears as a subquotient of a restriction of a good parity representation of $\OO(W_n)$. Now we define $\pi_{\good}$ up to outer conjugation as follows: Take any irreducible subquotient $\pi^+$ of $ \mathrm{Ind}_{\SO(W_n)}^{\OO(W_n)} \pi $. Then we define $\pi_{\good}$ to be any choice of the irreducible subquotient of $\textrm{Res}_{\SO(W_n)}^{\OO(W_n)} (\pi^+)_{\good}$. One can give a more refined definition without the ambiguity of the outer conjugation using Jantzen's decomposition for even special orthogonal groups (\cite{JL22}), {the details of which we omit}.

Next, we explain what Arthur type is. Let $G_n=\OO(W_n)$ and $\pi$ be a representation of $G_n^\circ$ (i.e., $G_n^\circ=\SO(W_n)$). We say that $\pi$ is of Arthur type if its $\Sigma_0$-orbit lies in some local Arthur packet of $G_n^\circ$ (see \S\ref{sec Local Arthur packets for orthogonal groups} for notation). Equivalently, $\pi$ is a restriction of a representation of $G_n$ of Arthur type. Note that $\pi$ is of Arthur type if and only if so is its outer conjugation. Therefore, it is well-defined whether $\pi_{\good}$ is of Arthur type or not.

Now we extend the result for special even orthogonal groups.

\begin{thm}\label{thm unitary SO}
    Let $G_n=\OO(W_n)$ and $\pi$ be an irreducible admissible unitary representation of $G_n^\circ$ (i.e., $G_n^\circ=\SO(W_n)$). Then $\pi_\good$ is of Arthur type.
\end{thm}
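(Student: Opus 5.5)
The plan is to reduce Theorem \ref{thm unitary SO} to Theorem \ref{thm unitary O} by inducing from $\SO(W_n)$ to $\OO(W_n)$. Let $\pi$ be irreducible unitary for $G_n^\circ=\SO(W_n)$ and set $\Pi:=\mathrm{Ind}_{G_n^\circ}^{G_n}\pi$. Since $G_n^\circ$ has index two in $G_n$, $\Pi$ is unitary, as induction from a finite-index open subgroup preserves a positive-definite invariant inner product. It is also semisimple, with one or two irreducible summands. Concretely, if $\pi\not\cong\pi^c$ (where $\pi^c$ is the outer conjugate by $c$), then $\Pi$ is irreducible. Otherwise $\Pi=\pi^+\oplus(\pi^+\otimes\det)$ with $\pi^+|_{G_n^\circ}=\pi$. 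In either case, I would pick an irreducible summand $\pi^+$ of $\Pi$. It is unitary, being a direct summand of a unitary representation.

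Next I apply Theorem \ref{thm unitary O} to $\pi^+$ to get that $(\pi^+)_{\good}$ is of Arthur type for $\OO(W_m)$. Say $(\pi^+)_{\good}\in\Pi_{\psi'}(\OO(W_m))$.

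By the definition of $\pi_{\good}$ just given, $\pi_{\good}$ is an irreducible subquotient of $\mathrm{Res}^{\OO(W_m)}_{\SO(W_m)}(\pi^+)_{\good}$. By the definition of Arthur type for $G_m^\circ$ (restriction of an Arthur type representation of $G_m$, or equivalently its $\Sigma_0$-orbit lies in $\Pi^{\Sigma_0}_{\psi'}(G_m^\circ)$), $\pi_{\good}$ is of Arthur type. When $W_m\in\mathcal W^+$ this is exactly Arthur's definition of $\Pi^{\Sigma_0}_{\psi'}(G_m^\circ)$. For the pure inner form $W_m\in\mathcal W^-$, I would use the "equivalently" formulation stated above the theorem as the definition.

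The main point to check is well-definedness: the result must not depend on the choice of $\pi^+$, nor on the choice of subquotient in the definition of $\pi_{\good}$. The other choice of summand is $\pi^+\otimes\det$, and $(\pi^+\otimes\det)_{\good}=(\pi^+)_{\good}\otimes\det$, which has the same restriction to $\SO$. By Corollary \ref{cor extended multi-segment for det} it is of Arthur type for the same $\psi'$. The restriction of $(\pi^+)_{\good}$ has at most two irreducible constituents, and they are outer conjugate. Being of Arthur type is invariant under outer conjugation, so any choice works. I expect no serious obstacle beyond this bookkeeping. The only mildly delicate point is confirming that $(\pi^+)_{\good}$ lies in the same Witt tower (the $\pm$ form) as required for the restriction statement, and this holds by construction in the proof of Theorem \ref{thm unitary O}.
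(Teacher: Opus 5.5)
Your proposal is correct and follows essentially the same route as the paper: induce $\pi$ to $\OO(W_n)$, apply Theorem \ref{thm unitary O} to a unitary irreducible constituent, and restrict back using the definitions of $\pi_{\good}$ and of Arthur type for $G_n^\circ$. The only difference is cosmetic. The paper treats the cases $\pi\cong\pi^c$ and $\pi\not\cong\pi^c$ separately, using $(\mathrm{Ind}\,\pi)_{\good}=\mathrm{Ind}\,\pi_{\good}$ in the second case, whereas you handle both cases at once by choosing an irreducible summand $\pi^+$.
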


\begin{proof}
The proof is based on the fact that $\pi$ is unitary if and only if $\mathrm{Ind}_{G_n^\circ}^{G_n}\pi$ is unitary. 

Now we suppose that $\pi$ is unitary. We have two cases based on the following. If $\pi$ is isomorphic to its outer conjugation, then $\mathrm{Ind}_{G_n^\circ}^{G_n}\pi= \pi_1\oplus\pi_2$, where $\pi_1,\pi_2\in\Pi(G_n)$ are unitary and $\pi_1\neq\pi_2.$ Otherwise, $\mathrm{Ind}_{G_n^\circ}^{G_n}\pi\in \Pi(G_n).$ 

Suppose first that $\mathrm{Ind}_{G_n^\circ}^{G_n}\pi= \pi_1\oplus\pi_2$. By definition, $\pi_{\good}$ is any irreducible subquotient of $\textrm{Res}_{G_n^{\circ}}^{G_n}( (\pi_1)_{\good})$. By Theorem \ref{thm unitary O}, $(\pi_1)_{\good}$ is of Arthur type since $\pi_1$ is unitary. Therefore, $ \pi_{\good}$ is of Arthur type by definition.

    Suppose next that $\mathrm{Ind}_{G_n^\circ}^{G_n}\pi\in \Pi(G_n).$
    Then $\mathrm{Ind}_{G_n^\circ}^{G_n}\pi$ is unitary and irreducible. Therefore, by Theorem \ref{thm unitary O}, $(\mathrm{Ind}_{G_n^\circ}^{G_n}\pi)_\good=\mathrm{Ind}_{G_m^\circ}^{G_m}\pi_\good$ is of Arthur type. Therefore $\pi_\good$ must also be Arthur type.
\end{proof}

\section{Metaplectic groups and pure inner forms of odd special orthogonal groups}\label{sec meta}

In this section, we explain that many results in this paper can be extended to metaplectic groups and pure inner forms of odd special orthogonal groups, based on the recent work of Chen (\cite{Che26}).

More precisely, Chen explicated the analogue of M{\oe}glin's construction for local Arthur packets for metaplectic groups, and rephrased them in terms of extended multi-segments as in Definition \ref{def multi-segment}. Under this parametrization, he proved the Adams conjecture from metaplectic groups to split odd special orthogonal groups \cite[Theorem 7.6]{Che26}. Thus, the analogue of Theorem \ref{thm Moeglin Adams} for metaplectic groups holds. With these results, our main result on intersections of local Arthur packet (Theorem \ref{thm O}) can be proved by a similar argument. The results after \S \ref{sec intersections for O(2n)} can also be proved similarly once we have the analogue of M{\oe}glin's result on $(add_{j}^{\Moe})^{-1}$ (Theorem \ref{thm Moeglin + embed}) for metaplectic groups. However, this expected result does not seem to be recorded in the literature. Alternatively, we expect that Theorem \ref{thm add embed} could be proved directly on metaplectic groups following the proof of \cite[Proposition 6.3]{HLLZ25}. Then the results after \S \ref{sec intersections for O(2n)} holds as well.

We remark that if the analogous result of Theorem \ref{thm Moeglin Adams} (the Adams conjecture) holds for the theta lifting from pure inner forms of odd special orthogonal groups to metaplectic groups, then all of the results in this paper can be proved similarly for pure inner forms of odd special orthogonal groups. Note that Theorem \ref{thm Moeglin + embed} for this case is already proved in M{\oe}glin's work.

\appendix
\section{Extended multi-segments of {B}o{\v s}njak and {S}tadler}\label{appendix: Bosnjak Stadler}

In this appendix, we provide a translation of our main results from the parameterization of local Arthur packets using  extended multi-segments (in the sense of Definition \ref{def multi-segment}(2)) to a modified version introduced by Bo{\v s}njak and Stadler in \cite{BS25}.

\subsection{Notation}
Throughout, given an irreducible self-dual supercuspidal representation $\rho$ of some
$\mathrm{GL}_d(F)$ and a segment $[A,B]_\rho$, we write
\[
a=A+B+1,\qquad b=A-B+1.
\]
Recall from Definition \ref{def multi-segment}(2) that an extended multi-segment for $G_n$ is
\[
\EE=\bigcup_\rho\{([A_i,B_i]_\rho,l_i,\eta_i)\}_{i\in(I_\rho,>)},\qquad
0\le l_i\le \tfrac{b_i}{2},\quad \eta_i\in\{\pm1\}/E ,
\]
where $E=\{\pm1\}$ if $2l_i=b_i$ and $E=\{1\}$ otherwise, the order $>$ on $I_\rho$
is also required to be admissible, i.e., it satisfies
\[
\qquad A_i>A_j \text{ and } B_i>B_j\ \Longrightarrow\ i>j,
\]
and the sign condition holds
\begin{equation}\label{eq:signHLL}
\prod_\rho\prod_{i\in I_\rho}(-1)^{\lfloor b_i/2\rfloor+l_i}\eta_i^{\,b_i}=\epsilon_{G_n}.
\end{equation}

In \cite[Definition 3.3]{BS25}, Bo{\v s}njak and Stadler define a modified version of an extended multi-segment, which we call a $\mu$-extended multi-segment 
\[
\mathcal{S}=\bigcup_{\rho}\bigl\{\bigl(([A_i^\rho,B_i^\rho]_\rho,\mu_i^\rho)\bigr)_{i=1}^{n_\rho}\bigr\},
\qquad \mu_i\in\mathbb{Z},\quad \mu_i\equiv b_i \bmod 2,\quad |\mu_i|\le b_i,
\]
with the same admissibility condition, and with the sign condition written as
\begin{equation}\label{eq:signBS}
\sum_{\rho}\sum_{i=1}^{n_\rho}\Bigl(\Bigl\lfloor\frac{\mu_i}{2}\Bigr\rfloor
+\mu_i\sum_{j=1}^{i-1}(b_j-1)\Bigr)\equiv \epsilon_{G_n}' \bmod 2,
\end{equation}
where $\epsilon_{G_n}'=0$ if $\epsilon_{G_n}=1$ and $\epsilon_{G_n}'=1$ otherwise.
The index set is always written as $\{1,\dots,n_\rho\}$ equipped with its natural order.  We adopt this convention from
now on, and drop $\rho$ from the notation whenever a single $\rho$-part is
under discussion.

We note that sometimes the admissible order on $\EE$ is required to satisfy $(P')$, e.g., in the definition of $dual$ (Definition \ref{dual segment}). The condition $(P')$ agrees precisely with what Bo{\v s}njak and Stadler call a very admissible order in \cite[Definition 3.3(6)]{BS25}.

\subsection{The translation map}
Set
\[
\delta_i:=\prod_{j<i}(-1)^{b_j-1}\in\{\pm1\} .
\]
In \cite[Definition 3.4]{BS25}, Bo{\v s}njak and Stadler give the bijection from $\mu$-extended multi-segments to extended multi-segments. Given $\mathcal{S}$, we define $\EE_{\mathcal{S}}=\bigcup_\rho\{([A_i,B_i]_\rho,l_i,\eta_i)\}_{i\in(I_\rho,>)}$ via
\begin{equation}\label{eq:dict}
l_i=\frac{b_i-|\mu_i|}{2},\qquad
\eta_i=\delta_i\,\sgn(\mu_i)
\end{equation}
with the convention $\sgn(0)=1$. Conversely, given an extended multi-segment $\EE$, we define the associated $\mu$-extended multi-segment $\mathcal{S}_\EE=\bigcup_{\rho}\bigl\{\bigl(([A_i,B_i]_\rho,\mu_i)\bigr)\bigr\}_{i\in I_\rho}$ by setting 
\begin{equation}\label{eq:dict2}
\mu_i=\delta_i\,\eta_i\,(b_i-2l_i).
\end{equation}
Technically, Bo{\v s}njak and Stadler only show that the resulting map defines a bijection when $\epsilon_{G_n}=1$; however, it is straightforward to verify that it also defines a bijection when $\epsilon_{G_n}=-1$ (see also Proposition \ref{prop:sign} below).

Recall from Definition \ref{dual segment} that
\[
\alpha_i=\sum_{j<i}a_j,\qquad \beta_i=\sum_{j>i}b_j.
\]
We record some useful identities below.
\begin{lemma}\label{lem:dict}
Let $\EE$ be an extended multi-segment. Fix $\rho$ and let $n=n_\rho$.  Then, with $\mu_i$ as in Equation \eqref{eq:dict2}, the following hold.
\begin{enumerate}
\item We have $|\mu_i|=b_i-2l_i$ and $\sgn(\mu_i)=\delta_i\eta_i$.
\item The identities $B_i+l_i=\dfrac{a_i-|\mu_i|}{2}$ and $A_i-l_i=\dfrac{a_i-2+|\mu_i|}{2}$ hold.
\item For adjacent $k,k+1$,
$\displaystyle\ \epsilon:=(-1)^{A_k-B_k}\eta_k\eta_{k+1}=\sgn(\mu_k\mu_{k+1})$.
\item Recall from Definition \ref{dual segment} that
$\alpha_i=\sum_{j<i}a_j.$ We have $\delta_i\,(-1)^{\alpha_i}=c_i$, where
\[
c_i=\begin{cases}(-1)^{i-1}, & B_j\in\mathbb{Z},\\[2pt]
1, & B_j\notin\mathbb{Z}.\end{cases}
\]
Consequently $(-1)^{\alpha_i}\eta_i=c_i\sgn(\mu_i)$.
\item We have $(-1)^{\lfloor b_i/2\rfloor+l_i}\eta_i^{\,b_i}=(-1)^{\lfloor \mu_i/2\rfloor}\,\delta_i^{\,\mu_i}$.
\end{enumerate}
\end{lemma}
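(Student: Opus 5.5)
The plan is to verify the five parts in order, each directly from the dictionary \eqref{eq:dict2}, $\mu_i=\delta_i\eta_i(b_i-2l_i)$. Only parts (3) and (4) use anything beyond this formula, namely the parity structure of good parity segments.

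For (1), note that $0\le l_i\le b_i/2$ gives $b_i-2l_i\ge 0$ and $\delta_i\eta_i\in\{\pm1\}$, so $|\mu_i|=b_i-2l_i$. If $b_i-2l_i>0$, then $\sgn(\mu_i)=\delta_i\eta_i$. If $b_i=2l_i$, then $\mu_i=0$ and $\sgn(0)=1$, and $\eta_i$ is only defined modulo $E=\{\pm1\}$, so we may take the representative with $\delta_i\eta_i=1$. Part (2) is the substitution $l_i=(b_i-|\mu_i|)/2$, using $A_i=(a_i+b_i)/2-1$ and $B_i=(a_i-b_i)/2$:
\[B_i+l_i=\frac{a_i-b_i}{2}+\frac{b_i-|\mu_i|}{2}=\frac{a_i-|\mu_i|}{2},\]
and similarly for $A_i-l_i$.

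For (3), we have $\sgn(\mu_k\mu_{k+1})=\delta_k\delta_{k+1}\eta_k\eta_{k+1}$, and $\delta_{k+1}=\delta_k(-1)^{b_k-1}$. Since $A_k-B_k=b_k-1$, this gives $\delta_k\delta_{k+1}=(-1)^{A_k-B_k}$, as required. (The case where some $\mu=0$ is handled by the same convention as in (1).)

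For (4), we need the parity of $a_j$ and $b_j$ for a fixed $\rho$. Since all $B_j$ with a common $\rho$ lie in the same class of $\frac12\mathbb{Z}/\mathbb{Z}$ by good parity, we get $a_j+b_j=2A_j+2$, hence $a_j\equiv b_j \pmod 2$. This gives
\[\delta_i(-1)^{\alpha_i}=\prod_{j<i}(-1)^{b_j-1+a_j}=\prod_{j<i}(-1)^{2b_j-1}=(-1)^{i-1}.\]
This seems to give $(-1)^{i-1}$ in \emph{both} cases, which contradicts the stated $c_i=1$ for $B_j\notin\mathbb{Z}$. So we must recheck the parity. When $B\in\mathbb{Z}$, $a-b=2B$ is even, so $a\equiv b$. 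When $B\notin\mathbb{Z}$, $a-b=2B$ is odd, so $a\not\equiv b$, and then $(-1)^{b_j-1+a_j}=1$. The correct parity statement is therefore $a_j\equiv b_j+2B_j$, which yields $c_i=(-1)^{i-1}$ or $1$ according to the two cases. This case split is the main point to get right. The consequence $(-1)^{\alpha_i}\eta_i=c_i\sgn(\mu_i)$ then follows by multiplying by (1), since $\delta_i^2=1$.

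For (5), write $m=|\mu_i|$ and $s=\sgn(\mu_i)=\delta_i\eta_i$. Then $l_i=(b_i-m)/2$ and $\eta_i^{b_i}=s^{b_i}\delta_i^{b_i}$, and since $\mu_i\equiv b_i\pmod 2$ we have $\delta_i^{b_i}=\delta_i^{\mu_i}$. It remains to check
\[(-1)^{\lfloor b_i/2\rfloor+(b_i-m)/2}s^{b_i}=(-1)^{\lfloor \mu_i/2\rfloor}.\]
For $b_i$ even: if $s=1$, the left side is $(-1)^{b_i-m/2}=(-1)^{m/2}$; if $s=-1$, the left side is still $(-1)^{m/2}$ (because $b_i$ is even, $s^{b_i}=1$), and $\lfloor -m/2\rfloor=-m/2$, so both sides agree. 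For $b_i$ odd: $\lfloor b_i/2\rfloor+(b_i-m)/2=b_i-\frac{m+1}{2}$, giving $(-1)^{(m+1)/2}s$ on the left (since $b_i$ is odd). When $s=1$ this equals $(-1)^{(m-1)/2}\cdot(-1)$, which must be compared with $\lfloor m/2\rfloor=(m-1)/2$; the same check is needed for $s=-1$ with $\lfloor -m/2\rfloor=-(m+1)/2$. This short parity bookkeeping in (5), together with the case split in (4), is where sign conventions must be tracked carefully. If a discrepancy appears there, it would indicate the identity needs the $(-1)^{b_i}$-type correction absorbed into $\delta_i^{\mu_i}$, and I would recheck that step first.
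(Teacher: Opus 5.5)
Parts (1)--(3) are correct and follow the paper's argument. In (4) your final reasoning is also right, but drop the first attempt from the write-up. The identity $a_j+b_j=2A_j+2$ does \emph{not} give $a_j\equiv b_j \pmod 2$ when $A_j\in\frac12+\mathbb{Z}$. The clean route, which is the paper's, is this: $a_j+b_j-1=2A_j+1$ is odd exactly when $A_j\in\mathbb{Z}$, equivalently $B_j\in\mathbb{Z}$ since $A_j-B_j\in\mathbb{Z}$, and good parity makes this condition uniform over $j\in I_\rho$.

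The genuine gap is in (5). Your reduction (via $\eta_i=s\delta_i$ and $\delta_i^{b_i}=\delta_i^{\mu_i}$) to
\[
(-1)^{\lfloor b_i/2\rfloor+(b_i-m)/2}\,s^{b_i}=(-1)^{\lfloor \mu_i/2\rfloor}
\]
is correct. It is equivalent to the paper's use of $\eta_i^{b_i}=\eta_i^{\mu_i}$, and your even case (which includes $\mu_i=0$) is fine.

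In the odd case, however, you drop a sign. For $b_i$ odd,
\[
(-1)^{b_i-(m+1)/2}=-(-1)^{(m+1)/2}=(-1)^{(m-1)/2},
\]
so the left side is $(-1)^{(m-1)/2}s$, not $(-1)^{(m+1)/2}s$. With your expression, the case $s=1$ produces a spurious contradiction, already visible at $b_i=1$, $l_i=0$. That false contradiction is what leads you to speculate that the identity might need a correction absorbed into $\delta_i^{\mu_i}$.

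No correction is needed. With the right sign, $s=1$ gives $(-1)^{(m-1)/2}=(-1)^{\lfloor m/2\rfloor}$. For $s=-1$ you get $-(-1)^{(m-1)/2}=(-1)^{(m+1)/2}=(-1)^{\lfloor -m/2\rfloor}$, using $\lfloor -m/2\rfloor=-(m+1)/2$.

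As written, your proposal leaves (5) unverified and casts doubt on a true statement. Finish it with this two-line case check, which is exactly the paper's ``case-by-case based on $b_i \bmod 2$ and $\sgn(\mu_i)$.''
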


\begin{proof}
Parts (1) and (2) are immediate from Equation \eqref{eq:dict2}.  For Part (3), note that $A_k-B_k=b_k-1$ and
$\delta_{k+1}=\delta_k(-1)^{b_k-1}$, so
$\sgn(\mu_k)\sgn(\mu_{k+1})=\delta_k\delta_{k+1}\eta_k\eta_{k+1}=(-1)^{b_k-1}\eta_k\eta_{k+1}$.
For Part (4), observe that $\delta_i(-1)^{\alpha_i}=(-1)^{\sum_{j<i}(a_j+b_j-1)}$ and
$a_j+b_j-1=2A_j+1$, which is odd when $A_j\in\mathbb{Z}$ and even when
$A_j\in\frac12+\mathbb{Z}$.
Finally Part (5) is checked case-by-case based on $b_i \bmod 2$ and
$\sgn(\mu_i)$, using $\eta_i^{b_i}=\eta_i^{\mu_i}$.
\end{proof}

\begin{remark}
\label{rem:bookkeeping}
Because $\delta_i$ depends on $b_1,\dots,b_{i-1}$, the coordinate $\mu_i$ is not
attached to the $i$-th extended segment alone, rather it depends on the previous rows.  In particular, we note that if a row whose segment has length $b$ is deleted from (or inserted at)
position $p$, then $\mu_i$ is multiplied by $(-1)^{b-1}$ for every $i>p$ in the same
$\rho$-part.

This appears in two instances later. Specifically, deleting rows after applying a $ui$ of type $3'$ (here $b=0$ and so $\mu_i\mapsto-\mu_i$) and the insertion of the rows $([x+1,x]_\rho,1,1)$ in Definition \ref{def criterion for Arthur type} (here $b=2$ and so again $\mu_i\mapsto-\mu_i$ for the rows
after the inserted row).
\end{remark}

Next we record that $\EE$ is an extended multi-segment of $G_n$ if and only if $\mathcal{S}_\EE$ is a $\mu$-extended multi-segment of $G_n$.

\begin{prop}\label{prop:sign}
Under Equation \eqref{eq:dict}, Equation \eqref{eq:signHLL} is 
equivalent to Equation \eqref{eq:signBS}.
\end{prop}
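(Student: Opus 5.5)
The plan is to reduce the statement to a termwise identity between the two products and then compare the resulting exponents. By Lemma \ref{lem:dict}(5), each factor on the left of \eqref{eq:signHLL} equals $(-1)^{\lfloor \mu_i/2\rfloor}\delta_i^{\mu_i}$. Multiplying over $\rho$ and $i$, \eqref{eq:signHLL} becomes
\[
\prod_\rho\prod_{i}(-1)^{\lfloor \mu_i/2\rfloor}\delta_i^{\mu_i}=\epsilon_{G_n}.
\]
Next I substitute $\delta_i=\prod_{j<i}(-1)^{b_j-1}$, which gives $\delta_i^{\mu_i}=(-1)^{\mu_i\sum_{j<i}(b_j-1)}$. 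The condition then reads
\[
(-1)^{\sum_\rho\sum_i\left(\lfloor\mu_i/2\rfloor+\mu_i\sum_{j<i}(b_j-1)\right)}=\epsilon_{G_n}.
\]
Since $\epsilon_{G_n}=(-1)^{\epsilon'_{G_n}}$ by the definition of $\epsilon'_{G_n}$, this is exactly the congruence \eqref{eq:signBS} modulo $2$. Every step is an equivalence, so this proves both directions.

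The one place that needs care is Lemma \ref{lem:dict}(5), which the paper only sketches. I would verify it directly in four cases. First I record the inputs: $b_i-2l_i=|\mu_i|$, $\eta_i=\delta_i\sgn(\mu_i)$, and $\mu_i\equiv b_i \pmod 2$, so $\eta_i^{b_i}=\eta_i^{\mu_i}=\delta_i^{\mu_i}\sgn(\mu_i)^{\mu_i}$.

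For $\mu_i\ge 0$ there are two cases. If $b_i$ is even, then $\lfloor b_i/2\rfloor+l_i=b_i-\mu_i/2\equiv \mu_i/2 \pmod 2$. If $b_i$ is odd, then $\lfloor b_i/2\rfloor+l_i=(b_i-1)/2+(b_i-\mu_i)/2$, which has the parity of $(\mu_i-1)/2=\lfloor\mu_i/2\rfloor$ because $b_i-\mu_i$ is even.

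For $\mu_i<0$ the extra factor $\sgn(\mu_i)^{\mu_i}=(-1)^{\mu_i}$ appears. It must compensate the difference between $\lfloor \mu_i/2\rfloor$ and $\lfloor|\mu_i|/2\rfloor$: for odd $\mu_i$ we have $\lfloor\mu_i/2\rfloor=-\lfloor|\mu_i|/2\rfloor-1$, and for even $\mu_i$ the two agree in parity.

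The main obstacle is this parity bookkeeping. In particular, one must use the convention $\sgn(0)=1$ and the ambiguity $\eta_i\in\{\pm1\}/E$ when $2l_i=b_i$, that is, when $\mu_i=0$ with $b_i$ even. In that case both sides are independent of the choice of $\eta_i$, since $\eta_i^{b_i}=1$.
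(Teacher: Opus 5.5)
Your proposal is correct and follows the paper's route: you apply Lemma~\ref{lem:dict}(5) factor by factor, substitute $\delta_i^{\mu_i}=(-1)^{\mu_i\sum_{j<i}(b_j-1)}$, and compare exponents modulo $2$, which is exactly the ``straightforward application'' the paper has in mind. Your four-case parity check of Lemma~\ref{lem:dict}(5), split by $b_i \bmod 2$ and $\sgn(\mu_i)$, together with your treatment of the $\mu_i=0$ ambiguity, is accurate and supplies the verification the paper only sketches.
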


\begin{proof}
    This is a straightforward application of Lemma \ref{lem:dict}(5).
\end{proof}

\subsection{Nonvanishing Conditions}
In this section, we translate the nonvanishing conditions which requires translating the row exchange operator. Recall from Definition \ref{def non-vanishing} that the combinatorial definition of nonvanishing requires verifying several parts. We begin with the translation of $\NV_{*}(\EE)\neq 0$ (see also \cite[Equation (3.5)]{BS25}).

\begin{prop}\label{prop:nvstar}
We have $\NV_*(\EE)\neq 0$ if and only if for any index $i$, $\mathcal{S}_\EE$ satisfies
\begin{equation}\label{eqn:NVstar}
    |\mu_i|\le a_i \quad (B_i\in\mathbb{Z}),\qquad\qquad |\mu_i-1|\le a_i\quad (B_i\notin\mathbb{Z}).
\end{equation}
\end{prop}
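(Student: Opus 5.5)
The plan is a row-by-row translation of the inequality $(\ast)$ of Definition \ref{def non-vanishing}(1) into the coordinates of $\mathcal{S}_\EE$, using Lemma \ref{lem:dict}. Both conditions are imposed separately for each $\rho$ and each index $i$. So it suffices to fix $\rho$ and $i$ and show that $(\ast)$ for the $i$-th row is equivalent to the corresponding inequality in \eqref{eqn:NVstar}. By Lemma \ref{lem:dict}(2), $B_i+l_i=\frac{a_i-|\mu_i|}{2}$. Every case of $(\ast)$ therefore becomes a bound of the form $|\mu_i|\le a_i-2c$, where $c\in\{0,\tfrac12,-\tfrac12\}$ is the right-hand side of $(\ast)$.

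\emph{Case $B_i\in\Z$.} Here $(\ast)$ reads $B_i+l_i\ge 0$, i.e.\ $a_i-|\mu_i|\ge 0$. This is exactly $|\mu_i|\le a_i$.

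\emph{Case $B_i\notin\Z$.} Then $A_i\in\tfrac12+\Z$, and Lemma \ref{lem:dict}(4) gives $c_i=1$, hence $(-1)^{\alpha_i}\eta_i=\sgn(\mu_i)$. The two subcases of $(\ast)$ are then:
\begin{itemize}
\item if $\eta_i=(-1)^{\alpha_i+1}$, i.e.\ $\sgn(\mu_i)=-1$, the condition is $|\mu_i|\le a_i-1$;
\item if $\eta_i=(-1)^{\alpha_i}$, i.e.\ $\sgn(\mu_i)=1$, the condition is $|\mu_i|\le a_i+1$.
\end{itemize}
We now compare with $|\mu_i-1|\le a_i$. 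When $\mu_i>0$ this is $\mu_i\le a_i+1$. When $\mu_i<0$ this is $-\mu_i+1\le a_i$, i.e.\ $|\mu_i|\le a_i-1$. Both agree with the subcases above.

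The main obstacle is the boundary convention $\mu_i=0$, which occurs exactly when $2l_i=b_i$. Then $\eta_i$ is only defined modulo $\pm1$ and $\sgn(0)=1$ is a convention. For $\mu_i=0$ the target inequality $|0-1|\le a_i$ is just $a_i\ge1$, which always holds. On the $(\ast)$ side, one must check that either choice of $\eta_i$ gives a condition that holds automatically. This follows because $l_i=b_i/2$ gives $B_i+l_i=a_i/2\ge \tfrac12$, using $A_i+B_i\ge0$. One should also note that $\alpha_i$ in $(\ast)$ and in Lemma \ref{lem:dict}(4) are the same quantity (with the fixed admissible order). Combining the cases proves the proposition.
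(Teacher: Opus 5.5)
Your proposal is correct and follows the paper's proof essentially line by line. You rewrite $B_i+l_i=(a_i-|\mu_i|)/2$ via Lemma \ref{lem:dict}(2), and in the half-integral case you use $c_i=1$ from Lemma \ref{lem:dict}(4) to match the two subcases of $(\ast)$ with the sign of $\mu_i$. Your explicit check of the boundary case $\mu_i=0$, where $\eta_i$ is only defined modulo $\pm1$, is a harmless addition that the paper handles silently through the convention $\sgn(0)=1$.
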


\begin{proof}
Recall Equation $(*)$ from Definition \ref{def non-vanishing}(1).
By Lemma \ref{lem:dict}(2), we have $B_i+l_i=(a_i-|\mu_i|)/2$.  If $B_i\in\mathbb{Z}$, Equation $(*)$ is $B_i+l_i\ge0$,
i.e.\ $|\mu_i|\le a_i$.  If $B_i\notin\mathbb{Z}$ then $c_i=1$ by Lemma \ref{lem:dict}(4), so
$\eta_i=(-1)^{\alpha_i+1}$ if and only if $ \mu_i<0$.
Hence Equation $(*)$ gives
$|\mu_i|\le a_i-1$ when $\mu_i<0$ and $|\mu_i|\le a_i+1$ when $\mu_i\ge0$, i.e.,  $|\mu_i-1|\le a_i$.
\end{proof}

For a $\mu$-extended multi-segment $\mathcal{S}$, we write $\NV_{*}(\mathcal{S})\ne0$ if  Equation \eqref{eqn:NVstar} holds for any index $i$.
Next, we recall the translation of $\widetilde{\NV}_{\mathrm{Xu}}(\EE)\ne0$. 
\begin{prop}[{\cite[Definition 3.7]{BS25}}]\label{prop:nvxu}
We have
$\widetilde{\NV}_{\mathrm{Xu}}(\EE)\ne0$ if and only if for any adjacent $k<k+1$, $\mathcal{S}_\EE$ satisfies
\begin{equation}\label{eqn:BSXu}
    |\mu_{k+1}-\mu_k|\ \le\ |A_{k+1}-A_k|+|B_{k+1}-B_k|.
\end{equation}
\end{prop}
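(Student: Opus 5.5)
The plan is a direct translation, one adjacent pair $k<k+1$ at a time, through Lemma \ref{lem:dict}. First I rewrite every quantity in Definition \ref{def non-vanishing}(2) in the $\mu$-coordinates. By Lemma \ref{lem:dict}(3), $\epsilon=\sgn(\mu_k\mu_{k+1})$. By Lemma \ref{lem:dict}(2), $B_i+l_i=(a_i-|\mu_i|)/2$ and $A_i-l_i=(a_i-2+|\mu_i|)/2$. Also $l_i=(b_i-|\mu_i|)/2$. Recall that $a_i=A_i+B_i+1$ and $b_i=A_i-B_i+1$. I then treat the three configurations (1), (2), (3) separately, and inside each split by the sign of $\epsilon$.

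Case (1): $A_k\le A_{k+1}$ and $B_k\le B_{k+1}$, so the right side of \eqref{eqn:BSXu} equals $(A_{k+1}-A_k)+(B_{k+1}-B_k)$.
\begin{itemize}
\item If $\epsilon=1$, the two inequalities become
\[
|\mu_{k+1}|-|\mu_k|\le a_{k+1}-a_k
\quad\text{and}\quad
|\mu_k|-|\mu_{k+1}|\le a_{k+1}-a_k .
\]
Since $\mu_k$ and $\mu_{k+1}$ have the same sign, $|\mu_{k+1}-\mu_k|=\bigl||\mu_{k+1}|-|\mu_k|\bigr|$, so together these say $|\mu_{k+1}-\mu_k|\le a_{k+1}-a_k$, which is \eqref{eqn:BSXu}.
\item If $\epsilon=-1$, the condition $A_k-l_k<B_{k+1}+l_{k+1}$ becomes $a_k-2+|\mu_k|<a_{k+1}-|\mu_{k+1}|$. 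Since both sides are integers of the same parity, this is $|\mu_k|+|\mu_{k+1}|\le a_{k+1}-a_k$. With opposite signs, $|\mu_k|+|\mu_{k+1}|=|\mu_{k+1}-\mu_k|$, so this is again \eqref{eqn:BSXu}.
\end{itemize}

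Case (2): $[A_k,B_k]_\rho\subset[A_{k+1},B_{k+1}]_\rho$, so the right side of \eqref{eqn:BSXu} is $b_{k+1}-b_k$.
\begin{itemize}
\item If $\epsilon=1$, the condition $0\le l_{k+1}-l_k\le b_{k+1}-b_k$ becomes
\[
|\mu_k|-|\mu_{k+1}|\ge b_k-b_{k+1}
\quad\text{and}\quad
|\mu_{k+1}|-|\mu_k|\le b_{k+1}-b_k ,
\]
which together say $\bigl||\mu_{k+1}|-|\mu_k|\bigr|\le b_{k+1}-b_k$.
\item If $\epsilon=-1$, the condition $l_k+l_{k+1}\ge b_k$ becomes $|\mu_k|+|\mu_{k+1}|\le b_{k+1}-b_k$.
\end{itemize}
The sign argument from Case (1) then finishes. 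Case (3) is symmetric, with $b_k-b_{k+1}$ in place of $b_{k+1}-b_k$.

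There are two remaining points to address. First, the three configurations in Definition \ref{def non-vanishing}(2) exhaust all adjacent pairs under an admissible order except the case $A_k>A_{k+1}$, $B_k>B_{k+1}$, which admissibility excludes. The case $A_k\ge A_{k+1}$, $B_k\le B_{k+1}$ is covered by (3), and when nesting degenerates to equality both descriptions agree. Second, one convention needs checking. When $\mu_i=0$, the sign is fixed by $\sgn(0)=1$, while $\eta_i$ is only defined modulo $E=\{\pm1\}$, so the value of $\epsilon$ is ambiguous. Here $|\mu_i|=0$, so $|\mu_{k+1}-\mu_k|$ equals both the sum and the difference of the absolute values. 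Hence the $\epsilon=\pm1$ inequalities coincide and the ambiguity is harmless.

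The main obstacle I expect is exactly this bookkeeping: the parity step that turns the strict inequality into a non-strict one, the $\mu=0$ convention, and confirming that the case split is exhaustive. Each inequality itself is a one-line substitution.
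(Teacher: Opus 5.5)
Your direct translation through Lemma \ref{lem:dict} is correct. The paper gives no argument for this statement (it simply cites \cite[Definition 3.7]{BS25}), but your case-by-case verification is the same kind of argument the paper uses for Propositions \ref{prop:nvstar} and \ref{prop:ui}.

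There is one slip, in Case (2) with $\epsilon=1$. Your two displayed inequalities are the same inequality written twice. The lower bound $0\le l_{k+1}-l_k$ gives $|\mu_{k+1}|-|\mu_k|\le b_{k+1}-b_k$. The upper bound $l_{k+1}-l_k\le b_{k+1}-b_k$ gives $|\mu_k|-|\mu_{k+1}|\le b_{k+1}-b_k$. Together these yield the $\bigl||\mu_{k+1}|-|\mu_k|\bigr|\le b_{k+1}-b_k$ you claim, so the conclusion stands.

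The parity step in Case (1), $\epsilon=-1$, needs one line of justification. It follows from $|\mu_i|\equiv b_i \pmod 2$, $a_i+b_i=2A_i+2$, and $A_{k+1}-A_k\in\Z$ for segments with the same $\rho$.
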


For a $\mu$-extended multi-segment $\mathcal{S}$, we write $\widetilde{\NV}_{\mathrm{Xu}}(\mathcal{S})\ne0$ if for any adjacent $k<k+1$, Equation \eqref{eqn:BSXu} holds.
To complete our translation of $\NV(\EE)\neq 0$ into a combinatorial condition on $\mathcal{S}_\EE$ it only remains to translate the row exchange operators. 

\begin{prop}\label{prop:R}
Consider a $\mu$-extended multi-segment $\mathcal{S}=\mathcal{S}_\rho\cup\mathcal{S}^\rho$ and let $k<k+1 \in I_\rho$ be adjacent.  If neither
$[A_k,B_k]\supseteq[A_{k+1},B_{k+1}]$ nor $[A_k,B_k]\subseteq[A_{k+1},B_{k+1}]$
holds, then $R_k(\mathcal{S}):=\mathcal{S}$. 

Otherwise write $\mathcal{S}_\rho=\cup_{i=1}^n\{([A_i,B_i]_\rho,\mu_i)\}$. Then
$R_k(\mathcal{S}):=R_k(\mathcal{S}_\rho)\cup\mathcal{S}^\rho$, where $R_k(\mathcal{S}_\rho)=\cup_{i=1}^n\{([A_i',B_i']_\rho,\mu_i')\}$ is defined by $([A_i',B_i']_\rho,\mu_i')=([A_i,B_i]_\rho,\mu_i)$ if $i\neq k, k+1$ and \[(([A_k',B_k']_\rho,\mu_k'),([A_{k+1}',B_{k+1}']_\rho,\mu_{k+1}'))\] is given by
\[
\begin{cases}
\bigl(([A_{k+1},B_{k+1}],\ \mu_{k+1}),\ ([A_k,B_k],\ 2\mu_{k+1}-\mu_k)\bigr),
& \mathrm{if} \ [A_k,B_k]\supsetneq[A_{k+1},B_{k+1}],\\[4pt]
\bigl(([A_{k+1},B_{k+1}],\ 2\mu_k-\mu_{k+1}),\ ([A_k,B_k],\ \mu_k)\bigr),
& \mathrm{if} \ [A_k,B_k]\subseteq[A_{k+1},B_{k+1}].
\end{cases}
\]

In this situation, we have that $R_k(\EE_{\mathcal{S}})=\EE_{R_k(\mathcal{S})}$.
\end{prop}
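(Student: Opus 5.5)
The claim is a direct computation. We translate Definition \ref{def row exchange} through the dictionary \eqref{eq:dict2}, using the identities of Lemma \ref{lem:dict}. Only rows $k,k+1$ change in the $\EE$-picture. The prefactors $\delta_i$ for $i>k+1$ depend only on the multiset $\{b_k,b_{k+1}\}$, which is preserved, so $\mu_i$ is unchanged for $i\neq k,k+1$. The work is therefore to compute $\mu_k'$ and $\mu_{k+1}'$ and compare them with the proposed formula.

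The key bookkeeping concerns the new prefactors after the swap. Write $\delta=\delta_k$. The row now in position $k$ (the old row $k+1$) has prefactor $\delta$. The row now in position $k+1$ (the old row $k$) has prefactor $\delta(-1)^{b_{k+1}-1}$. By contrast, old row $k+1$ originally had prefactor $\delta(-1)^{b_k-1}$.

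Consider Case 1, $[A_k,B_k]\supsetneq[A_{k+1},B_{k+1}]$. Old row $k+1$ keeps $l_{k+1}$ and gets $\eta_{k+1}'=(-1)^{A_k-B_k}\eta_{k+1}=(-1)^{b_k-1}\eta_{k+1}$. Hence its new $\mu$ is $\delta(-1)^{b_k-1}\eta_{k+1}(b_{k+1}-2l_{k+1})=\mu_{k+1}$, as claimed. For old row $k$ we use $\epsilon=\sgn(\mu_k\mu_{k+1})$ (Lemma \ref{lem:dict}(3)) and $|\mu_i|=b_i-2l_i$, and check the three subcases.

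In (c), $\epsilon=-1$, we have $l_k'=l_k-|\mu_{k+1}|$, so $b_k-2l_k'=|\mu_k|+2|\mu_{k+1}|=|2\mu_{k+1}-\mu_k|$ because the signs are opposite. The new sign is $\delta(-1)^{b_{k+1}-1}(-1)^{b_{k+1}}\eta_k=-\sgn(\mu_k)=\sgn(2\mu_{k+1}-\mu_k)$.

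In (b), $\epsilon=1$ and $|\mu_k|\ge 2|\mu_{k+1}|$, we get $b_k-2l_k'=|\mu_k|-2|\mu_{k+1}|=|2\mu_{k+1}-\mu_k|$. The sign flips, matching $\sgn(2\mu_{k+1}-\mu_k)=-\sgn(\mu_k)$; when the value is $0$ the sign is irrelevant modulo $E$.

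In (a), $\epsilon=1$ and $|\mu_k|<2|\mu_{k+1}|$, we have $l_k'=b_k-l_k-|\mu_{k+1}|$. Then $b_k-2l_k'=2|\mu_{k+1}|-|\mu_k|$, and the sign is $\delta(-1)^{b_{k+1}-1}(-1)^{b_{k+1}-1}\eta_k=\sgn(\mu_k)=\sgn(2\mu_{k+1}-\mu_k)$.

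Case 2, $[A_k,B_k]\subseteq[A_{k+1},B_{k+1}]$, is symmetric. Old row $k$ keeps $l_k$ with $\eta_k'=(-1)^{b_{k+1}-1}\eta_k$, and combined with its new prefactor $\delta(-1)^{b_{k+1}-1}$ this gives $\mu=\mu_k$. Old row $k+1$ now carries prefactor $\delta$ instead of $\delta(-1)^{b_k-1}$, which exactly absorbs the factor $(-1)^{A_k-B_k}$ appearing in its $\eta'$. The same three subcases then give $2\mu_k-\mu_{k+1}$.

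The main obstacle is the sign bookkeeping in the boundary situations: values where $2l=b$ (so $\eta$ is only defined modulo $E$ and $\mu=0$ with $\sgn(0)=1$), and the possibility of $l'$ leaving $[0,b/2]$ (formal extended multi-segments). One must check that the value of $\mu$ still determines $(l,\eta)$ consistently there, using $\sgn(0)=1$ and the quotient by $E$. Once this is done, the identity $R_k(\EE_{\mathcal{S}})=\EE_{R_k(\mathcal{S})}$ follows by applying \eqref{eq:dict} to both sides.
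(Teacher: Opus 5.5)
Your proposal is correct and is essentially the paper's proof. Like the paper, you push Definition \ref{def row exchange} through \eqref{eq:dict2}, use the new prefactors $\delta'_k=\delta_k$ and $\delta'_{k+1}=\delta_k(-1)^{b_{k+1}-1}$, and check subcases (a)--(c) via Lemma \ref{lem:dict}(3) to obtain $2\mu_{k+1}-\mu_k$ (resp.\ $2\mu_k-\mu_{k+1}$). You are more explicit than the paper, which leaves Case 2 and the boundary cases ($\mu=0$, or $l'<0$ for formal objects) to the reader. These boundary cases do go through: for example, when $\mu_k=0$ both representatives of $\eta_k$ give $2\mu_{k+1}$.
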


\begin{proof}
Without loss of generality, assume that $\mathcal{S}=\mathcal{S}_\rho$ and write $\EE_{\mathcal{S}_\rho}=\cup_{i=1}^n\{([A_i,B_i]_\rho,l_i,\eta_i)\}$ according to Equation \eqref{eq:dict}.
Suppose $[A_k,B_k]\supsetneq[A_{k+1},B_{k+1}]$. We have $m_i:=|\mu_i|=b_i-2l_i$ and
$s_i=\sgn\mu_i$. In $R_k(\EE_\mathcal{S})$ the extended segment at position $k$ is
$([A_{k+1},B_{k+1}],l_{k+1},(-1)^{b_k-1}\eta_{k+1})$ with 
$\delta'_k=\delta_k$. Thus applying Equation \eqref{eq:dict2} gives
$\delta_k(-1)^{b_k-1}\eta_{k+1}m_{k+1}=\delta_{k+1}\eta_{k+1}m_{k+1}=\mu_{k+1}$ as claimed.
The extended segment of $R_k(\EE_\mathcal{S})$ at position $k+1$ has  $\delta'_{k+1}=\delta_k(-1)^{b_{k+1}-1}$,
and, in each of the three sub-cases (a),(b),(c) of Definition \ref{def row exchange}, a direct
computation gives $\mu_k'=-\mu_k+2s_km_{k+1}$ if $\epsilon=1$ (sub-cases (a) and (b)) and $-\mu_k-2s_km_{k+1}$ if $\epsilon=-1$ (sub-case (c)). By Lemma \ref{lem:dict}(3), we obtain that both equal
$2\mu_{k+1}-\mu_k$. This completes the proof in this case. 

The proof in the case that $[A_k,B_k]\subseteq[A_{k+1},B_{k+1}]$ is similar.
\end{proof}

For two $\mu$-extended multi-segments $\mathcal{S}_1$ and $\mathcal{S}_2$, we write $\mathcal{S}_1=_R\mathcal{S}_2$ if there exists $R_{i_1},\dots, R_{i_s}$ such that
$\mathcal{S}_2=(R_{i_1}\circ\cdots\circ R_{i_s})(\mathcal{S}_1)$. We let $[\mathcal{S}]=\{\mathcal{S}' \ | \ \mathcal{S'}=_R\mathcal{S}\}.$

\begin{remark}\label{rem:reflection}
We remark that Bo{\v s}njak and Stadler give a definition of row exchange in \cite[Definition 3.8]{BS25} and shows that it is equivalent to the version given by Atobe (\cite[\S5.2]{Ato22a}) in \cite[Proposition 3.11]{BS25}. However, Definition \ref{def row exchange} may fail to agree with \cite[\S5.2]{Ato22a}. Indeed, we only have that these definitions (for extended multi-segments) agree when $\NV(\EE)\neq 0.$ The same thus holds for our definition of row exchange for $\mu$-extended multi-segments and that of \cite[Definition 3.8]{BS25}.
For example, when
$[A_k,B_k]=[A_{k+1},B_{k+1}]$ the non-vanishing inequality of
Proposition~\ref{prop:nvxu} forces $\mu_k=\mu_{k+1}$ and so the two
formulas agree in this case. 

Furthermore, just as the row exchange of Definition \ref{def row exchange} may fail to be an extended multi-segment, the row exchange in the above proposition may fail to be a $\mu$-extended multi-segment. Again, assuming the nonvanishing conditions, all of these row exchanges are equivalent and so this becomes a non-issue later.
\end{remark}

For a $\mu$-extended multi-segment $\mathcal{S}$, we write $\NV(\mathcal{S})\ne0$ if $\widetilde{\NV}_{\mathrm{Xu}}(\mathcal{S'})\ne0$ and ${\NV}_{*}(\mathcal{S'})\ne0$ for any $\mathcal{S}'=_R\mathcal{S}$. We refer the reader to \cite[Theorem 3.20]{BS25} for a clean equivalent description of $\NV(\mathcal{S})$, without involving row exchange. 

\subsection{The operators}

In this section, we translate various operators on extended multi-segments into the language of $\mu$-extended multi-segments.

Throughout this section we fix $\rho$ and write $\mathcal{S}_\rho$ as a sequence
$\{([A_i,B_i]_\rho,\mu_i)\}_{i=1}^{n}$ as the operators do not affect $\mathcal{S}^\rho$.
We begin with the translation of the shift and add operators.

\begin{prop}\label{prop:shadd}
Let $\mathcal{S}$ be a $\mu$-extended multi-segment. For a fixed $d\in\mathbb{Z}$ and index $j\in I_\rho$, we define 
 $sh^d_j(\mathcal{S})=sh^d_j(\mathcal{S}_\rho)\cup\mathcal{S}^\rho$, where $sh^d_j(\mathcal{S}_\rho)=\cup_{i=1}^\rho\{([A_i',B_i']_\rho,\mu_i')\}$ with \[([A_i',B_i']_\rho,\mu_i')=([A_i,B_i]_\rho,\mu_i), \ \mathrm{if}\  i\neq j \  \mathrm{and} \ ([A_j',B_j']_\rho,\mu_j')=([A_j+d,B_j+d],\ \mu_j).\]

We also define $add^d_j(\mathcal{S})=add^d_j(\mathcal{S}_\rho)\cup\mathcal{S}^\rho$, where $add^d_j(\mathcal{S}_\rho)=\cup_{i=1}^\rho\{([A_i'',B_i'']_\rho,\mu_i'')\}$ with \[([A_i'',B_i'']_\rho,\mu_i'')=([A_i,B_i]_\rho,\mu_i) \ \mathrm{if} \ i\neq j \ \mathrm{and} \ ([A_j'',B_j'']_\rho,\mu_j'')=([A_j+d,B_j-d],\ \mu_j)\] 
except in the degenerate case below. We emphasize two important cases:
\begin{enumerate}
\item $add^{-1}_j$ is defined (i.e., the result is again a BS-extended
multi-segment) if and only if $|\mu_j|\le b_j-2$;
\item the degenerate case of $add^{d}_j$, in which the $j$-th row is
removed, occurs exactly when $b_j-2d=0$, and then necessarily $\mu_j=0$. By
Remark~\ref{rem:bookkeeping} the removal multiplies $\mu_i$ by $-1$ for all $i>j$.
\end{enumerate}

In any case, we have $\EE_{sh^d_j(\mathcal{S})}=sh^d_j(\EE_\mathcal{S})$ and $\EE_{add^d_j(\mathcal{S})}=add^d_j(\EE_\mathcal{S})$
(recall Definition \ref{def shift and add}).
\end{prop}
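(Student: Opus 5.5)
The plan is to verify both identities coordinate by coordinate through the dictionary \eqref{eq:dict}--\eqref{eq:dict2}, since $sh$ and $add$ change only the $j$-th row of $\EE_\rho$ and never any other $\rho$-part. The whole argument reduces to tracking how the sign $\delta_i=\prod_{m<i}(-1)^{b_m-1}$ and the quantity $|\mu_i|=b_i-2l_i$ from Lemma \ref{lem:dict}(1) behave.

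For $sh^d_j$, I note that $b_j=A_j-B_j+1$ is unchanged and $l_j,\eta_j$ are unchanged (Definition \ref{def shift and add}). Hence every $\delta_i$ is unchanged. Formula \eqref{eq:dict2} then gives $\mu_i'=\mu_i$ for all $i$, which is exactly the claimed $sh^d_j(\mathcal{S})$.

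For $add^d_j$ in the non-degenerate case, the new row has $b_j''=b_j+2d$ and $l_j''=l_j+d$. So $b_j''-2l_j''=b_j-2l_j=|\mu_j|$, and $\eta_j$ is unchanged. Since $b_j''\equiv b_j \bmod 2$, all $\delta_i$ with $i>j$ are unchanged, and $\delta_j$ itself depends only on earlier rows. Thus $\mu_j''=\mu_j$ and $\mu_i''=\mu_i$ otherwise.

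For claim (1), $add^{-1}_j(\EE)$ is an extended multi-segment exactly when $0\le l_j-1\le (b_j-2)/2$. The upper bound holds automatically. The lower bound $l_j\ge1$ is equivalent to $|\mu_j|=b_j-2l_j\le b_j-2$. I also need $b_j-2\ge 0$ unless the degenerate case applies, but this is implied. One sign subtlety must be checked: when $2l_j''=b_j''$, the sign $\eta$ lives in $\{\pm1\}/\{\pm1\}$, and the convention $\sgn(0)=1$ makes $\mu_j''=0$ consistent.

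The sign conditions \eqref{eq:signHLL} and \eqref{eq:signBS} are preserved by Proposition \ref{prop:sign}. Admissibility needs no separate argument, because the order is unchanged and $\EE$ and $\mathcal{S}$ are required to satisfy the same admissibility condition.

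The degenerate case, claim (2), is the one place needing care. The row is removed when $A_j+d=B_j-d-1$ and $l_j+d=0$, i.e.\ $b_j+2d=0$ and $l_j=-d=b_j/2$, which forces $\mu_j=b_j-2l_j=0$. Deleting a row of length $b=0$ flips $\delta_i$ by $(-1)^{0-1}=-1$ for every $i>j$. Hence the $\mu$-coordinates of the later rows of $\EE_{add^d_j}$ are $-\mu_i$, as recorded in Remark \ref{rem:bookkeeping}. This is consistent with the statement as long as the $\mu$-side operator is defined by removing the row and negating the subsequent $\mu_i$, which is what (2) asserts. I expect this bookkeeping, making the $\mu$-side definition in the degenerate case match the $\delta$-recalculation, to be the only non-routine point.
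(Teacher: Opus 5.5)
Your proposal is correct and matches the paper's proof. Both rest on three points: every $\delta_i$, together with $b_j-2l_j=|\mu_j|$ and $\eta_j$, is invariant under $sh^d_j$ and $add^d_j$ (since $b_j$ changes by the even integer $2d$); Part (1) is the equivalence $l_j\ge 1\iff |\mu_j|\le b_j-2$; and Part (2) is the deletion bookkeeping of Remark \ref{rem:bookkeeping}. Your degenerate condition $b_j+2d=0$, which forces $l_j=b_j/2$ and hence $\mu_j=0$, is the correct reading of Definition \ref{def shift and add}; the ``$b_j-2d=0$'' in the statement is a sign slip.
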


\begin{proof}

We begin by proving that $\EE_{sh^d_j(\mathcal{S})}=sh^d_j(\EE_\mathcal{S})$ and $\EE_{add^d_j(\mathcal{S})}=add^d_j(\EE_\mathcal{S})$. Let $(\EE_\mathcal{S})_\rho=\cup_{i=1}^n\{([A_i,B_i]_\rho,l_i,\eta_i)\}$. Note that
for $sh^d_j$, the data $b_j$, $l_j$, $\eta_j$ and all $\delta_i$ are unchanged when comparing $\EE_{\mathcal{S}}$ and $sh^d_j(\EE_\mathcal{S})$ from which the claim follows.  For
$add^d_j$, the change from $\EE_{\mathcal{S}}$ and $add^d_j(\EE_\mathcal{S})$ is given by $b_j\mapsto b_j+2d$ and $l_j\mapsto l_j+d$, so $b_j-2l_j$ is unchanged,
$\eta_j$ is unchanged, and $\delta_i$ is unchanged because $b_j$ changes by the
even integer $2d$. These observations imply $\EE_{add^d_j(\mathcal{S})}=add^d_j(\EE_\mathcal{S})$.

Next, we note that Part (1) follows from the observation that $0\le l_j-1$ gives \ $b_j-|\mu_j|\ge2$. Part 2 follows from Remark~\ref{rem:bookkeeping}. This completes the proof of the lemma.
\end{proof}

Next we study the union-intersection operator.

\begin{prop}\label{prop:ui} Let $\mathcal{S}$ be a $\mu$-extended multi-segment and $k<k+1$ be adjacent.  We say $ui_k$ is applicable on $\mathcal{S}$ at $(k,k+1)$ if and only if
\[
A_k<A_{k+1},\qquad B_k<B_{k+1},\qquad
|\mu_{k+1}-\mu_k|=(A_{k+1}-A_k)+(B_{k+1}-B_k) ,
\]
that is, if and only if the non-vanishing inequality of
Equation \eqref{eqn:BSXu} is an \emph{equality}.  In this case, we define $ui_k(\mathcal{S})=ui_k(\mathcal{S}_\rho)\cup\mathcal{S}^\rho$ as follows. 
Set
$\sigma=\sgn(\mu_{k+1}-\mu_k)$ and $c=\sigma\,(A_{k+1}-A_k)$. Then $ui_k(\mathcal{S}_\rho):=\{([A_i',B_i']_\rho,\mu_i')\}_{i=1}^{n},$ where $([A_i',B_i']_\rho,\mu_i')=([A_i,B_i]_\rho,\mu_i)$ if $i\neq k,k+1$ and
\[
\bigl(([A_k',B_k'],\mu_k'),([A_{k+1}',B_{k+1}'],\mu_{k+1}')\bigr):=
\bigl(([A_{k+1},B_k],\ \mu_k-c),\ ([A_k,B_{k+1}],\ \mu_{k+1}-c)\bigr),
\]
except in the following degenerate case: if $\sgn\mu_k=-\sgn\mu_{k+1}$ and $B_{k+1}=A_k+1$, then $\mu_{k+1}-c=0$, and by Remark~\ref{rem:bookkeeping} the deletion of $([A_k, B_k+1]_\rho,0)$ results in
$\mu_i':=-\mu_i$ for all $i>k+1$.

Then $\EE_{ui_k(\mathcal{S})}={ui_k(\EE_\mathcal{S})}$.
\end{prop}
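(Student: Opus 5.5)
The proof is a direct translation through the dictionary \eqref{eq:dict2}, using Lemma \ref{lem:dict} throughout. We compare Definition \ref{ui def} with the claimed $\mu$-formula, one case at a time.

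\textbf{Applicability.} Set $m_i=|\mu_i|=b_i-2l_i$ and $s_i=\sgn\mu_i$. By Lemma \ref{lem:dict}(3), $\epsilon=s_ks_{k+1}$. By Lemma \ref{lem:dict}(2), $A_i-l_i=(a_i-2+m_i)/2$ and $B_i+l_i=(a_i-m_i)/2$. Using $a_i=A_i+B_i+1$ and $b_i=A_i-B_i+1$, the three applicability conditions become:
\begin{itemize}
\item Case 1 ($\epsilon=1$, $A_{k+1}-l_{k+1}=A_k-l_k$) is equivalent to $m_{k+1}-m_k=-(A_{k+1}-A_k)-(B_{k+1}-B_k)$.
\item Case 2 ($\epsilon=1$, $B_{k+1}+l_{k+1}=B_k+l_k$) is equivalent to $m_{k+1}-m_k=(A_{k+1}-A_k)+(B_{k+1}-B_k)$.
\item Case 3 ($\epsilon=-1$, $B_{k+1}+l_{k+1}=A_k-l_k+1$) is equivalent to $m_{k+1}+m_k=(A_{k+1}-A_k)+(B_{k+1}-B_k)$.
\end{itemize}
When the signs agree, $|\mu_{k+1}-\mu_k|=|m_{k+1}-m_k|$; when they differ, it equals $m_k+m_{k+1}$. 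So the union of the three cases is exactly the equality case of \eqref{eqn:BSXu}.

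Two edge situations need separate checks. If $\mu_k=0$ or $\mu_{k+1}=0$, the convention $\sgn(0)=1$ has to be checked against the ambiguity $\eta\in\{\pm1\}/E$. Cases 1 and 2 can coincide; this happens only when the right-hand side is $0$, which is impossible since $A_k<A_{k+1}$.

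\textbf{Segments and $\delta$.} The new segments are $[A_{k+1},B_k]$ and $[A_k,B_{k+1}]$, with $b_k'=b_k+(A_{k+1}-A_k)$ and $b_{k+1}'=b_{k+1}-(A_{k+1}-A_k)$. Hence $b_k'+b_{k+1}'=b_k+b_{k+1}$. It follows that $\delta_k'=\delta_k$ and $\delta_i'=\delta_i$ for $i>k+1$. Only $\delta_{k+1}'=\delta_{k+1}(-1)^{A_{k+1}-A_k}$ changes. This sign matches the factor $(-1)^{A_{k+1}-A_k}$ attached to $\eta_{k+1}'$ in every case of Definition \ref{ui def}, so $\sgn\mu_{k+1}'$ is computed from $\delta_{k+1}\eta_{k+1}$ as before.

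\textbf{Case-by-case comparison.} For each case we compute $\mu_k'=\delta_k\eta_k'(b_k'-2l_k')$ and $\mu_{k+1}'=\delta_{k+1}'\eta_{k+1}'(b_{k+1}'-2l_{k+1}')$, then compare with $\mu_k-c$ and $\mu_{k+1}-c$, where $c=\sigma(A_{k+1}-A_k)$.
\begin{itemize}
\item Case 1: here $\sigma=-s_k$. We get $m_k'=m_k+(A_{k+1}-A_k)$ with the same sign, so $\mu_k'=\mu_k+s_k(A_{k+1}-A_k)$. The same computation applies to row $k+1$.
\item Case 2, when $b_k-2l_k\ge A_{k+1}-A_k$: $m_k'=m_k-(A_{k+1}-A_k)$ with the same sign.
\item Case 2, when $b_k-2l_k<A_{k+1}-A_k$: the new $l_k'=b_k-l_k$ gives $b_k'-2l_k'=(A_{k+1}-A_k)-m_k$, with the sign flipped by $-\eta_k$. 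The product is again $\mu_k-s_k(A_{k+1}-A_k)$. This sub-case is where the sign flip has to be tracked most carefully.
\item Case 3: here $\sigma=s_{k+1}$. We use $m_{k+1}+m_k=(A_{k+1}-A_k)+(B_{k+1}-B_k)$ in both sub-cases $l_{k+1}\le l_k$ and $l_{k+1}>l_k$.
\end{itemize}

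\textbf{Main obstacle.} The hardest part is Case 3 together with the degenerate case $3'$. In sub-case $l_{k+1}>l_k$ of Case 3, $l_{k+1}$ is replaced by $l_k$ and $\eta_{k+1}$ acquires an extra sign. We must show that this lands exactly on $\mu_{k+1}-c$, using $b_{k+1}'-2l_k$ and the Case 3 identity. This requires checking that the parity and sign work out, and separately handling the possibility $\mu_{k+1}'=0$.

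For case $3'$, we have $B_{k+1}=A_k+1$, so $b_{k+1}'=0$ and $l_k=l_{k+1}=0$. Then $\mu_{k+1}'=0$, and the row is deleted. By Remark~\ref{rem:bookkeeping}, deleting a row with $b=0$ negates $\mu_i$ for all $i>k+1$, which matches the statement. Finally, we check that "$3'$" corresponds exactly to "$\sgn\mu_k=-\sgn\mu_{k+1}$ and $B_{k+1}=A_k+1$": this combination forces $m_k+m_{k+1}=b_k+b_{k+1}$, hence $l_k=l_{k+1}=0$.
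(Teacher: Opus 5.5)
Your proposal is correct and follows essentially the same route as the paper's proof. Both translate the Case 1/2/3 applicability conditions through Lemma~\ref{lem:dict}(2),(3) into identities for $|\mu_{k+1}|\mp|\mu_k|$, then compute $b'$, $\delta'_{k+1}=\delta_{k+1}(-1)^{A_{k+1}-A_k}$ and $\mu'$ case by case, and handle the type $3'$ deletion via Remark~\ref{rem:bookkeeping}.

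The extra checks you raise are details the paper leaves implicit, and they hold: $\delta_i$ is unchanged for $i>k+1$, the $\mu=0$ sign ambiguity resolves consistently, type $3'$ is characterized as you state, and row $k+1$ in Case 2 (which you omit) is the same routine computation. One correction: in the sub-case $l_{k+1}>l_k$ of Case 3 the factor on $\eta'_{k+1}$ is $(-1)^{A_{k+1}-A_k+1}$, not $(-1)^{A_{k+1}-A_k}$ as claimed ``in every case''. You do account for this later.
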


\begin{proof}
We note that Cases 1, 2, 3 of
Definition \ref{def ui} correspond to
\[
\text{Case 1}:\ \sigma=-\sgn\mu_k,\qquad
\text{Case 2}:\ \sigma=\sgn\mu_k,\qquad
\text{Case 3}:\ \sgn\mu_k=-\sgn\mu_{k+1}.
\]
Let $(\EE_\mathcal{S})_\rho=\cup_{i=1}^n\{([A_i,B_i]_\rho,l_i,\eta_i)\}$.

For applicability, Lemma \ref{lem:dict}(2) turns $A_{k+1}-l_{k+1}=A_k-l_k$ (Case 1) into
$|\mu_{k+1}|-|\mu_k|=-\bigl[(A_{k+1}-A_k)+(B_{k+1}-B_k)\bigr]$, turns
$B_{k+1}+l_{k+1}=B_k+l_k$ (Case 2) into
$|\mu_{k+1}|-|\mu_k|=+\bigl[(A_{k+1}-A_k)+(B_{k+1}-B_k)\bigr]$, and turns
$B_{k+1}+l_{k+1}=A_k-l_k+1$ (Case 3) into
$|\mu_k|+|\mu_{k+1}|=(A_{k+1}-A_k)+(B_{k+1}-B_k)$.  Next,  
write $\epsilon=\sgn(\mu_k\mu_{k+1})$ by Lemma \ref{lem:dict}(3). Note that if $\epsilon=1$ then
 $\bigl||\mu_{k+1}|-|\mu_k|\bigr|=|\mu_{k+1}-\mu_k|$, while if $\epsilon=-1$ then
 $|\mu_k|+|\mu_{k+1}|=|\mu_{k+1}-\mu_k|$.
Combining these observations with Lemma \ref{lem:dict}(3)  gives the applicability claim (and identifies the three
cases as stated above).

For the effect, put $\Delta A=A_{k+1}-A_k$, $\Delta B=B_{k+1}-B_k$; then
$b'_k=b_k+\Delta A$, $b'_{k+1}=b_k-\Delta B$, and $\delta'_{k+1}=\delta_{k+1}(-1)^{\Delta A}$.
In Case 1, $\mu'_k=\delta_k\eta_k(b_k+\Delta A-2l_k)=\mu_k+\sgn(\mu_k)\Delta A$
and $\mu'_{k+1}=\delta_{k+1}\eta_{k+1}(|\mu_{k+1}|+\Delta A)=\mu_{k+1}+\sgn(\mu_{k+1})\Delta A$;
since $\sigma=-\sgn\mu_k=-\sgn\mu_{k+1}$ here, this is $\mu-c$ in both rows.  In
Case 2 both sub-cases $b_k-2l_k\gtrless\Delta A$ give
$\mu'_k=\sgn(\mu_k)(|\mu_k|-\Delta A)=\mu_k-\sigma\Delta A$, and similarly for
$\mu'_{k+1}$.  In Case 3, $\mu'_k=\mu_k+\sgn(\mu_k)\Delta A=\mu_k-\sigma\Delta A$,
and the two sub-cases $l_{k+1}\le l_k$, $l_{k+1}>l_k$ (which by the applicability
identity are exactly $|\mu_{k+1}|\ge\Delta A$ and $|\mu_{k+1}|<\Delta A$) both give
$\mu'_{k+1}=\mu_{k+1}-\sigma\Delta A$. The degenerate case follows from Remark~\ref{rem:bookkeeping}. This completes the proof of the proposition.
\end{proof}

The $dual$ operator was already translated in Bo{\v s}njak and Stadler.

\begin{prop}[{\cite[Definition A.4]{BS25}}]\label{prop:dual}
Let $\mathcal{S}=(([A_i,B_i]_\rho,\mu_i))_{i=1}^n$ satisfy $(P')$.  Define $  dual(\mathcal{S})=(([A_i',B_i']_\rho,\mu_i'))_{i=1}^n$ where $[A_i',B_i']_\rho=[A_{n-i+1},B_{n-i+1}]_\rho$ and
\[
{\quad
\mu_i'=
\begin{cases}
(-1)^{\,n-1}\,\mu_{n+1-i}, & B_i\in\mathbb{Z},\\[4pt]
1-\mu_{n+1-i}, & B_i\notin\mathbb{Z} .
\end{cases}\quad}
\]
Then $dual(\EE_\mathcal{S})=\EE_{dual(\mathcal{S})}$.
\end{prop}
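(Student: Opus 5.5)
The statement to prove is Proposition \ref{prop:dual}: $dual(\EE_\mathcal{S})=\EE_{dual(\mathcal{S})}$ under the dictionary \eqref{eq:dict}. The plan is a direct row-by-row comparison, working with a single $\rho$-part $\mathcal{S}=\{([A_i,B_i]_\rho,\mu_i)\}_{i=1}^n$. Write $\EE=\EE_\mathcal{S}$, with $l_i=(b_i-|\mu_i|)/2$ and $\eta_i=\delta_i\sgn(\mu_i)$.

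Definition \ref{dual segment} reverses the order. So the $i$-th row of $dual(\EE)$ is the old row $j=n+1-i$ with segment $[A_j,-B_j]_\rho$ and data $(l_j',\eta_j')$. Applying \eqref{eq:dict2} to $dual(\EE)$ gives
\[\mu_i'=\delta_i'\,\eta_j'\,(b_j'-2l_j'),\]
where $b_j'=A_j+B_j+1=a_j$ and $\delta_i'=\prod_{k>j}(-1)^{a_k-1}$. The task is to show this equals $(-1)^{n-1}\mu_j$ when $B\in\Z$, and $1-\mu_j$ when $B\notin\Z$. Note that within one $\rho$-part all $B_k$ have the same integrality, since good parity fixes the parity of $a_k+b_k$.

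\textbf{Case $B\in\Z$.} We have $l_j'=l_j+B_j$, so
\[a_j-2l_j'=a_j-b_j+|\mu_j|-2B_j=|\mu_j|,\]
using $a_j-b_j=2B_j$. The absolute value is therefore preserved. For the sign we need
\[\sgn\mu_i'=\delta_i'(-1)^{\alpha_j+\beta_j}\delta_j\sgn\mu_j.\]
Here $\delta_j(-1)^{\alpha_j}=c_j=(-1)^{j-1}$ by Lemma \ref{lem:dict}(4), since $A_k\in\Z$ in this case. Similarly, $\delta_i'(-1)^{\beta_j}=\prod_{k>j}(-1)^{a_k+b_k-1}=(-1)^{n-j}$. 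The product is $(-1)^{n-1}$, as required. We must also check that $\mu_j=0$ causes no problem. Then $2l_j=b_j$ and $\eta_j$ is only defined modulo $E$, and we get $\mu_i'=0$ either way.

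\textbf{Case $B\notin\Z$.} This is the case I expect to be the main obstacle. Now $l_j'=l_j+B_j+\tfrac12(-1)^{\alpha_j}\eta_j$, and $(-1)^{\alpha_j}\eta_j=\sgn\mu_j$ by Lemma \ref{lem:dict}(4), since $c_j=1$. Hence
\[a_j-2l_j'=|\mu_j|-\sgn\mu_j=\bigl|\,|\mu_j|-1\,\bigr|\]
when $\mu_j\neq0$. Here $b_j$ is even, so $\mu_j$ is even and nonzero or zero. For the sign, $\eta_j'=(-1)^{\alpha_j+\beta_j+1}\eta_j$. The same parity bookkeeping gives $\delta_i'(-1)^{\beta_j}=1$, because $a_k+b_k-1$ is even here. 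So $\sgn\mu_i'=-\sgn\mu_j$, and therefore $\mu_i'=-\sgn\mu_j(|\mu_j|-1)=1-\mu_j$.

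The hardest point will be the boundary convention, Definition \ref{dual segment}(3). When $\mu_j=0$, i.e.\ $l_j=b_j/2$, $\eta_j$ is replaced by $(-1)^{\alpha_j+1}$, which effectively sets $\sgn\mu_j=-1$. Then $l_j'=l_j+B_j-\tfrac12$, so $a_j-2l_j'=1$, and $\eta_j'=(-1)^{\beta_j}$. This gives $\mu_i'=\delta_i'(-1)^{\beta_j}=1=1-\mu_j$. The remaining step is to check that the convention $\sgn(0)=1$ in \eqref{eq:dict} is consistent with this replacement; this is the step I would verify most carefully.

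Finally, the order of $dual(\EE)$ is the reverse order, which matches the reindexing $i\mapsto n+1-i$. This completes the comparison.
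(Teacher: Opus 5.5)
Your strategy is the right one. The paper gives no proof here and only cites \cite{BS25}, so a direct check like yours is what is needed. You translate $dual(\EE_{\mathcal S})$ back through \eqref{eq:dict2} row by row, with new length $a_j$ and $\delta_i'=\prod_{k>j}(-1)^{a_k-1}$, and use Lemma~\ref{lem:dict}(4). This is the same style of argument the paper uses for Propositions~\ref{prop:R} and \ref{prop:ui}. Your integral case, including $\mu_j=0$, is correct. You also correctly take the new segment to be $[A_j,-B_j]_\rho$, although the statement as printed drops the minus sign. The half-integral boundary case $\mu_j=0$ also needs nothing more. When $2l_j=b_j$, $\eta_j$ only lives in $\{\pm1\}/\{\pm1\}$, so Definition~\ref{dual segment}(3) merely fixes a representative. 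Your computation $\mu_i'=1$ already settles it, and there is no consistency check with $\sgn(0)=1$ left to do.

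The half-integral case with $\mu_j\neq0$, however, is wrong as written, for two reasons.

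First, $b_j$ need not be even when $B_j\notin\Z$; only $a_j+b_j$ is odd. For example, $[\tfrac12,\tfrac12]_\rho$ has $b=1$, so $\mu_j$ can be odd, e.g.\ $\pm1$.

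Second, you correctly obtain $a_j-2l_j'=|\mu_j|-\sgn\mu_j$, but then rewrite it as $\bigl||\mu_j|-1\bigr|$ and use $|\mu_j|-1$. For $\mu_j<0$ the true value is $|\mu_j|+1$. Your final expression $-\sgn\mu_j(|\mu_j|-1)$ then equals $-\mu_j-1$, not $1-\mu_j$. Concretely, the single row $([\tfrac12,\tfrac12]_\rho,-1)$ has $l=0$ and $\eta=-1$, and dualizes to $([\tfrac12,-\tfrac12]_\rho,0,1)$. That gives $\mu'=2=1-\mu$, whereas your formula gives $0$. This case matters, since negative $\mu_j$ with $B_j=\tfrac12$ is exactly where $dual_k^+$ applies.

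The fix is one line. Keep $a_j-2l_j'=|\mu_j|-\sgn\mu_j$, which is valid for all $\mu_j\neq0$; it vanishes when $\mu_j=1$, giving $\mu_i'=0=1-\mu_j$. Multiply by the sign $\delta_i'\eta_j'=-\sgn\mu_j$. This gives $\mu_i'=-\mu_j+(\sgn\mu_j)^2=1-\mu_j$ for every nonzero $\mu_j$.
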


We address the final operator next.
\begin{prop}\label{prop:partialdual}
Assume that $\mathcal{S}_\rho=(([A_i,B_i]_\rho,\mu_i))_{i=1}^n$ is half-integral (i.e, $B_k\in\frac12+\mathbb{Z}$), and
let $b_k=A_k-B_k+1$. We define $dual_k^\pm(\mathcal{S}_\rho)=(([A_i',B_i']_\rho,\mu_i'))_{i=1}^n$  as follows.
\begin{enumerate}
\item We say $dual^+_k$ is applicable if and only if
\[
B_k=\frac{1}{2},\qquad \mu_k=-b_k=-(A_k+\frac{1}{2}),\qquad
B_i<\frac{1}{2}\ \text{ for all } i<k .
\]
\item We say $dual^-_k$ is applicable if and only if
\[
B_k=-\frac{1}{2},\qquad \mu_k=b_k=A_k+\frac{3}{2},\qquad
B_j>-\frac{1}{2}\ \text{ for all } j>k .
\]
\item In either case where it is applicable, the effect is $dual_k^\pm(\mathcal{S}_\rho)$ is defined by
\[
([A_i',B_i']_\rho,\mu_i')=
\begin{cases}
( [A_i,B_i]_\rho,2-\mu_i), & i<k,\\
([A_i, \mp \frac{1}{2}]_\rho, 1-\mu_k), & i=k,\\
( [A_i,B_i]_\rho, -\mu_i), & i>k.
\end{cases}
\]
If $dual_k^\pm$ is not applicable, then  we define $dual_k^\pm(\mathcal{S}_\rho)=\mathcal{S}_\rho.$
\end{enumerate}

Then we have $\EE_{dual_k^\pm({\mathcal{S}_\rho})}=dual_k^\pm(\EE_{\mathcal{S}_\rho})$.
\end{prop}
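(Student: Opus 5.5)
The plan is a direct bookkeeping comparison. We write $\EE=\EE_{\mathcal{S}_\rho}=\{([A_i,B_i]_\rho,l_i,\eta_i)\}_{i=1}^n$ and translate both the applicability conditions and the output of Definition \ref{def partial dual}. Throughout we use the dictionary \eqref{eq:dict2}, $\mu_i=\delta_i\eta_i(b_i-2l_i)$, together with Lemma \ref{lem:dict}. Since $B_k\notin\Z$, every $A_i,B_i$ in this $\rho$-part is half-integral. Hence $c_i=1$ by Lemma \ref{lem:dict}(4), which gives $(-1)^{\alpha_i}\eta_i=\sgn(\mu_i)$.

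\textbf{Applicability.}
\begin{enumerate}
\item For $dual_k^+$, the conditions $B_k=\half{1}$ and $B_i<\half{1}$ for $i<k$ are unchanged.
\item The condition $l_k=0$ means $|\mu_k|=b_k$.
\item The condition $(-1)^{\alpha_k}\eta_k=-1$ means $\sgn(\mu_k)=-1$. Together with the previous item this gives $\mu_k=-b_k$, and $b_k=A_k+\half{1}$ because $B_k=\half{1}$.
\item For $dual_k^-$, we use the remark after Definition \ref{def partial dual}: applicability is $B_k=-\half{1}$, $l_k=0$, $B_j>-\half{1}$ for $j>k$, with $\eta_k$ forced by $\NV_*$. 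Proposition \ref{prop:nvstar} gives $|\mu_k-1|\le a_k=A_k-\half{1}$. With $|\mu_k|=b_k=A_k+\half{3}$, this forces $\mu_k=+b_k$.
\end{enumerate}

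\textbf{Effect of $dual_k^+$.} We compute the output by composing Proposition \ref{prop:dual} twice, following Definition \ref{def partial dual} literally. Write $\FF=\FF_1+\{k\}+\FF_2$ with $|\FF_1|=k-1$. In the half-integral case, the dual formula reverses the order and sends $\mu\mapsto 1-\mu$. So $dual(\FF)$ has the rows of $\FF_2$ first (values $1-\mu_i$), then row $k$ with value $1-\mu_k=1+b_k$, then the rows of $\FF_1$.

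Next we replace the middle row by $([A_k,\half{1}],0,(-1)^{\beta_k+1})$. This row has segment length $b_k+1$, so its $\mu$ value is recomputed from the dictionary. The segment length changes by $1$, which changes the parity of $b$, so we must track $\delta_i$ for the later rows as in Remark \ref{rem:bookkeeping}. Applying $dual$ again and keeping only the $\widetilde{\widetilde{\FF_1}}$ part and the new middle row gives the following.
\begin{enumerate}
\item Each $i<k$ has $\mu$ transformed by $\mu\mapsto 1-(1-\mu)$, corrected by a shift. This comes from one sign flip from the parity change of $\delta$ and one from the changed middle $\mu$. The expected answer is $2-\mu_i$.
\item Row $k$ is $([A_k,-\half{1}],0,-\eta_k)$. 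Its $\mu$ is $\delta_k(-\eta_k)(b_k+1)$. Using $\sgn(\mu_k)=-1$ and $\delta_k=(-1)^{\alpha_k}$ (since $c_k=1$), this equals $b_k+1=1-\mu_k$.
\item The rows of $\FF_2$ are untouched as extended segments. However, the length of row $k$ changed from $b_k$ to $b_k+1$, so $\delta_i$ flips for all $i>k$ and hence $\mu_i\mapsto-\mu_i$.
\end{enumerate}

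\textbf{Main obstacle.} The hard part is item 1 of the previous step, i.e.\ verifying $2-\mu_i$ for $i<k$. It needs careful tracking of how the signs $\delta_i$ and the terms $(-1)^{\beta_i}$ change between the two duals. I would first do it directly at the level of $(l_i,\eta_i)$ using Definition \ref{dual segment}, noting that $\alpha_i$ and $\beta_i$ each change by $1$ in parity. I would then convert via Lemma \ref{lem:dict}(4), checking separately the boundary convention of Definition \ref{dual segment}(3) when $l_i=b_i/2$.

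\textbf{Effect of $dual_k^-$.} Finally, $dual_k^-=dual\circ dual_k^+\circ dual$. We conjugate the formula just obtained by $\mu\mapsto1-\mu$ with reversal. Under this conjugation $\mu\mapsto 2-\mu$ on one side becomes $\mu\mapsto-\mu$ on the other, and vice versa, and $1-\mu_k$ is preserved. This reproduces part (3) with $\mp\half{1}$ and completes the identity $\EE_{dual_k^\pm(\mathcal{S}_\rho)}=dual_k^\pm(\EE_{\mathcal{S}_\rho})$.
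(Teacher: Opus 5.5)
Your outline is the paper's route. You translate applicability via Lemma~\ref{lem:dict}(4) with $c_k=1$, compute $dual_k^+$ by applying Proposition~\ref{prop:dual} twice, and use Remark~\ref{rem:bookkeeping} for length changes. Your treatment of row $k$ and of the rows of $\FF_2$ matches the paper. Your conjugation argument for the effect of $dual_k^-$ is correct, and it makes explicit a step the paper leaves tacit.

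\textbf{The gap: rows $i<k$.} This is the one step with real content. You state the target $2-\mu_i$ but do not derive it, and the mechanism you describe is wrong.
\begin{itemize}
\item In $dual(\FF)$ the rows of $\widetilde{\FF_1}$ carry $1-\mu_i$.
\item The middle row $[A_k,-\half{1}]$ (length $b_k+1$) is replaced by $[A_k,\half{1}]$ (length $b_k$). This changes the parity of that length, so by Remark~\ref{rem:bookkeeping} every later row, i.e.\ every row of $\widetilde{\FF_1}$, is negated exactly once, to $\mu_i-1$.
\item The new middle sign $(-1)^{\beta_k+1}$ does not touch these rows at all, because in $\mu$-coordinates $dual$ acts row by row as $\mu\mapsto 1-\mu$.
\item The second $dual$ then gives $1-(\mu_i-1)=2-\mu_i$.
\end{itemize}
The two flips you describe (``one from the parity change of $\delta$ and one from the changed middle $\mu$'') would cancel and return $\mu_i$.

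Your fallback computation at the level of $(l_i,\eta_i)$ has the same defect. For the rows of $\widetilde{\FF_1}$ in the intermediate multi-segment, only $\alpha_i$ changes parity: the middle row's $a$ goes from $A_k+\half{1}$ to $A_k+\half{3}$. By contrast, $\beta_i=\sum_{j>i}b_j$ never involves the middle row, since it lies before these rows. This single $\alpha_i$-flip is exactly the $\delta_i$-flip above, because $\delta_i=(-1)^{\alpha_i}$ when $c_i=1$. If $\beta_i$ also flipped, the sign $(-1)^{\alpha_i+\beta_i}$ in Definition~\ref{dual segment} would be unchanged, and you would land on $\mu_i-2$ rather than $2-\mu_i$.

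\textbf{Two smaller points.}
\begin{itemize}
\item For $B_k=-\half{1}$ one has $a_k=A_k+\half{1}$, not $A_k-\half{1}$. With your value, neither sign of $\mu_k$ satisfies $|\mu_k-1|\le a_k$.
\item You do not need $\NV_*$ (or any nonvanishing hypothesis) here. Applicability of $dual_k^-$ is by definition applicability of $dual^+$ on $dual(\FF)$. There the row becomes $[A_k,\half{1}]$, of length $b_k-1$, with value $1-\mu_k$. The condition therefore reads $1-\mu_k=-(b_k-1)$, i.e.\ $\mu_k=b_k$.
\end{itemize}
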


\begin{proof}
Let $\FF=(\EE_\mathcal{S})_\rho=\cup_{i=1}^n\{([A_i,B_i]_\rho,l_i,\eta_i)\}$.
By Lemma \ref{lem:dict}(4), $c_k=1$ in the half-integral case, so the condition
$(-1)^{\alpha_k}\eta_k=-1$ of Definition
\ref{def partial dual} is $\sgn(\mu_k)=-1$, and
$l_k=0$ is $|\mu_k|=b_k$. This gives the applicability condition in Part (1): $dual_k^+$ applies on $\EE_\mathcal{S}$ if and only if $dual_k^+$ applies on $\mathcal{S}$. Applicability for $dual_k^-$, i.e., Part (2), follows by applying $dual$.

For the justification of Part (3), write \[\FF=\FF_1+\{([A_k,\frac12],0,\eta_k)\}+\FF_2\] as in Definition
\ref{def partial dual} (we will use its notation throughout this proof).  By Proposition~\ref{prop:dual}, $  dual(\FF)$ has
$\mu$-values $\mu_i''=1-\mu_{n+1-i}$. Note that
the segment $[A_k,-\frac12]$ has length $b_k+1$.  Passing to
\[\widetilde{\FF_2}+\{([A_k,\frac12],0,(-1)^{\beta_k+1})\}+\widetilde{\FF_1}\]
changes the middle length from $b_k+1$ to $b_k$, hence (see Remark~\ref{rem:bookkeeping}) we
negate the $\mu$-values of all rows after it, i.e., of $\widetilde{\FF_1}$, and
negate the middle $\mu$-value itself.  Applying $dual$ once more,
the rows of $\widetilde{\widetilde{\FF_1}}$ have $\mu$-values
$1-\bigl(-(1-\mu_i)\bigr)=2-\mu_i$ for $i<k$. The middle row of the
output is $([A_k,-\frac12],0,-\eta_k)$, whose corresponding $\mu$-value is
$-\sgn(\mu_k)(b_k+1)=1-\mu_k$.  Finally $\FF_2$ is unchanged, but the length at the position $k$ has changed from
$b_k$ to $b_k+1$, and so it gives $\mu$-values $-\mu_i$ for $i>k$ by Remark \ref{rem:bookkeeping}.
\end{proof}

Recall that we say the operators $ ui^{-1}_{i,j}$,
$  dual\circ ui_{i,j}\circ  dual$ and $  dual^-_k$ on extended multi-segments are called raising operators. Similarly, we call them raising operators on $\mu$-extended multi-segments and say that $\mathcal{S}$ is absolutely maximal if no raising operator is applicable. From Theorem \ref{thm E max} the raising operators on extended multi-segments determine a unique absolutely maximal extended multi-segment (up to row exchanges). Given a $\mu$-extended multi-segment with $\NV(\mathcal{S})\neq 0,$ the above propositions show that there is also a unique (up to row exchange) absolutely maximal $\mu$-extended multi-segment which we denote by $[\mathcal{S}^{|max|}]$.

\subsection{Algorithms} In this section, we translate Algorithms \ref{algo pi_com} and \ref{algo Arthur type pi^-} into the language of $\mu$-extended multi-segments.
We begin by translating Algorithm \ref{algo pi_com}. 

\begin{algo}\label{alg:Ldata} Consider a $\mu$-extended multi-segment $\mathcal{S}$ with $\NV(\mathcal{S})\neq 0$. \\
\textbf{Step 1.} Compute $\mathcal{S}^{|max|}=\bigcup_\rho(([A_i,B_i]_\rho,\mu_i))_i$.  If $\psi_{\mathcal{S}^{|max|}}$ is tempered, then set
\[
\pi_{\com}(\mathcal{S}):=\pi\Bigl(\bigoplus_\rho\bigoplus_{i}\rho\otimes S_{2A_i+1},\ \varepsilon\Bigr),
\qquad \varepsilon(\rho\otimes S_{2A_i+1})=\mu_i .
\]
\noindent \textbf{Step 2.} Otherwise choose $j$ as in Theorem \ref{thm E max}(2): fix $\rho$ with
$b_\rho:=\max_i b_i>1$, let $j=\min\{i: b_i=b_\rho\}$ and, after row exchanges,
$j=\max\{i: B_i=B_j\}$.  Set $(\mathcal{S}^{|max|})^-:=add^{-1}_j(\mathcal{S}^{|max|})$.
Then
\[
\pi_{\com}(\mathcal{S})=\{\Delta_\rho[B_j,-A_j]\}+\pi_{\com}\bigl((\mathcal{S}^{|max|})^-\bigr).
\]
\end{algo}

The above propositions immediately imply that $\pi_\com(\EE_\mathcal{S})=\pi_\com(\mathcal{S}).$ Next we translate the ingredients needed for the translation of Algorithm \ref{algo Arthur type pi^-}. Let $\pi\in\Pi_{gp}(G_n)$ and $\pi=\pi^{\rho,-}+\{(\Delta_{\rho}[x,-y])^r\}$. 

\begin{itemize}
\item (Definition \ref{def EE_rho_minus}) For $\mathcal{S}$ absolutely maximal, ordered so that
$A_i\ge A_j$ if $i>j$ and $B_i=B_j$, with $b_\rho=\max_i b_i>1$,
$j_1=\min\{i:b_i=b_\rho\}$ and \[\{j_1<\dots<j_s\}=\{j:[A_j,B_j]=[A_{j_1},B_{j_1}]\},\]
we set
\[
\mathcal{S}^{\rho,-}=\sum_{k=1}^s add^{-1}_{j_k}(\mathcal{S}).
\]
If $\pi_\com(\mathcal{S})=\pi$, then $\pi_\com(\mathcal{S}^{\rho,-})=\pi^{\rho,-}$ via Theorem \ref{thm pi_rho_minus}. 

\item (Definition \ref{def criterion for Arthur type}(2)) Let $\mathscr{S}(\pi^{\rho,-}; \Delta_{\rho}[x,-y],r)$ denote the set of $\mu$-extended multi-segments $\mathcal{S}$ with $\NV(\mathcal{S})\neq 0$ and equipped with a very admissible order such that $\pi_\com(\mathcal{S})=\pi^{\rho,-}$ and $\psi_{\mathcal{S}}\in \Psi(\pi^{\rho,-}; \Delta_{\rho}[x,-y],r)$. 

\item (Definition \ref{def criterion for Arthur type}(2)(i) case $y-x=1$) Let $\mathcal{S} \in \mathscr{S}(\pi^{\rho,-}; \Delta_{\rho}[x,-y],r)$ and assume that $y-x=1$. Write $\mathcal{S}_\rho=(([A_i,B_i]_\rho,\mu_i))_{(i\in I_\rho, >)}$. We define \[\mathcal{S}^{\rho,+}=(([A_i',B_i']_\rho,\mu_i'))_{(i\in I_\rho\sqcup \{j_1,\dots ,j_r\}, \gg)}\] where $\gg$ is defined by
\[\begin{cases}
j_r \gg j_{r-1} \gg \cdots \gg j_1,\\
\alpha \gg \beta  \Longleftrightarrow \alpha > \beta & \text{for }\alpha, \beta \in I_{\rho},\\
\alpha \gg j_k \Longleftrightarrow {B_\alpha >x} & \text{for }\alpha \in I_{\rho},
\end{cases}
\]
$([A_i',B_i']_\rho,\mu_i')=([A_i,B_i]_\rho,\mu_i)$ for $i<j_1$, $([A_i',B_i']_\rho,\mu_i')=([x+1,x]_\rho, 0)$ if $i=j_{k}$ for some $k=1,2,\dots,r$, and  $([A_i',B_i']_\rho,\mu_i')=([A_i,B_i]_\rho,(-1)^{r}\mu_i)$ for $i>j_k$. We note that each inserted row has $b=2$, so by
Remark~\ref{rem:bookkeeping} the insertion negates $\mu_i$ for every row after the
inserted rows.

\item (Definition \ref{def criterion for Arthur type}(2)(ii) case $y-x>1$).  Let $\mathcal{S} \in \mathscr{S}(\pi^{\rho,-}; \Delta_{\rho}[x,-y],r)$ and assume that $y-x>1$. Change the admissible order if necessary so that there exists $j_1,\ldots, j_r \in I_{\rho}$ such that
\begin{enumerate}
    \item [$\oldbullet$] $j_1 < \cdots < j_r$ are adjacent under the admissible order on $I_{\rho}$,
    \item [$\oldbullet$] $[A_{j_1},B_{j_1}]_{\rho}= \cdots = [A_{j_r},B_{j_r}]_{\rho}=[y-1,x+1]_{\rho}$,
    \item [$\oldbullet$]  $j_1 = \min\{i \in I_{\rho} \ | \ B_i=B_{j_1}\}$,
    \item [$\oldbullet$] $ A_{j_1}-B_{j_1}+3 \geq A_i-B_i+1$ for all $i \in I_{\rho}$ and the equality does not hold for $i<j_1$.
\end{enumerate}

We set
$\mathcal{S}^{\rho,+}=\sum_{k=1}^r add^{1}_{j_k}(\mathcal{S})$. 
\end{itemize}

We now give the translation of Algorithm \ref{algo Arthur type pi^-}, i.e., an algorithm to determine when a representation is of Arthur type using $\mu$-extended multi-segments.

\begin{algo}\label{alg:arthur}
Let $\pi\in\Pi_{gp}(G_n)$.
\begin{itemize}
\item[\textbf{Step 1.}] If $\pi=\pi(\phi,\varepsilon)$ is tempered with
$\phi=\bigoplus_\rho\bigoplus_i\rho\otimes S_{2A_i+1}$, order $I_\rho$ so that
$a_i=2A_i+1$ is non-decreasing and set
\[
\mathcal{S}:=\bigcup_\rho\bigl(([A_i,A_i]_\rho,\ \mu_i)\bigr)_i,\qquad
\mu_i:=\varepsilon(\rho\otimes S_{2A_i+1})\in\{\pm1\},
\]
and output the set $\mathscr{S}(\mathcal{S})$ of $\mu$-extended multi-segments that can be obtained from $\mathcal{S}$ by a composition of row exchanges, $ui_k$, $dual\circ ui_k\circ dual$, $dual_k^\pm$ and their inverses.  The trivial
representation of $G_0$ corresponds to $\mathcal{S}=\emptyset$.
\item[\textbf{Step 2.}] Otherwise write $\pi=\pi^{\rho,-}+\{(\Delta_\rho[x,-y])^r\}$
as in Definition \ref{def pi minus}. Apply the algorithm on $\pi^{\rho,-}$ to construct the set (possibly empty)
\[\{[\mathcal{S}']:\pi_{\com}(\mathcal{S}')\cong\pi^{\rho,-}\}.\]
The representation $\pi$ is of the form $\pi_\com(\mathcal{S})$ for some $\mu$-extended multi-segment $\mathcal{S}$ if and only if the set $\mathscr{S}(\pi^{\rho,-}; \Delta_{\rho}[x,-y],r) $ is non-empty. If this set is non-empty, take any $\mathcal{S}$ in this set and then output the set $\mathscr{S}(\mathcal{S}^{\rho,+})$ obtained by applying all possible compositions of row exchanges, $ui_k$, $dual\circ ui_k\circ dual$, $dual_k^\pm$ and their inverses to $\mathcal{S}^{\rho,+}$.
\end{itemize}
\end{algo}

\end{document}